\font\cmssl=cmss10 at 12 pt
\newtheorem{thm}{Theorem}
\newtheorem{lem}[thm]{Lemma}
\newtheorem{prop}[thm]{Proposition}
\newtheorem{defn}[thm]{Definition}
\newtheorem{cor}[thm]{Corollary}
\newtheorem{rem}[thm]{Remark}
\newtheorem{notation}[thm]{Notation}
\newtheorem{exa}[thm]{Example}
\title{$B_{n}$-generalized pseudo-K\"{a}hler structures}
\date{\today}
\author{Vicente Cort\'es and  Liana David}
\begin{document}

\maketitle

{\bf Abstract:}  We define the  notions  of 
$B_{n}$-generalized pseudo-Hermitian and 
 $B_{n}$-generalized pseudo-K\"{a}hler structure
on  an odd exact Courant algebroid $E$. 
When $E$ is in the standard form (or of type $B_{n}$) 
we express these notions in terms of classical tensor fields on the base of $E$. 
This is analogous to the  bi-Hermitian viewpoint on generalized K\"{a}hler structures  on
exact Courant algebroids.
We describe left-invariant $B_{n}$-generalized pseudo-K\"{a}hler structures on Courant algebroids of type $B_{n}$ over Lie groups of dimension
two, three and four.

\tableofcontents

\section{Introduction}

Generalized complex geometry is  a unification of complex and symplectic geometry and represents 
an active research area in current mathematics at the interface with mathematical physics. The  initial idea was  to replace the tangent bundle of a manifold $M$ with the generalized tangent bundle $\mathbb{T}M:= TM \oplus T^{*}M$ and to include 
complex and symplectic structures in a more general type of structure,  a so called generalized complex structure. 
Later on, other
classical structures  (like K\"{a}hler, quaternionic, hyper-K\"{a}hler  etc.) were  defined  and studied in this general setting.

Generalized tangent bundles are the simplest class of Courant algebroids. They are sometimes called exact Courant algebroids, 
owing to
the fact that  they  fit into an exact sequence 
$$
0\rightarrow  T^{*}M \stackrel{ \pi^{*}}{\rightarrow }\mathbb{T}M \stackrel{\pi}{\rightarrow}  TM \rightarrow 0
$$
where $\pi : \mathbb{T}M \rightarrow TM$  is 
the natural projection 
(the anchor of $\mathbb{T}M$)
and $\pi^{*} : T^{*}M \rightarrow  \mathbb{T}M$ is the dual of $\pi$
(here $\mathbb{T}M$ and
$(\mathbb{T}M)^{*}$ are identified using the  natural scalar product  of neutral signature of $\mathbb{T}M$). 
The notion of a Courant
algebroid 
(see Section \ref{preliminary} for its definition) 
was defined  for the first time   by Z.\ J.\ Liu, A.\ Weinstein and P.\ Xu (see  \cite{w}) and since then it has been  intensively studied.  
Regular Courant algebroids  (i.e.\ Courant  algebroids for which the  image of the anchor is a vector bundle) were classified in \cite{chen}.  This includes a classification of transitive Courant algebroids (i.e.\ Courant algebroids with surjective anchor). 
In the context of $T$-duality, 
heterotic Courant
algebroids (i.e.\ Courant algebroids for which the  associated quadratic Lie algebra bundle is an adjoint bundle) were considered in \cite{baraglia}.
To any regular Courant algebroid $E$ one can associate a  (regular) quadratic Lie algebroid $E/ ( \mathrm{Ker}\,  \pi)^{\perp}$, where $\perp$ denotes the orthogonal  complement with respect to the scalar product of $E$.  Conversely, a regular quadratic Lie algebroid  arises in this
way from a Courant algebroid if and only if its first Pontryagin class \cite{bressl, severa} vanishes  (see Theorem 1.10 of  \cite{chen}).

Our setting in this paper are the  so called 
Courant algebroids of type $B_{n}$
(and their global analogue, the odd exact Courant algebroids),  introduced in \cite{rubio}. 
The terminology is justified by the fact that the underlying bundle of a Courant algebroid of type $B_{n}$ is of the form
$TM\oplus T^{*}M \oplus \mathbb{R}$ 
and its scalar product is  the natural scalar product of $TM\oplus T^{*}M \oplus \mathbb{R}$ 
of signature $(n+1, 1)$, where  $n$ is the dimension of the manifold $M$. 
(Along the same lines, exact Courant algebroids were called in \cite{rubio} Courant algebroids of type $D_{n}$, as the  natural scalar product
of a generalized tangent bundle has signature $(n, n)$).  
One  may  also define a Courant algebroid  of type $B_{n}$ as a transitive Courant algebroid in the standard form, 
with  quadratic Lie algebra bundle $\mathcal{Q} = M\times \mathbb{R}$ 
the trivial line bundle,  
with  positive definite canonical  metric,
trivial Lie bracket
and canonical flat connection.  
Therefore, from  the  viewpoint  of the classification  of  transitive Courant algebroids \cite{chen}, the Courant algebroids of type 
$B_{n}$ represent the next simplest class, after generalized tangent bundles. \

Courant  algebroids of type $B_{n}$ may be seen as the odd analogue of  
Courant algebroids of type $D_{n}$. With this analogy in mind, 
generalized  complex structures on odd exact Courant algebroids   (the so called $B_{n}$-generalized complex structures) 
were introduced and  studied systematically in \cite{rubio}.
It is worth  mentioning that there are various approaches  to define odd dimensional analogues of $D_{n}$-geometry
(see e.g. \cite{gen-1, gen-2, gen-3}).
 In this paper we adopt   the  viewpoint of \cite{rubio} and, in analogy with \cite{gualtieri-thesis},   we define generalized pseudo-K\"{a}hler structures on  odd exact Courant algebroids
 as an enrichment of $B_{n}$-generalized complex structures.\\

{\bf Structure of the paper and main results.} In Section \ref{preliminary} we recall basic definitions 
on Courant algebroids of type $B_{n}$, odd exact Courant algebroids, 
generalized metrics and $B_{n}$-generalized complex structures.\

In Section   \ref{definitions} we define  the notion of $B_{n}$-generalized almost pseudo-Hermitian structure on  an odd exact Courant algebroid $E$ as  a pair  $(\mathcal{G}, \mathcal F)$ formed 
by a generalized metric  $\mathcal{G}$ and a $B_{n}$-generalized almost complex structure  $\mathcal F$ which satisfy 
$\mathcal{G}^{\mathrm{end}} \mathcal F = \mathcal F \mathcal{G}^{\mathrm{end}}$.
The integrability of $(\mathcal G , \mathcal F )$ (and the   corresponding notion of 
$B_{n}$-generalized  pseudo-K\"{a}hler structure)  is defined by imposing integrability   on  $\mathcal F$ and on
the second $B_{n}$-generalized almost complex structure 
$\mathcal F_{2}:=  \mathcal F \mathcal{G}^{\mathrm{end}}$.\

In analogy with the bi-Hermitian viewpoint on generalized K\"{a}hler structures  on generalized tangent bundles    \cite{gualtieri-thesis}, 
 in Section  \ref{components-hermitian}  we assume that $E$ is a Courant algebroid of type $B_{n}$ 
 and 
 we describe 
 $B_{n}$-generalized almost pseudo-Hermitian structures on $E$ 
 in terms of classical tensor fields 
(called  ``components'')
on the base of $E$.  The case when $n$ is odd is described in the next proposition (below ''$\perp$''  denotes the orthogonal complement
with respect to $g$), cf.\ Proposition~\ref{data-on-M} i).

\begin{prop}\label{data-on-M-odd} 
Let $E$ be a Courant algebroid of type  $B_{n}$ over an $n$-di\-men\-sio\-nal manifold $M$, with Dorfman bracket twisted by
$(H, F).$  Assume that $n$ is odd.  A $B_{n}$-generalized  almost  pseudo-Hermitian  structure 
$(\mathcal{G}, \mathcal F)$ on $E$
is equivalent to the data  $(g, J_{+}, J_{-}, X_{+}, X_{-})$ 
where $g$ is a pseudo-Riemannian metric on $M$, 
$J_{\pm}\in \Gamma ( \mathrm{End}\, TM)$ are $g$-skew-symmetric endomorphisms and 
$X_{\pm}\in {\mathfrak X}(M)$ are  vector fields of norm  one,   such that  $J_{\pm } X_{\pm} =0$ and 
$J_{\pm}\vert_{ X_{\pm}^{\perp}}$ are complex structures.
\end{prop}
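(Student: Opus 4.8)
\emph{Strategy.} Since the statement concerns an \emph{almost} pseudo-Hermitian structure, no integrability is involved and the assertion is a fibrewise statement in linear algebra, performed smoothly over $M$; the twisting $(H,F)$ only fixes the standard-form model of $E$ and does not enter. The plan is to use $\mathcal G$ to split $E$ orthogonally, to use the compatibility condition to decompose $\mathcal F$ along the splitting, and finally to transport the resulting endomorphisms to $TM$ through the anchor $\pi$.

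First I pass to the involution $\mathcal G^{\mathrm{end}}$ and let $V_{\pm}$ be its $(\pm1)$-eigenbundles, so that $E=V_{+}\oplus V_{-}$ orthogonally, with $\operatorname{rank}V_{+}=n+1$ and $\operatorname{rank}V_{-}=n$ and with $\langle\cdot,\cdot\rangle$ restricting non-degenerately to each. The compatibility $\mathcal G^{\mathrm{end}}\mathcal F=\mathcal F\,\mathcal G^{\mathrm{end}}$ is exactly the statement that $\mathcal F$ preserves $V_{+}$ and $V_{-}$, so $\mathcal F=\mathcal F|_{V_{+}}\oplus\mathcal F|_{V_{-}}$ with each summand skew for the restricted scalar product and satisfying $\mathcal F^{3}=-\mathcal F$. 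A skew endomorphism with this cubic relation is diagonalisable with eigenvalues $0,\pm i$, so each $V_{\pm}$ splits $\langle\cdot,\cdot\rangle$-orthogonally as $\ker(\mathcal F|_{V_{\pm}})\oplus\operatorname{im}(\mathcal F|_{V_{\pm}})$, the image carrying the complex structure $\mathcal F|_{V_{\pm}}$ and hence of even rank. Because this orthogonal splitting is non-degenerate, $\langle\cdot,\cdot\rangle$ restricts non-degenerately to $\ker\mathcal F$; in particular the kernel line is non-null. The decisive point — and the reason the parity of $n$ matters — is now a rank count: the two even-rank images leave kernels of complementary parity in $V_{+}$ (rank $n+1$) and $V_{-}$ (rank $n$), and since $\operatorname{rank}\ker\mathcal F=1$, for $n$ odd the single kernel line must sit in $V_{-}$. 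Thus $\mathcal F|_{V_{+}}$ is an \emph{invertible} complex structure on $V_{+}$, while $\mathcal F|_{V_{-}}$ is a complex structure off one line.

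Next I read the data off on $M$. The negative side is straightforward: $\pi|_{V_{-}}\colon V_{-}\to TM$ is an isomorphism (transversality to $\ker\pi$ being part of the generalized-metric structure), so $g:=-\langle\cdot,\cdot\rangle|_{V_{-}}$ transports to a pseudo-Riemannian metric and $J_{-}:=\pi\circ\mathcal F|_{V_{-}}\circ(\pi|_{V_{-}})^{-1}$ is a $g$-skew endomorphism whose one-dimensional kernel is spanned by $X_{-}:=\pi(\ker\mathcal F)$, which is non-null by the previous paragraph and so may be normalised to $g(X_{-},X_{-})=1$; on $X_{-}^{\perp}$ it is a complex structure. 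On the positive side $\pi|_{V_{+}}$ is only surjective, with one-dimensional kernel $\ell:=V_{+}\cap\ker\pi$. Here the clean device is to transport $\mathcal F|_{V_{+}}$ through the complement $U_{+}:=\ell^{\perp}\cap V_{+}$, which (the line $\ell$ being non-null) projects isomorphically to $TM$: for $v\in TM$ with lift $\widetilde v\in U_{+}$ I set $J_{+}v:=\pi(\mathcal F|_{V_{+}}\widetilde v)$. Since $\mathcal F|_{V_{+}}$ is a complex structure one has $\mathcal F|_{V_{+}}\ell\subset\ell^{\perp}=U_{+}$, so that $X_{+}:=\pi(\mathcal F|_{V_{+}}\ell)$ is well defined and $J_{+}X_{+}=\pi(\mathcal F|_{V_{+}}^{2}\ell)=\pi(-\ell)=0$; moreover the very choice $U_{+}=\ell^{\perp}$ is what forces $J_{+}$ to be $g$-skew and a complex structure on $X_{+}^{\perp}=\pi\big((\ell\oplus\mathcal F|_{V_{+}}\ell)^{\perp}\cap V_{+}\big)$.

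Finally I run the construction backwards: from $(g,J_{\pm},X_{\pm})$ one rebuilds $V_{\pm}\subset E$ as the appropriate graphs over $TM$ (using the standard-form splitting), declares them the eigenbundles of a generalized metric $\mathcal G$, and defines $\mathcal F$ to act as $J_{\pm}$ on $V_{\pm}$ and to vanish on the kernel line, and then checks $\mathcal F^{3}=-\mathcal F$, $\operatorname{rank}\ker\mathcal F=1$ and $\mathcal G^{\mathrm{end}}\mathcal F=\mathcal F\,\mathcal G^{\mathrm{end}}$. I expect the main obstacle to be the positive side: manufacturing a genuine one-dimensional kernel $X_{+}$ and an honest $g$-skew $J_{+}$ on $TM$ out of the \emph{kernel-free} complex structure $\mathcal F|_{V_{+}}$ through the non-injective anchor, together with verifying that this extraction returns the \emph{same} metric $g$ as the negative side.
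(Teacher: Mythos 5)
Your proposal is correct and takes essentially the same route as the paper: split $E=E_{+}\oplus E_{-}$ via $\mathcal G^{\mathrm{end}}$, use the commutation to restrict $\mathcal F$ to each factor, place $\ker\mathcal F$ in $E_{-}$ by the parity count (this is Lemma~\ref{neu}), and transport through the standard-form identifications $E_{-}\cong TM$ and $E_{+}\cong TM\oplus\mathbb R$ --- your line $\ell=E_{+}\cap\ker\pi$ and its orthogonal complement inside $E_{+}$ are exactly the $\mathbb R$- and $TM$-blocks in the paper's matrix for $\mathcal J_{+}$. The checks you flag at the end (skewness of $J_{+}$, $g(X_{\pm},X_{\pm})=1$, and that both sides induce the same metric $g$) are immediate computations once the metric is in standard form, which is precisely how the paper's ``easy computation'' disposes of them.
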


 The case when $n$ is even is described in the next proposition, cf.\ Proposition~\ref{data-on-M} ii). 
 
 \begin{prop}\label{data-on-M-even} 
 Let $E$ be a Courant algebroid of type $B_{n}$ over an $n$-di\-men\-sio\-nal manifold $M$, with Dorfman bracket twisted by
$(H, F).$   Assume that $n$ is even. A $B_{n}$-generalized almost pseudo-Hermitian structure 
$(\mathcal{G}, \mathcal F)$ 
on $E$
is equivalent to the data $(g,  J_{+},  J_{-}, X_{+} , X_{-},  c_{+})$ where $g$ is a pseudo-Riemannian  metric on $M$,
$J_{\pm}\in \Gamma ( \mathrm{End}\, TM)$ are  $g$-skew-symmetric endomorphisms, 
$X_{\pm}\in {\mathfrak X}(M)$  are $g$-orthogonal vector fields and
$c_{+}\in C^{\infty}(M)$ is a function, such that
$J_{-}$ is an almost complex structure on $M$, 
$J_{+}$ satisfies 
\begin{align}
\nonumber&  J_{+}X_{+} = - c_{+} X_{-},\ J_{+}X_{-} = c_{+}X_{+},\\
\nonumber& J_{+}^{2} X = - X + g(X, X_{+}) X_{+} + g(X, X_{-}) X_{-},\ \forall X\in TM
\end{align}
and $g(X_{+}, X_{+}) = g(X_{-}, X_{-})= 1-c_{+}^{2}$.
\end{prop}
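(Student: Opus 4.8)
The plan is to work fiberwise: since $(\mathcal G,\mathcal F)$ is an \emph{almost} structure, the Dorfman bracket and its twisting by $(H,F)$ play no role, and the statement reduces to a linear-algebra dictionary on each fiber of $E\cong TM\oplus T^{*}M\oplus\mathbb R$. First I would record the shape of the two ingredients. The generalized metric $\mathcal G$ yields the $\langle\cdot,\cdot\rangle$-orthogonal splitting $E=V_{+}\oplus V_{-}$ into the $(\pm1)$-eigenbundles of $\mathcal G^{\mathrm{end}}$, with $V_{+}$ positive definite of rank $n+1$ and $V_{-}$ negative definite of rank $n$; the metric $g$ on $M$ is then the metric induced on $TM$ through the anchor. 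The compatibility $\mathcal G^{\mathrm{end}}\mathcal F=\mathcal F\mathcal G^{\mathrm{end}}$ is equivalent to $\mathcal F$ preserving $V_{+}$ and $V_{-}$, so $\mathcal F=\mathcal F|_{V_{+}}\oplus\mathcal F|_{V_{-}}$ with each summand skew-symmetric and satisfying the $B_{n}$-identity $\mathcal F^{3}=-\mathcal F$; hence each restriction has eigenvalues among $\{0,\pm i\}$.

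The step that singles out the even case is a parity count on the kernel. Because $\mathcal F$ is a $B_{n}$-generalized almost complex structure its kernel is a line, and since $\mathcal F$ preserves the definite subbundles this line lies entirely in $V_{+}$ or entirely in $V_{-}$. A skew endomorphism of a definite space of odd rank necessarily has nontrivial kernel, so for $n$ even it is $V_{+}$ (rank $n+1$, odd) that must carry the kernel, while $V_{-}$ (rank $n$, even) is forced to be kernel-free — otherwise $\dim\ker\mathcal F\ge2$. Consequently $\mathcal F|_{V_{-}}$ is invertible with $(\mathcal F|_{V_{-}})^{2}=-\mathrm{id}$, and transporting it to $TM$ by the anchor isomorphism $\pi|_{V_{-}}\colon V_{-}\xrightarrow{\sim}TM$ defines the genuine almost complex structure $J_{-}$, automatically $g$-skew. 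This is precisely the structural reason why $J_{-}$ is a bona fide almost complex structure when $n$ is even, in contrast with Proposition~\ref{data-on-M-odd}.

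On the positive side $\pi|_{V_{+}}\colon V_{+}\to TM$ is only surjective, with one-dimensional positive-definite kernel $L_{+}=\ker(\pi|_{V_{+}})$; I would fix a unit generator $e_{+}$ of $L_{+}$ (the $V_{+}$-part of the canonical $\mathbb R$-direction of $E$) and split $V_{+}=\mathbb R e_{+}\oplus\widetilde{TM}$, where $\widetilde{TM}=e_{+}^{\perp}\cap V_{+}$ is identified isometrically with $(TM,g)$ via $\pi$. Writing $\mathcal F|_{V_{+}}$ in block form, skew-symmetry forces $\mathcal F e_{+}=\widetilde{X_{+}}$ with $X_{+}:=\pi(\mathcal F e_{+})$ and fixes the off-diagonal block, so $\mathcal F\widetilde{X}=\widetilde{J_{+}X}-g(X_{+},X)\,e_{+}$ for a $g$-skew endomorphism $J_{+}$; one sets $c_{+}$ by $g(X_{+},X_{+})=1-c_{+}^{2}$ (the $e_{+}$-content, i.e.\ the norm deficit) and recovers $X_{-}$ from $J_{+}X_{+}=-c_{+}X_{-}$, which is $g$-orthogonal to $X_{+}$ because $J_{+}$ is skew. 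Expanding $(\mathcal F|_{V_{+}})^{3}=-\mathcal F|_{V_{+}}$ blockwise then yields exactly $J_{+}X_{+}=-c_{+}X_{-}$, $J_{+}X_{-}=c_{+}X_{+}$, $J_{+}^{2}X=-X+g(X,X_{+})X_{+}+g(X,X_{-})X_{-}$ and $g(X_{+},X_{+})=g(X_{-},X_{-})=1-c_{+}^{2}$, so that $J_{+}$ is an honest complex structure on $\{X_{+},X_{-}\}^{\perp}$ and the $c_{+}$-rotation on $\mathrm{span}(X_{+},X_{-})$. The converse is the reverse assembly: from the listed data one rebuilds $\mathcal G$ and $\mathcal F$ and checks that these same identities guarantee skew-symmetry, $\mathcal F^{3}=-\mathcal F$, and $\dim\ker\mathcal F=1$.

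The step I expect to be the main obstacle is the kernel bookkeeping on $V_{+}$: one must verify that the single line $\ker\mathcal F$ is correctly accounted for by the $e_{+}$-direction, even though the induced $J_{+}$ on $TM$ may kill the whole plane $\mathrm{span}(X_{+},X_{-})$ (e.g.\ when $c_{+}=0$). The point is that $\mathcal F\widetilde{X_{+}}=-e_{+}\neq0$, so $\widetilde{X_{+}}\notin\ker\mathcal F$ despite $J_{+}X_{+}$ possibly vanishing; the extra degeneracy of $J_{+}$ is absorbed by $e_{+}$ and does not produce a spurious second kernel direction. Keeping the $\mathbb R e_{+}$ summand consistently separated from $\widetilde{TM}$ and tracking norms through the anchor identification is the delicate part; once this block calculus is organized, the remaining verifications are routine and parallel the odd case.
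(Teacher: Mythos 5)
Your overall route is the same as the paper's: reduce to fibrewise linear algebra, split $E$ by the generalized metric, transport $\mathcal F|_{E_-}$ through the anchor to get $J_-$ (with the parity argument locating $\ker\mathcal F$ in $E_+$ when $n$ is even, as in Lemma~\ref{neu}), and block-decompose $\mathcal F|_{E_+}$ with respect to the $\mathbb{R}$-direction. The gap is in how you produce $c_+$ and $X_-$. From the blocks of $\mathcal F|_{V_+}$ you only extract $J_+$ and the single vector field $X_+:=\pi(\mathcal F e_+)$, and you then propose to define $c_+$ by $g(X_+,X_+)=1-c_+^2$ and to ``recover'' $X_-$ from $J_+X_+=-c_+X_-$. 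This fails: the norm deficit fixes $c_+$ only up to sign (and gives no smooth choice where it vanishes), and wherever $c_+=0$ the equation $J_+X_+=-c_+X_-$ reads $0=0$ and determines no $X_-$ at all, even though the data of the proposition still contain a nontrivial pair $X_\pm$ of unit vectors in that case. Moreover, the identities you claim to obtain ``by expanding $(\mathcal F|_{V_+})^3=-\mathcal F|_{V_+}$ blockwise'' cannot come out of that expansion: the blocks involve only $J_+$ and $X_+$, so $X_-$ never appears, and the cubic identity is strictly weaker than what is needed (it only constrains eigenvalues, not the rank of the kernel). To get $J_+^2X=-X+g(X,X_+)X_++g(X,X_-)X_-$ and the norm relations you must use the sharper identity~(\ref{endo-f}), $\mathcal F^2=-\mathrm{Id}+\langle\cdot\,,u_0\rangle u_0$, which involves the normalized kernel section $u_0$.

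The missing ingredient, which is exactly how the paper argues, is to read the second vector field and the function $c_+$ directly off the kernel line: by Lemma~\ref{neu}, $u_0\in\Gamma(E_+)$ with $\langle u_0,u_0\rangle=1$; writing $u_0=X_++g(X_+)+c_+$ defines $X_+$ and $c_+$ smoothly, with sign, and $g(X_+,X_+)=1-c_+^2$ is automatic. Then $X_-:=\pi(\mathcal F e_+)$ is the other vector field, $\mathcal F u_0=0$ yields $J_+X_+=-c_+X_-$ and $g(X_+,X_-)=0$, and the restriction of~(\ref{endo-f}) to $E_+$ gives $J_+X_-=c_+X_+$, $g(X_-,X_-)=1-c_+^2$ and the formula for $J_+^2$; skew-symmetry fixes the remaining blocks. (Two smaller points: $V_\pm$ are only non-degenerate, not definite, since $g$ is pseudo-Riemannian --- your parity argument still works because a skew endomorphism of a non-degenerate space has even rank; and your labelling of $X_\pm$ is transposed relative to the paper's, which is harmless up to the sign of $c_+$.) With the kernel decomposition put in as the source of $(X_+,c_+)$, the rest of your block calculus and the converse assembly go through as in the paper.
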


Our aim in  Section \ref{components-kahler} is to express the integrability of a $B_{n}$-generalized almost pseudo-Hermitian structure 
$(\mathcal G , \mathcal F)$ 
on a Courant algebroid of type $B_{n}$ in terms of its  components.  When $n$ is odd we obtain the
following  summarized description. (Below $\nabla$ denotes the Levi-Civita connection of $g$ and 
$T^{(1,0)}_{A}M\subset (TM)_{\mathbb{C}}$  denotes the $i$-eigenbundle of an endomorphism $A$ of $TM$). 

\begin{thm}\label{integr-odd-introd}  In the setting of Proposition \ref{data-on-M-odd}, define connections $\nabla^\pm$ 
on $TM$ by
\begin{align}
\nonumber& \nabla^{-}_{X} := \nabla_{X} +\frac{1}{2} H(X),\\
\nonumber& \nabla^{+}_{X} X := \nabla_{X} -\frac{1}{2} H(X) - J_{+} F(X) \otimes X_{+},
\end{align}
for any $X\in {\mathfrak X}(M)$, where we are identifying vectors and co-vectors using the metric, as explained
after equation (\ref{nabla-1-def}), cf.\ Notation~\ref{H-F-notation}.  
Then $(\mathcal G , \mathcal F)$ is a $B_{n}$-generalized pseudo-K\"{a}hler structure if and only if $\nabla^{\pm} $ preserves 
$T_{J_{\pm}}^{(1,0)} M$, 
\begin{equation}
\nabla^{-}_{X}X_{-} =0,\ \nabla^{+}_{X} X_{+} = - J_{+} F(X),\ \forall X\in {\mathfrak X}(M)
\end{equation}
and certain algebraic conditions hold (see  Theorem 
\ref{integrability-odd} for the precise conditions).
\end{thm}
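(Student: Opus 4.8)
The plan is to translate the integrability conditions on the two $B_{n}$-generalized almost complex structures $\mathcal F$ and $\mathcal F_{2} = \mathcal F \mathcal G^{\mathrm{end}}$ into conditions on the classical components $(g, J_{\pm}, X_{\pm})$ furnished by Proposition~\ref{data-on-M-odd}. First I would recall the explicit description of $\mathcal F$ and $\mathcal G^{\mathrm{end}}$ in terms of the components, as established in Section~\ref{components-hermitian}; since $\mathcal{G}^{\mathrm{end}}\mathcal F = \mathcal F\mathcal{G}^{\mathrm{end}}$, the operators $\mathcal F$ and $\mathcal F_{2}$ share the same $\pm 1$-eigenbundles for $\mathcal G^{\mathrm{end}}$, and on these eigenbundles they restrict to data governed by $(J_{+}, X_{+})$ and $(J_{-}, X_{-})$ respectively. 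The key observation is that integrability of a $B_{n}$-generalized almost complex structure is equivalent to the involutivity (under the twisted Dorfman bracket) of its $i$-eigenbundle $L \subset E_{\mathbb C}$, and because the two generalized complex structures commute and pair with $\mathcal G^{\mathrm{end}}$, the eigenbundle $L$ of $\mathcal F$ decomposes according to the $C_{\pm}$-splitting of $E$ into the $\pm 1$-eigenbundles of $\mathcal G^{\mathrm{end}}$.

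\medskip

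The central computational step is to evaluate the twisted Dorfman bracket of sections of $L\cap (C_{+})_{\mathbb C}$ and of $L\cap (C_{-})_{\mathbb C}$ and to demand that each stays in $L$. Here I would use the fact, standard in the bi-Hermitian picture, that the bracket restricted to the two eigenbundles $C_{\pm}$ of $\mathcal G^{\mathrm{end}}$ is governed by two connections on $TM$ obtained from the Levi-Civita connection $\nabla$ by adding $\pm\tfrac12 H$ together with the correction terms coming from the twist $F$ in the $B_{n}$ (as opposed to $D_{n}$) setting. This is precisely the origin of the connections $\nabla^{\pm}$ in the statement: $\nabla^{-} = \nabla + \tfrac12 H$ is the usual Bismut-type connection, while $\nabla^{+}$ carries, in addition to $\nabla - \tfrac12 H$, the extra term $-\,J_{+}F(X)\otimes X_{+}$ reflecting the coupling of the $\mathbb R$-component of the Courant algebroid to the distinguished vector field $X_{+}$. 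I would verify that these are indeed metric connections (or at least compatible with the relevant tensors) and then show that involutivity of $L\cap (C_{\pm})_{\mathbb C}$ is equivalent to the twin statements that $\nabla^{\pm}$ preserves the $(1,0)$-bundle $T^{(1,0)}_{J_{\pm}}M$ and that $\nabla^{-}X_{-}=0$, $\nabla^{+}X_{+} = -J_{+}F(X)$, the latter two being exactly the conditions expressing that the distinguished vector fields $X_{\pm}$ (which span the kernels of $J_{\pm}$ in the odd-dimensional case) are parallel in the appropriate sense.

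\medskip

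I expect the main obstacle to be the bookkeeping of the $\mathbb R$-summand of $E = TM \oplus T^{*}M \oplus \mathbb R$ and its interaction with the twist $F$. Unlike the classical generalized Kähler case on an exact Courant algebroid, here the Dorfman bracket is twisted by the pair $(H,F)$, and the extra $\mathbb R$-direction forces the vector fields $X_{\pm}$ of unit norm and the kernels $J_{\pm}X_{\pm}=0$ into the structure. Disentangling which terms of the bracket land in $TM$, which in $T^{*}M$, and which in $\mathbb R$ — and matching the $\mathbb R$-valued terms against the condition $\nabla^{+}_{X}X_{+} = -J_{+}F(X)$ — is where the genuine new content lies, and it is also where the ``certain algebraic conditions'' of Theorem~\ref{integrability-odd} (the constraints on $H$, $F$, $J_{\pm}$ and $X_{\pm}$ that are purely pointwise, arising from the $C^{\infty}(M)$-linear part of the involutivity requirement) will emerge. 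Once these algebraic identities are isolated, the remaining differential content organizes cleanly into the two connection conditions, completing the equivalence.
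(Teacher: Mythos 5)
Your overall bi-Hermitian strategy is the same as the paper's (decompose the $i$-eigenbundle $L_{1}$ of $\mathcal F$ along the $\pm 1$-eigenbundles of $\mathcal G^{\mathrm{end}}$, identify the pieces with bundles built from $T^{(1,0)}_{J_{\pm}}M$, and read off connection conditions from the twisted Dorfman bracket), but as described it has a genuine gap. First, ``demand that each stays in $L$'' only encodes integrability of $\mathcal F$; the pseudo-K\"ahler condition also requires $\mathcal F_{2}=\mathcal G^{\mathrm{end}}\mathcal F$ to be integrable, i.e.\ brackets of sections of $L_{1}^{+}=L_{1}\cap L_{2}$ must stay in $L_{1}^{+}$ (and likewise for $L_{1}^{-}=L_{1}\cap\bar L_{2}$), and the mixed brackets $[\Gamma(L_{1}^{+}),\Gamma(L_{1}^{-})]$ and $[\Gamma(L_{1}^{+}),\Gamma(\overline{L_{1}^{-}})]$ must be controlled separately; in the paper the former is handled by Lemma~\ref{general-results-1} and the latter by the Lie-bialgebroid-type argument in Proposition~\ref{criterion}, which uses the formula for $[X,\xi]$ containing the extra $u_{0}$-term.

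Second, and more seriously, your plan never brings in the normalized kernel section $u_{0}=X_{-}-g(X_{-})$ of $\mathrm{Ker}\,\mathcal F$ and the requirement that the Dorfman Lie derivative ${\mathbf L}_{u_{0}}$ preserve $\Gamma(L_{1}^{\pm})$ (Lemma~\ref{crit-first}, Proposition~\ref{criterion}). Since $u_{0}$ is orthogonal to $L_{1}\oplus\bar L_{1}$, the vector field $X_{-}$ does not occur among the generators of the eigenbundles whose brackets you propose to compute (cf.\ Lemma~\ref{computation-l-pm}: $L_{1}^{-}=\{X-g(X)\,:\,X\in T^{(1,0)}_{J_{-}}M\}$ and $L_{1}^{+}$ is built from $T^{(1,0)}_{J_{+}}M$ and $X_{+}$); hence your computation cannot produce the conditions $\nabla^{-}_{X}X_{-}=0$, $i_{X_{-}}F=0$ and $(i_{X_{-}}H)\vert_{\Lambda^{2}T^{(1,0)}_{J_{-}}M}=0$, which in the paper come precisely from the ${\mathbf L}_{u_{0}}$-invariance via Lemma~\ref{general-results}~ii) (see Corollary~\ref{pre-final-1}). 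This ingredient is not free in either direction of the equivalence: for necessity it is deduced from Rubio's Lemma~4.13 (${\mathbf L}_{u_{0}}\mathcal F=0$ for any integrable $B_{n}$-generalized complex structure), and for sufficiency it is one of the hypotheses fed into Proposition~\ref{criterion} to recover integrability of $L_{2}$. So the genuinely new content in the odd $B_{n}$ case is not primarily the bookkeeping of the $\mathbb R$-summand that you flag, but the role of the kernel section $u_{0}$ and its Dorfman Lie derivative; without it the stated equivalence cannot be established along the route you describe.
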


The analogous result for  $n$ is even is stated as follows.

\begin{thm}\label{integr-even-introd}  In the setting of Proposition \ref{data-on-M-even}, 
assume that $X_{+}$ and $X_{-}$ are non-null (at any point), i.e. $c_{+} (p)^{2}  \neq  1$ for any $p\in M.$ 
Define connections $D^\pm$ on $TM$ by
\begin{align}
\nonumber& D^{-}_{X} := \nabla_{X} +\frac{1}{2} H(X),\\
\nonumber& D^{+}_{X} X := \nabla_{X} -\frac{1}{2} H(X) +\frac{c_{+}}{ 1- c_{+}^{2}} F(X)\otimes X_{+} -\frac{ J_{+} F(X)}{ 1- c_{+}^{2}} \otimes 
X_{-},
\end{align}
for any $X\in {\mathfrak X}(M).$  Then $(\mathcal G , \mathcal F)$ is a $B_{n}$-generalized pseudo-K\"{a}hler structure if and only if 
$D^{\pm} $ preserves 
$T_{J_{\pm}}^{(1,0)} M$, $X_{+}$ and $X_{-}$ commute, 
their  covariant derivatives  are given by
\begin{align}
\nonumber& \nabla_{X}  X_{+} =-\frac{1}{2} ( i_{X_{+}} H) (X) + c_{+} F(X),\\
\nonumber&  \nabla_{X} X_{-} = - \frac{1}{2}( i_{X_{-}} H)(X) - J_{+} F(X),
\end{align}
$i_{X_{+}} F = dc_{+}$  
and certain algebraic conditions hold
(see Theorem  \ref{1-even} for the precise conditions). 
\end{thm}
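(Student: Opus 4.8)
The plan is to read off the two integrability conditions — that $\mathcal{F}$ and $\mathcal{F}_2 := \mathcal{F}\mathcal{G}^{\mathrm{end}}$ are each $B_n$-generalized complex structures — as conditions on the classical data $(g,J_+,J_-,X_+,X_-,c_+)$ of Proposition~\ref{data-on-M-even}. Recall that integrability of a $B_n$-generalized almost complex structure means that its $(+i)$-eigenbundle is closed under the Dorfman bracket twisted by $(H,F)$. Since $\mathcal{F}$ and $\mathcal{G}^{\mathrm{end}}$ commute, both $\mathcal{F}$ and $\mathcal{F}_2$ preserve the eigenbundles $V_\pm$ of $\mathcal{G}^{\mathrm{end}}$, and $\mathcal{F}_2 = \pm\mathcal{F}$ on $V_\pm$; consequently the $(+i)$-eigenbundles $L$ of $\mathcal{F}$ and $L_2$ of $\mathcal{F}_2$ coincide on one eigenbundle and are complex-conjugate on the other. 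This is exactly the mechanism by which, as in Gualtieri's bi-Hermitian picture, the integrability of $\mathcal{F}$ will encode the $J_+$-data and that of $\mathcal{F}_2$ the $J_-$-data.

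First I would write $L$ and $L_2$ explicitly as subbundles of $(TM\oplus T^*M\oplus\mathbb{R})_{\mathbb{C}}$, using Proposition~\ref{data-on-M-even}: away from the null locus (here the hypothesis $c_+^2\neq 1$ enters) one obtains convenient local frames built from $J_\pm$-holomorphic vectors $Z\in T^{(1,0)}_{J_\pm}M$ together with the distinguished sections coming from the $X_+$, $X_-$ and $\mathbb{R}$ directions. Next I would expand the twisted Dorfman bracket of these frame sections using the explicit bracket formula on $TM\oplus T^*M\oplus\mathbb{R}$ from Section~\ref{preliminary} and project onto the $TM$-, $T^*M$- and $\mathbb{R}$-summands. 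Demanding that the $TM$-component of the bracket of two holomorphic vectors remain of type $(1,0)$ is precisely the statement that the connection $D^+$ (resp.\ $D^-$) preserves $T^{(1,0)}_{J_+}M$ (resp.\ $T^{(1,0)}_{J_-}M$): the Levi-Civita part, the torsion $\mp\tfrac12 H$, and the correction terms in $F$, $c_+$, $X_\pm$ that define $D^\pm$ are exactly what is needed to absorb the non-holomorphic pieces. The brackets that involve the distinguished $X_\pm/\mathbb{R}$-sections then yield the remaining conditions: the prescribed covariant derivatives $\nabla_X X_\pm$, the identity $i_{X_+}F = dc_+$ (from the $\mathbb{R}$-component of closure), the commutation $[X_+,X_-]=0$, and the purely algebraic constraints recorded in Theorem~\ref{1-even}. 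Both implications follow because closure of the eigenbundle is equivalent to the vanishing of all of these bracket components.

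I expect the principal obstacle to be the bookkeeping in the mixed block. Compared with the $D_n$ (generalized Kähler) case, the $B_n$ Courant algebroid carries the extra $\mathbb{R}$-direction, and the two-form $F$, the function $c_+$ and the vector fields $X_\pm$ couple together in the brackets of the distinguished sections. The delicate point is to sort these contributions simultaneously and cleanly into the three distinct outputs — the covariant-derivative formulas for $X_\pm$, the relation $i_{X_+}F=dc_+$, and the algebraic identities — without letting the differential and algebraic parts contaminate one another. The invertibility of $1-c_+^2$, guaranteed by the non-null hypothesis, is what permits solving for the connection corrections defining $D^+$ and normalizing the frames of $L$ and $L_2$; once the eigenbundles are framed correctly and the bracket is fully expanded, the differential conditions reorganize exactly into the assertions that $D^\pm$ preserve $T^{(1,0)}_{J_\pm}M$ and into the stated derivatives of $X_\pm$.
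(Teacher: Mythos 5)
Your overall strategy (compute the $(+i)$-eigenbundles $L_{1}$ of $\mathcal F$ and $L_{2}$ of $\mathcal F_{2}$ in terms of the components, expand the twisted Dorfman bracket on frames, and sort the resulting conditions) is the same computational skeleton as the paper's, and the decomposition you mention ($L_{1}$ and $L_{2}$ agreeing on one eigenbundle of $\mathcal G^{\mathrm{end}}$ and being conjugate on the other) is exactly the paper's splitting $L_{1}=L_{1}^{+}\oplus L_{1}^{-}$, $L_{2}=L_{1}^{+}\oplus\overline{L_{1}^{-}}$. However, there is a genuine gap in the way you close the argument. Saying that ``closure of the eigenbundle is equivalent to the vanishing of all of these bracket components'' and that these ``reorganize exactly'' into the stated list assumes away the hard part: the brackets within the pure blocks $\Gamma(L_{1}^{\pm})$ do give the conditions of Theorem \ref{1-even}, but joint integrability of $L_{1}$ and $L_{2}$ also requires the mixed brackets $[\Gamma(L_{1}^{+}),\Gamma(L_{1}^{-})]\subset\Gamma(L_{1})$ and $[\Gamma(L_{1}^{+}),\Gamma(\overline{L_{1}^{-}})]\subset\Gamma(L_{2})$, and these are \emph{not} among the theorem's conditions; one must prove they are consequences of them. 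The paper does this with two dedicated ingredients you do not address: Lemma \ref{general-results-1}, which shows the mixed bracket for $L_{1}$ follows from the listed differential and algebraic relations, and Proposition \ref{criterion}, which recovers integrability of $L_{2}$ from integrability of $L_{1}$, $L_{1}^{\pm}$ together with invariance under the Dorfman Lie derivative ${\mathbf L}_{u_{0}}$, via a Lie-algebroid duality argument (formula (\ref{lie-dual})). Without an argument of this kind your ``if'' direction does not go through.

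Relatedly, the condition $i_{X_{+}}F=dc_{+}$ is not, in the paper's proof, a component of the closure of $L_{1}$ or $L_{2}$ on frame sections: it arises from requiring that ${\mathbf L}_{u_{+}}$, with $u_{+}=X_{+}+g(X_{+})+c_{+}$ the kernel section of $\mathcal F$, preserve $\Gamma(L_{1}^{-})$ (Lemma \ref{general-results} iii)), and its necessity in the ``only if'' direction rests on the nontrivial fact that integrability of a $B_{n}$-generalized complex structure forces ${\mathbf L}_{u_{0}}\mathcal F=0$ (Rubio's Lemma 4.13, i.e.\ Lemma \ref{dorf-u0}). Since $u_{+}$ is not a section of $L_{1}$ or $L_{2}$, your plan of only bracketing frame sections of the two eigenbundles would not cleanly isolate this relation (the paper's remark after Corollary \ref{pre-final-2} shows it is automatic only on the support of $c_{+}$). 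So you need to either incorporate the kernel-section Lie derivative into your criterion, as the paper does, or supply an independent derivation of $i_{X_{+}}F=dc_{+}$ and of the vanishing of the mixed-block components from the raw closure conditions; as written, the proposal does not contain either.
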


The proofs of Theorems \ref{integr-odd-introd} and  \ref{integr-even-introd}  are analogues  to the proof of
Gualtieri on the relation between generalized K\"{a}hler structures on exact Courant algebroids and bi-Hermitian data
\cite{gualtieri-thesis}.  More precisely, we find a criterion for the integrability of a $B_{n}$-generalized  almost pseudo-Hermitian structure
$(\mathcal G , \mathcal F)$ in terms of the  (Dorfman) integrability of the bundles $L_{1}$, $L_{1}\cap L_{2}$  and $L_{1}\cap \bar{L}_{2}$
and  their invariance under the Dorfman Lie derivative ${\mathbf L}_{u_{0}}$ 
(where $L_{1}$ is the $(1,0)$-eigenbundle of $\mathcal F$,  $L_{2}$  is the $(1,0)$-eigenbundle of $G^{\mathrm{end}} \mathcal F$ and $u_{0} \in \Gamma (\mathrm{Ker}\, \mathcal F )$ is  suitably normalized),  
see Proposition \ref{criterion}. The proofs of the above theorems follow by
computing  the intersections $L_{1}\cap L_{2}$ and $L_{1} \cap \bar{L}_{2}$ in terms of the components of 
$(\mathcal G , \mathcal F)$ and applying  the above criterion. 
The assumption  from Theorem \ref{integr-even-introd} that $X_{\pm}$ are non-null simplify the argument considerably. 
Along the way we determine various further properties of the components of $B_{n}$-generalized pseudo-K\"{a}hler structures as well
as a rescaling property for such structures (see Corollary \ref{rescaling-1} and  Corollary \ref{rescaling-2}). 
Finally, we remark that a
generalized K\"{a}hler structure on an exact Courant algebroid   may be interpreted as a 
$B_{n}$-generalized K\"{a}hler structure (see Remark \ref{2-even}).

In Section \ref{3-4} we simplify Theorems \ref{integr-odd-introd} and \ref{integr-even-introd} under the assumption that 
$M$ has small dimension. This section is intended as a preparatory material for Section \ref{examples}, 
which is devoted to the classification of left-invariant
$B_{n}$-generalized pseudo-K\"{a}hler structures $(\mathcal{G}, \mathcal F)$  over Lie groups of dimension two, three or four.
Let $G$ be a Lie group with Lie algebra $\mathfrak{g}$ 
and  $E $ a Courant algebroid of type $B_{n}$ over $G$, whose Dorfman bracket is twisted by left-invariant forms
$(H, F)$. A $B_{n}$-generalized pseudo-K\"{a}hler structure on $E$  is called {\cmssl left-invariant} if 
its components are left-invariant tensor fields on $G$ (in particular, the function $c_{+}$ is constant).   
Our main results from Section \ref{examples} can be roughly summarized as follows (below $c_{+}$ and $g$ are part of the components of the corresponding $B_{n}$-generalized 
pseudo-K\"{a}hler structure).

 \begin{thm}\label{summary-lie}  i) Assume that $G$ is $2$-dimensional. There is a left-invariant $B_{2}$-generalized pseudo-K\"{a}hler structure on $E$, such that
 $c_{\pm} \notin \{ -1, +1\}$  if and only if 
 $\mathfrak{g}$ is abelian.\
 
 ii) Assume that $G$ is  $3$-dimensional,  unimodular, with canonical operator $L$  (defined in (\ref{def-L})) diagonalizable.   There is a left-invariant $B_{3}$-generalized pseudo-K\"{a}hler structure on $E$ if and only if  $\mathfrak{g}$ is abelian, 
  $\mathfrak{g}= \mathfrak{so} (2) \ltimes \mathbb{R}^{2} $ or  $\mathfrak{g} = \mathfrak{so}(1,1) \ltimes
 \mathbb{R}^{2}$.\
 
 iii) Assume that $G$ is  $3$-dimensional  and non-unimodular. Let  $\mathfrak{g}_{0}$ be the unimodular kernel of $\mathfrak{g}$. 
 There is a left-invariant $B_{3}$-generalized pseudo-K\"{a}hler structure on 
 $E$ such that $g\vert_{\mathfrak{g}_{0} \times \mathfrak{g}_{0}}$ is non-degenerate if and only if 
$\mathfrak{g} = \mathbb{R} \oplus \mathfrak{sol}_{2}$, where $\mathfrak{sol}_{2}$ is  the unique non-abelian $2$-dimensional Lie algebra.\ 
  
 iv) Assume that $G$ is $4$-dimensional and  
 its unimodular kernel is non-abelian.
 There is an adapted  (see Definition \ref{adapted}) $B_{4}$-generalized pseudo-K\"{a}hler structure on $E$ such that $c_{+} \notin \{ -1, 0, 1\}$ if and only if 
 $\mathfrak{g}$ has a basis $\{ u, e_{1}, e_{2}, e_{3} \}$ in which the Lie bracket takes the form
 \begin{align}
\nonumber& [e_{1}, e_{2} ] = \epsilon_{3} \lambda   e_{3},\ [e_{2}, e_{3} ] =  0, \ [e_{3}, e_{1} ] =\epsilon_{2}  \lambda e_{2},\\
\label{form-1-rez}& [u, e_{1} ] =0,\ [ u, e_{2}  ]=  \beta  e_{3},\ [ u, e_{3} ] = - \beta  e_{2},
\end{align} 
where  
$\lambda \in \mathbb{R}\setminus \{ 0\}$, $\beta  \in \mathbb{R}$,   $\epsilon_{i} \in \{ \pm 1\}$.\ 

In all cases above $E$ is untwisted (i.e. $H=0$, $F=0$).
 \end{thm}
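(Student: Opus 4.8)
The plan is to convert each equivalence into a finite problem of linear algebra on the Lie algebra $\mathfrak{g}$ and then to run through the classification of Lie algebras in the relevant dimension. Because the structure is left-invariant, all of its components $(g, J_{\pm}, X_{\pm}, c_{+})$ are constant tensors, hence algebraic data on $\mathfrak{g}$, the function $c_{+}$ is a constant, and the Levi-Civita connection $\nabla$ is the left-invariant connection given on $\mathfrak{g}$ by the Koszul formula in terms of the constant metric $g$ and the bracket. The twisting forms $(H,F)$ are left-invariant by assumption. Substituting this data into the integrability criteria of Theorem \ref{integr-odd-introd} (for $n=3$) and Theorem \ref{integr-even-introd} (for $n=2,4$), as simplified in Section \ref{3-4}, turns the conditions ``$\nabla^{\pm}$ (resp.\ $D^{\pm}$) preserves $T^{(1,0)}_{J_{\pm}}M$'', the prescribed covariant derivatives of $X_{\pm}$, the commutation of $X_{\pm}$, the relation $i_{X_{+}}F=dc_{+}=0$, and the algebraic conditions of Theorems \ref{integrability-odd} and \ref{1-even} into a closed system of polynomial equations in the structure constants and the components. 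The first step in every case is to fix a basis of $\mathfrak{g}$ adapted to $X_{\pm}$ and $J_{\pm}$ and to write this system out explicitly.

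For part i) we are in the even case with $n=2$. Every $2$-dimensional Lie algebra is either abelian or isomorphic to $\mathfrak{sol}_{2}$. On the abelian algebra $\nabla=0$, so one simply exhibits components satisfying the algebraic relations of Proposition \ref{data-on-M-even} with $X_{\pm}$ non-null, producing a structure with $c_{\pm}\notin\{-1,+1\}$. For $\mathfrak{sol}_{2}$ one shows that the prescribed covariant derivatives $\nabla_{X}X_{\pm}$, together with the invariance of the $J_{\pm}$-eigenbundles, cannot be met by the unique non-abelian bracket when $X_{\pm}$ are non-null, so no such structure exists. For parts ii) and iii) we are in the odd case with $n=3$ and invoke the classification of $3$-dimensional Lie algebras. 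In the unimodular case the canonical operator $L$ of (\ref{def-L}) encodes the bracket, and diagonalizability of $L$ lets us work in an orthonormal eigenframe, in which the structure constants are read off from the eigenvalues. Imposing $\nabla^{-}_{X}X_{-}=0$, $\nabla^{+}_{X}X_{+}=-J_{+}F(X)$, the preservation of $T^{(1,0)}_{J_{\pm}}M$ by $\nabla^{\pm}$, and the remaining algebraic conditions forces the eigenvalue pattern of $L$ into exactly the cases corresponding to the abelian algebra and to $\mathfrak{so}(2)\ltimes\mathbb{R}^{2}$ and $\mathfrak{so}(1,1)\ltimes\mathbb{R}^{2}$, while the remaining unimodular algebras are excluded; the converse is an explicit construction on each surviving algebra. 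In the non-unimodular case the non-degeneracy of $g\vert_{\mathfrak{g}_{0}\times\mathfrak{g}_{0}}$ on the unimodular kernel $\mathfrak{g}_{0}$ lets one split $\mathfrak{g}=\mathfrak{g}_{0}\oplus\mathfrak{g}_{0}^{\perp}$ orthogonally and reduce the same system to a short list, from which only $\mathbb{R}\oplus\mathfrak{sol}_{2}$ survives.

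Part iv) is the most involved, and I expect it to be the main obstacle. Here $n=4$ is even, one uses the $D^{\pm}$-characterization of Theorem \ref{integr-even-introd} under the non-null hypothesis $c_{+}^{2}\neq 1$, the adaptedness of Definition \ref{adapted}, and $c_{+}\notin\{0\}$. The unimodular kernel is now a non-abelian $3$-dimensional ideal, so the space of candidate algebras is considerably larger. The strategy is to choose a basis $\{u,e_{1},e_{2},e_{3}\}$ compatible with the splitting induced by $X_{\pm}$ and $J_{\pm}$, to translate the commutation of $X_{\pm}$, the covariant derivatives $\nabla_{X}X_{\pm}$, the relation $i_{X_{+}}F=0$, and the eigenbundle preservation into equations on the structure constants, and then to solve this system. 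The bookkeeping of these many constraints, and the verification that they are exactly exhausted by the bracket (\ref{form-1-rez}) together with an explicit construction showing that (\ref{form-1-rez}) does admit such a structure, is the heaviest part of the argument.

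Finally, in every case the vanishing $H=0$, $F=0$ is not assumed but derived: the identities $i_{X_{+}}F=dc_{+}=0$ and the parallelism conditions force $F$ and then $H$ to vanish in these low dimensions, so the resulting $E$ is automatically untwisted, as claimed. I would record this observation at the end of each case to complete the proof.
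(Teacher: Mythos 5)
Your overall framework is the same as the paper's: treat all components and the pair $(H,F)$ as constant tensors on $\mathfrak{g}$, feed them into the low-dimensional criteria of Section~\ref{3-4} (Corollaries~\ref{ex-2-dim}, \ref{ex-3-dim}, \ref{ex-4-dim}), and classify case by case. The genuine gap is that the ``only if'' directions --- the entire content of the theorem --- are asserted rather than proved: every elimination step is phrased as ``one shows'', ``forces'', or ``survives'', and you explicitly defer the dimension-four bookkeeping, which is exactly where the proof lives. Concretely, what is missing are the mechanisms the paper uses to solve the systems. In dimension $2$ the clean argument is that $X_{+}$ is a non-zero left-invariant Killing field (non-zero because $c_{+}^{2}\neq 1$), and a non-abelian $2$-dimensional metric Lie algebra admits no such field; you gesture at a computation for $\mathfrak{sol}_{2}$ but give no argument. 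In dimension $3$ the elimination hinges on first classifying which unimodular (resp.\ non-unimodular) metric Lie algebras carry a left-invariant unit Killing field $X_{-}$, and then on relations of the type $\epsilon\, H(X_{-},X) = -J_{+}F(X)$ combined with the algebraic facts that $J_{+}$ is $g$-skew, $J_{+}X_{+}=0$ and $\mathrm{rank}\,\mathrm{Ker}\,J_{+}=1$, which force $H=0$, $F=0$ and kill the remaining structure constants; none of these steps appear in your outline. In dimension $4$ the substance is the identity $F = \frac{1}{2c_{+}}\bigl(dX_{+}^{\flat} + i_{X_{+}}H\bigr)$, the reduction of the polynomial system to finitely many solution classes, and the elimination of most of them using $J_{+}X_{+}=-c_{+}X_{-}$, $J_{+}X_{-}=c_{+}X_{+}$ together with $g$-skewness of $J_{+}$; this is precisely the part you leave out. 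Finally, your closing claim that $i_{X_{+}}F=dc_{+}=0$ ``forces $F$ and then $H$ to vanish in these low dimensions'' is accurate only in dimension $2$: in dimension $3$ there is no $c_{+}$, the available identity is $i_{X_{-}}F=0$, which by itself does not annihilate a $2$-form on a $3$-dimensional algebra, and the vanishing of $H$ and $F$ there (and in dimension $4$) is an outcome of the full elimination rather than of those identities.
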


The  description of the left-invariant $B_{n}$-generalized pseudo-K\"{a}hler structures  which arise 
in Theorem \ref{summary-lie}  can be found in Section \ref{sect-dim-2}, Proposition \ref{unimod-3}, Proposition \ref{non-mod-3} and 
Proposition \ref{4c-n0}.  Cases ii) and iii) of Theorem \ref{summary-lie} include a  description of  
all left invariant $B_{3}$-generalized K\"{a}hler structures on Courant algebroids of type $B_{3}$ over $3$-dimensional Lie groups
(see Corollary \ref{classification}).  
The various additional assumptions from Theorem \ref{summary-lie}
were intended to simplify the computations. We finally remark that all examples provided by Theorem \ref{summary-lie} live on untwisted
Courant algebroids.  It would be interesting to find twisted examples.\\

{\bf Acknowledgements.}   Research of V.C.\ was supported by the German Science Foundation (DFG) under Germany's Excellence Strategy  --  EXC 2121 ``Quantum Universe'' -- 390833306. L.D. was partially supported by the UEFISCDI research grant PN-III-P4-ID-PCE-2020-0794, project title  ``Spectral Methods in Hyperbolic Geometry''.

\section{Preliminary material}\label{preliminary}

For completeness of our exposition we recall the definition of a Courant algebroid (the axioms C2 and C3 are redundant and are only included for convenience).

\begin{defn} A  {\cmssl Courant algebroid}  on a manifold $M$ is a vector bundle $E\rightarrow M$ equipped with a non-degenerate symmetric
bilinear form $\langle \cdot , \cdot \rangle\in \Gamma ( \mathrm{Sym}^{2} (E^{*}))$ (called the {\cmssl scalar product}), a bilinear operation 
$[\cdot , \cdot ]$ (called the {\cmssl Dorfman bracket}) on the space of smooth sections $\Gamma (E)$ of $E$ and a homomorphism
of vector bundles  $\pi : E \rightarrow TM$ (called the {\cmssl anchor}) such that the following conditions are satisfied:
for all $u, v, w\in \Gamma (E)$ and $f\in C^{\infty}(M)$,\

C1) $[ u, [v, w]] = [[ u, v], w] + [v, [u, w]]$;\

C2) $\pi ( [u, v]) = [ \pi (u), \pi (v) ]$;\

C3) $[u, fv] = \pi (u)(f) v + f [u, v]$;\

C4) $\pi (u) \langle v, w\rangle = \langle [u, v], w\rangle + \langle v, [u , w]\rangle$;\

C5) $2\langle [u, u], v\rangle = \pi (v) \langle u, u\rangle .$
\end{defn}

According to \cite{rubio}, a  {\cmssl Courant algebroid of type $B_{n}$}  over a manifold $M$ is the vector bundle
$E = TM \oplus T^{*}M\oplus \mathbb{R}$, with scalar product 
\begin{equation}\label{scalar}
\langle X + \xi + \lambda , Y +\eta + \mu \rangle =\frac{1}{2} \left( \eta (X) +\xi (Y)\right) + \lambda \mu ,
\end{equation}
for any $X, Y\in TM$, $\xi , \eta \in T^{*}M$ and $\lambda , \mu \in \mathbb{R}$, anchor  given by the natural projection 
$\pi : E \rightarrow TM$  and 
Dorfman bracket given by
\begin{align}
\nonumber [ X + \xi + \lambda , Y + \eta + \mu ] & = {\mathcal L}_{X} Y + {\mathcal L}_{X}\eta - i_{Y} d\xi +
2\mu d\lambda + i_{X} i_{Y} H\\
\label{dorfmann}  & - 2\left( \mu i_{X} F -\lambda i_{Y} F\right) + X(\mu ) - Y(\lambda  ) + F(X, Y)
\end{align}
for any $X, Y\in {\mathfrak X}(M)$, $\xi , \eta \in \Omega^{1}(M)$ and $\lambda , \mu \in C^{\infty}(M)$,
where $F\in \Omega^{2}(M)$ is a closed $2$-form and $H\in \Omega^{3}(M)$  such that $ dH = - F\wedge F$
(we follow the convention of \cite{rubio}, which differs from the convention from our previous works
\cite{moscow} and \cite{t-duality-paper}
by a minus sign in  the $3$-form $H$).  
We will refer to (\ref{dorfmann}) as  the {\cmssl  Dorfman bracket twisted by $(H, F)$}.  
The Courant algebroid of type $B_{n}$ with Dorfman bracket twisted by $(H, F)$ will be denoted by
$E_{H, F}$ and will be called the {\cmssl Courant algebroid twisted by $(H, F).$} 
An {\cmssl odd exact Courant
algebroid} is a Courant algebroid isomorphic to a Courant algebroid of type $B_{n}.$

\subsection{Generalized metrics}

A {\cmssl generalized  metric} on an odd exact  Courant algebroid $E$,  with scalar product
$\langle \cdot , \cdot \rangle$ and anchor $\pi : E \rightarrow TM$,  
is a subbundle $E_{-}\subset E$ of rank $n :=\mathrm{dim}\, M$ 
on which 
$\langle \cdot , \cdot \rangle$ is non-degenerate  (see  e.g.\ \cite{garcia}).
According to \cite{baraglia}, when  the restriction $\pi\vert_{E_{-}} : E_{-} \rightarrow TM$ 
is an isomorphism, the  generalized metric is called {\cmssl admissible}. 
Any rank $n$  subbundle $E_{-}\subset E$ with the property that the restriction  
$\langle \cdot , \cdot\rangle\vert_{E_{-}}$ is negative definite 
is an admissible  generalized metric.  Such generalized metrics will be called
{\cmssl Riemannian}. 
In this paper we only consider admissible  metrics. For this reason, the word
`admissible' will be omitted.

A generalized metric 
$E_{-}$ on an odd exact Courant algebroid  $E$  defines an 
orthogonal isomorphism 
$\mathcal{G}^{\mathrm{end}}\in \mathrm{End}\, (E)$ by $\mathcal{G}^{\mathrm{end}}\vert_{E_{\pm}} = \pm \mathrm{Id}$,
where $E_{+} : = E_{-}^{\perp}$  (unless otherwise stated, $\perp$ will always denote the orthogonal complement with respect
to the scalar product of the Courant algebroid).
The bilinear form 
\begin{equation}\label{g-end}
\mathcal{G}(u, v) := \langle \mathcal{G}^{\mathrm{end}} u, v\rangle ,\ u, v\in E
\end{equation}
 is given by 
\begin{equation}
\mathcal{G} = \langle \cdot , \cdot\rangle \vert_{E_{+}} - \langle \cdot , \cdot\rangle \vert_{E_{-}}.
\end{equation} 
It  is symmetric and, when $E_{-}$ is a generalized Riemannian metric, 
it is positive definite. 
When  referring to a generalized metric we will freely name either the bundle $E_{-}$ or the bilinear form
$\mathcal{G}$.\

Let $E_{-}$ be a generalized metric on a Courant algebroid $E$ of type $B_{n}$ 
and $E_{+} := E_{-}^{\perp}$. Since   
the restriction of the anchor $\pi\vert_{E_{-}} : E_{-} \rightarrow TM$ is a bijection there is an induced pseudo-Riemannian metric on $M$, defined by
\begin{equation}\label{def-g}
g(X, Y) := - \langle s(X), s(Y)\rangle ,  \forall X, Y\in TM 
\end{equation}
where $s: TM \rightarrow E_{-}$ is the inverse of $\pi\vert_{E_{-}}$.  
When  $E_{-}$ is a generalized Riemannian metric, $g$ is positive definite. 
As for generalized metrics on heterotic Courant algebroids (see \cite{baraglia}), the vector bundles $E_{\pm }$ 
are given by
\begin{align}
\nonumber& E_{-} = \{ X - i_{X} (g-b) - A(X) A + A(X) \mid X\in TM\} \\
\label{form}& E_{+} = \{ X + i_{X}(g + b) + ( A(X) -2\mu)A + \mu \mid    X\in TM,\ \mu \in \mathbb{R}\} ,
\end{align}
where $A$ is a one-form and $b$ is a two-form.

The next lemma was proved in \cite{baraglia} for heterotic Courant algebroids.
The same argument applies  to Courant algebroids of type $B_{n}.$ 
For completeness of our exposition we include its proof.

\begin{lem}\label{metric-standard} 
Let $E_{-}$ be a generalized metric on a Courant algebroid  $E$ of type $B_{n}.$ 
There is an isomorphism between $E$ and another Courant algebroid  $\tilde{E}$  of type $B_{n}$  
which
maps $E_{-}$ to a generalized  metric on $\tilde{E}$ of the form (\ref{form}) with $ A=0$ and $b =0.$
\end{lem}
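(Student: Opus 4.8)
The plan is to exhibit the isomorphism explicitly as a composition of a $B$-field transformation (to kill $b$) and an $A$-field transformation (to kill the one-form $A$), exploiting the fact that these are precisely the gauge symmetries of Courant algebroids of type $B_n$ that fix the underlying tangent bundle and the scalar product. Concretely, I would first recall from \cite{rubio} (or verify directly from (\ref{scalar}) and (\ref{dorfmann})) which bundle automorphisms of $E = TM\oplus T^*M\oplus\mathbb{R}$ preserve the scalar product and intertwine the Dorfman brackets of two type-$B_n$ structures. These come in two families: the $B$-field transform, sending $X+\xi+\lambda \mapsto X + (\xi + i_X b) + \lambda$ for a two-form $b$, and the analogue that mixes the $\mathbb{R}$-summand via a one-form $A$, sending $X+\xi+\lambda \mapsto X + (\xi - A(X)A + \tfrac{1}{2}|A|^2\,X\text{-terms}) + (\lambda + A(X))$ up to the precise normalization forced by (\ref{scalar}). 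The key point is that each such transform carries a Courant algebroid twisted by $(H,F)$ to one twisted by a new pair $(\tilde H,\tilde F)$, so the target is again of type $B_n$.

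\medskip

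\textbf{The key steps, in order.} First I would write down the automorphism $e^A$ (the one-form gauge transformation) and compute its effect on a generalized metric of the form (\ref{form}): a direct substitution should show that $e^A$ sends the pair $(A,b)$ to a pair $(0, b')$ for a suitably modified two-form $b'$, i.e.\ it eliminates the one-form component while possibly altering $b$. Second, I would apply the $B$-field transform $e^{b'}$, which acts on (\ref{form}) by the rule $X \mapsto i_X b'$ added to the co-vector slot, thereby sending $(0,b')$ to $(0,0)$. Composing, $\tilde E := (e^{b'}\circ e^A)(E)$ is the desired Courant algebroid, and the image of $E_-$ is exactly the bundle (\ref{form}) with $A=0$, $b=0$, namely $\{X - i_X g \mid X\in TM\}$. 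Third, I would check that both transforms are orthogonal (they preserve $\langle\cdot,\cdot\rangle$) and are Courant algebroid isomorphisms onto type-$B_n$ targets, so that $\tilde E$ is genuinely of type $B_n$ and the metric $g$ induced via (\ref{def-g}) is unchanged.

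\medskip

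\textbf{The main obstacle} is bookkeeping the interaction between the $A$-transform and the $\mathbb{R}$-summand: unlike the pure $D_n$ (exact) case treated in \cite{baraglia}, the presence of the extra factor $\mathbb{R}$ and the cross-terms $-A(X)A + A(X)$ in (\ref{form}) means the one-form gauge transformation is not simply a skew map on $T^*M$ but genuinely couples $TM$, $T^*M$ and $\mathbb{R}$; getting its precise form so that it is orthogonal for (\ref{scalar}) and maps type-$B_n$ brackets to type-$B_n$ brackets requires care with the normalization. Once the correct $e^A$ is pinned down, the remaining computation is the routine $D_n$-style $B$-field argument. Since the lemma is stated as a transcription of the heterotic result of \cite{baraglia} with the same proof, I expect the verification to be mechanical after the two gauge transforms are correctly written; the only real content is checking that these transforms exist in the type-$B_n$ category and act on $(A,b)$ as claimed.
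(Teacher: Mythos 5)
Your proposal is correct and is essentially the paper's argument: the paper's proof simply writes the composition of your two gauge transformations as a single explicit map $I(X)=X-A(X)-i_{X}b-A(X)A$, $I(\eta)=\eta$, $I(\mu)=2\mu A+\mu$, which is an orthogonal Courant algebroid isomorphism onto $E_{\tilde H,\tilde F}$ with $\tilde F=F+dA$ and $\tilde H=H-db-(2F+dA)\wedge A$, carrying $E_{-}$ to $\{X-i_{X}g\}$. (The only caveat is that the quadratic correction in the $A$-transform is the term $-A(X)A$, with no metric-dependent ``$|A|^{2}$'' terms, and since the $A$- and $b$-transforms commute here the two-form is in fact unchanged, $b'=b$.)
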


\begin{proof} Let   $\tilde{E}:= E_{\tilde{H}, \tilde{F}}$ be the Courant algebroid   twisted by $(\tilde{H}, \tilde{F})$, where 
$$
\tilde{H}:= H - db - (2F + dA)\wedge A,\ \tilde{F}:= F + dA
$$
(note that $d\tilde{H} = - \tilde{F}\wedge \tilde{F}$ since 
$dH = - F\wedge F$).
The map $I : E\rightarrow \tilde{E}$ defined by
$$
I(X) = X - A(X) - i_{X}b - A(X) A,\ I(\eta ) = \eta , I(\mu ) = 2 \mu  A + \mu ,
$$
for $X\in {\mathfrak X}(M)$, $\eta \in \Omega^{1}(M)$ and $\mu \in C^{\infty}(M)$ is an isomorphism of Courant algebroids
which  maps $E_{-}$ to the  generalized   metric 
\[ 
\tilde{E}_{-}= \{ X - i_{X} g,\ X\in TM\} . \qedhere 
\]
\end{proof}

Owing to the above lemma  we will  often  assume 
that a generalized metric on a Courant algebroid
of type $B_{n}$ 
 is given by a subbundle $E_{-}$ like in (\ref{form}) 
with
$g$ a pseudo-Riemannian metric,  $b=0$ and $A=0.$ 
Such a generalized metric is  in the {\cmssl  standard form}.
For more details on generalized metrics on arbitrary Courant algebroids, see  e.g.\ \cite{garcia}.

\subsection{$B_{n}$-generalized complex structures}

Following \cite{rubio}, we  recall that a {\cmssl  $B_{n}$-generalized almost complex structure}  on an odd exact  Courant algebroid $(E, 
\langle \cdot , \cdot \rangle , [\cdot , \cdot ], \pi )$ over a manifold $M$ 
of dimension $n$ is a 
complex isotropic  rank $n$ subbundle $L \subset  E_{\mathbb{C}}$ such that $L \cap \bar{L} =0$.
The orthogonal complement  $U_{\mathbb{C}}$ of  $L \oplus \bar{L}\subset E_{\mathbb{C}}$ has   rank  one. It is generated by a section $u_{0} \in \Gamma (E)$ (unique up to multiplication by $\pm 1$)
which satisfies $\langle u_{0}, u_{0} \rangle = (-1)^{n}$.
Let $\mathcal F\in\Gamma (  \mathrm{End}\,  E)$ be the  endomorphism 
with $i$-eigenbundle $L$,  $(-i)$-eigenbundle $\bar{L}$ and $\mathcal F u_{0} =0.$   
It is $\langle \cdot , \cdot \rangle$-skew-symmetric and  satisfies 
\begin{equation}\label{endo-f}
\mathcal F^{2} = -\mathrm{Id}
+ (-1)^{n}  \langle \cdot , u_{0} \rangle u_{0}.
\end{equation}
We  will often call  $\mathcal F$  
(rather than $L$) 
a $B_{n}$-generalized almost complex structure.
We say that $\mathcal F$  is a {\cmssl $B_{n}$-generalized complex structure}
(or is {\cmssl integrable}) 
if 
$L$ is  integrable. (A subbundle of $E$ or its complexification $E_{\mathbb{C}}$ is called {\cmssl integrable} if its space of sections is closed
under the  Dorfman bracket).   Then  $L$ becomes a Lie algebroid with Lie bracket induced by the  Dorfman bracket of 
$E$.\

\section{Definition of $B_{n}$-generalized pseudo-K\"{a}hler structures  }\label{definitions}

Let $(E, \langle \cdot , \cdot \rangle , [\cdot , \cdot ] , \pi )$ be an odd exact Courant algebroid  over a manifold $M$  of dimension $n$.

\begin{defn} A {\cmssl $B_{n}$-generalized almost pseudo-Hermitian   structure}  $(\mathcal{G}, \mathcal F)$ on $E$ is a 
generalized metric $\mathcal{G}$  together with a $B_{n}$-generalized almost complex structure
 $\mathcal F$ such that $\mathcal{G}^{\mathrm{end}} \mathcal F = \mathcal F \mathcal{G}^{\mathrm{end}}.$
\end{defn}

The commutativity condition $\mathcal{G}^{\mathrm{end}} \mathcal F = \mathcal F \mathcal{G}^{\mathrm{end}}$ implies  that
$\mathcal F$ preserves the bundles $E_{\pm}$ determined by the generalized metric. 
The next lemma summarizes some simple properties of $B_{n}$-generalized almost pseudo-Hermitian   structures.

\begin{lem} \label{neu} Let $(\mathcal{G}, \mathcal F)$ be a $B_{n}$-generalized almost pseudo-Hermitian   structure on $E$,
$U:= \mathrm{Ker}\, \mathcal F$ 
 and 
$u_{0} \in\Gamma ( U)$  such that  $\langle u_{0}, u_{0} \rangle = (-1)^{n}.$ 
The following statements hold:

i)  $u_{0}$ is a section of $E_{+}$ if $n$ is even and a section of $E_{-}$ is $n$ is odd, i.e.
\begin{equation}\label{def-gend}
\mathcal{G}^{\mathrm{end}} (u_{0} ) =  (-1)^{n} u_{0}.
\end{equation}

ii)  For any $u, v\in E$, 
\begin{align}
\nonumber& \mathcal{G}(\mathcal F u, \mathcal F v) = \mathcal{G}(u, v) -\langle u, u_{0} \rangle \langle v, u_{0}\rangle ;\\
\label{prop-G} & \mathcal{G}(\mathcal F u, v) = - \mathcal{G}(u, \mathcal F v) .
 \end{align}
  \end{lem}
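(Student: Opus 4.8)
The plan is to derive both parts by pure fiberwise linear algebra, using only four facts available before the statement: the commutativity $\mathcal{G}^{\mathrm{end}}\mathcal F=\mathcal F\mathcal{G}^{\mathrm{end}}$, the $\langle\cdot,\cdot\rangle$-skew-symmetry of $\mathcal F$, the squaring identity $\mathcal F^{2}=-\mathrm{Id}+(-1)^{n}\langle\cdot,u_{0}\rangle u_{0}$ of \eqref{endo-f}, and the fact (recorded after \eqref{g-end}) that $\mathcal{G}^{\mathrm{end}}$ is an orthogonal involution, i.e.\ self-adjoint with $(\mathcal{G}^{\mathrm{end}})^{2}=\mathrm{Id}$.

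For part i), first I would note that commutativity forces $\mathcal{G}^{\mathrm{end}}$ to preserve $U=\mathrm{Ker}\,\mathcal F$: if $\mathcal F u=0$ then $\mathcal F(\mathcal{G}^{\mathrm{end}}u)=\mathcal{G}^{\mathrm{end}}(\mathcal F u)=0$. Since $U$ has rank one and $(\mathcal{G}^{\mathrm{end}})^{2}=\mathrm{Id}$, this yields $\mathcal{G}^{\mathrm{end}}u_{0}=\lambda u_{0}$ with $\lambda\in\{+1,-1\}$, i.e.\ $u_{0}\in E_{+}$ or $u_{0}\in E_{-}$. To fix the sign I would argue by parity. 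Commutativity also makes $\mathcal F$ preserve $E_{\pm}$; on the summand that does not contain $u_{0}$ the term $\langle\cdot,u_{0}\rangle u_{0}$ in \eqref{endo-f} drops out (because $E_{+}\perp E_{-}$), so $\mathcal F$ restricts there to an endomorphism squaring to $-\mathrm{Id}$, i.e.\ a genuine complex structure. If $\lambda=+1$ this complex structure lives on the rank-$n$ real bundle $E_{-}$, and taking fiberwise determinants gives $\det(\mathcal F|_{E_{-}})^{2}=(-1)^{n}\ge 0$, forcing $n$ even; if $\lambda=-1$ it lives on the rank-$(n+1)$ bundle $E_{+}$, forcing $n+1$ even, hence $n$ odd. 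Thus $\lambda=(-1)^{n}$, which is exactly \eqref{def-gend}.

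For part ii), the second identity is immediate: $\mathcal{G}(\mathcal F u,v)=\langle\mathcal{G}^{\mathrm{end}}\mathcal F u,v\rangle=\langle\mathcal F\mathcal{G}^{\mathrm{end}}u,v\rangle=-\langle\mathcal{G}^{\mathrm{end}}u,\mathcal F v\rangle=-\mathcal{G}(u,\mathcal F v)$, using commutativity and then skew-symmetry. For the first identity I would compute $\mathcal{G}(\mathcal F u,\mathcal F v)=\langle\mathcal F\mathcal{G}^{\mathrm{end}}u,\mathcal F v\rangle=-\langle\mathcal{G}^{\mathrm{end}}u,\mathcal F^{2}v\rangle$ and then substitute \eqref{endo-f}. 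The first resulting term is $\mathcal{G}(u,v)$; in the second, self-adjointness of $\mathcal{G}^{\mathrm{end}}$ together with part i) gives $\langle\mathcal{G}^{\mathrm{end}}u,u_{0}\rangle=\langle u,\mathcal{G}^{\mathrm{end}}u_{0}\rangle=(-1)^{n}\langle u,u_{0}\rangle$, so the two factors of $(-1)^{n}$ cancel and one is left with $-\langle u,u_{0}\rangle\langle v,u_{0}\rangle$, as desired.

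The main obstacle is the sign determination in part i). Everything else is mechanical substitution, but here the sign cannot be read off from positivity of $\mathcal{G}$ as in the Riemannian case, since in the pseudo-Hermitian setting $\mathcal{G}$ need not be definite; the parity/determinant argument above is precisely what replaces that positivity input. Along the way I would verify carefully that $\mathcal F$ genuinely restricts to each of $E_{\pm}$ (so that the fiberwise determinant is legitimate) and that the self-adjointness and involutivity of $\mathcal{G}^{\mathrm{end}}$ invoked in part ii) are exactly the orthogonal-involution properties already established for $\mathcal{G}^{\mathrm{end}}$.
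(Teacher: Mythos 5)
Your proof is correct and follows essentially the same route as the paper: commutativity plus $(\mathcal{G}^{\mathrm{end}})^{2}=\mathrm{Id}$ pin $\mathcal{G}^{\mathrm{end}}u_{0}=\pm u_{0}$, the sign is then fixed by observing that $\mathcal F$ restricts to a complex structure on the summand orthogonal to $u_{0}$ (the paper uses $E_{-}$ resp.\ $E_{-}\cap u_{0}^{\perp}$ where you use $E_{-}$ resp.\ $E_{+}$ -- an inessential variant of the same parity argument, which you additionally justify via determinants), and part ii) is the same mechanical computation from skew-symmetry of $\mathcal F$, the identity (\ref{endo-f}) and self-adjointness of $\mathcal{G}^{\mathrm{end}}$ that the paper leaves to the reader.
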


\begin{proof} i) Since $U$ has  rank one and $\mathcal{G}^{\mathrm{end}}\mathcal F = \mathcal F \mathcal{G}^{\mathrm{end}}$, we obtain that 
$\mathcal{G}^{\mathrm{end}} (u_{0}) = \lambda u_{0}$ for $\lambda \in  C^{\infty}(M)$.   From $(\mathcal{G}^{\mathrm{end}})^{2} =
\mathrm{Id}$ we obtain that $\lambda = \pm 1$,
  i.e.\ $u_{0} \in\Gamma ( E_{+})$ or $u_{0} \in \Gamma (E_{-}).$
 On the other hand, $\mathcal F$ is a complex structure on $u_{0}^{\perp}$ and preserves $E_{\pm}$.    
 In particular, $\mathcal F$ is a complex structure on  $u_{0}^{\perp}\cap E_{\pm}$. 
If  $u_{0}\in E_{+}$, then  $E_{-} = E_{-} \cap  u_{0}^{\perp}$ 
(because $E_{\pm}$ are orthogonal)
and  $\mathcal F\vert_{E_{-}}$ is a complex structure  on $E_{-}$. This  implies that $n = \mathrm{rank}\, E_{-} $ is even.   If  $u_{0}\in E_{-}$, then  
$$
E_{-} =  ( E_{-} \cap u_{0}^{\perp} ) \oplus \mathrm{span} \{ u_{0} \} 
$$
and $\mathcal F$ restricts to a complex structure on  $E_{-} \cap u_{0}^{\perp}$.  We obtain that $n$ is odd.\

ii) Relations  (\ref{prop-G}) 
are consequences of the properties of $\mathcal{G}^{\mathrm{end}}$ and $\mathcal F .$ 
\end{proof}

In the setting of Lemma \ref{neu},  let $\mathcal F_{2} := \mathcal{G}^{\mathrm{end}} \mathcal F .$ Then  $\mathcal F_{2}$ 
is skew-symmetric with respect to $\langle \cdot , \cdot \rangle $,  satisfies relation   (\ref{endo-f})
and $\mathrm{Ker}\, \mathcal F_{2}=  U$.   Its $i$-eigenbundle is given by
$$
L_{2} := L_{1} \cap (E_{+})_{\mathbb{C}} \oplus \bar{L}_{1} \cap (E_{-})_{\mathbb{C}} 
$$ 
where $L_{1}$ is the $i$-eigenbundle of $\mathcal F .$ In particular, $\mathrm{rank}\, L_{2} =\mathrm{rank}\, L_{1} =n.$ 
We deduce that  $\mathcal F_{2}$ is a $B_{n}$-generalized almost
complex structure. We obtain the following  alternative definition of $B_{n}$-generalized almost pseudo-Hermitian structures
on  odd exact Courant algebroids.

\begin{prop} A  $B_{n}$-generalized almost  pseudo-Hermitian  structure on $E$ is equivalent to a pair $(\mathcal F_{1}, \mathcal F_{2})$ of commuting $B_{n}$-generalized
almost complex structures such that 
the bilinear form 
 $(u, v)\mapsto \langle \mathcal F_{1} \mathcal F_{2}  (u ),  v\rangle $
on  $U^{\perp}$ is nondegenerate 
and  
\begin{equation}\label{m1}
\pi :   \{ u\in U^{\perp}\mid  \mathcal F_{1} (u) =-  \mathcal F_{2} (u) \} \rightarrow TM
\end{equation} 
is an isomorphism when $n$ is even while
\begin{equation}\label{m2}
\pi : \{  u\in E  \mid  \mathcal F_{1} (u) =-  \mathcal F_{2} (u) \} \rightarrow TM
\end{equation} 
is an isomorphism when $n$ is odd.
Above 
 $U= \mathrm{Ker}\,  \mathcal F_{1} = \mathrm{Ker}\, \mathcal F_{2}$.
\end{prop}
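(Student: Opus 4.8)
The plan is to establish the equivalence in both directions, translating between the data $(\mathcal{G}, \mathcal{F})$ of a $B_n$-generalized almost pseudo-Hermitian structure and the pair $(\mathcal{F}_1, \mathcal{F}_2)$ of commuting $B_n$-generalized almost complex structures subject to the stated nondegeneracy and isomorphism conditions. First I would set $\mathcal{F}_1 := \mathcal{F}$ and $\mathcal{F}_2 := \mathcal{G}^{\mathrm{end}} \mathcal{F}$, exactly as in the discussion preceding the proposition. The work already done in the excerpt shows that $\mathcal{F}_2$ is a $B_n$-generalized almost complex structure with $\mathrm{Ker}\,\mathcal{F}_2 = U = \mathrm{Ker}\,\mathcal{F}_1$, so the common kernel is automatic. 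Commutativity of $\mathcal{F}_1$ and $\mathcal{F}_2$ follows from $\mathcal{F}_1 \mathcal{F}_2 = \mathcal{F} \mathcal{G}^{\mathrm{end}} \mathcal{F} = \mathcal{G}^{\mathrm{end}} \mathcal{F}^2 = \mathcal{F}_2 \mathcal{F}_1$, using that $\mathcal{G}^{\mathrm{end}}$ commutes with $\mathcal{F}$ and that $\mathcal{F}^2$ equals $-\mathrm{Id}$ on $U^\perp = (u_0^\perp)$, hence commutes with everything preserving $U$. So the forward direction is largely bookkeeping; the only substantive points are verifying the nondegeneracy of $(u,v) \mapsto \langle \mathcal{F}_1\mathcal{F}_2 u, v\rangle$ on $U^\perp$ and the isomorphism property of the anchor on the appropriate $(-1)$-eigenspace of $\mathcal{F}_1\mathcal{F}_2^{-1}$ (equivalently, the subspace where $\mathcal{F}_1 = -\mathcal{F}_2$).

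The key geometric observation is that $\mathcal{F}_1 \mathcal{F}_2 = \mathcal{F} \mathcal{G}^{\mathrm{end}} \mathcal{F}$ acts on $U^\perp$ as $\mathcal{G}^{\mathrm{end}} \mathcal{F}^2 = -\mathcal{G}^{\mathrm{end}}$ (since $\mathcal{F}^2 = -\mathrm{Id}$ there). Therefore the bilinear form $\langle \mathcal{F}_1\mathcal{F}_2 u, v\rangle = -\langle \mathcal{G}^{\mathrm{end}} u, v\rangle = -\mathcal{G}(u,v)$ on $U^\perp$, which is (minus) the generalized metric restricted to $U^\perp$ and hence nondegenerate; this gives the first condition directly. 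For the anchor condition, I would identify the subspace $\{u : \mathcal{F}_1 u = -\mathcal{F}_2 u\}$. On $U^\perp$ this means $\mathcal{F} u = -\mathcal{G}^{\mathrm{end}}\mathcal{F} u$, i.e.\ $\mathcal{F} u \in E_-$; since $\mathcal{F}$ preserves $E_\pm$, this is precisely the condition $u \in E_-$. Using Lemma~\ref{neu} i), when $n$ is odd $u_0 \in E_-$ so the full $(-1)$-eigenspace of $\mathcal{G}^{\mathrm{end}}$ is $E_-$ (including the $u_0$ direction, on which both $\mathcal{F}_1$ and $\mathcal{F}_2$ vanish and so trivially agree up to sign), matching the source space $\{u \in E : \mathcal{F}_1 u = -\mathcal{F}_2 u\} = E_-$ in (\ref{m2}); when $n$ is even $u_0 \in E_+$ and the relevant eigenspace inside $U^\perp$ is again $E_-$, matching (\ref{m1}). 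In either case the space equals $E_-$, and the anchor isomorphism $\pi|_{E_-}: E_- \to TM$ is exactly the admissibility of the generalized metric, which holds by our standing assumption.

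For the converse, given a pair $(\mathcal{F}_1, \mathcal{F}_2)$ satisfying the stated conditions, I would set $\mathcal{F} := \mathcal{F}_1$ and reconstruct the generalized metric by declaring $E_-$ to be the subbundle appearing in the anchor condition (the source of (\ref{m1}) or (\ref{m2})), and $E_+ := E_-^\perp$. I would then define $\mathcal{G}^{\mathrm{end}}$ to be $\pm\mathrm{Id}$ on $E_\pm$ and check that $\mathcal{G}^{\mathrm{end}}\mathcal{F}_1 = \mathcal{F}_2$, which recovers commutativity $\mathcal{G}^{\mathrm{end}}\mathcal{F} = \mathcal{F}\mathcal{G}^{\mathrm{end}}$ since $\mathcal{F}_1$ and $\mathcal{F}_2$ commute. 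The main obstacle here is verifying that this $E_-$ has the correct rank $n$ and that the scalar product is nondegenerate on it, so that it genuinely defines a generalized metric. This is where the nondegeneracy hypothesis on $(u,v)\mapsto \langle\mathcal{F}_1\mathcal{F}_2 u, v\rangle$ does its real work: it forces $\mathcal{F}_1\mathcal{F}_2 = -\mathcal{G}^{\mathrm{end}}$ to be an involution on $U^\perp$ whose $\pm 1$-eigenspaces are nondegenerate and of complementary rank, while the anchor isomorphism pins down the rank of $E_-$ to be exactly $n$. I expect the most delicate step to be disentangling the role of the $u_0$-direction in the even versus odd cases, since the definitions (\ref{m1}) and (\ref{m2}) differ precisely by whether $U$ is included in the source space, and one must check consistency with Lemma~\ref{neu} i) to confirm that $\mathcal{G}^{\mathrm{end}}(u_0) = (-1)^n u_0$ emerges correctly from the reconstruction.
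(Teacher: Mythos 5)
Your proposal is correct and takes essentially the same route as the paper's (very terse) proof: the forward direction is the discussion preceding the proposition, and the converse recovers the generalized metric exactly as you describe, by taking $E_{-}$ to be the locus where $\mathcal F_{1}=-\mathcal F_{2}$ (equivalently setting $\mathcal{G}^{\mathrm{end}}:=-\mathcal F_{1}\mathcal F_{2}$ on $U^{\perp}$ and $(-1)^{n}\mathrm{Id}$ on $U$), with the two anchor conditions amounting precisely to admissibility of $\mathcal{G}$. The only small quibble is attribution: the involution property of $\mathcal F_{1}\mathcal F_{2}$ on $U^{\perp}$ and the nondegeneracy of its eigenspaces already follow from the commutativity, the common kernel and skew-symmetry (so the stated nondegeneracy hypothesis is not really where the work happens), but this does not affect the validity of your argument.
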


\begin{proof} Given $(\mathcal F_{1}, \mathcal F_{2})$ as in the statement of the proposition, 
we recover the  generalized metric  $\mathcal{G}$ from   $\mathcal{G}^{\mathrm{end}} :=  -\mathcal F_{1} \mathcal F_{2}$ on $U^{\perp}$ and
$\mathcal{G}^{\mathrm{end}} = (-1)^{n} \mathrm{Id}$ on $U$. The requirement that (\ref{m1}) and (\ref{m2}) are isomorphisms
is equivalent to the fact that $\mathcal{G}$ is admissible.
\end{proof}

\begin{rem}{\rm In the setting of the above proposition, if  $ - \langle  \mathcal F_{1} \mathcal F_{2} u, u\rangle >0$ for any $u\in U^{\perp}\setminus \{ 0\}$ then
$\mathcal{G}$ is a generalized Riemannian metric.}
\end{rem}

\begin{defn} A {\cmssl $B_{n}$-generalized  pseudo-K\"{a}hler 
structure}  is a $B_{n}$-generalized almost pseudo-Hermitian structure $(\mathcal{G}, \mathcal F)$ for which 
$\mathcal F$  and $\mathcal{G}^{\mathrm{end}}\mathcal F$  are $B_{n}$-generalized complex structures. 
\end{defn}

For  $u,v\in \Gamma (E)$  and $A\in\Gamma (\mathrm{End} \, E)$ we define ${\mathbf L}_{u} v := [u,v]$ and 
${\mathbf L}_{u}A\in\Gamma
( \mathrm{End}\, E)$ by
$$
({\mathbf L}_{u} A)(v) := [u, Av ] - A [u, v],\ \forall v\in \Gamma (E)
$$
and  refer to it as the {\cmssl Dorfman Lie derivative of $A$ in the direction of $u$}.
It is easy to check that ${\mathbf L}_{u}$ acts as a derivation on the 
$\mathbb{R}$-algebra $\Gamma (\mathrm{End} \, E)$.

\begin{lem}\label{dorf-u0}   Let $(\mathcal{G},   \mathcal F )$ be a $B_{n}$-generalized pseudo-K\"{a}hler structure and $u_{0} \in\Gamma ( \mathrm{Ker}\, \mathcal F )$ with 
$\langle u_{0}, u_{0}\rangle = (-1)^{n}.$ Then  ${\mathbf L}_{u_{0}}\mathcal F =0$
and ${\mathbf L}_{u_{0}} \mathcal{G}^{\mathrm{end}} =0.$
\end{lem}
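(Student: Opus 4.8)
The plan is to show that the infinitesimal automorphism ${\mathbf L}_{u_{0}}$ of the Courant algebroid (it is a derivation of the Dorfman bracket by C1 and is compatible with $\langle\cdot,\cdot\rangle$ by C4) preserves each eigenbundle of $\mathcal F$ and of $\mathcal{G}^{\mathrm{end}}$. Since ${\mathbf L}_{u_{0}}$ acts as a derivation on $\Gamma(\mathrm{End}\, E)$ and these endomorphisms act as constant scalars on their complexified eigenbundles, it suffices to prove these invariances. Writing $L_{1}$ for the $i$-eigenbundle of $\mathcal F$ and recalling $U_{\mathbb C}=\mathrm{span}\{u_{0}\}=(L_{1}\oplus\bar L_{1})^{\perp}$ together with $\mathcal{G}^{\mathrm{end}}u_{0}=(-1)^{n}u_{0}$ from Lemma~\ref{neu}, I will show ${\mathbf L}_{u_{0}}$ preserves $L_{1}$, $\bar L_{1}$, $U_{\mathbb C}$ (giving ${\mathbf L}_{u_{0}}\mathcal F=0$) and $E_{+}$, $E_{-}$ (giving ${\mathbf L}_{u_{0}}\mathcal{G}^{\mathrm{end}}=0$).

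First the easy pieces. From C5 and $\langle u_{0},u_{0}\rangle=(-1)^{n}$ constant, $\langle[u_{0},u_{0}],v\rangle=\tfrac12\pi(v)\langle u_{0},u_{0}\rangle=0$ for all $v$, so $[u_{0},u_{0}]=0$; with C3 this shows $U_{\mathbb C}$ is preserved, and the analogous C4 computation shows $U^{\perp}$ is preserved. The crux, which I expect to be the only genuine step, is the claim: if $\mathcal F$ is integrable then $[u_{0},\Gamma(L_{1})]\subset\Gamma(L_{1})$. To prove it, fix $v,w\in\Gamma(L_{1})$ and decompose $[u_{0},v]$ along $L_{1}\oplus\bar L_{1}\oplus U_{\mathbb C}$. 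The $U_{\mathbb C}$-component is governed by $\langle[u_{0},v],u_{0}\rangle$, which vanishes by C4 because $\langle v,u_{0}\rangle\equiv 0$ ($u_{0}\perp L_{1}$) and $[u_{0},u_{0}]=0$. The $\bar L_{1}$-component is detected by pairing with $\Gamma(L_{1})$, i.e.\ by $\langle[u_{0},v],w\rangle$. Here integrability enters: applying C4 to the triple $(v,u_{0},w)$ gives $\pi(v)\langle u_{0},w\rangle=\langle[v,u_{0}],w\rangle+\langle u_{0},[v,w]\rangle$, and both outer terms vanish — the first because $\langle u_{0},w\rangle\equiv 0$, the second because $[v,w]\in\Gamma(L_{1})$ by integrability while $u_{0}\perp L_{1}$ — so $\langle[v,u_{0}],w\rangle=0$. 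Finally, polarizing C5 and using $\langle v,u_{0}\rangle\equiv 0$ yields $[v,u_{0}]=-[u_{0},v]$, whence $\langle[u_{0},v],w\rangle=0$ for all $w\in\Gamma(L_{1})$. Thus both the $\bar L_{1}$- and $U_{\mathbb C}$-components vanish and $[u_{0},v]\in\Gamma(L_{1})$. Conjugating (recall $u_{0}$ is real) gives $[u_{0},\Gamma(\bar L_{1})]\subset\Gamma(\bar L_{1})$, and therefore ${\mathbf L}_{u_{0}}\mathcal F=0$.

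It remains to treat $\mathcal{G}^{\mathrm{end}}$, and this is where the second integrability assumption is used. Since $(\mathcal G,\mathcal F)$ is pseudo-K\"ahler, $\mathcal F_{2}=\mathcal{G}^{\mathrm{end}}\mathcal F$ is also an integrable $B_{n}$-generalized complex structure, with the same kernel and the same normalized generator $u_{0}$ (note $u_{0}\perp L_{2}$, as $L_{2}\subset L_{1}\oplus\bar L_{1}$). Hence the crux claim applied to $\mathcal F_{2}$ shows that ${\mathbf L}_{u_{0}}$ preserves $L_{2}$ and $\bar L_{2}$ as well. Intersecting with the invariant bundles $L_{1},\bar L_{1}$ and using $L_{2}=\big(L_{1}\cap(E_{+})_{\mathbb C}\big)\oplus\big(\bar L_{1}\cap(E_{-})_{\mathbb C}\big)$ together with its conjugate, ${\mathbf L}_{u_{0}}$ preserves each of $L_{1}\cap(E_{\pm})_{\mathbb C}$ and $\bar L_{1}\cap(E_{\pm})_{\mathbb C}$; adjoining the invariant line $U_{\mathbb C}$ to the appropriate side according to the parity of $n$ (cf.\ Lemma~\ref{neu}\,i)) reconstitutes $(E_{+})_{\mathbb C}$ and $(E_{-})_{\mathbb C}$. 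Thus ${\mathbf L}_{u_{0}}$ preserves $E_{+}$ and $E_{-}$, which is precisely ${\mathbf L}_{u_{0}}\mathcal{G}^{\mathrm{end}}=0$. Alternatively, once ${\mathbf L}_{u_{0}}\mathcal F={\mathbf L}_{u_{0}}\mathcal F_{2}=0$, the derivation property gives ${\mathbf L}_{u_{0}}(\mathcal F_{2}\mathcal F)=0$, and one concludes from $\mathcal{G}^{\mathrm{end}}=-\mathcal F_{2}\mathcal F$ on the ${\mathbf L}_{u_{0}}$-invariant subbundle $U^{\perp}$ and $\mathcal{G}^{\mathrm{end}}u_{0}=(-1)^{n}u_{0}$ with $[u_{0},u_{0}]=0$. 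Everything outside the crux claim is routine bookkeeping with the Courant axioms.
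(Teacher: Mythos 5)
Your proposal is correct, but it follows a more self-contained route than the paper for the key step. The paper's own proof is very short: it simply cites Lemma~4.13 of \cite{rubio} for the statement that ${\mathbf L}_{u_{0}}\mathcal F=0$ holds for any integrable $B_{n}$-generalized complex structure, applies that same citation to $\mathcal F_{2}=\mathcal{G}^{\mathrm{end}}\mathcal F$, and then concludes ${\mathbf L}_{u_{0}}\mathcal{G}^{\mathrm{end}}=0$ exactly by your ``alternative'' argument: ${\mathbf L}_{u_{0}}$ preserves $u_{0}^{\perp}$ because $[u_{0},u_{0}]=\tfrac12\pi^{*}d\langle u_{0},u_{0}\rangle=0$, and $\mathcal{G}^{\mathrm{end}}=-\mathcal F\,\mathcal F_{2}$ on $u_{0}^{\perp}$ together with the derivation property of ${\mathbf L}_{u_{0}}$ on $\Gamma(\mathrm{End}\,E)$ gives the claim. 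What you do differently is to re-prove the cited ingredient from scratch: your ``crux claim'' that integrability of $L_{1}$ forces $[u_{0},\Gamma(L_{1})]\subset\Gamma(L_{1})$, obtained by killing the $U_{\mathbb C}$-component via C4 and $[u_{0},u_{0}]=0$, and the $\bar L_{1}$-component via C4 applied to $(v,u_{0},w)$ plus the polarized C5 identity $[v,u_{0}]=-[u_{0},v]$ (valid because $\langle v,u_{0}\rangle\equiv 0$); this is a correct and complete argument, and it makes the lemma independent of the external reference. Your first route to ${\mathbf L}_{u_{0}}\mathcal{G}^{\mathrm{end}}=0$, via invariance of the intersections $L_{1}\cap(E_{\pm})_{\mathbb C}$ and reassembly of $(E_{\pm})_{\mathbb C}$ with the line $U_{\mathbb C}$ placed according to the parity of $n$, is also valid, though slightly longer than the paper's derivation argument, which you in any case reproduce. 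In short: same overall architecture (prove the statement for $\mathcal F$, apply it to $\mathcal F_{2}$, then recover $\mathcal{G}^{\mathrm{end}}$), with the benefit on your side of an explicit proof of the step the paper outsources to \cite{rubio}.
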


\begin{proof} The statement ${\mathbf L}_{u_{0} }\mathcal F =0$ for any $B_{n}$-generalized complex structure $\mathcal F$
on an odd exact Courant algebroid 
and $u_{0}\in\Gamma ( \mathrm{Ker}\, \mathcal F )$ normalized such that $\langle u_{0} , u_{0} \rangle = (-1)^{n}$ was proved in
Lemma 4.13 of \cite{rubio}.
Applying this statement to
 $\mathcal F_{2} := \mathcal{G}^{\mathrm{end}} \mathcal F$ we obtain 
${\mathbf L}_{u_{0} }\mathcal F_{2} =0$. 
From
 ${\mathbf L}_{u_{0}}(u_{0} ) = \frac{1}{2} \pi^{*} d\langle u_{0}, u_{0} \rangle =0$ we deduce  that 
${\mathbf L}_{u_{0}}$ preserves $u_{0}^{\perp}$.
Combined with  $\mathcal{G}^{\mathrm{end}} = - \mathcal F  \mathcal F_{2}$ on $u_{0}^{\perp}$ we obtain  ${\mathbf L}_{u_{0}}\mathcal{G}^{\mathrm{end}} =0.$
\end{proof}

\section{Components of $B_{n}$-generalized almost pseudo-Hermitian structures}\label{components-hermitian}

In this section we describe $B_{n}$-generalized almost pseudo-Hermitian structures  $(\mathcal{G}, \mathcal F)$ on a
Courant algebroid  $E:= E_{H, F}$ of type $B_{n}$ over a manifold $M$ of dimension $n$
  in terms of tensor fields on $M$. 
We assume that  the generalized metric $\mathcal{G}$ is in the standard form.
Let $E:= E_{+} \oplus E_{-}$ be the decomposition  of $E$ determined by $\mathcal{G}$.
From the description (\ref{form}) of $E_{\pm}$ 
(with $b=0$, $A=0$) we obtain  canonical  isomorphisms 
\begin{align}
\nonumber&  E_{-}\cong TM,\  X -  i_{X} g\mapsto X,\\
\label{iso-can} &  E_{+} \cong TM \oplus
\mathbb{R},\ X + i_{X} g+ a\mapsto X +a 
\end{align}
where $g$ is the  pseudo-Riemannian metric on $M$ induced by $\mathcal{G}$, see
(\ref{def-g}). The second isomorphism (\ref{iso-can}) maps $\langle\cdot , \cdot \rangle\vert_{E_{+}}$
to the metric
$$
(g+ g_{\mathrm{can}})(X + \lambda, Y+ \mu ) = g(X, Y) + \lambda\mu,\  \forall X, Y\in TM,\ \lambda , \mu\in\mathbb{R}
$$ 
on $TM \oplus \mathbb{R}$, where $g_{\mathrm{can}} ( \lambda , \mu ) = \lambda \mu .$  
Let $u_{0} \in\Gamma ( \mathrm{Ker}\,  \mathcal F )$, normalized such that $\langle u_{0}, u_{0} \rangle = (-1)^{n}$.
When $n$ is even, $u_{0}\in \Gamma (E_{+}) $ and  will be denoted by $u_{+}.$ When $n$ is odd, 
$u_{0} \in \Gamma (E_{-})$ and 
will be denoted by $u_{-}.$ (See Lemma~\ref{neu}.)

In the next proposition   `$\perp$'  denotes the orthogonal complement in $TM$ with respect 
to  $g$.

\begin{prop}\label{data-on-M} i) Assume that $n$ is odd.  A $B_{n}$-generalized  almost  pseudo-Hermitian  structure 
$(\mathcal{G}, \mathcal F)$ on $E$
is equivalent to the data  $(g, J_{+}, J_{-}, X_{+}, X_{-})$ 
where $g$ is a pseudo-Riemannian metric on $M$, 
$J_{\pm}\in \Gamma ( \mathrm{End}\, TM)$ are\linebreak[4]  $g$-skew-symmetric endomorphisms and 
$X_{\pm}\in {\mathfrak X}(M)$ are vector fields,   such that  $J_{\pm } X_{\pm} =0$,  
$J_{\pm}\vert_{ X_{\pm}^{\perp}}$ are complex structures and $g(X_{\pm}, X_{\pm}) =1.$ \

ii) Assume that $n$ is even. A $B_{n}$-generalized almost pseudo-Hermitian structure 
$(\mathcal{G}, \mathcal F)$ 
on $E$
is equivalent to the data $(g,  J_{+},  J_{-}, X_{+} , X_{-},  c_{+})$ where $g$ is a pseudo-Riemannian  metric on $M$,
$J_{\pm}\in \Gamma ( \mathrm{End}\, TM)$ are  $g$-skew-symmetric endomorphisms, 
$X_{\pm}\in {\mathfrak X}(M)$  are $g$-orthogonal vector fields and
$c_{+}\in C^{\infty}(M)$ is a function, such that
$J_{-}$ is an almost complex structure on $M$, 
$J_{+}$ satisfies 
\begin{align}
\nonumber&  J_{+}X_{+} = - c_{+} X_{-},\ J_{+}X_{-} = c_{+}X_{+},\\
\nonumber& J_{+}^{2} X = - X + g(X, X_{+}) X_{+} + g(X, X_{-}) X_{-},\ \forall X\in TM
\end{align}
and $g(X_{+}, X_{+}) =
g(X_{-}, X_{-})= 1-c_{+}^{2}$.

iii) In both cases ($\mathrm{dim}\, M$ odd or even), the generalized 
metric $\mathcal{G}$ given by
\begin{equation}\label{g}
E_{-}= \{  X -  i_{X} g\mid X\in TM\}.
\end{equation}
If $\mathrm{dim}\, M$ is odd/even, the $B_{n}$-generalized almost complex structures $\mathcal F_{1} = \mathcal F$ 
and $\mathcal F_{2} = \mathcal{G}^{\mathrm{end}} \mathcal F$ are given by 
\begin{equation}\label{components-odd}
\mathcal F = \left(
\begin{tabular}{ccc}
$\frac{1}{2} (J_{+}+ J_{-}) $ & $ \frac{1}{2}(J_{+} - J_{-}) \circ g^{-1}$ & $X_{\pm}$\\
$\frac{1}{2} g \circ (J_{+} - J_{-})$ & $-\frac{1}{2} (J_{+} + J_{-})^{*}$ & $i_{X_{\pm}} g$\\
$-\frac{1}{2} i_{X_{\pm}} g$ &  $-\frac{1}{2} X_{\pm}$ & $0$
\end{tabular}\right)
\end{equation}
and
\begin{equation}\label{components-odd_2}
\mathcal F_{2} = \left(
\begin{tabular}{ccc}
$\frac{1}{2} (J_{+}- J_{-}) $ & $ \frac{1}{2}(J_{+} + J_{-}) \circ g^{-1}$ & $X_{\pm}$\\
$\frac{1}{2} g \circ (J_{+} + J_{-})$ & $-\frac{1}{2} (J_{+} - J_{-})^{*}$ & $i_{X_{\pm}} g$\\
$-\frac{1}{2} i_{X_{\pm}}g$ &  $-\frac{1}{2} X_{\pm}$ & $0$
\end{tabular}\right) .
\end{equation}
\end{prop}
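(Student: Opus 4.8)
The plan is to exploit the commutativity $\mathcal{G}^{\mathrm{end}}\mathcal F = \mathcal F\mathcal{G}^{\mathrm{end}}$, which forces $\mathcal F$ to be block-diagonal with respect to the decomposition $E = E_+ \oplus E_-$, and then to read off $\mathcal F|_{E_-}$ and $\mathcal F|_{E_+}$ through the canonical isomorphisms (\ref{iso-can}). Under $E_- \cong TM$ the scalar product $\langle\cdot,\cdot\rangle|_{E_-}$ becomes $-g$ by (\ref{def-g}), so $\mathcal F|_{E_-}$ transports to a $g$-skew endomorphism $J_-$ of $TM$; under $E_+ \cong TM \oplus \mathbb{R}$ the scalar product becomes $g + g_{\mathrm{can}}$, so $\mathcal F|_{E_+}$ transports to a $(g+g_{\mathrm{can}})$-skew endomorphism $\mathcal J$ of $TM \oplus \mathbb{R}$. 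By Lemma~\ref{neu} the generator $u_0$ of $\mathrm{Ker}\,\mathcal F$ sits in $E_-$ when $n$ is odd and in $E_+$ when $n$ is even; this single fact is what splits the argument into its two cases and dictates where the degeneracy of $\mathcal F$ shows up.

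First I would analyze the $E_+$ block. Writing $\mathcal J(0+1) = Y + d$ and using $(g+g_{\mathrm{can}})$-skewness gives $d = 0$, so $\mathcal J(0+1)$ is a genuine vector field; calling it $X_+$ (for $n$ odd) resp.\ $X_-$ (for $n$ even) and setting $\mathcal J(X + 0) =: J_+ X + \beta(X)$, the same skewness identities force $\beta(X) = -g(X,X_+)$ (resp.\ $-g(X,X_-)$) and show that $J_+$ is $g$-skew. Thus $\mathcal J$ is completely encoded by $J_+$ together with a single vector field. When $n$ is odd, $\mathcal J$ has trivial kernel and $\mathcal J(0+1) = X_+$ has unit $g$-norm (because $\mathcal J$ is then a $(g+g_{\mathrm{can}})$-isometry), while the $E_-$ block supplies the second vector field $X_-$ as the image of $u_-$, again of unit norm since $\langle u_-,u_-\rangle = -1$. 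When $n$ is even, $\mathcal J$ has a one-dimensional kernel; writing $u_+ \leftrightarrow X_+ + c_+$ identifies the function $c_+$ and the vector field $X_+$, the normalization $\langle u_+,u_+\rangle = 1$ yields $g(X_+,X_+) = 1 - c_+^2$, and $\mathcal J(X_+ + c_+) = 0$ gives both $J_+ X_+ = -c_+ X_-$ and the orthogonality $g(X_+,X_-) = 0$.

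The heart of the matter is then to convert the structural identity $\mathcal F^2 = -\mathrm{Id} + (-1)^n\langle\cdot,u_0\rangle u_0$ of (\ref{endo-f}), restricted to $E_+$, into the stated constraints on the components. Computing $\mathcal J^2(X+a)$ from the explicit form of $\mathcal J$ and matching the $TM$- and $\mathbb{R}$-parts against the right-hand side produces, in the odd case, $J_+^2 = -\mathrm{Id} + g(\cdot,X_+)X_+$ (so $J_+ X_+ = 0$ and $J_+|_{X_+^\perp}$ is a complex structure), and in the even case the full system $J_+ X_- = c_+ X_+$, $g(X_-,X_-) = 1-c_+^2$ and $J_+^2 X = -X + g(X,X_+)X_+ + g(X,X_-)X_-$. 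On the $E_-$ side the analogous, simpler computation gives that $J_-$ is a complex structure on $X_-^\perp$ with $J_- X_- = 0$ when $n$ is odd, and that $J_-$ is an honest almost complex structure when $n$ is even. I expect the matching of $\mathbb{R}$-components against $TM$-components in the even case to be the main bookkeeping obstacle, since it is exactly there that the coupling between $J_+$, $X_\pm$ and $c_+$ is produced and the signs must be tracked carefully; the odd case is strictly easier because $\mathcal J$ is invertible.

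Finally, for part iii) I would assemble the global matrix of $\mathcal F$ on $TM\oplus T^*M\oplus\mathbb{R}$ by decomposing an arbitrary section $X+\xi+\lambda$ into its $E_\pm$-components, namely $W = \frac{1}{2}(X+g^{-1}\xi)$, $V = \frac{1}{2}(X-g^{-1}\xi)$ and $a=\lambda$ as dictated by (\ref{form}) with $b=A=0$, applying the block descriptions of $\mathcal F|_{E_\pm}$, and mapping back; the conversions $i_{J_\pm g^{-1}\xi}\,g = -J_\pm^*\xi$ turn the raw output into the entries of (\ref{components-odd}). Since $\mathcal{G}^{\mathrm{end}}$ is $+\mathrm{Id}$ on $E_+$ and $-\mathrm{Id}$ on $E_-$, the structure $\mathcal F_2 = \mathcal{G}^{\mathrm{end}}\mathcal F$ is obtained from $\mathcal F$ simply by replacing $J_-$ with $-J_-$, which immediately yields (\ref{components-odd_2}). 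The converse implication is then a matter of running these computations backwards: defining $\mathcal F$ by the block data, the commutation with $\mathcal{G}^{\mathrm{end}}$ is automatic, and skew-symmetry, the kernel condition and (\ref{endo-f}) follow from precisely the listed conditions on $(g,J_\pm,X_\pm,c_+)$.
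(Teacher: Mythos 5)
Your proposal is correct and follows essentially the same route as the paper: split $\mathcal F$ into its $E_\pm$-blocks via the commutation with $\mathcal{G}^{\mathrm{end}}$, transport through the canonical isomorphisms (\ref{iso-can}), use Lemma~\ref{neu} to place $u_0$ in $E_-$ or $E_+$ according to the parity of $n$, extract $(g,J_\pm,X_\pm,c_+)$ and their constraints from skew-symmetry and (\ref{endo-f}), and obtain (\ref{components-odd})--(\ref{components-odd_2}) by decomposing sections into $E_\pm$-components. The only (harmless) cosmetic difference is that you make explicit the small skew-symmetry computations the paper compresses into "an easy check" and observe directly that $\mathcal F_2$ amounts to replacing $J_-$ by $-J_-$.
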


\begin{proof} i)  
The scalar product $-\langle \cdot , \cdot
\rangle\vert_{E_{-}}$ corresponds to $g$  
under the first isomorphism (\ref{iso-can}). 
Then  $X_{-}:= \pi (u_{-})$  satisfies $g(X_{-}, X_{-})=1$,  since
$\langle u_{-}, u_{-} \rangle =-1.$
By means of the first isomorphism (\ref{iso-can}), 
the restriction   $\mathcal F\vert_{E_{-}}$ induces a $g$-skew-symmetric  endomorphism $J_{-}$ of $TM$,
with $J_{-}X_{-} =0$ and   which is
a  complex structure on $X_{-}^{\perp}$. 
Consider now the second isomorphism (\ref{iso-can}). By means of this isomorphism, 
$\langle \cdot , \cdot \rangle\vert_{E_{+}}$ corresponds  to 
$g+ g_{\mathrm{can}}$ 
on $TM\oplus \mathbb{R}$ 
and
$\mathcal F\vert_{E_{+}}$ induces a complex structure ${\mathcal J}_{+}$ on $TM\oplus \mathbb{R}$, 
skew-symmetric with respect to $g+ g_{\mathrm{can}}$. An easy computation shows that ${\mathcal J}_{+}$ is of the form 
$$
{\mathcal J}_{+} =\left(
\begin{tabular}{cc}
$J_{+}$ & $ X_{+}$\\
$-  i_{X_{+}} g$ & $ 0$
\end{tabular}\right) ,
$$
where $X_{+}\in {\mathfrak X}(M)$ satisfies $g(X_{+}, X_{+})=1$ and  $J_{+}\in\Gamma (  \mathrm{End}\, TM)$  is\linebreak[4] $g$-skew-symmetric, $J_{+}X_{+}=0$ and
$J_{+}\vert_{ X_{+}^{\perp}}$ is a complex structure on  $X_{+}^{\perp}.$

ii) When $n$ is even,  $\mathcal F\vert_{E_{-}}$ 
induces, under the 
first isomorphism (\ref{iso-can}),   a
$g$-skew-symmetric 
 almost complex structure $J_{-}$ on $M$.
Write  $u_{+} = X_{+} + g(X_{+}) + c_{+}$, 
where $X_{+}\in {\mathfrak X}(M)$ and $c_{+} \in C^{\infty}(M).$
From $\langle u_{+}, u_{+}\rangle =1$ we obtain 
$g(X_{+}, X_{+}) = 1- c_{+}^{2}.$
Under the second  isomorphism 
(\ref{iso-can}),   the restriction
$\mathcal F\vert_{E_{+}}$ induces an endomorphism 
$\mathcal F_{+}\in \Gamma ( \mathrm{End} (TM\oplus \mathbb{R}))$, 
which satisfies 
$$
{\mathcal F}_{+}(X_{+}+c_{+} ) =0,\ \mathcal F_{+}^{2} = -\mathrm{Id} + i_{X_{+} + c_{+}}(g + g_{\mathrm{can}} )\otimes 
(X_{+} + c_{+})
$$
and is skew-symmetric with respect to $g + g_{\mathrm{can}}$.   
Writing $\mathcal F_{+}$ in block form 
$$
{\mathcal F}_{+} =\left(
\begin{tabular}{cc}
$J_{+}$ & $ X_{-}$\\
$\omega$ & $ a$
\end{tabular}\right)
$$
where $J_{+}\in\Gamma (  \mathrm{End}\, TM)$, $X_{-}\in {\mathfrak X}(M)$, $\omega\in \Omega^{1}(M)$ and
$a\in C^{\infty}(M)$ 
an easy check shows  that  
$a=0$,  $\omega = -  i_{X_{-}} g$,  $g(X_{-}, X_{-}) = 1 - c_{+}^{2}$, 
$ g(X_{+}, X_{-})=0$ and $J_{+}$ satisfies the required properties.\

iii) Relations (\ref{components-odd}) and (\ref{components-odd_2}) follow from i) and ii). 
For instance, to compute $\mathcal{F}$ on a element $X\in TM$ it suffices to decompose
$X = \frac12 (X+ i_{X} g) +\frac12 (X- i_{X} g)\in E_+\oplus E_-$ and to apply the above formulas for $\mathcal{F}|_{E_\pm}$. 
This yields the first column of (\ref{components-odd}). The second column is obtained similarly after decomposing an element $\xi\in T^*M$ as $\xi = \frac12 (X+i_{X} g) -\frac12 (X- i_{X} g)$, where $X=g^{-1}\xi$.
\end{proof}

\begin{defn}
Let $(\mathcal{G}, \mathcal F)$ be a $B_{n}$-generalized pseudo-Hermitian structure on a Courant algebroid $E_{H, F}$ of type
$B_{n}$ over a manifold $M$.  The tensor fields on $M$  constructed in  Proposition \ref{data-on-M}, are called 
the {\cmssl{components}}  of $(\mathcal{G}, \mathcal F ).$  
\end{defn}

\section{Components of $B_{n}$-generalized pseudo-K\"{a}hler structures}\label{components-kahler}

In this section we express the integrability of a $B_{n}$-generalized almost pseudo-Hermitian structure on a Courant
algebroid   $E$  of type $B_{n}$ in terms of its components. This is done in  Theorems \ref{integrability-odd} and 
\ref{1-even}  from the next section. 
The proofs of  these theorems will be   presented in Section
\ref{proofs-section}.

\subsection{Statement of results}\label{integrability-statements}

Let  $E= E_{H, F}$ be a Courant algebroid of type $B_{n}$ 
over a manifold $M$ of dimension $n$, with  Dorfman bracket twisted   by $(F, H)$, 
and
$(\mathcal{G}, \mathcal F )$ a $B_{n}$-generalized almost  pseudo-Hermitian structure on $E$.

\begin{notation}{\rm i) \label{H-F-notation}
In analogy with the standard notation for the $(1,0)$-bundle of  an almost
complex structure on a manifold, for any  endomorphism
$A\in \Gamma (\mathrm{End}\, TM)$, we  will denote by 
 $T^{(1,0)}_{A}M\subset (TM)_\mathbb{C}$ its  $i$-eigenbundle.\

ii) Let $g$ be the pseudo-Riemannian metric which is part of the components of $( \mathcal{G}, \mathcal F )$, see 
equation~(\ref{def-g}).
We  identify $TM$ with $T^{*}M$ 
using $g$. 
 For $X, Y\in \mathfrak {X}(M)$ we denote by $F(X)$ and $H(X, Y)$ the vector fields identified with the $1$-forms 
 $i_{X}F$ and $i_{Y} i_{X} H.$
 A  decomposable tensor $A\otimes B\in TM\otimes TM$ with $A, B\in TM$ is identified with  the endomorphism of $TM$ which assigns 
 to $X\in TM$ the vector $g(A, X) B.$ }
 \end{notation}

\subsubsection{The case  of odd $n$}
Assume  that $n$ is odd and let
$( g,   J_{+}, J_{-}, X_{+}, X_{-})$   be the components of 
$(\mathcal{G}, \mathcal  F)$.
Define connections $\nabla^+$ and $\nabla^-$ on $TM$ by
\begin{align}
\nonumber& \nabla^{-} _{X}:= \nabla_{X} +\frac{1}{2} H(X),\\
 \label{nabla-1-def} &{\nabla}^{+}_{X}:=\nabla_{X} -\frac{1}{2} H(X)- J_{+} F(X) \otimes X_{+},
\end{align}
where $X\in \mathfrak{X}(M)$, $\nabla$ is the Levi-Civita connection of $g$, 
$H(X)$ denotes the skew-symmetric endomorphism $Y\mapsto H(X,Y)$ and the vector field $J_{+} F(X)$
is metrically identified with a one-form.

\begin{thm}\label{integrability-odd} The $B_{n}$-generalized almost pseudo-Hermitian  structure  
 $(\mathcal{G}, \mathcal F )$ is $B_{n}$-generalized pseudo-K\"{a}hler   if and only if the 
following conditions hold:

i)   The connections  $\nabla^\pm$  preserve the distributions   
$T_{J_\pm}^{(1,0)}M$ (respectively)  and 
\begin{equation}\label{covariant-deriv}
\nabla_{X}^{-} X_{-} =0,\  \nabla^{+}_{X} X_{+} = - J_{+} F(X),\ \forall X\in {\mathfrak X}(M). 
\end{equation} 

ii) The forms $H$ and $F$ satisfy the constraints
\begin{align}
\nonumber& H\vert_{ \Lambda^{3} T_{J_{\pm}}^{(1,0)}M }=0,\   
(i_{X_{+}}H)\vert_{\Lambda^{2}T_{J_{+}}^{(1,0)}M}
= i F\vert_{\Lambda^{2} T_{J_{+}}^{(1,0)}M},\\
\label{rel-F-H}  &
(i_{X_{-}}H)\vert_{\Lambda^{2}T_{J_{-}}^{(1,0)}M}
=0,\  F\vert_{\Lambda^{2} T_{J_{-}}^{(1,0)}M}=0,\
i_{X_{-}} F =0.
\end{align}
\end{thm}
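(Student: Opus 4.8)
The plan is to apply the integrability criterion of Proposition~\ref{criterion}, which reduces the $B_n$-generalized pseudo-K\"ahler condition to the Dorfman-integrability of the three bundles $L_1$, $A:=L_1\cap L_2$ and $B:=L_1\cap\bar L_2$, together with their invariance under the Dorfman Lie derivative $\mathbf{L}_{u_0}$, where $u_0=u_-$ for $n$ odd. Since $L_2=L_1\cap(E_+)_{\mathbb C}\oplus\bar L_1\cap(E_-)_{\mathbb C}$ and $L_1\cap\bar L_1=0$, I would first record that $A=L_1\cap(E_+)_{\mathbb C}$, $B=L_1\cap(E_-)_{\mathbb C}$ and $L_1=A\oplus B$. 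Thus the whole problem is phrased in terms of the $(1,0)$-parts of $\mathcal F$ inside $E_+$ and $E_-$ separately, and the $J_+$ and $J_-$ conditions will decouple accordingly.

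Next I would make these bundles explicit using the isomorphisms (\ref{iso-can}) and the block form (\ref{components-odd}). On the $E_-$ side one finds $B=\{Z-i_Zg:Z\in T^{(1,0)}_{J_-}M\}$, while a short eigenvalue computation for the complex structure $\mathcal J_+$ on $TM\oplus\mathbb R$ gives $A=\langle W+i_Wg:W\in T^{(1,0)}_{J_+}M\rangle\oplus\mathbb C\,w_+$, where $w_+:=X_++i_{X_+}g+i$; one checks directly that $\mathcal F w_+=i w_+$ and $\langle w_+,w_+\rangle=0$ using $J_+X_+=0$ and $g(X_+,X_+)=1$. The distinguished section $w_+$, which carries the $\mathbb R$-direction and is absent in the $D_n$ case, is what will ultimately force the correction term $J_+F(X)\otimes X_+$ in the connection $\nabla^+$ of (\ref{nabla-1-def}).

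Then I would compute the Dorfman brackets of these spanning sections via (\ref{dorfmann}) and project onto $E_\pm$ and the $\mathbb R$-summand. For $B$, the bracket $[Z-i_Zg,Z'-i_{Z'}g]$ must again lie in $B$: its scalar component is $F(Z,Z')$, forcing $F|_{\Lambda^2 T^{(1,0)}_{J_-}M}=0$, while vanishing of the $E_+$-component together with the $(1,0)$-type of the $E_-$-component splits into a symmetric part, absorbed into the requirement that the torsion connection $\nabla^-=\nabla+\tfrac12 H$ preserve $T^{(1,0)}_{J_-}M$, and a totally skew part giving $H|_{\Lambda^3 T^{(1,0)}_{J_-}M}=0$. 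For $A$, the brackets among the generic sections $W+i_Wg$ yield in the same fashion that $\nabla^+$ preserves $T^{(1,0)}_{J_+}M$ and that $H|_{\Lambda^3 T^{(1,0)}_{J_+}M}=0$, whereas brackets involving $w_+$ (which has $\lambda=i$) activate the $\lambda$-dependent terms $2\mu\,d\lambda$, $2\lambda\,i_YF$ and $-Y(\lambda)$ of (\ref{dorfmann}); tracking the resulting vector, covector and scalar components produces both the covariant-derivative identity $\nabla^+_XX_+=-J_+F(X)$ and the mixed constraint $(i_{X_+}H)|_{\Lambda^2 T^{(1,0)}_{J_+}M}=i\,F|_{\Lambda^2 T^{(1,0)}_{J_+}M}$.

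Finally, the invariance under $\mathbf{L}_{u_0}=\mathbf{L}_{u_-}$, whose necessity is guaranteed by Lemma~\ref{dorf-u0}, would be computed with $u_-=X_--i_{X_-}g$; the contractions $i_{X_-}i_ZH$ and $F(X_-,Z)$ appearing in (\ref{dorfmann}) yield the remaining identities $\nabla^-_XX_-=0$, $(i_{X_-}H)|_{\Lambda^2 T^{(1,0)}_{J_-}M}=0$ and $i_{X_-}F=0$, after which one verifies that the integrability of $L_1=A\oplus B$ adds nothing beyond what is already imposed on $A$, $B$ and by $\mathbf{L}_{u_-}$-invariance. The main obstacle will be the bookkeeping in the third step: unlike Gualtieri's $D_n$ computation, each Dorfman bracket now carries an extra scalar ($\mathbb R$) component and is twisted by both $H$ and $F$, and the section $w_+$ couples the $\mathbb R$-direction to $X_+$, so the conditions on $\nabla^\pm$, on $H$ and on $F$ disentangle only after carefully separating the symmetric, skew and scalar parts of each bracket and systematically using $J_\pm X_\pm=0$ together with the normalizations $g(X_\pm,X_\pm)=1$.
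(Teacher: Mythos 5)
Your plan is correct and follows essentially the same route as the paper: reduce via Proposition~\ref{criterion} to the integrability of $L_{1}$, $L_{1}^{\pm}$ and the $\mathbf{L}_{u_{-}}$-invariance, compute $L_{1}^{\pm}$ explicitly as in Lemma~\ref{computation-l-pm} (including the distinguished section $X_{+}+g(X_{+})+i$), and extract the conditions on $\nabla^{\pm}$, $H$ and $F$ from the Dorfman brackets, checking at the end that the mixed brackets $[\Gamma(L_{1}^{+}),\Gamma(L_{1}^{-})]$ impose nothing new. The paper merely packages these bracket computations into the general Lemmas~\ref{general-results} and \ref{general-results-1} and the intermediate Corollary~\ref{pre-final-1}, so there is no substantive difference in approach.
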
 

The above theorem has several immediate consequences.

\begin{cor}\label{rem-comments}In the setting of Theorem \ref{integrability-odd},  if 
$(\mathcal{G}, \mathcal F )$ is a $B_{n}$-generalized pseudo-K\"{a}hler  structure 
with components $(g,  J_{+}, J_{-}, X_{+}, X_{-})$, 
then
$X_{-}$ is  a Killing  field with respect to $g$ which commutes with $X_{+}$ and  preserves
the endomorphisms $J_{\pm}$.   
\end{cor}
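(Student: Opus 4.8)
The plan is to derive all three assertions at once from Lemma~\ref{dorf-u0}, which tells us that the Dorfman Lie derivative along a suitably normalized generator $u_{0}$ of $\mathrm{Ker}\,\mathcal F$ kills both $\mathcal F$ and $\mathcal G^{\mathrm{end}}$. Since $n$ is odd we have $u_{0}=u_{-}\in\Gamma(E_{-})$, and in the standard form this is the section $u_{-}=X_{-}-i_{X_{-}}g$, with $\pi(u_{-})=X_{-}$ and $\langle u_{-},u_{-}\rangle=-1$. From ${\mathbf L}_{u_{-}}\mathcal G^{\mathrm{end}}=0$ I first note that ${\mathbf L}_{u_{-}}$ preserves the eigenbundles $E_{\pm}$, and from ${\mathbf L}_{u_{-}}\mathcal F=0$ that it commutes with $\mathcal F$. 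The whole argument then amounts to transporting the identity ${\mathbf L}_{u_{-}}\mathcal F=0$ through the canonical isomorphisms (\ref{iso-can}) and reading off the equations satisfied by the components.

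On $E_{-}$ this is immediate. Writing $sY:=Y-i_{Y}g\in\Gamma(E_{-})$ for $Y\in\mathfrak X(M)$, the fact that ${\mathbf L}_{u_{-}}$ preserves $E_{-}$ together with axiom C2 (the anchor intertwines the Dorfman and Lie brackets) gives $[u_{-},sY]=s(\mathcal L_{X_{-}}Y)$; thus ${\mathbf L}_{u_{-}}$ corresponds to $\mathcal L_{X_{-}}$ under $E_{-}\cong TM$. Inserting this into axiom C4 applied to $\langle sY,sZ\rangle=-g(Y,Z)$ yields $(\mathcal L_{X_{-}}g)(Y,Z)=0$, so $X_{-}$ is Killing; inserting it into ${\mathbf L}_{u_{-}}\mathcal F=0$ restricted to $E_{-}$, where $\mathcal F|_{E_{-}}$ corresponds to $J_{-}$, yields $\mathcal L_{X_{-}}J_{-}=0$.

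The $E_{+}$ part is the only place requiring a genuine computation, and it is where I expect the one real obstacle. I would evaluate $[u_{-},v]$ for $v=Y+i_{Y}g+a\in\Gamma(E_{+})$ directly from the Dorfman bracket (\ref{dorfmann}). The anchor produces the $TM$-component $\mathcal L_{X_{-}}Y$, while a short computation gives the $\mathbb R$-component $X_{-}(a)+F(X_{-},Y)$. At this point the constraint $i_{X_{-}}F=0$ from Theorem~\ref{integrability-odd}~ii) is essential: it annihilates the off-diagonal term $F(X_{-},Y)$, so that under $E_{+}\cong TM\oplus\mathbb R$ the operator ${\mathbf L}_{u_{-}}$ becomes exactly the diagonal operator $D(Y,a)=(\mathcal L_{X_{-}}Y,X_{-}(a))$. (Were $i_{X_{-}}F$ nonzero, $D$ would acquire an off-diagonal piece and the conclusion would fail.) Since $\mathcal F|_{E_{+}}$ corresponds to $\mathcal J_{+}=\left(\begin{smallmatrix}J_{+}&X_{+}\\-i_{X_{+}}g&0\end{smallmatrix}\right)$, the identity ${\mathbf L}_{u_{-}}\mathcal F=0$ becomes the statement that $D$ commutes with $\mathcal J_{+}$.

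Expanding this commutation in block form, and using the already-established $\mathcal L_{X_{-}}g=0$, the $TM$-component of $D\mathcal J_{+}=\mathcal J_{+}D$ reads $(\mathcal L_{X_{-}}J_{+})Y+a\,\mathcal L_{X_{-}}X_{+}=0$ for all $Y\in\mathfrak X(M)$ and $a\in C^{\infty}(M)$; taking $a=0$ gives $\mathcal L_{X_{-}}J_{+}=0$, and the surviving term then gives $\mathcal L_{X_{-}}X_{+}=[X_{-},X_{+}]=0$ (the $\mathbb R$-component reproduces the same identity as a consistency check). This establishes the three claims. As an independent check of the Killing property and of $[X_{-},X_{+}]=0$, one may argue directly: $\nabla^{-}_{X}X_{-}=0$ gives $\nabla_{X}X_{-}=-\tfrac12 H(X)X_{-}$, so that $g(\nabla_{X}X_{-},Y)$ is $-\tfrac12$ times the value of the $3$-form $H$ on $(X,X_{-},Y)$ and is therefore antisymmetric in $X,Y$, i.e.\ $X_{-}$ is Killing; and combining $\nabla^{+}_{X}X_{+}=-J_{+}F(X)$ with $J_{+}X_{+}=0$ to get $\nabla_{X}X_{+}=\tfrac12 H(X)X_{+}-J_{+}F(X)$, one finds $[X_{-},X_{+}]=\nabla_{X_{-}}X_{+}-\nabla_{X_{+}}X_{-}=-J_{+}F(X_{-})=0$ since $i_{X_{-}}F=0$.
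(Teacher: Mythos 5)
Your proof is correct, but it takes a genuinely different route from the paper's. The paper argues entirely at the level of the classical data: it deduces the Killing property and $[X_{-},X_{+}]=0$ directly from the covariant-derivative identities (\ref{covariant-deriv}) together with $i_{X_{-}}F=0$, and it obtains $\mathcal L_{X_{-}}J_{\pm}=0$ by showing that $\mathcal L_{X_{-}}$ preserves $T^{(1,0)}_{J_{\pm}}M$, using the $\nabla^{\pm}$-invariance of these bundles and the constraints $(i_{X_{-}}H)\vert_{\Lambda^{2}T^{(1,0)}_{J_{-}}M}=0$, $(i_{X_{+}}H)\vert_{\Lambda^{2}T^{(1,0)}_{J_{+}}M}=iF\vert_{\Lambda^{2}T^{(1,0)}_{J_{+}}M}$ from (\ref{rel-F-H}). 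You instead lift everything to the Courant algebroid: Lemma~\ref{dorf-u0} gives ${\mathbf L}_{u_{-}}\mathcal F={\mathbf L}_{u_{-}}\mathcal G^{\mathrm{end}}=0$, and transporting these identities through the isomorphisms (\ref{iso-can}) (with the block form of $\mathcal F\vert_{E_{\pm}}$ from Proposition~\ref{data-on-M}) turns them into exactly the three assertions, the only integrability input from Theorem~\ref{integrability-odd} being $i_{X_{-}}F=0$, which is what makes ${\mathbf L}_{u_{-}}$ act diagonally on $E_{+}\cong TM\oplus\mathbb{R}$. Your approach is more structural: it exhibits all three claims as the component translation of the single fact that the whole pair $(\mathcal G,\mathcal F)$ is invariant along the canonical section $u_{-}$, and it consumes fewer of the conditions of Theorem~\ref{integrability-odd}; the price is that you must know the explicit matrix of $\mathcal F\vert_{E_{+}}$ and compute the relevant Dorfman brackets, whereas the paper's argument is a short self-contained tensor computation that recycles the formulas it has just established. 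Your closing cross-check of the Killing property and of $[X_{-},X_{+}]=0$ via $\nabla^{\mp}$ is essentially the paper's own proof of those two points, so the two routes agree where they overlap.
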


\begin{proof}
From the first relation (\ref{covariant-deriv}) 
and the definition of $\nabla^{-}$ 
we obtain that $X_{-}$ is Killing.
In order to prove that $X_{+}$ and $X_{-}$ commute, we write
\begin{equation}
{\mathcal L}_{X_{-}}X_{+} = \nabla_{X_{-}} X_{+} -\nabla_{X_{+}} X_{-} = \frac{1}{2} H(X_{-}, X_{+}) 
+\frac{1}{2} H(X_{+}, X_{-})=0,  
\end{equation} 
where we used  both relations (\ref{covariant-deriv}) and $i_{X_{-}} F=0$. 
It remains to prove that ${\mathcal L}_{X_{-}} J_{\pm } =0.$ 
For any $X\in \Gamma ( T^{(1,0)}_{J_{-}} M)$,
\begin{equation}
{\mathcal L}_{X_{-}} X = \nabla_{X_{-}} X - \nabla_{X} X_{-} = \nabla_{X_{-}}^{-} X - H(X_{-}, X),
 \end{equation}
where we used again
the definition of $\nabla^{-}$ and  $\nabla_{X} X_{-} = - \frac{1}{2} H(X, X_{-})$, which follows 
from the first relation (\ref{covariant-deriv}). Since $\nabla^{-}$ preserves 
$T^{(1,0)}_{J_{-}}M$ and $(i_{X_{-}} H)\vert_{\Lambda^{2} T^{(1,0)}_{J_{-}} M} =0$, 
we obtain that
${\mathcal L}_{X_{-}} X $ is a section of $T^{(1,0)}_{J_{-}} M$. 
Combining this fact with  $J_{-} X_{-}=0$ we obtain  ${\mathcal L}_{X_{-}} J_{-} =0.$
The claim ${\mathcal L}_{X_{-}} J_{+} =0$ can be checked 
by similar computations.
\end{proof}

There is a rescaling property of $B_{n}$-generalized pseudo-K\"{a}hler structures, which also follows from
Theorem \ref{integrability-odd}. 

\begin{cor}\label{rescaling-1}
Let $(\mathcal{G}, \mathcal F)$ be a $B_{n}$-generalized pseudo-K\"{a}hler structure on a Courant 
algebroid $E_{H, F}$ of type $B_{n}$ over an odd dimensional manifold $M$, with components 
$(g,  J_{+}, J_{-}, X_{+}, X_{-})$.
 For any $\lambda \in \mathbb{R}\setminus \{ 0\}$,  the data 
$( \tilde{g} := \lambda^{2} g,
\tilde{J}_{+} := J_{+},  \tilde{J}_{-} := J_{-}, 
\tilde{X}_{+} := \frac{1}{\lambda } X_{+},
\tilde{X}_{-} := \frac{1}{\lambda } X_{-})$ defines a $B_{n}$-generalized pseudo-K\"{a}hler
structure on   the Courant algebroid $E_{\tilde{H} , \tilde{F}}$, where $\tilde{H} := \lambda^{2} H$ and $\tilde{F} :=
\lambda F.$
\end{cor}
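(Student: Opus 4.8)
The plan is to verify, for the rescaled data $(\tilde g, \tilde J_\pm, \tilde X_\pm)$ on the Courant algebroid $E_{\tilde H,\tilde F}$, each of the conditions of Theorem~\ref{integrability-odd}, exploiting the conformal nature of the rescaling. First I would confirm that the rescaled data actually satisfies the algebraic constraints of Proposition~\ref{data-on-M}~i): since $\tilde g = \lambda^2 g$ and $\tilde X_\pm = \lambda^{-1}X_\pm$, we get $\tilde g(\tilde X_\pm,\tilde X_\pm) = \lambda^2\cdot\lambda^{-2}\, g(X_\pm,X_\pm) = 1$, while $\tilde J_\pm = J_\pm$ remain $\tilde g$-skew-symmetric (skew-symmetry is conformally invariant), annihilate $\tilde X_\pm$, and restrict to complex structures on $\tilde X_\pm^\perp = X_\pm^\perp$. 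So $(\tilde{\mathcal G},\tilde{\mathcal F})$ is a genuine $B_n$-generalized almost pseudo-Hermitian structure, and it remains to check integrability.

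The key computational input is how the relevant objects transform. The Levi-Civita connection is conformally invariant under constant rescaling, so $\tilde\nabla = \nabla$, and since $\tilde H = \lambda^2 H$, $\tilde F = \lambda F$, the metric identifications of forms with vectors pick up factors of $\lambda^{-2}$. Concretely, I expect $\tilde H(X) = \lambda^2 H(X)$ as a $\tilde g$-skew endomorphism to reduce, after the metric identification built into Notation~\ref{H-F-notation}, to the \emph{same} endomorphism as $H(X)$ (the $\lambda^2$ from $\tilde H$ cancels the $\lambda^{-2}$ from raising an index with $\tilde g^{-1}$). Similarly $\tilde J_+\tilde F(X)\otimes \tilde X_+$ should match $J_+ F(X)\otimes X_+$ up to the correct bookkeeping. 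The upshot I aim to establish is that the rescaled connections coincide with the originals, $\tilde\nabla^{\pm} = \nabla^{\pm}$, so that the preservation of $T^{(1,0)}_{\tilde J_\pm}M = T^{(1,0)}_{J_\pm}M$ by $\tilde\nabla^\pm$ is immediate from condition~i) for the original structure.

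Next I would verify the covariant-derivative conditions~(\ref{covariant-deriv}) and the algebraic constraints~(\ref{rel-F-H}). For the former, $\tilde\nabla^-_X \tilde X_- = \nabla^-_X(\lambda^{-1}X_-) = \lambda^{-1}\nabla^-_X X_- = 0$, and one checks $\tilde\nabla^+_X\tilde X_+ = -\tilde J_+\tilde F(X)$ by tracking the single factor of $\lambda^{-1}$ on both sides. For~(\ref{rel-F-H}), each condition is a vanishing statement or a proportionality between $H$- and $F$-restrictions, and since $\tilde H$, $\tilde F$ differ from $H$, $F$ only by the overall constants $\lambda^2$, $\lambda$, and the factor $i_{\tilde X_+} = \lambda^{-1} i_{X_+}$ introduces a compensating power, each identity in~(\ref{rel-F-H}) transforms homogeneously and holds for the tilded data precisely when it holds for the original. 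Finally, I must note that $E_{\tilde H,\tilde F}$ is a valid Courant algebroid of type $B_n$, i.e. $d\tilde H = -\tilde F\wedge\tilde F$; this follows from $d(\lambda^2 H) = \lambda^2 dH = -\lambda^2 F\wedge F = -(\lambda F)\wedge(\lambda F)$.

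The main obstacle I anticipate is not conceptual but bookkeeping: the metric identifications in Notation~\ref{H-F-notation} mean that the symbols $H(X)$, $F(X)$, $J_+F(X)$ already have indices raised by $g$, so under rescaling each carries an implicit, and easily miscounted, power of $\lambda$. The crux is to track these powers consistently — distinguishing the explicit rescaling of the forms $H\mapsto\lambda^2 H$, $F\mapsto\lambda F$ from the implicit rescaling of the musical isomorphism $g^{-1}\mapsto\lambda^{-2}g^{-1}$ — and to confirm that in every term of the connection formulas~(\ref{nabla-1-def}) and every constraint~(\ref{rel-F-H}) these powers cancel to leave the conditions form-invariant. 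Once the transformation rules $\tilde\nabla^\pm=\nabla^\pm$ are pinned down, invariance of the remaining conditions is routine.
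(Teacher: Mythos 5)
Your proposal is correct and follows essentially the same route as the paper: the paper's proof likewise observes that the rescaled connections satisfy $\tilde\nabla^{\pm}=\nabla^{\pm}$ (because the factors from $\tilde H=\lambda^{2}H$, $\tilde F=\lambda F$, $\tilde X_{+}=\lambda^{-1}X_{+}$ cancel against the musical isomorphism of $\tilde g=\lambda^{2}g$) and then checks that the conditions of Theorem~\ref{integrability-odd} transform homogeneously, e.g.\ $\tilde\nabla^{+}_{X}\tilde X_{+}=\lambda^{-1}\nabla^{+}_{X}X_{+}=-\tilde J_{+}\tilde F(X)$. Your additional remarks (verifying the algebraic constraints of Proposition~\ref{data-on-M}~i) and $d\tilde H=-\tilde F\wedge\tilde F$) are left implicit in the paper but are correct.
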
   

\begin{proof}Let us denote by $\tilde{\nabla}^\pm$ the connections associated with the 
rescaled data. Since 
\begin{eqnarray*}&& H(X,Y)=g^{-1}H(X,Y,\cdot ) = \tilde{g}^{-1}\tilde{H}(X,Y,\cdot)=\tilde{H}(X,Y),\\
&&F(X,Y)\otimes X_+ = \tilde{F}(X,Y)\otimes \tilde{X}_+,\end{eqnarray*}  
for all $X,Y\in  \mathfrak{X}(M)$ 
and $\tilde{J}_+=J_+$, we see that 
$\tilde{\nabla}^{\pm} = \nabla^{\pm}$. Using this fact, it is easy to check that the 
rescaled data satisfy the conditions from Theorem~\ref{integrability-odd}. 
For instance, we get 
\[  \tilde{\nabla}^{+}_{X} \tilde{X}_{+} = \frac{1}{\lambda}\nabla^{+}_{X} X_{+} =-  \frac{1}{\lambda}J_+F(X)
= -\tilde{J}_+\tilde{F}(X),
\]
where we have used that $\tilde{F}(X)= \tilde{g}^{-1}\tilde{F}(X,\cdot ) = \frac{1}{\lambda}g^{-1}F(X, \cdot ) =  \frac{1}{\lambda} F(X)$. 
\end{proof}

\subsubsection{The case of even $n$}

Assume  that  $n$  is even and let 
$(g,   J_{+}, J_{-}, X_{+}, X_{-}, c_{+})$  be the components  of $( \mathcal{G}, \mathcal F )$. 
To simplify the computations  we assume that $X_{\pm}$ are non-null,  i.e.\   $g( (X_{\pm})_{p}, (X_{\pm})_{p}) \neq 0$ or
$c_{\pm}(p)\neq \pm 1$, for any $p\in M.$
We
define the connections
\begin{align}
\nonumber& D^{+}_{X} := \nabla_{X} -\frac{1}{2} H(X) +\frac{c_{+}}{ 1- c_{+}^{2}} F(X)\otimes X_{+} -\frac{J_{+}
F(X)}{ 1-c_{+}^{2}} \otimes X_{-}\\
\label{d-p-m-nabla}& D^{-}_{X} := \nabla_{X} +\frac{1}{2} H(X)
\end{align}
on $TM$,  where $\nabla$ is the Levi-Civita connection of $g$.

\begin{thm}\label{1-even} 
The $B_{n}$-generalized 
almost pseudo-Hermitian structure $( \mathcal{G}, \mathcal F )$ is $B_{n}$-generalized  pseudo-K\"{a}hler  if and only if the following conditions hold:\

i) $D^{\pm }$ preserve $T^{(1,0)}_{J_{\pm}} M$, the vector fields $\{ X_{+}, X_{-} \}$ commute 
 and  their covariant derivatives  $\nabla X_{\pm}$  are given by
\begin{align}
\nonumber  &\nabla_{X} X_{+} = -\frac{1}{2}( i_{X_{+}} H)(X) + c_{+} F(X);\\
\label{d-p-m-civita} & \nabla_{X} X_{-}= -\frac{1}{2}( i_{X_{-}} H)(X) - J_{+} F(X),
\end{align}
for any $X\in {\mathfrak X}(M)$.\

ii) The forms  $F$ and $H$ satisfy  
\begin{equation}\label{H-F-alg-even}
F\vert_{\Lambda^{2} T_{J_{-}}^{(1,0)}M }=0,\quad  H\vert_{\Lambda^{3} T_{J_{\pm}}^{(1,0)}M }=0,\quad
(i_{X_{+}+ ic_{+} X_{-}}H)\vert_{\Lambda^{2} T_{J_{+}}^{(1,0)}M }= 0,
\end{equation}
and are related by
\begin{equation}\label{H-F-even}
F\vert_{\Lambda^{2} T_{J_{+}}^{(1,0)}M } = - i  (i_{X_{-}}H)\vert_{\Lambda^{2} T_{J_{+}}^{(1,0)}M }.
\end{equation}

iii) The function $c_{+}$ 
is Hamiltonian for the closed $2$-form $F$, with Hamiltonian vector field $X_{+}$:
\begin{equation}\label{c-F}
d c_{+} =  i_{X_{+}} F.
\end{equation}
\end{thm}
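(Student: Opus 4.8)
The plan is to prove Theorem~\ref{1-even} by the same general mechanism used for the odd case (Theorem~\ref{integrability-odd}), namely by translating the integrability of the two $B_{n}$-generalized almost complex structures $\mathcal F_{1}=\mathcal F$ and $\mathcal F_{2}=\mathcal G^{\mathrm{end}}\mathcal F$ into Dorfman-closedness conditions on the relevant eigenbundles. Following the criterion announced in the introduction (Proposition~\ref{criterion}), $(\mathcal G,\mathcal F)$ is $B_{n}$-generalized pseudo-K\"ahler precisely when the bundles $L_{1}$, $L_{1}\cap L_{2}$ and $L_{1}\cap\bar{L}_{2}$ are Dorfman-integrable and invariant under ${\mathbf L}_{u_{0}}$, where here $u_{0}=u_{+}\in\Gamma(E_{+})$ since $n$ is even. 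So the first step is to compute these intersection bundles explicitly in terms of the components $(g,J_{+},J_{-},X_{+},X_{-},c_{+})$, using the block forms \eqref{components-odd} and \eqref{components-odd_2}.

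\textbf{Computing the eigenbundles.}
Using the isomorphisms \eqref{iso-can}, I would first identify $L_{1}$ (the $i$-eigenbundle of $\mathcal F$) and $L_{2}$ (the $i$-eigenbundle of $\mathcal F_{2}$) as subbundles of $E_{\mathbb C}$. From the explicit description $L_{2}=L_{1}\cap(E_{+})_{\mathbb C}\oplus\bar L_{1}\cap(E_{-})_{\mathbb C}$ given before Proposition~\ref{data-on-M}, one sees that $L_{1}\cap L_{2}=L_{1}\cap(E_{+})_{\mathbb C}$ and $L_{1}\cap\bar L_{2}=L_{1}\cap(E_{-})_{\mathbb C}$. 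Here is where the non-nullity hypothesis $c_{+}^{2}\neq 1$ enters decisively: the component $\mathcal F|_{E_{+}}$ acts as the endomorphism $\mathcal J_{+}$ with $J_{+}$ satisfying the stated quadratic relation, and because $X_{+},X_{-}$ are non-null I can split $TM$ orthogonally off the plane $\mathrm{span}\{X_{+},X_{-}\}$ and identify the $i$-eigenspace of $J_{+}$ with $T^{(1,0)}_{J_{+}}M$ cleanly. Concretely, I expect $L_{1}\cap(E_{+})_{\mathbb C}$ to be spanned by sections of the form $X+i_{X}g$ with $X\in\Gamma(T^{(1,0)}_{J_{+}}M)$, suitably corrected by the $\mathbb R$-summand, and $L_{1}\cap(E_{-})_{\mathbb C}$ to be spanned by $X-i_{X}g$ with $X\in\Gamma(T^{(1,0)}_{J_{-}}M)$.

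\textbf{Translating Dorfman-closedness.}
Next I would impose Dorfman-integrability of these bundles and compute the resulting bracket using \eqref{dorfmann}. Expanding $[X\pm i_{X}g,\,Y\pm i_{Y}g]$ with the twist $(H,F)$ produces, after projecting back onto the relevant subbundle, exactly the connections $D^{\pm}$ of \eqref{d-p-m-nabla}: the $\pm\frac12 H(X)$ terms arise from the $i_{X}i_{Y}H$ and ${\mathcal L}_{X}\eta-i_{Y}d\xi$ contributions (as in the odd case), while the $F$-dependent correction terms $\frac{c_{+}}{1-c_{+}^{2}}F(X)\otimes X_{+}$ and $-\frac{J_{+}F(X)}{1-c_{+}^{2}}\otimes X_{-}$ come from how the $\mathbb R$-summand of $u_{+}$ mixes into $T^{(1,0)}_{J_{+}}M$; the denominators $1-c_{+}^{2}=g(X_{\pm},X_{\pm})$ are precisely the normalizations needed to project onto the non-null plane. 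The requirement that these brackets stay in the subbundle yields both the preservation of $T^{(1,0)}_{J_{\pm}}M$ by $D^{\pm}$ and the algebraic constraints \eqref{H-F-alg-even}, \eqref{H-F-even} on the types of $H$ and $F$. The invariance under ${\mathbf L}_{u_{+}}$ then yields the covariant-derivative formulas \eqref{d-p-m-civita}, the commutation of $X_{+}$ and $X_{-}$, and the Hamiltonian condition \eqref{c-F}; indeed \eqref{c-F} should emerge from the $\mathbb R$-component of ${\mathbf L}_{u_{+}}$ acting on a generator, reflecting how $d c_{+}$ records the failure of $u_{+}$ to be parallel.

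\textbf{The main obstacle.}
The delicate part will be the bookkeeping of the $\mathbb R$-summand throughout. Unlike the odd case where $u_{0}=u_{-}\in E_{-}$ and the extra line sits on the ``metric'' side, here $u_{+}=X_{+}+i_{X_{+}}g+c_{+}$ genuinely mixes all three factors $TM$, $T^{*}M$ and $\mathbb R$, so the projection formulas defining $D^{+}$ acquire the two correction terms and the function $c_{+}$ couples the geometry of $X_{+}$ and $X_{-}$ through $J_{+}X_{+}=-c_{+}X_{-}$. Keeping track of which combination $X_{+}+ic_{+}X_{-}$ (appearing in \eqref{H-F-alg-even}) is the ``holomorphic direction'' for the $E_{+}$-side, versus the plain $X_{-}$ direction for the relation \eqref{H-F-even}, and verifying that the denominators $1-c_{+}^{2}$ cancel consistently so that the final conditions are smooth everywhere, is where the argument requires the most care. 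Once the intersection bundles are correctly identified and the Dorfman bracket is expanded, however, matching terms against \eqref{d-p-m-nabla}--\eqref{c-F} should be a direct computation parallel to the odd case.
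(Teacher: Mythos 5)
Your proposal follows essentially the same route as the paper's proof: the integrability criterion of Proposition \ref{criterion}, the explicit computation of $L_{1}\cap L_{2}$ and $L_{1}\cap \bar{L}_{2}$ in terms of the components (Lemma \ref{computation-l-pm}), and the expansion of the Dorfman bracket via the general lemmas (Lemmas \ref{general-results} and \ref{general-results-1}) to match the conditions of the theorem. The only caveats are bookkeeping ones that the computation would force on you anyway: the extra generator of $L_{1}\cap (E_{+})_{\mathbb{C}}$ is $V_{-}+g(V_{-})+i$ with $V_{-}=\frac{1}{1-c_{+}^{2}}(X_{-}-ic_{+}X_{+})$, and it is the Dorfman-closedness of this bundle (not the ${\mathbf L}_{u_{+}}$-invariance) that produces the formula for $\nabla X_{-}$ and the commutation of $X_{\pm}$, while ${\mathbf L}_{u_{+}}$-invariance of $L_{1}\cap\bar{L}_{2}$ yields the formula for $\nabla X_{+}$ and $dc_{+}=i_{X_{+}}F$.
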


In computations we shall often replace the condition $\mathcal L_{X_{+} } X_{-} =0$  from the above theorem with a relation between the forms
$H$ and $F$, as follows.

\begin{lem}\label{exchange} In the setting of Theorem 
\ref{1-even}, the commutativity of $X_{+}$ and $X_{-}$ can be replaced by the relation
\begin{equation}\label{H-F-even-prime}
H(X_{+}, X_{-}) = c_{+}F(X_{-})+ J_{+} F(X_{+})
\end{equation}
\end{lem}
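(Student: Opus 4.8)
The plan is to turn the equivalence into a one-line computation of the bracket $\mathcal{L}_{X_{+}} X_{-} = [X_{+}, X_{-}]$, using the explicit covariant derivatives (\ref{d-p-m-civita}) that are already imposed as hypotheses in Theorem~\ref{1-even}. Since the Levi-Civita connection $\nabla$ of $g$ is torsion-free, I would begin from
$$
\mathcal{L}_{X_{+}} X_{-} = \nabla_{X_{+}} X_{-} - \nabla_{X_{-}} X_{+},
$$
and then substitute the two formulas from (\ref{d-p-m-civita}): the first evaluated at $X = X_{+}$ gives $\nabla_{X_{+}} X_{-} = -\tfrac{1}{2}(i_{X_{-}} H)(X_{+}) - J_{+} F(X_{+})$, and the second evaluated at $X = X_{-}$ gives $\nabla_{X_{-}} X_{+} = -\tfrac{1}{2}(i_{X_{+}} H)(X_{-}) + c_{+} F(X_{-})$.

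The delicate step is the sign bookkeeping dictated by Notation~\ref{H-F-notation}, where $H(X_{+}, X_{-})$ denotes the vector metrically dual to the one-form $i_{X_{-}} i_{X_{+}} H$, i.e.\ $Y \mapsto H(X_{+}, X_{-}, Y)$. Using antisymmetry of $H$ one identifies $(i_{X_{+}} H)(X_{-})$ with the one-form $Y \mapsto H(X_{+}, X_{-}, Y)$, hence with $H(X_{+}, X_{-})$, while $(i_{X_{-}} H)(X_{+})$ corresponds to $Y \mapsto H(X_{-}, X_{+}, Y) = -H(X_{+}, X_{-}, Y)$, hence equals $-H(X_{+}, X_{-})$. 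This is the only place where one can easily slip a sign, so I would verify it explicitly before combining terms.

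Feeding these identifications back in, the two half-$H$ contributions add rather than cancel, and I expect to obtain
$$
\mathcal{L}_{X_{+}} X_{-} = H(X_{+}, X_{-}) - c_{+} F(X_{-}) - J_{+} F(X_{+}).
$$
Reading off this identity, the vanishing of the left-hand side (the commutativity of $X_{+}$ and $X_{-}$) is equivalent to $H(X_{+}, X_{-}) = c_{+} F(X_{-}) + J_{+} F(X_{+})$, which is precisely (\ref{H-F-even-prime}). Since every intermediate identity is an equality rather than merely an implication, the equivalence holds in both directions, so within the list of conditions of Theorem~\ref{1-even} one may freely exchange $\mathcal{L}_{X_{+}} X_{-} = 0$ for (\ref{H-F-even-prime}). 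The main obstacle is genuinely just the sign convention described above; no integrability input beyond the covariant derivatives (\ref{d-p-m-civita}) is required, and the fact that $\nabla$ is torsion-free is what lets the bracket be rewritten in terms of those derivatives in the first place.
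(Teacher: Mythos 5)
Your computation is correct and is essentially the paper's own proof: both arguments use torsion-freeness of $\nabla$ together with the two relations (\ref{d-p-m-civita}) specialized at $X=X_{+}$ and $X=X_{-}$, plus the antisymmetry of $H$, the paper merely organizing the same substitutions in two steps (via (\ref{add-j-plus}) and (\ref{preg-lie})) rather than in one pass. The only slip is notational: you call ``first'' the relation for $\nabla_{X}X_{-}$ and ``second'' the one for $\nabla_{X}X_{+}$, reversing the order in (\ref{d-p-m-civita}), but the formulas you actually use are the correct ones, so the argument stands.
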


\begin{proof}
The second relation (\ref{d-p-m-civita}) applied to $X:= X_{+}$ implies
\begin{equation}\label{add-j-plus}
J_{+} F(X_{+}) = -\nabla_{X_{+}} X_{-} -\frac{1}{2} H(X_{-}, X_{+}).
\end{equation}
We  obtain that (\ref{H-F-even-prime}) is equivalent to
\begin{equation}\label{preg-lie}
\nabla_{X_{+}} X_{-} = -\frac{1}{2} H(X_{+}, X_{-}) + c_{+} F(X_{-}).
\end{equation}
We conclude  by writing $\mathcal L_{X_{+}} X_{-} = \nabla_{X_{+}} X_{-} - \nabla_{X_{-}} X_{+}$
and combining the first relation (\ref{d-p-m-civita}) with $X:= X_{-}$ with (\ref{preg-lie}). 
\end{proof}

The above theorem has several immediate consequences.

\begin{cor} 
In the setting of Theorem \ref{1-even},  if 
$( \mathcal{G}, \mathcal F )$ is a $B_{n}$-generalized pseudo-K\"{a}hler  structure 
with components $(g,  J_{+}, J_{-}, X_{+}, X_{-},  c_{+})$, then 
$X_{+}$ is a Killing  field   and the  almost complex structure $J_{-}$ is integrable.
\end{cor}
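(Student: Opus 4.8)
The plan is to read both claims directly off the characterization in Theorem~\ref{1-even}, using only the differential identity \eqref{d-p-m-civita} for $\nabla X_{+}$ and a torsion computation for the connection $D^{-}$ from \eqref{d-p-m-nabla}; no further integrability input is needed.

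For the Killing property I would interpret the first relation in \eqref{d-p-m-civita} as an identity between endomorphisms of $TM$, namely $X\mapsto \nabla_{X}X_{+}$ equals $X\mapsto -\tfrac12 (i_{X_{+}}H)(X)+c_{+}F(X)$. Both summands are $g$-skew-symmetric: by Notation~\ref{H-F-notation} one has $g\big((i_{X_{+}}H)(X),Y\big)=H(X_{+},X,Y)$ and $g(F(X),Y)=F(X,Y)$, and these are antisymmetric in $X,Y$ because $H$ is a $3$-form and $F$ is a $2$-form. Since $c_{+}$ is a scalar factor, the endomorphism $X\mapsto\nabla_{X}X_{+}$ is $g$-skew-symmetric, i.e.\ $g(\nabla_{X}X_{+},Y)+g(\nabla_{Y}X_{+},X)=0$ for all $X,Y$, which is exactly the Killing equation for $X_{+}$.

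For the integrability of $J_{-}$ I would exploit that, by condition i) of Theorem~\ref{1-even}, $D^{-}$ preserves $T^{(1,0)}_{J_{-}}M$, and compute its torsion. Since $D^{-}_{X}=\nabla_{X}+\tfrac12 H(X)$ with $\nabla$ torsion-free and $H(X)Y=H(X,Y)=-H(Y,X)$, a short computation gives $T^{D^{-}}(X,Y)=D^{-}_{X}Y-D^{-}_{Y}X-[X,Y]=H(X,Y)$. Hence for $X,Y\in\Gamma(T^{(1,0)}_{J_{-}}M)$ one has $[X,Y]=D^{-}_{X}Y-D^{-}_{Y}X-H(X,Y)$, and as the first two terms again lie in $T^{(1,0)}_{J_{-}}M$, involutivity of this bundle reduces to showing $H(X,Y)\in T^{(1,0)}_{J_{-}}M$. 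Here I would use that, $J_{-}$ being $g$-skew-symmetric, the bundle $T^{(1,0)}_{J_{-}}M$ is $g$-isotropic and maximal isotropic (of complex rank $n/2$), so a complex vector $U$ lies in $T^{(1,0)}_{J_{-}}M$ precisely when $g(U,W)=0$ for all $W\in T^{(1,0)}_{J_{-}}M$. Thus $H(X,Y)\in T^{(1,0)}_{J_{-}}M$ is equivalent to $H(X,Y,W)=0$ for all $W\in T^{(1,0)}_{J_{-}}M$, which is exactly the constraint $H|_{\Lambda^{3}T^{(1,0)}_{J_{-}}M}=0$ contained in \eqref{H-F-alg-even}. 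Therefore $T^{(1,0)}_{J_{-}}M$ is closed under the Lie bracket, and $J_{-}$ is integrable by the Newlander--Nirenberg theorem.

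The only slightly delicate point is this last reduction: turning the geometric statement that $H(X,Y)$ is of type $(1,0)$ into the purely algebraic vanishing $H|_{\Lambda^{3}T^{(1,0)}_{J_{-}}M}=0$, which relies on the isotropy and maximality of the $(1,0)$-bundle of the $g$-skew-symmetric almost complex structure $J_{-}$ and on the nondegenerate pairing between $T^{(1,0)}_{J_{-}}M$ and $T^{(0,1)}_{J_{-}}M$. Everything else is a direct substitution of the identities supplied by Theorem~\ref{1-even}.
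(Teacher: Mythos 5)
Your proof is correct. For the Killing property you argue exactly as the paper does: the first relation of (\ref{d-p-m-civita}) exhibits $X\mapsto\nabla_{X}X_{+}$ as a sum of $g$-skew-symmetric endomorphisms (coming from the $3$-form $H$ and the $2$-form $F$), which is the Killing equation. For the integrability of $J_{-}$ your route is genuinely different from the paper's: the paper simply points back to Corollary \ref{pre-final-2} iii), where ``$J_{-}$ is integrable'' appears verbatim as one of the equivalent conditions established inside the proof of Theorem \ref{1-even}, so nothing further needs to be derived. You instead re-derive involutivity of $T^{(1,0)}_{J_{-}}M$ from the \emph{statement} of Theorem \ref{1-even} alone, via the torsion identity $T^{D^{-}}(X,Y)=H(X,Y)$, the $D^{-}$-invariance of $T^{(1,0)}_{J_{-}}M$, and the algebraic constraint $H\vert_{\Lambda^{3}T^{(1,0)}_{J_{-}}M}=0$ together with the maximal isotropy of the $(1,0)$-bundle of a $g$-skew-symmetric almost complex structure. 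Your reduction of ``$H(X,Y)$ is of type $(1,0)$'' to the vanishing of $H$ on $\Lambda^{3}T^{(1,0)}_{J_{-}}M$ is sound, since $T^{(1,0)}_{J_{-}}M$ equals its own $g$-orthogonal complement in $(TM)_{\mathbb{C}}$. The trade-off is that the paper's version is shorter but leans on an intermediate result from the proof of the theorem, whereas yours is self-contained at the level of the theorem's published hypotheses; in fact your torsion argument is essentially the mechanism the paper itself uses in the reverse direction (in the lemma translating Corollary \ref{pre-final-1} iii) into Theorem \ref{integrability-odd}), so the two approaches are close in spirit even though they enter the argument at different points.
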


\begin{proof} The first relation  (\ref{d-p-m-civita}) implies  that $X_{+}$ is a Killing field.   
The integrability of $J_{-}$  follows from the proof of
Theorem  \ref{1-even} (see Corollary  \ref{pre-final-2} iii) below).
\end{proof}

Another immediate consequence of Theorem \ref{1-even} is a rescaling property of  certain $B_{n}$-generalized  pseudo-K\"{a}hler
structures defined on Courant algebroids $E_{H, F}$ with  trivial closed $2$-form $F$. Remark that on such
Courant algebroids  any  $B_{n}$-generalized  pseudo-K\"{a}hler
structure has $c_{+}$ constant (owing to  the second relation (\ref{c-F}) above).

\begin{cor}\label{rescaling-2}
Let $(\mathcal{G}, \mathcal F)$ be a $B_{n}$-generalized pseudo-K\"{a}hler structure on a Courant 
algebroid $E_{H}:= E_{H, 0}$ of type $B_{n}$ over an even dimensional manifold $M$.  Let 
$(g,  J_{+}, J_{-}, X_{+}, X_{-},  c_{+})$ be the components of $(\mathcal{G}, \mathcal F)$ and assume  that
$c_{+} \not\in \{ -1, 0, 1\}$. Define $\tilde{g}:= \epsilon g$ where $\epsilon := \mathrm{sign}\, ( 1- c_{+}^{2})$,  
$\tilde{X}_{\pm}:=$\linebreak[4]  $|1- c_{+}^{2}|^{-1/2}  X_{\pm}$,
and  $\tilde{J}_{\pm } \in \mathrm{End}\,  (TM)$ by  
$\tilde{J}_{-} := J_{-}$,  $\tilde{J}_+|_{\{ X_{+}, X_{-}   \}^{\perp}} := J_+|_{\{ X_{+}, X_{-}   \}^{\perp}}$  and $\tilde{J}_{+} X_{+} = \tilde{J}_{+} X_{-} =0$.
Then $( \tilde{g}, \tilde{J}_{+}, \tilde{J}_{-}, \tilde{X}_{+}, \tilde{X}_{-}, \tilde{c}_{+}:= 0)$ are the components of a $B_{n}$-generalized pseudo-K\"{a}hler structure on the Courant algebroid $E_{\tilde{H}}: =E_{\tilde{H}, 0}$ where $\tilde{H} := \epsilon H.$ 
\end{cor}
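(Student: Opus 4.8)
The plan is to verify that the rescaled data $(\tilde g, \tilde J_\pm, \tilde X_\pm, \tilde c_+ = 0)$ satisfies all the conditions of Proposition~\ref{data-on-M} ii) (so it genuinely forms a set of components) and then the integrability conditions i)--iii) of Theorem~\ref{1-even}, now applied to the Courant algebroid $E_{\tilde H, 0}$. First I would check the algebraic constraints of Proposition~\ref{data-on-M} ii): with $\tilde c_+ = 0$ the structure $\tilde J_+$ must satisfy $\tilde J_+ X_+ = \tilde J_+ X_- = 0$ (given by definition) and $\tilde J_+^2 X = -X + \tilde g(X, \tilde X_+)\tilde X_+ + \tilde g(X, \tilde X_-)\tilde X_-$ for all $X$, together with $\tilde g(\tilde X_\pm, \tilde X_\pm) = 1$. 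Since $\tilde g = \epsilon g$ and $\tilde X_\pm = |1 - c_+^2|^{-1/2} X_\pm$, the normalization gives $\tilde g(\tilde X_\pm, \tilde X_\pm) = \epsilon |1 - c_+^2|^{-1} g(X_\pm, X_\pm) = \epsilon |1-c_+^2|^{-1}(1 - c_+^2) = 1$, as required. The identity for $\tilde J_+^2$ should follow by decomposing $X$ into its component along $\mathrm{span}\{X_+, X_-\}$ and along $\{X_+, X_-\}^\perp$, using that $\tilde J_+$ agrees with $J_+$ on the orthogonal complement and annihilates $X_\pm$, combined with the original relation $J_+^2 X = -X + g(X,X_+)X_+ + g(X,X_-)X_-$.

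Next I would record how the relevant objects transform under the rescaling. The key computational observation is that $\tilde g^{-1} = \epsilon^{-1} g^{-1} = \epsilon g^{-1}$, so for the metrically-identified forms one gets $\tilde H(X,Y) = \tilde g^{-1}\tilde H(X,Y,\cdot) = \epsilon g^{-1}\cdot \epsilon H(X,Y,\cdot) = H(X,Y)$ (since $\tilde H = \epsilon H$), exactly as in the proof of Corollary~\ref{rescaling-1}. Because $F = 0$ here, the $F$-terms in the connection $D^+$ vanish identically, so $D^\pm$ for the rescaled data reduce to $\tilde\nabla_X \pm \frac12 \tilde H(X) = \nabla_X \pm \frac12 H(X)$; in particular the Levi-Civita connection is unchanged ($\tilde\nabla = \nabla$, as rescaling by a constant does not affect Christoffel symbols) and therefore $\tilde D^\pm = D^\pm$. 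Since $\tilde J_- = J_-$ and $\tilde J_+$ has the same $i$-eigenbundle restricted to the orthogonal complement, the preservation of $T^{(1,0)}_{\tilde J_\pm}M$ by $\tilde D^\pm$ follows from the corresponding statement for the original data; here I would need to check that the eigenbundle $T^{(1,0)}_{\tilde J_+}M$ coincides with $T^{(1,0)}_{J_+}M$, which requires a short argument comparing $\tilde J_+$ and $J_+$ on the complexified tangent space.

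Then I would verify conditions i)--iii) for the rescaled data. Commutativity $[\tilde X_+, \tilde X_-] = 0$ is immediate from $[X_+, X_-] = 0$ (both are constant multiples of $X_\pm$). For the covariant derivatives, with $F = 0$ the original relations (\ref{d-p-m-civita}) become $\nabla_X X_+ = -\frac12 (i_{X_+}H)(X)$ and $\nabla_X X_- = -\frac12(i_{X_-}H)(X)$; rescaling $\tilde X_\pm = |1-c_+^2|^{-1/2}X_\pm$ by a \emph{constant} factor and using $\tilde H = \epsilon H$, $\tilde\nabla = \nabla$, the expected relations $\tilde\nabla_X \tilde X_\pm = -\frac12(i_{\tilde X_\pm}\tilde H)(X)$ (with $\tilde c_+ = 0$, so the $F$-terms are absent) should hold after tracking the scalar factors and the $\epsilon$. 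The algebraic conditions (\ref{H-F-alg-even})--(\ref{H-F-even}) with $F=0$ reduce to $H|_{\Lambda^3 T^{(1,0)}_{J_\pm}M} = 0$ and $(i_{X_\pm}H)|_{\Lambda^2 T^{(1,0)}_{J_+}M} = 0$, which transfer directly since $\tilde H = \epsilon H$ scales the restrictions by a nonzero constant and the eigenbundles are unchanged; condition (\ref{c-F}) becomes $d\tilde c_+ = 0 = i_{\tilde X_+}\tilde F = 0$, trivially satisfied since $\tilde c_+$ is constant and $\tilde F = 0$.

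The main obstacle I anticipate is the bookkeeping around $\tilde J_+$: one must confirm that the restriction $\tilde J_+|_{\{X_+,X_-\}^\perp} = J_+|_{\{X_+,X_-\}^\perp}$ together with $\tilde J_+ X_\pm = 0$ genuinely produces a $\tilde g$-skew-symmetric endomorphism whose $i$-eigenbundle is $T^{(1,0)}_{J_+}M$, and that the preservation condition $\tilde D^+ T^{(1,0)}_{\tilde J_+}M \subseteq T^{(1,0)}_{\tilde J_+}M$ follows from the original one rather than requiring a fresh computation. Since the original $J_+$ did not annihilate $X_\pm$ (instead $J_+ X_+ = -c_+ X_-$, $J_+ X_- = c_+ X_+$), changing $\tilde J_+$ to kill $X_\pm$ alters the endomorphism on the span of $X_+, X_-$, and I would need to check carefully that this modification is compatible with both the skew-symmetry and, crucially, with the integrability/preservation statement — showing that the $D^+$-parallel transport of sections of $T^{(1,0)}_{J_+}M$ is unaffected by the redefinition on the (now invariant) plane $\mathrm{span}\{X_+, X_-\}$. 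This is precisely the point where I would expect to invoke the detailed structure uncovered in the proof of Theorem~\ref{1-even} rather than argue purely formally.
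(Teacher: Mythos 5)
Your verification is correct and matches the paper's (implicit) argument: the corollary is stated as an immediate consequence of Theorem \ref{1-even}, and the intended proof is exactly what you do — check the conditions of Proposition \ref{data-on-M} ii) and Theorem \ref{1-even} for the rescaled data, using $\tilde{\nabla}=\nabla$, $\tilde{H}(X,Y)=\tilde{g}^{-1}\tilde{H}(X,Y,\cdot)=H(X,Y)$, $F=\tilde{F}=0$ (hence $c_{+}$ constant and $\tilde{D}^{\pm}=D^{\pm}$), and the constant rescaling of $X_{\pm}$. The only worry you flag at the end is harmless: since $J_{+}$ preserves the $g$-orthogonal splitting $\mathrm{span}\{X_{+},X_{-}\}\oplus\{X_{+},X_{-}\}^{\perp}$ and has eigenvalues $\pm i c_{+}\neq\pm i$ on the span, one gets $T^{(1,0)}_{\tilde{J}_{+}}M=T^{(1,0)}_{J_{+}}M$ directly, so the preservation by $\tilde{D}^{+}=D^{+}$ is inherited formally without revisiting the proof of Theorem \ref{1-even}.
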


\begin{rem} \label{2-even}  The formulation and the proof of Theorem \ref{1-even} can be adapted to the case when the vector fields $X_{\pm }$ 
are trivial (see Remark \ref{classical-even}). Consider the setting of  Theorem~\ref{1-even} but assume that 
$X_{+} = X_{-} =0.$  It turns out that  $(\mathcal{G}, \mathcal F)$ is $B_{n}$-generalized pseudo-K\"{a}hler  if and only if $F =0$ and
$(\mathcal{G}, \mathcal F )$ is a generalized pseudo-K\"{a}hler structure on the even exact Courant algebroid 
$TM \oplus T^{*}M$  (trivially extended to $TM\oplus T^*M \oplus \mathbb{R}$) with Dorfman  bracket twisted by the (closed) $3$-form $H$. Such structures were  defined 
in \cite{gualtieri-thesis} and  are deeply studied in the literature (especially  for positive definite signature).
\end{rem}

\subsection{Proofs of Theorems \ref{integrability-odd} and \ref{1-even} }\label{proofs-section}

We start with various general integrability results, which will be used in the proofs of Theorems \ref{integrability-odd} and \ref{1-even}.

\subsubsection{General integrablity results}

Let $(\mathcal{G}, \mathcal F )$ be a $B_{n}$-generalized almost pseudo-Hermitian structure on an odd exact  Courant algebroid $E$
over a manifold $M$ of dimension $n$. Let 
$L_{1}\subset E_{\mathbb{C} }$  be the $i$-eigenbundle of $\mathcal F$
and $E_{\pm}$ the $\pm 1$-eigenbundles of $\mathcal{G}^{\mathrm{end}}.$  Since $\mathcal{G}^{\mathrm{end}}$ commutes with $\mathcal F$, 
$$
L_{1} = L _{1} \cap  (E_{+} )_{\mathbb{C}}\oplus L_{1} \cap (E_{-} )_{\mathbb{C}}= L_{1} \cap L_{2} \oplus L_{1} \cap \bar{L}_{2}
$$
where $L_{2}$ is the  $i$-eigenbundle of $\mathcal F_{2} := \mathcal F \mathcal{G}^{\mathrm{end}}.$ 
Let $L_{1}^{+} : =L_{1} \cap L_{2}$ and $L_{1}^{-} :=L_{1} \cap \bar{L}_{2}$ and  
$u_{0}\in \mathrm{Ker}\, \mathcal F$ such that  $\langle u_{0}, u_{0}\rangle = (-1)^{n}$.

\begin{lem}\label{crit-first} Assume that $\mathcal F$ is integrable. The  Dorfman Lie derivative  ${\mathbf L}_{u_{0}}:\Gamma (E) \rightarrow \Gamma (E)$ preserves  $\Gamma (L_{1}^{+})$  if and only if
it preserves $\Gamma (L_{1}^{-})$.
\end{lem}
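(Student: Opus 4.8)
The plan is to exploit the reality of the operator ${\mathbf L}_{u_0}$ together with axiom C4 and the orthogonality $E_+\perp E_-$. First I would record the one nontrivial input: since $\mathcal F$ is assumed integrable, the statement quoted in the proof of Lemma~\ref{dorf-u0} (Lemma 4.13 of \cite{rubio}, valid for any $B_n$-generalized complex structure) gives ${\mathbf L}_{u_0}\mathcal F=0$. As ${\mathbf L}_{u_0}$ then commutes with $\mathcal F$, it preserves the eigenbundles of $\mathcal F$; in particular it maps $\Gamma(L_1)$ into $\Gamma(L_1)$ and $\Gamma(\bar L_1)$ into $\Gamma(\bar L_1)$. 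Moreover $u_0$ is a real section, so ${\mathbf L}_{u_0}$ commutes with complex conjugation; hence the hypothesis that it preserves $\Gamma(L_1^{+})$ is equivalent to the hypothesis that it preserves $\Gamma(\bar L_1^{+}):=\overline{\Gamma(L_1^{+})}=\Gamma(\bar L_1\cap(E_+)_{\mathbb C})$, and symmetrically for the superscript $-$.

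Next I would prove one implication (the other being symmetric). Assume ${\mathbf L}_{u_0}$ preserves $\Gamma(L_1^{+})$, hence also $\Gamma(\bar L_1^{+})$, and take $v\in\Gamma(L_1^{-})$. Since ${\mathbf L}_{u_0}$ preserves $\Gamma(L_1)=\Gamma(L_1^{+})\oplus\Gamma(L_1^{-})$, write ${\mathbf L}_{u_0}v=a+b$ with $a\in\Gamma(L_1^{+})\subset(E_+)_{\mathbb C}$ and $b\in\Gamma(L_1^{-})\subset(E_-)_{\mathbb C}$; the goal is $a=0$. For any $w\in\Gamma(\bar L_1^{+})$ I pair and apply C4:
\[
\langle a,w\rangle=\langle {\mathbf L}_{u_0}v,w\rangle=\pi(u_0)\langle v,w\rangle-\langle v,{\mathbf L}_{u_0}w\rangle,
\]
where the first equality uses $\langle b,w\rangle=0$ (as $b\in(E_-)_{\mathbb C}$, $w\in(E_+)_{\mathbb C}$ and $E_+\perp E_-$). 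Both remaining terms vanish: $\langle v,w\rangle=0$ by the same orthogonality, and ${\mathbf L}_{u_0}w\in\Gamma(\bar L_1^{+})\subset(E_+)_{\mathbb C}$ is orthogonal to $v\in(E_-)_{\mathbb C}$. Thus $\langle a,w\rangle=0$ for all $w\in\Gamma(\bar L_1^{+})$.

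To conclude $a=0$ I would invoke nondegeneracy of $\langle\cdot,\cdot\rangle$ restricted to $(E_+)_{\mathbb C}$: the section $a\in L_1^{+}$ is already orthogonal to $(E_-)_{\mathbb C}$, to $L_1^{+}$ (isotropy of $L_1$), and to $u_0$ (since $u_0\in U_{\mathbb C}\perp L_1\oplus\bar L_1$), so the relation $a\perp\bar L_1^{+}$ forces $a\perp(E_+)_{\mathbb C}$ and hence $a=0$. This gives ${\mathbf L}_{u_0}v=b\in\Gamma(L_1^{-})$, i.e.\ ${\mathbf L}_{u_0}$ preserves $\Gamma(L_1^{-})$. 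The converse is obtained by interchanging the superscripts and using the nondegeneracy of the pairing of $L_1^{-}$ with $\bar L_1^{-}$ inside $(E_-)_{\mathbb C}$.

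The main point requiring care is the nondegeneracy bookkeeping in the last step, which depends on the location of $u_0$. By Lemma~\ref{neu}, $u_0\in E_+$ for $n$ even and $u_0\in E_-$ for $n$ odd, so $u_0$ contributes to exactly one of $(E_\pm)_{\mathbb C}$; I would check in both parities that, after quotienting out $\mathbb C\,u_0$ where relevant, the scalar product still pairs $L_1^{+}$ nondegenerately with $\bar L_1^{+}$ and $L_1^{-}$ nondegenerately with $\bar L_1^{-}$. This is the only place where the distinction between even and odd $n$ enters, and it is precisely what makes the two directions genuinely symmetric rather than merely formally so.
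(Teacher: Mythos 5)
Your proof is correct and follows essentially the same route as the paper's: both hinge on the observation that integrability of $\mathcal F$ forces ${\mathbf L}_{u_0}$ to preserve $\Gamma(L_1)$, followed by the C4 computation $\langle {\mathbf L}_{u_0}v,w\rangle=-\langle v,{\mathbf L}_{u_0}w\rangle=0$ for $w\in\Gamma(\overline{L_1^{+}})$ and the nondegeneracy of the pairing between $L_1^{\pm}$ and $\overline{L_1^{\pm}}$. The paper packages that last nondegeneracy step as the characterization (\ref{charact-dec}) of $L_1^{\pm}$ inside $L_1$, which is exactly the bookkeeping you carry out explicitly at the end.
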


\begin{proof}
We remark that 
\begin{align}
\nonumber& L_{1}^{-}  = \{ v\in L_{1}\mid  \langle v, w\rangle =0,\ \forall w\in \overline{ L_{1}^{+} } \} \\
\label{charact-dec}& L_{1}^{+} =\{ v\in L_{1}\mid \langle v, w\rangle =0,\ \forall w\in \overline{ L_{1}^{-}}\} .
\end{align}
Assume  that ${\mathbf L}_{u_{0} }$ preserves 
$\Gamma ( L_{1}^{+})$.  Then ${\mathbf L}_{u_{0} }$ preserves  also
$\Gamma ( \overline{L_{1}^{+}})$. 
For any 
$v\in \Gamma (L_{1}^{-})$ and  $w\in \Gamma ( \overline{L_{1}^{+}})$,  
\begin{equation}\label{dorf-crit}
\langle   [u_{0} , v ] , w\rangle  = - \langle  v, [u_{0} , w ] \rangle =  0,
\end{equation}
where we used $\langle v, w\rangle =0$, 
$[u_{0}, w]\in \Gamma (\overline{L_{1}^{+}})$ and the first relation 
(\ref{charact-dec}).  
Since $\mathcal F$ is integrable,  $[u_{0}, v]\in \Gamma (L_{1})$
(see Lemma \ref{dorf-u0}). 
We conclude  from   the first relation  (\ref{charact-dec})  and (\ref{dorf-crit}).
The converse statement follows in a similar way, by using the second relation
(\ref{charact-dec}).
\end{proof}

The next proposition  is the analogue of Proposition 6.10 of \cite{gualtieri-thesis}.

\begin{prop}\label{criterion}
Let $(\mathcal{G}, \mathcal F)$  be a $B_{n}$-generalized almost pseudo-Hermitian structure. 
Then 
 $(\mathcal{G}, \mathcal F)$ is integrable if and only if  
$L_{1}$, $L_{1}^{\pm}$ are integrable and any of the equivalent conditions from
Lemma  \ref{crit-first} hold.
\end{prop}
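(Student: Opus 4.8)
The plan is to prove both implications working in the $\langle\cdot,\cdot\rangle$-orthogonal decomposition
\[
E_{\mathbb{C}}=L_1^+\oplus L_1^-\oplus \overline{L_1^+}\oplus\overline{L_1^-}\oplus\mathbb{C}u_0,
\]
together with the identities $L_2=L_1^+\oplus\overline{L_1^-}$ and $\bar L_2=\overline{L_1^+}\oplus L_1^-$ (which follow from the formula for $L_2$ recalled before Proposition~\ref{criterion}). The pairing relations I will use repeatedly are: $L_1$ and $L_2$ are isotropic, the orthogonality $E_+\perp E_-$ gives $\langle L_1^+,\overline{L_1^-}\rangle=\langle L_1^+,L_1^-\rangle=0$, and $u_0$ is orthogonal to $L_1\oplus\bar L_1$ with $\langle u_0,u_0\rangle=(-1)^n\neq0$. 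Consequently a section $w$ lies in $L_2$ if and only if $\langle w,c\rangle=0$ for all $c\in\Gamma(L_2)$ and $\langle w,u_0\rangle=0$; this is the test I will apply to brackets.

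For the forward implication, assume $(\mathcal{G},\mathcal F)$ is $B_n$-generalized pseudo-K\"ahler, so $L_1$ and $L_2$ are both involutive; then $\bar L_2$ is involutive as well, since the Dorfman bracket is real. Because $L_1^+=L_1\cap L_2$ and $L_1^-=L_1\cap\bar L_2$ are intersections of involutive subbundles, they are involutive: for $u,v\in\Gamma(L_1^+)$ one has $[u,v]\in\Gamma(L_1)\cap\Gamma(L_2)=\Gamma(L_1^+)$, and similarly for $L_1^-$. Finally, Lemma~\ref{dorf-u0} gives ${\mathbf L}_{u_0}\mathcal F=0$ and ${\mathbf L}_{u_0}\mathcal{G}^{\mathrm{end}}=0$, i.e.\ ${\mathbf L}_{u_0}$ commutes with both $\mathcal F$ and $\mathcal{G}^{\mathrm{end}}$ and hence preserves their common eigenbundle $L_1\cap(E_+)_{\mathbb{C}}=L_1^+$, which is exactly the condition of Lemma~\ref{crit-first}.

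For the converse, assume $L_1$ and $L_1^\pm$ are involutive and that ${\mathbf L}_{u_0}$ preserves $L_1^+$; by Lemma~\ref{crit-first} it then also preserves $L_1^-$, hence $L_2$. Since $\mathcal F$ is already integrable, it remains to prove that $L_2=L_1^+\oplus\overline{L_1^-}$ is involutive. The brackets of sections of $L_1^+$ among themselves and of $\overline{L_1^-}$ among themselves stay in $L_2$ by involutivity of $L_1^+$ and $\overline{L_1^-}$, so everything reduces to the cross brackets $[a,\bar b]$ with $a\in\Gamma(L_1^+)$, $\bar b\in\Gamma(\overline{L_1^-})$ (observe $[a,\bar b]=-[\bar b,a]$, since $\langle a,\bar b\rangle=0$). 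I will check that $[a,\bar b]$ pairs trivially with $L_2$ and with $u_0$. Pairing with $c\in\Gamma(L_1^+)$ in axiom~C4, using $\langle \bar b,c\rangle=0$ and $[a,c]\in\Gamma(L_1^+)$, kills the $\overline{L_1^+}$-component; pairing with $u_0$ in C4, using $\langle\bar b,u_0\rangle=0$ and $[a,u_0]=-[u_0,a]\in\Gamma(L_1^+)$ (as $\langle a,u_0\rangle=0$ and ${\mathbf L}_{u_0}$ preserves $L_1^+$), kills the $u_0$-component.

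The main obstacle is the $L_1^-$-component of $[a,\bar b]$, the only piece not controlled by $L_1^+$-involutivity or the $u_0$-condition. The key device is to apply C4 with the conjugate section as the anchoring direction: for $\bar d\in\Gamma(\overline{L_1^-})$ one has $\pi(\bar b)\langle a,\bar d\rangle=\langle[\bar b,a],\bar d\rangle+\langle a,[\bar b,\bar d]\rangle$, where the left side vanishes because $\langle a,\bar d\rangle=0$ and the last term vanishes because $[\bar b,\bar d]\in\Gamma(\overline{L_1^-})$ is orthogonal to $L_1^+$; the antisymmetry $[\bar b,a]=-[a,\bar b]$ then forces $\langle[a,\bar b],\bar d\rangle=0$. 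With all three "bad" components annihilated, $[a,\bar b]\in\Gamma(L_2)$, so $L_2$ is involutive and $(\mathcal{G},\mathcal F)$ is $B_n$-generalized pseudo-K\"ahler. I expect the only delicate points to be bookkeeping the five-fold decomposition and ensuring that each use of C4 is set up so that the scalar product on its left-hand side vanishes.
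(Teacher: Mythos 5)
Your proof is correct, and in the converse direction it takes a genuinely different route from the paper. The paper reduces the involutivity of $L_{2}=L_{1}^{+}\oplus \overline{L_{1}^{-}}$ to the cross-brackets $[X,\xi]$ with $X\in\Gamma(L_{1}^{+})$, $\xi\in\Gamma(\overline{L_{1}^{-}})$ and then passes to Lie algebroid calculus: it identifies $\bar{L}_{1}$ with $L_{1}^{*}$, invokes the bracket formula $[X,\xi]={\mathcal L}_{X}\xi-i_{\xi}d_{L_{1}^{*}}X+(-1)^{n}\langle[u_{0},X],\xi\rangle u_{0}$ from Rubio's thesis, uses the hypothesis on ${\mathbf L}_{u_{0}}$ to kill the last term, and locates the two remaining terms in $\overline{L_{1}^{-}}$ and $L_{1}^{+}$ via a general fact about Lie algebroids split into two integrable subbundles. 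You instead test the same cross-bracket $[a,\bar b]$ directly against the decomposition $E_{\mathbb{C}}=L_{2}\oplus\bar{L}_{2}\oplus\mathbb{C}u_{0}$, using only axiom C4, the polarization of C5 (which gives $[u,v]=-[v,u]$ whenever $\langle u,v\rangle$ is constant), the involutivity of $L_{1}^{\pm}$, and the ${\mathbf L}_{u_{0}}$-invariance of $L_{1}^{+}$; each of the three applications of C4 is set up correctly, and the three pairings you annihilate do detect exactly the $\overline{L_{1}^{+}}$-, $L_{1}^{-}$- and $u_{0}$-components, so the argument closes. What your approach buys is self-containedness: it needs no external bracket formula and no Lie algebroid machinery, only the Courant axioms and the orthogonality bookkeeping of the five-fold splitting. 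What the paper's approach buys is that it exhibits the statement as an instance of the Lie bialgebroid formalism already set up in \cite{rubio}, which is the framework used elsewhere in the paper. One small point worth making explicit in your write-up: the integrability of $L_{1}$ is not used in your verification that $L_{2}$ is involutive (only $L_{1}^{\pm}$ and the $u_{0}$-condition enter there); it is needed solely because integrability of $\mathcal F$ itself is part of the definition of a $B_{n}$-generalized pseudo-K\"ahler structure, exactly as in the paper.
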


\begin{proof}  If $(\mathcal{G}, \mathcal F )$ is integrable  then obviously 
$L_{1}^{\pm } $ 
and $L_{1}$ are integrable. From  
Lemma 4.13 of \cite{rubio} , 
${\mathbf L}_{u_{0}}$ preserves 
$\Gamma (L_{1}^{\pm })$.

For the converse,  we need to show that 
$L_{2} = L_{1}^{+} \oplus  \overline{L_{1}^{-}}$ is  integrable, i.e.
\begin{equation}\label{lie-plus-minus}
[ \Gamma ( L_{1}^{+}), \Gamma   (\overline{L_{1}^{-}} ) ] \subset \Gamma
( L_{1}^{+} + \overline{L_{1}^{-}} ).
\end{equation}
The map which assigns to $X\in \bar{L}_{1}$ the covector $\xi \in L_{1}^{*}$ defined by $\xi ( Y)  := \langle X, Y\rangle$, for any
$Y\in L_{1}$,  is an isomorphism, which maps 
$\overline{L_{1}^{\pm }}\subset \bar{L}_{1}$ onto $\mathrm{Ann}\, (L_{1}^{\mp}) \subset L_{1}^{*}$.
We identify $\bar{L}_{1}$ with $L_{1}^{*}$ by means of this isomorphism. In particular,  
${L}^{*}_{1}$ inherits a Lie algebroid structure from the Lie algebroid
structure of  $\bar{L}_{1}$.  
As proved in \cite{rubio} (Section 4.4, page 66), 
for any $X\in \Gamma (L_{1})$ and $\xi \in \Gamma ( L_{1}^{*})$, 
\begin{equation}\label{lie-dual}
[ X, \xi ] = {\mathcal L}_{X} \xi - i_{\xi} d_{L_{1}^{*}} X + (-1)^{n} \langle [ u_{0}, X], \xi \rangle u_{0}
\end{equation}
where 
$\mathcal L_{X}$ denotes the Lie derivative of  the Lie algebroid $L_{1}$, defined by the Cartan formula 
${\mathcal  L}_{X} := i_{X} d_{L_{1} }+ d_{L_{1}}  i_{X}$,  and 
\begin{align}
\nonumber& d_{L_{1}}: \Gamma ( \Lambda^{k} L_{1}^{*}) \rightarrow \Gamma ( \Lambda^{k+1} L_{1}^{*}),
\nonumber& d_{L_{1}^{*}}: \Gamma ( \Lambda^{k} L_{1}) \rightarrow \Gamma ( \Lambda^{k+1} L_{1}),
\end{align}
are the exterior derivatives of the Lie algebroids  $L_{1}$ and $L_{1}^{*}.$  
Assume now that  $X\in \Gamma (L_{1}^{+})$ and $\xi
\in\Gamma (\overline{ L_{1}^{-}}).$  
Since 
${\mathbf L}_{u_{0}}$ preserves $\Gamma (L_{1}^{+})$, the last term  from (\ref{lie-dual})  vanishes and we obtain
\begin{equation}\label{lie-dual2}
[ X, \xi ] = {\mathcal L}_{X} \xi - i_{\xi} d_{L_{1}^{*}} X =  i_{X} d_{L_{1}} \xi  - i_{\xi} d_{L_{1}^{*}} X,
\end{equation}
where we used $\xi (X) =0.$
On the other hand, if  $L$ is an arbitrary Lie algebroid with a decomposition $L = L_{1} \oplus L_{2}$ where
$L_{i}$ are integrable subbundles of $L$, then $ i_{X} d_{L} \xi \in \Gamma (  \mathrm{Ann}\, (L_{1}))$, for any
$X\in \Gamma (L_{1})$ and $\xi \in \Gamma ( \mathrm{Ann}\,  (L_{1})).$  
Applying this result to the Lie algebroids 
$L_{1} = L_{1}^{+} \oplus L_{1}^{-}$ and 
$$
L_{1}^{*} =  (L_{1}^{+} )^{*} \oplus  (L_{1}^{-})^{*} = \mathrm{Ann}\,  (L_{1}^{-}) \oplus  \mathrm{Ann}\, (L_{1}^{+}) =
\overline{L_{1}^{+}} \oplus \overline{L_{1}^{-}}
$$
we obtain that $i_{X} d_{L} \xi $ is a section of $\mathrm{Ann}\, ( L_{1}^{+}) = \overline{L_{1}^{-}}$ 
and 
$i_{\xi} d_{L_{1}^{*}} X $ is a section of  $\mathrm{Ann}\, (\mathrm{Ann}\, ( L_{1}^{+}) )= L_{1}^{+}$.
Relation (\ref{lie-plus-minus})  follows.
\end{proof}

Let $E = E_{H, F}$ be a Courant algebroid  of type $B_{n}$ over a manifold $M$ with Dorfman bracket 
twisted  by $(H, F)$. On $M$ we consider a  pseudo-Riemannian metric $h$,  
a complex distribution 
$\mathcal D\subset (TM)_{\mathbb{C}}$ and a vector field 
$X_{0}$ (real or complex), $h$-orthogonal
to $\mathcal D$ and 
such that $h(X_{0}, X_{0})=1.$  We define  the subbundles
\begin{align}
\nonumber& L^{h, \mathcal D} := \{  X + h(X) \mid X\in \mathcal D\},\\
\nonumber& L^{h, \mathcal D , X_{0}} :=  L^{h, \mathcal D}  \oplus\mathrm{span}_{\mathbb{C}}  \{ X_{0} + h(X_{0}) + i \} \subset E_\mathbb{C}. 
\end{align} 
Let $Y_{0}\in {\mathfrak X}(M)$,  $f\in C^{\infty}(M, \mathbb{C})$ and 
$$
u^{Y_{0}} :=  Y_{0} + h(Y_{0}),\ u^{Y_{0}, f}:= Y_{0} - h(Y_{0}) + f,
$$
which are sections of  $E_\mathbb{C}$.
Let $\nabla$ be the Levi-Civita connection of $h.$

\begin{lem}\label{general-results}  i) The bundle $L^{h, \mathcal D}$ is integrable if and only if $\mathcal D$ is involutive, $F\vert_{\Lambda^{2} \mathcal D} =0$
and, for any  $X, Y\in \Gamma (\mathcal D )$ and  $Z\in {\mathfrak X}(M)$, 
\begin{equation}\label{first-rel}
h (\nabla_{Z} X, Y) =\frac{1}{2} H(X, Y, Z).
\end{equation}
ii) The Dorfman Lie derivative  ${\mathbf L}_{u^{Y_{0}}}$ preserves $\Gamma (L^{h,\mathcal D})$ 
if and only if 
$$
{\mathcal L}_{Y_{0}}\Gamma (\mathcal D)
\subset \Gamma (\mathcal D),\ (i_{Y_{0}} F)\vert_{\mathcal D} =0
$$ 
and, for any $ X\in \Gamma (\mathcal D )$ and  $Z\in {\mathfrak X}(M)$, 
\begin{equation}
h (\nabla_{Z} Y_{0}, X) =\frac{1}{2} H(Y_{0}, X, Z).
\end{equation}

iii)  The Dorfman Lie derivative  ${\mathbf L}_{u^{Y_{0}, f}}$ preserves $\Gamma (L^{h,\mathcal D})$ 
if and only if 
$$
{\mathcal L}_{Y_{0}} \Gamma (\mathcal D )\subset\Gamma (\mathcal D),\
(i_{Y_{0}} F)\vert_{ \mathcal D} = (df)\vert_{\mathcal D}
$$  and, for any  $X\in \Gamma (\mathcal D)$ and  $Z\in {\mathfrak X}(M)$,     
\begin{equation}
h(\nabla_{X} Y_{0}, Z) =\frac{1}{2}H(Y_{0}, X, Z)- f F(X, Z).
\end{equation}

iv) The bundle $L^{h, \mathcal D , X_{0}}$ is integrable if and only if  
the following conditions hold:

\begin{enumerate}

\item  for any $Y, Z\in \Gamma (\mathcal D )$,
\begin{align}
\nonumber& {\mathcal L}_{Y} Z + i F(Y, Z) X_{0}\in \Gamma 
(\mathcal D )\\
\label{lie} & {\mathcal L}_{X_{0}}Y+ i F(X_{0}, Y)X_{0} \in \Gamma (\mathcal D ) ;
\end{align}  
\item   for any $Y, Z\in \Gamma (\mathcal D )$ and $X\in {\mathfrak X}(M)$, 
\begin{align}
\nonumber& h (\nabla_{X} Y, Z) =\frac{1}{2} H(X, Y, Z)\\
\label{conn-even-1}&  h (\nabla_{X}  X_{0}, Y) =- \frac{1}{2} ( i_{X_{0}} H) (X, Y) +  i  F(X, Y) .
\end{align}
\end{enumerate}

\end{lem}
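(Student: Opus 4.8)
The plan is to reduce every assertion to a direct computation of the Dorfman bracket (\ref{dorfmann}) on the explicit generators of the bundles, followed by a decomposition of the result into its $TM$-, $T^{*}M$- and $\mathbb{R}$-components. First I would record the elementary reduction that lets one test involutivity on generators only. By the Leibniz rule C3 in the second slot and the first-slot identity $[fu,v]=f[u,v]-(\pi(v)f)\,u+2\langle u,v\rangle\,\pi^{*}(df)$ (which follows from C3, C4 and C5), closure of $\Gamma(L^{h,\mathcal D})$ (resp.\ $\Gamma(L^{h,\mathcal D,X_{0}})$) under the bracket is equivalent to closure on the spanning sections $X+h(X)$ (resp.\ together with $w:=X_{0}+h(X_{0})+i$); the anomalous term $2\langle u,v\rangle\pi^{*}(df)$ is harmless because in the situations where the lemma is applied $\mathcal D$ is isotropic, so both bundles are isotropic. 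The same reduction, applied with one argument fixed, handles the Lie-derivative statements ii) and iii).

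For i) I would compute $[X+h(X),Y+h(Y)]$ for $X,Y\in\Gamma(\mathcal D)$ from (\ref{dorfmann}) with vanishing $\mathbb{R}$-parts. Its $TM$-component is $\mathcal L_{X}Y$, its $\mathbb{R}$-component is $F(X,Y)$, and its $T^{*}M$-component is $\mathcal L_{X}(h(Y))-i_{Y}d(h(X))+i_{X}i_{Y}H$. Membership in $L^{h,\mathcal D}$ forces the $TM$-component into $\mathcal D$ (involutivity of $\mathcal D$), the $\mathbb{R}$-component to vanish ($F|_{\Lambda^{2}\mathcal D}=0$), and the $T^{*}M$-component to equal $h(\mathcal L_{X}Y)$. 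This last requirement is the technical heart: evaluating it on an arbitrary $Z\in\mathfrak X(M)$ and rewriting all Lie brackets and derivatives through metric compatibility $Z\,h(\cdot,\cdot)=h(\nabla_{Z}\cdot,\cdot)+h(\cdot,\nabla_{Z}\cdot)$ and the vanishing torsion $[\cdot,\cdot]=\nabla_{\cdot}\cdot-\nabla_{\cdot}\cdot$ of $\nabla$, the whole expression collapses to $2h(\nabla_{Z}X,Y)-H(X,Y,Z)=0$, which is exactly (\ref{first-rel}).

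Parts ii) and iii) are the same computation with one argument replaced by $u^{Y_{0}}=Y_{0}+h(Y_{0})$, respectively $u^{Y_{0},f}=Y_{0}-h(Y_{0})+f$, the bracket acting on a generator $X+h(X)$ of $L^{h,\mathcal D}$. The $TM$- and $\mathbb{R}$-components immediately yield $\mathcal L_{Y_{0}}\Gamma(\mathcal D)\subset\Gamma(\mathcal D)$ and the stated scalar constraint ($(i_{Y_{0}}F)|_{\mathcal D}=0$ in ii), $(i_{Y_{0}}F)|_{\mathcal D}=(df)|_{\mathcal D}$ in iii), the $-Y(\lambda)$ and $2\lambda\,i_{Y}F$ terms of (\ref{dorfmann}) with $\lambda=f$ producing the $df$ and $f$ contributions). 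The $T^{*}M$-component is again reduced as in i); here the sign of the covector part of the generating section ($+h(Y_{0})$ versus $-h(Y_{0})$) is exactly what decides whether the surviving condition is a derivative in the arbitrary direction $Z$, as in ii), or a derivative in the $\mathcal D$-direction $X$ together with the $-fF(X,Z)$ term, as in iii).

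Finally, for iv) the new feature is the null generator $w=X_{0}+h(X_{0})+i$, chosen so that $\langle w,w\rangle=h(X_{0},X_{0})-1=0$ and $\langle X+h(X),w\rangle=h(X,X_{0})=0$. I would compute the brackets $[X+h(X),Y+h(Y)]$, $[w,Y+h(Y)]$ and $[w,w]$; the remaining $[Y+h(Y),w]$ is then automatic from $[u,v]+[v,u]=2\pi^{*}d\langle u,v\rangle$ and the orthogonality just noted, while $[w,w]=\pi^{*}d\langle w,w\rangle=0$. Because the target now contains the $w$-line, a bracket lies in $L^{h,\mathcal D,X_{0}}$ precisely when its $\mathbb{R}$-component $ci$ fixes the coefficient $c$, its $TM$-component lies in $\mathcal D$ after subtracting $cX_{0}$, and its $T^{*}M$-component equals $h$ of its full $TM$-component. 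Tracking the scalar parts $F(X,Y)$ and $F(X_{0},Y)$ into $c$ produces exactly the $iF(\cdot,\cdot)X_{0}$ corrections in (\ref{lie}), while the $T^{*}M$-reduction (now carrying the $2\lambda\,i_{Y}F$ term with $\lambda=i$) yields the two covariant-derivative identities (\ref{conn-even-1}), the second of which is $h(\nabla_{X}X_{0},Y)=-\tfrac12(i_{X_{0}}H)(X,Y)+iF(X,Y)$. The main obstacle throughout is not conceptual but the covector bookkeeping: each $T^{*}M$-component is an unwieldy combination of Lie derivatives and interior products that must be massaged, via metric compatibility and vanishing torsion, into clean covariant-derivative form, while keeping careful track of the antisymmetrizations of $H$ and of the factors of $i$ introduced by the null generator.
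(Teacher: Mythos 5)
Your proposal is correct and follows essentially the same route as the paper: expand the Dorfman bracket (\ref{dorfmann}) on the sections $X+h(X)$ and the extra generator $X_{0}+h(X_{0})+i$, read off the $TM$-, $T^{*}M$- and $\mathbb{R}$-components, and convert the covector condition into the stated covariant-derivative identities via metric compatibility and torsion-freeness (the paper only sketches this for part i) and leaves the rest as "similar"). The additional bookkeeping you supply — justifying the reduction to generators through the anomalous term $2\langle u,v\rangle\pi^{*}(df)$, the vanishing of $[w,w]$, and deducing $[Y+h(Y),w]$ from the symmetrized bracket — is accurate and merely makes explicit what the paper's brief proof takes for granted.
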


\begin{proof} The proof is a straightforward computation,  which uses the expression~(\ref{dorfmann}) of the Dorfman bracket. For example, to prove claim  i)  let $X, Y\in \Gamma (\mathcal D )$. Then  
\begin{equation}
[ X + h(X), Y + h(Y) ] = {\mathcal L}_{X}Y + {\mathcal L}_{X} \left( h(Y)\right) -  i_{Y} d h(X) - H(X, Y, \cdot )
+ F(X, Y)
\end{equation}
is a section of $L^{h,\mathcal D}$ if and only if ${\mathcal L}_{X}Y \in \Gamma (\mathcal D)$,  
$F(X, Y)=0$ 
and
$$
h( \mathcal L_{X}Y) =  {\mathcal L}_{X} (h(Y))- i_{Y} d h(X) -  H(X, Y,\cdot ).
$$ 
Applying the above relation  to $Z\in {\mathfrak X}(M)$ and writing it in terms of $\nabla$ 
we obtain (\ref{first-rel}).
The other claims can be obtained similarly.  (For the last equation we observe that 
$h (\nabla_{X}  X_{0}, Y)=-h(\nabla_XY,X_0)$, since $X_0\perp Y$.)

\end{proof}

\begin{lem}\label{general-results-1}  
Let $h$ be a pseudo-Riemannian metric on $M$,   $\mathcal D_{\pm}\subset (TM)_{\mathbb{C}}$  two complex  distributions 
such that $\mathcal D_{+}$ is isotropic
and $X_{0}\in  {\mathfrak X}(M)_\mathbb{C}$, such that $X_{0}$ is
orthogonal to $\mathcal D_{+}$ and   $h ( X_{0}, X_{0}) =1.$ 
Assume that  the following relations hold:\

i) for any $X\in \Gamma (\mathcal D_{-})$ and $Y\in \Gamma (\mathcal D_{+ })$,
\begin{equation}\label{n1}
\nabla_{Y}X -\frac{1}{2} H(X, Y) \in \Gamma (\mathcal D_{-});
\end{equation}
ii) for any $X\in \Gamma (\mathcal D_{-})$ and $Y\in \Gamma (\mathcal D_{+})$, 
\begin{equation}\label{n2}
\nabla_{X} Y -\frac{1}{2} H(X, Y) \in \Gamma  (\mathcal D_{+}\oplus \mathrm{span}_{\mathbb{C}}\{ X_{0}\} ));
\end{equation}
iii) for any $X\in \Gamma (\mathcal D_{-})$,
\begin{equation}\label{n3}
\nabla_{X_{0}} X + \frac{1}{2} H(X_{0}, X) \in \Gamma (\mathcal D_{-} );
\end{equation}
iv) for any $X\in \Gamma (\mathcal D_{-})$,   
\begin{equation}\label{n4}
\nabla_{X}X_{0} + \frac{1}{2} H(X_{0}, X)- iF(X) \in \Gamma (\mathcal D_{+} \oplus \mathrm{span}_{\mathbb{C}}\{ X_{0}\} ); 
\end{equation}
v) for $X\in \Gamma (\mathcal D_{-})$, 
\begin{equation}\label{f-d}
F(X) \in \Gamma (\mathcal D_{-}).
\end{equation}
Then 
\begin{equation}\label{partial-integrable}
[ \Gamma ( L^{-h, {\mathcal D}_{-}} ), \Gamma ( L^{h, \mathcal D_{+}, X_{0} } ) ] \subset 
\Gamma ( L^{-h,\mathcal D_{-}} \oplus  L^{h, \mathcal D_{+}, X_{0}}).
\end{equation}
\end{lem}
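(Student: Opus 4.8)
The plan is to verify the inclusion~(\ref{partial-integrable}) directly from the formula~(\ref{dorfmann}) for the Dorfman bracket, after first reducing the problem to a check on generating sections. As a $C^{\infty}(M,\mathbb{C})$-module, $L^{-h,\mathcal{D}_{-}}$ is locally generated by the sections $u:=X-h(X)$ with $X\in\Gamma(\mathcal{D}_{-})$, while $L^{h,\mathcal{D}_{+},X_{0}}$ is generated by $v:=Y+h(Y)$ with $Y\in\Gamma(\mathcal{D}_{+})$ together with $w:=X_{0}+h(X_{0})+i$. The first thing I would record is that these two bundles are mutually orthogonal: using the scalar product~(\ref{scalar}) and the symmetry of $h$ one finds $\langle u,v\rangle=\langle u,w\rangle=0$ on all generators. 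Since the failure of $C^{\infty}$-linearity of the bracket in the first slot is proportional to $\langle u,v\rangle$ (by axioms C4, C5), and the second slot obeys the Leibniz rule C3, the two arising correction terms land in $\Gamma(L^{-h,\mathcal{D}_{-}})$ and $\Gamma(L^{h,\mathcal{D}_{+},X_{0}})$ respectively while the orthogonality term drops out. Hence it suffices to show that $[u,v]$ and $[u,w]$ lie in $\Gamma(L^{-h,\mathcal{D}_{-}}\oplus L^{h,\mathcal{D}_{+},X_{0}})$ for the generators above.

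Next I would compute the two brackets from~(\ref{dorfmann}). For $[u,v]$ the scalar part is $F(X,Y)$ and the vector part is $[X,Y]$, while the one-form part, after rewriting $\mathcal{L}_{X}(i_{Y}h)+i_{Y}d(i_{X}h)$ via the Koszul formula, takes the shape $i_{W}h$ with $W=\nabla_{X}Y+\nabla_{Y}X-H(X,Y)$ (the sign of the $H$-contribution being fixed by the convention in~(\ref{dorfmann})). To decide membership I would match $[u,v]$ against the general element $(A-h(A))+(B+h(B))+c\,(X_{0}+h(X_{0})+i)$ with $A\in\mathcal{D}_{-}$, $B\in\mathcal{D}_{+}$ and $c\in C^{\infty}(M,\mathbb{C})$. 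Comparing scalar, vector and one-form parts, and using that the graphs of $+h$ and $-h$ are complementary, forces $c=-iF(X,Y)$, $A=\tfrac12(V-W)$ and $B=\tfrac12(V+W)-cX_{0}$ with $V=[X,Y]$. A short simplification gives $A=-(\nabla_{Y}X-\tfrac12 H(X,Y))$, which lies in $\mathcal{D}_{-}$ by hypothesis~(\ref{n1}), and $B=\nabla_{X}Y-\tfrac12 H(X,Y)+iF(X,Y)X_{0}$. The bracket $[u,w]$ is handled identically: hypotheses~(\ref{n3}) and~(\ref{f-d}) are precisely what place the component $A=-(\nabla_{X_{0}}X-\tfrac12 H(X,X_{0}))+iF(X)$ into $\mathcal{D}_{-}$, while~(\ref{n4}) controls the complementary part.

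The step I expect to be the main obstacle is showing that $B$ actually lands in $\mathcal{D}_{+}$, and not merely in $\mathcal{D}_{+}\oplus\mathrm{span}_{\mathbb{C}}\{X_{0}\}$. Hypothesis~(\ref{n2}) only guarantees $\nabla_{X}Y-\tfrac12 H(X,Y)=B'+\gamma X_{0}$ with $B'\in\mathcal{D}_{+}$, so I must prove that the coefficient $\gamma=h(\nabla_{X}Y-\tfrac12 H(X,Y),X_{0})$ equals exactly $-iF(X,Y)$, so that it cancels the $+iF(X,Y)X_{0}$ appearing in $B$. This is where the remaining hypotheses conspire: differentiating $h(Y,X_{0})=0$ yields $h(\nabla_{X}Y,X_{0})=-h(Y,\nabla_{X}X_{0})$; hypothesis~(\ref{n4}) together with the isotropy of $\mathcal{D}_{+}$ and $X_{0}\perp\mathcal{D}_{+}$ then evaluates $h(Y,\nabla_{X}X_{0})=-\tfrac12 H(X_{0},X,Y)+iF(X,Y)$; and finally the cyclic symmetry $H(X_{0},X,Y)=H(X,Y,X_{0})$ of the three-form collapses the two $H$-terms, leaving $\gamma=-iF(X,Y)$ as needed. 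An entirely analogous but slightly easier computation governs the $X_{0}$-coefficient in $[u,w]$, now using $h(\nabla_{X}X_{0},X_{0})=0$ (from $h(X_{0},X_{0})\equiv 1$) and $H(X_{0},X,X_{0})=0$. Collecting these identifications gives $[u,v],[u,w]\in\Gamma(L^{-h,\mathcal{D}_{-}}\oplus L^{h,\mathcal{D}_{+},X_{0}})$, which by the reduction of the first paragraph establishes~(\ref{partial-integrable}).
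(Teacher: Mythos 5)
Your proposal is correct and follows essentially the same route as the paper: a direct computation of the two brackets $[X-h(X),Y+h(Y)]$ and $[X-h(X),X_{0}+h(X_{0})+i]$ from (\ref{dorfmann}), with the one-form part rewritten via the Levi-Civita connection and hypotheses (\ref{n1})--(\ref{f-d}) invoked at the same points; your explicit check that the $X_{0}$-coefficient $\gamma$ equals $-iF(X,Y)$ is exactly the paper's identity $F(X,Y)=i\,h(\mathcal F^{\prime}_{+}(X,Y),X_{0})$, just packaged through the decomposition $(A,B,c)$ instead of the graphs over $\mathcal D_{-}$ and $\mathcal D_{+}\oplus\mathrm{span}_{\mathbb{C}}\{X_{0}\}$.
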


\begin{proof}
In order to prove (\ref{partial-integrable}) we need to show that   $[ X - h(X), Y + h(Y) ] $
and  $[ X - h(X), X_{0} + h(X_{0}) + i ] $ are sections of 
$L^{-h,\mathcal D_{-}} \oplus  L^{h, \mathcal D_{+}, X_{0}}$, for any  
$X\in \Gamma (\mathcal D_{-})$ and $Y\in   \Gamma (\mathcal D_{+})$.
Remark that 
\begin{align*}
\nonumber&  L^{-h, {\mathcal D}_{-}} \oplus  L^{h, \mathcal D_{+}, X_{0} }= \\
\nonumber&\{ X - h(X)\mid X\in \mathcal D_{-} \} + \{ Y+ h(Y) + i h(Y, X_{0})\mid Y\in \mathcal D_{+} \oplus  \mathrm{span}_{\mathbb{C}} \{ X_{0}\} \} .
\end{align*}
Writing $\mathcal L_{X} Y = {\mathcal F}^{\prime}_{+}(X, Y) - {\mathcal F}^{\prime}_{-}(X, Y) $, 
where 
\begin{align*}
\nonumber& {\mathcal F}^{\prime}_{+}(X, Y):=  \nabla_{X} Y- \frac{1}{2} H (X, Y)\in \Gamma (\mathcal D_{+} \oplus \mathrm{span}_{\mathbb{C}}\{ X_{0}\} ),\\
\nonumber& {\mathcal F}^{\prime}_{-}(X, Y):= \nabla_{Y} X-\frac{1}{2} H(X, Y)\in \Gamma (\mathcal D_{-}), 
\end{align*} 
(compare relations (\ref{n1}) and (\ref{n2})), we obtain 
\begin{align*}
\nonumber & [ X - h(X), Y + h(Y) ] \\
\nonumber&  ={\mathcal F}^{\prime}_{+}(X, Y) - {\mathcal F}^{\prime}_{-}(X, Y)  + \mathcal L_{X} \left( h(Y)\right)  + i_{Y} d h(X) - H(X, Y, \cdot )
+ F(X, Y).
\end{align*}
Relation (\ref{n4}), together with the fact that  $\mathcal D_{+}$ is isotropic and $X_{0}$ is orthogonal to $\mathcal D_{+}$,  
imply that 
\begin{align}
\nonumber&  F(X, Y) = i h({\mathcal F}^{\prime}_{+}(X, Y) , X_{0}),\\
 \nonumber& \mathcal L_{X} (h(Y) )+ i_{Y} d h(X) - H(X, Y,\cdot ) = h( {\mathcal F}^{\prime}_{+}(X, Y)) + h ( {\mathcal F}^{\prime}_{-}(X, Y) ),
 \end{align}
from where we deduce that 
\begin{align}
\nonumber [ X - h(X), Y + h(Y) ] &=  {\mathcal F}^{\prime}_{+}(X, Y) +h ( {\mathcal F}^{\prime}_{+}(X, Y) )+ 
i h ( {\mathcal F}^{\prime}_{+}(X, Y), X_{0})\\ 
\label{partial-integrable-2}&  - {\mathcal F}^{\prime}_{-}(X, Y) + h( {\mathcal F}^{\prime}_{-}(X, Y))
\end{align}
is a section of  $L^{-h,\mathcal D_{-}} \oplus  L^{h, \mathcal D_{+}, X_{0}}.$
Similar computations show that 
\begin{align}
\nonumber [ X - h(X), X_{0} + h(X_{0}) + i ] &=  {\mathcal F}_{+} (X) + h ({\mathcal F}_{+} (X))  + i  h ({\mathcal F}_{+} (X), X_{0} )\\
\label{partial-integrable-1}& - {\mathcal F}_{-} (X) +   h ({\mathcal F}_{-} (X)) 
\end{align} 
is also a section of $L^{-h,\mathcal D_{-}} \oplus  L^{h, \mathcal D_{+}, X_{0}}$, where
\begin{align*}
\nonumber& \mathcal F_{+} (X) := \nabla_{X} X_{0} +\frac{1}{2} H(X_{0}, X) - i F(X)  \in \Gamma (\mathcal D_{+} \oplus \mathrm{span}_{\mathbb{C}} \{ X_{0} \} ),\\
\nonumber& \mathcal F_{-} (X) := \nabla_{X_{0}} X+\frac{1}{2} H(X_{0}, X) - i F(X)\in \Gamma (\mathcal D_{-}),
\end{align*} 
(from relations (\ref{n3}), (\ref{n4})  and
(\ref{f-d})).  
\end{proof}

\subsubsection{Application of  general integrability results }

We now turn to the setting of Section \ref{integrability-statements}, and, using the results
from the previous section, we prove 
Theorems \ref{integrability-odd} and \ref{1-even}.
Consider the setting of these theorems. 
We start by computing  the bundles $L_{1}^{\pm}$ associated to  the $B_{n}$-generalized pseudo-Hermitian structure 
$(\mathcal{G}, \mathcal F )$, as in Proposition \ref{criterion}. 
They  turn out to be of the form $L^{h,\mathcal D}$ or $L^{h, \mathcal D, X_{0}}$ for suitably chosen $h$,
$\mathcal D$ and $X_{0}.$

\begin{lem}\label{computation-l-pm}  i) If $ n$ is odd then  
\begin{align}
\nonumber L_{1}^{+}& = \{   X + g(X) \mid   X\in T_{J_{+}}^{(1,0)}M\} \oplus 
\mathrm{span}_{\mathbb{C}}\{ X_{+} + g(X_{+}) + i\};\\ 
\label{cpm-odd}L_{1}^{-} &= \{ X - g(X)\mid X\in T_{J_{-}}^{(1,0)}M\} .
\end{align}
ii) If $ n$ is even 
then
\begin{align}
\nonumber L_{1}^{+} &= \{   X  + g(X)\mid X\in   T^{(1,0)}_{J_{+}}M \}\oplus \mathrm{span}_{\mathbb{C}} \{ V_{-} + g (V_{-}) + i \} ;\\
\label{int-1} \L_{1}^{-} &= \{  X - g(X)\mid X\in T_{J_{-}}^{(1,0)} M\} ,
\end{align}
where $V_{-} :=\frac{1}{ 1- c_{+}^{2} } ( X_{-} - i c_{+} X_{+})$
is of norm one and orthogonal to $T_{J_{+}}^{(1,0)}M$.
\end{lem}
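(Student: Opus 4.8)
The plan is to identify $L_1^{\pm}$ with the intersections $L_1 \cap (E_{\pm})_{\mathbb{C}}$, and then to compute each of these by transporting the eigenbundle equation for $\mathcal F$ through the canonical isomorphisms~(\ref{iso-can}) and using the block forms of $\mathcal F|_{E_{\pm}}$ obtained in the proof of Proposition~\ref{data-on-M}. First I would note that, since $\mathcal{G}^{\mathrm{end}}$ commutes with $\mathcal F$, both $(E_{\pm})_{\mathbb{C}}$ are $\mathcal F$-invariant, so $L_1 = (L_1 \cap (E_+)_{\mathbb{C}}) \oplus (L_1 \cap (E_-)_{\mathbb{C}})$. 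Comparing this with $L_2 = (L_1 \cap (E_+)_{\mathbb{C}}) \oplus (\bar L_1 \cap (E_-)_{\mathbb{C}})$ and using $L_1 \cap \bar L_1 = 0$ gives $L_1^+ = L_1 \cap L_2 = L_1 \cap (E_+)_{\mathbb{C}}$ and $L_1^- = L_1 \cap \bar L_2 = L_1 \cap (E_-)_{\mathbb{C}}$.

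For $L_1^-$ the computation is immediate in both parities: under the first isomorphism~(\ref{iso-can}) one has $E_- \cong TM$ with $\mathcal F|_{E_-}$ corresponding to $J_-$, so its $i$-eigenbundle is carried to $T^{(1,0)}_{J_-}M$; pulling back, $X$ corresponds to $X - g(X)$, which yields the second line of both (\ref{cpm-odd}) and (\ref{int-1}). For $L_1^+$ I would use the second isomorphism~(\ref{iso-can}), $E_+ \cong TM \oplus \mathbb{R}$, under which $\mathcal F|_{E_+}$ becomes $\mathcal J_+$ (odd case) or $\mathcal F_+$ (even case), and solve $\mathcal F|_{E_+}(Y+\lambda) = i(Y+\lambda)$ directly. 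In the odd case, writing $Y = Y^{\perp} + \mu X_+$ with $Y^{\perp}\in X_+^{\perp}$ and using $g(X_+,X_+)=1$ and $J_+X_+=0$, the equation splits into $J_+Y^{\perp}=iY^{\perp}$ and $\lambda = i\mu$; this produces the piece $\{X + g(X)\mid X\in T^{(1,0)}_{J_+}M\}$ together with the extra line $\mathrm{span}_{\mathbb{C}}\{X_+ + g(X_+) + i\}$, proving (\ref{cpm-odd}).

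The even case is the main computational step. Here $J_+$ mixes $X_+$ and $X_-$ via $J_+X_+ = -c_+X_-$ and $J_+X_- = c_+X_+$, so on $\mathrm{span}\{X_+,X_-\}$ its eigenvalues are $\pm i c_+ \neq \pm i$ (using $c_+^2\neq 1$); consequently $T^{(1,0)}_{J_+}M$ lies inside $(\{X_+,X_-\}^{\perp})_{\mathbb{C}}$ and is automatically $g$-orthogonal to $X_{\pm}$. Each $X\in T^{(1,0)}_{J_+}M$ then gives an eigenvector $(X,0)$, and the remaining one-dimensional piece must be of the form $(V_-,\alpha)$ with $V_-\in\mathrm{span}_{\mathbb{C}}\{X_+,X_-\}$. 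Solving the eigenvalue equation with the block form of $\mathcal F_+$, together with $g(X_{\pm},X_{\pm}) = 1-c_+^2$ and $g(X_+,X_-)=0$, forces $V_- = \frac{1}{1-c_+^2}(X_- - ic_+X_+)$ and $\alpha = i$. I would then verify $g(V_-,V_-)=1$ and $g(V_-,X)=0$ for $X\in T^{(1,0)}_{J_+}M$ by direct substitution, and transport back via~(\ref{iso-can}) to obtain (\ref{int-1}); a dimension count, $\frac{n-2}{2} + 1 = \frac{n}{2}$, confirms that no eigenvectors are omitted.

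The hardest part will be the even-case identification of $V_-$: the natural guess that the extra direction is simply $X_-$ fails, and one must recognize that the correct vector is the specific complex combination $\frac{1}{1-c_+^2}(X_- - ic_+X_+)$ and check its normalization. Everything else is linear algebra carried through the fixed isomorphisms~(\ref{iso-can}).
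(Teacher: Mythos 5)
Your proposal is correct and follows essentially the same route as the paper: the paper's proof likewise reduces everything to the block formulas for $\mathcal{J}_{+}$ and $\mathcal{F}_{+}$ from Proposition~\ref{data-on-M} via the isomorphisms (\ref{iso-can}), checking that $X_{+}+g(X_{+})+i$ and $V_{-}+g(V_{-})+i$ are $i$-eigenvectors of $\mathcal F$, with the splitting $L_{1}^{\pm}=L_{1}\cap (E_{\pm})_{\mathbb{C}}$ already recorded in the text. Your only addition is that you derive $V_{-}$ by solving the eigenvalue equation (and add a rank count) rather than merely verifying it, which is a harmless elaboration of the same argument.
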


\begin{proof}
The proof is  straightforward from Proposition
\ref{data-on-M}. In particular, one can easily check from the formulas for 
$\mathcal{J}_+$ and $\mathcal{F}_+$ in Proposition~\ref{data-on-M} that 
$\mathcal{J}_+(X_++i) = i(X_++i)$ and $\mathcal{F}_+(V_-+i)=i(V_-+i)$, which are equivalent to 
$\mathcal{F}(X_+ + g(X_+) + i) = i(X_+ + g(X_+) + i)$ and $\mathcal{F}(V_-+g(V_-)+i)=i(V_-+g(V_-)+i)$, respectively.
\end{proof}

\begin{cor}\label{pre-final-1}  Assume that $n$ is odd. 
The following are equivalent:\

i) $( \mathcal{G}, \mathcal F )$ is a $B_{n}$-generalized pseudo-K\"{a}hler structure;\

ii) $L_{1}^{\pm}$ are integrable  and   the Dorfman Lie derivative ${\mathbf L}_{u_{-}}$
preserves $\Gamma ( L_{1}^{-})$, where $u_{-}:= X_{-} - g(X_{-})$;\

iii)   $T_{J_{-}}^{(1,0)}M$ is involutive,  $F\vert_{\Lambda^{2} T_{J_{-}}^{(1,0)}M}=0$,
$i_{X_{-}} F=0$, the Lie derivative ${\mathcal L}_{X_{-}}$ preserves $\Gamma (T_{J_{-}}^{(1,0)}M)$
and the following relations hold:

\begin{enumerate}

\item for any $X, Y\in \Gamma (T_{J_{-}}^{(1,0)}M)$ and $Z, V\in {\mathfrak X}(M)$, 
\begin{align}
\nonumber& g (\nabla_{Z} X, Y) =-\frac{1}{2} H(X, Y, Z),\\
\label{nabla}&  g(\nabla_{Z} X_{-}, V) = -\frac{1}{2} H(X_{-}, V, Z);
\end{align}

\item for any $Y, Z\in \Gamma (T_{J_{+}}^{(1,0)}M)$, 
\begin{align}
\nonumber& {\mathcal L}_{Y} Z + i F(Y, Z) X_{+}\in \Gamma (T_{J_{+}}^{(1,0)}M)\\
\label{lie-conditii}& {\mathcal L}_{X_{+}} Z + i F(X_{+}, Z) X_{+}\in \Gamma (T_{J_{+}}^{(1,0)}M);
\end{align}

\item for any $Y, Z\in \Gamma (T_{J_{+}}^{(1,0)}M)$ and $X\in {\mathfrak X}(M)$,
\begin{align}
\nonumber& g (\nabla_{X} Y, Z) =\frac{1}{2} H(X, Y, Z)\\
\label{nabla-plus} & g (\nabla_{X} X_{+}, Y) =- \frac{1}{2}( i_{X_{+}} H)(X, Y) + i  F(X, Y).
\end{align}
\end{enumerate}
\end{cor}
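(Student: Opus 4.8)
The plan is to prove the cycle of implications i) $\Rightarrow$ ii) $\Rightarrow$ iii) $\Rightarrow$ i). Since $n$ is odd we have $u_{0}=u_{-}$, so the implication i) $\Rightarrow$ ii) is immediate from Proposition~\ref{criterion}: a $B_{n}$-generalized pseudo-K\"ahler structure has $L_{1}^{\pm}$ integrable and, by Lemma~\ref{crit-first}, its Dorfman Lie derivative ${\mathbf L}_{u_{-}}$ preserves $\Gamma(L_{1}^{-})$, which is precisely what ii) asserts. Here I deliberately discard the integrability of $L_{1}$ that Proposition~\ref{criterion} also provides; it will be recovered in the last step of the cycle.

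The bulk of the work is the equivalence ii) $\Leftrightarrow$ iii), obtained by translating the three bundle-theoretic conditions of ii) into tensorial ones. By Lemma~\ref{computation-l-pm} i) I would write $L_{1}^{-}=L^{-g,\,T_{J_{-}}^{(1,0)}M}$ and $L_{1}^{+}=L^{g,\,T_{J_{+}}^{(1,0)}M,\,X_{+}}$, noting that $X_{+}$ has $g$-norm one and is $g$-orthogonal to the (isotropic) distribution $T_{J_{+}}^{(1,0)}M$. Then I apply Lemma~\ref{general-results}: part i) with $h=-g$ turns integrability of $L_{1}^{-}$ into involutivity of $T_{J_{-}}^{(1,0)}M$, the vanishing $F|_{\Lambda^{2}T_{J_{-}}^{(1,0)}M}=0$ and the first relation~(\ref{nabla}); part iv) with $h=g$ and $X_{0}=X_{+}$ turns integrability of $L_{1}^{+}$ into (\ref{lie-conditii}) and (\ref{nabla-plus}); and part ii) with $h=-g$ and $u_{-}=u^{X_{-}}$ turns ${\mathbf L}_{u_{-}}$-invariance of $\Gamma(L_{1}^{-})$ into ${\mathcal L}_{X_{-}}$-invariance of $\Gamma(T_{J_{-}}^{(1,0)}M)$, the restriction $(i_{X_{-}}F)|_{T_{J_{-}}^{(1,0)}M}=0$, and the second relation~(\ref{nabla}) restricted to $T_{J_{-}}^{(1,0)}M$. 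The one genuine point is to upgrade the restricted identities to the full conditions of iii): since $X_{-}$, $F$ and $J_{-}$ are real, conjugation extends the vanishing from $T_{J_{-}}^{(1,0)}M$ to $T_{J_{-}}^{(0,1)}M$, while the $X_{-}$-direction is automatic from $g(X_{-},X_{-})=1$ and the skew-symmetry of $H$ (which give $g(\nabla_{Z}X_{-},X_{-})=\tfrac12 Z\,g(X_{-},X_{-})=0$ and $F(X_{-},X_{-})=0$). In view of the splitting $(TM)_{\mathbb{C}}=T_{J_{-}}^{(1,0)}M\oplus T_{J_{-}}^{(0,1)}M\oplus \mathbb{C}X_{-}$, this yields $i_{X_{-}}F=0$ and the second relation~(\ref{nabla}) for all vector fields, exactly as in iii).

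For iii) $\Rightarrow$ i) it remains, by Proposition~\ref{criterion}, to establish integrability of $L_{1}=L_{1}^{+}\oplus L_{1}^{-}$, the other hypotheses being available from ii). Because $L_{1}$ is isotropic we have $[a,b]=-[b,a]$ for $a,b\in\Gamma(L_{1})$, so together with the self-integrability of $L_{1}^{\pm}$ it suffices to produce the single cross-bracket $[\Gamma(L_{1}^{-}),\Gamma(L_{1}^{+})]\subset\Gamma(L_{1})$. This is exactly the conclusion of Lemma~\ref{general-results-1} applied with $h=g$, ${\mathcal D}_{\pm}=T_{J_{\pm}}^{(1,0)}M$ and $X_{0}=X_{+}$, whose hypotheses (\ref{n1})--(\ref{f-d}) are repackagings of the relations in iii) (using $J_{\pm}X_{\pm}=0$, the $g$-skew-symmetry of $J_{\pm}$, the isotropy of $T_{J_{+}}^{(1,0)}M$ and the normalizations). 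Once $L_{1}$ is integrable, all hypotheses of Proposition~\ref{criterion} hold and $(\mathcal{G},\mathcal F)$ is $B_{n}$-generalized pseudo-K\"ahler.

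The hard part will be the bookkeeping in the middle paragraph: carrying the two metrics $\pm g$ correctly through the several parts of Lemma~\ref{general-results}, matching $u_{-}$ with $u^{X_{-}}$, and above all upgrading the eigendistribution-restricted conditions to conditions on all of $TM$ by reality and the norm normalization. Checking that iii) supplies the hypotheses (\ref{n1})--(\ref{f-d}) of Lemma~\ref{general-results-1} is a similar, purely algebraic, verification; I expect no conceptual difficulty there, only careful index and sign management.
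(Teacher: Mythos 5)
Your proposal is correct and follows essentially the same route as the paper: i)\,$\Rightarrow$\,ii) via Proposition~\ref{criterion}, ii)\,$\Leftrightarrow$\,iii) via Lemma~\ref{general-results} i), iv), ii) with the identical choices $h=\mp g$, $\mathcal D=T^{(1,0)}_{J_\mp}M$, $X_0=X_+$, $Y_0=X_-$, and iii)\,$\Rightarrow$\,i) via Proposition~\ref{criterion} together with Lemma~\ref{general-results-1} specialized to $\mathcal D_\pm=T^{(1,0)}_{J_\pm}M$, $h=g$, $X_0=X_+$. Your explicit upgrade of the restricted identities (on $T^{(1,0)}_{J_-}M$) to all of $TM$ using reality, the splitting $(TM)_{\mathbb C}=T^{(1,0)}_{J_-}M\oplus T^{(0,1)}_{J_-}M\oplus\mathbb{C}X_-$ and $g(X_-,X_-)=1$ is a correct filling-in of a step the paper leaves implicit.
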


\begin{proof} The implication i) $\Longrightarrow $ ii) follows from
Proposition \ref{criterion}, while the equivalence between ii) and iii) follows from 
Lemma \ref{general-results} i) with $h:= -g$ and  $\mathcal D:= {T}_{J_{-}}^{(1,0)}M$, 
Lemma \ref{general-results} iv) with $h:= g$, $\mathcal D:= {T}_{J_{+}}^{(1,0)}M$
and $X_{0} := X_{+}$, and Lemma \ref{general-results} ii) with $h:= - g$,
$\mathcal D:= T_{J_{-}}^{(1,0)}M$ and  $Y_{0} := X_{-}$. In order to prove 
that iii) implies i)
we apply again Proposition \ref{criterion}. 
We need to show 
that the relations from iii) imply that $L_{1}$ is integrable, or 
\begin{equation}
[ \Gamma (L_{1}^{+}), \Gamma ( L_{1}^{-} ) ] \subset  \Gamma (L_1) =\Gamma (L_{1}^{+}\oplus L_{1}^{-}).
\end{equation}
The above relation  follows  from Lemma \ref{general-results-1}, by  
noticing that the assumptions
from this lemma   
are implied by the conditions 
from iii).  (The lemma is specialized to $\mathcal{D}_\pm = T^{(1,0)}_{J_\pm}M$, $h=g$ and $X_0=X_+$).
For example, the first relation
(\ref{nabla}) implies that 
$\nabla_{Y} X - \frac{1}{2} H(X, Y)$ is orthogonal to  ${T}_{J_-}^{(1,0)}M$, i.e.\ is  a section of
 ${T}_{J_{-}}^{(1,0)}M\oplus \mathrm{span}_{\mathbb{C}}\{ X_{-} \} $, 
for any  $X\in \Gamma ({T}_{J_{-}}^{(1,0)}M)$ and  $Y\in \Gamma (T_{J_{+}}^{(1,0)}M)$.
From the second relation (\ref{nabla}),  it is also orthogonal to  $X_{-}$.
Relation (\ref{n1}) follows. The other relations can be checked similarly.
\end{proof}

\begin{cor}\label{pre-final-2} Assume that $n$ is even. The following are equivalent:

i) $(\mathcal{G}, \mathcal F  )$ is a $B_{n}$-generalized  pseudo-K\"{a}hler structure;\

ii)  $L_{1}^{\pm}$ are integrable  and   the Dorfman Lie derivative ${\mathbf L}_{u_{+}}$
preserves $\Gamma ( L_{1}^{-})$, where $u_{+}:= X_{+} + g(X_{+}) +c_{+}$;\

iii)  $J_{-}$ is integrable,  $F\vert_{\Lambda^{2}T^{(1,0)}_{J_{-}}M}=0$  and the following relations hold:

\begin{enumerate}

\item for any  $X, Y\in\Gamma ( { T}^{(1,0)}_{J_{-}}M)$ and 
$Z\in {\mathfrak X}(M)$, 
\begin{align}
\nonumber& g( \nabla_{Z} X, Y) =-\frac{1}{2} H(X, Y, Z),\\ 
\label{even-B-u}& g(\nabla_{X} X_{+}, Z) =- \frac{1}{2}  ( i_{X_{+}} H)(X, Z) +  c_{+} F(X, Z) ;
\end{align}

 \item  for any $X, Y\in \Gamma (T_{J_{+}}^{(1,0)}M)$,
\begin{align}
\nonumber& {\mathcal L}_{X} Y + i F(X, Y) V_{-}\in \Gamma 
(T^{(1,0)}_{J_{+}}M)\\
\label{lie} & {\mathcal L}_{V_{-}}X+ i F(V_{-}, X)V_{-} \in \Gamma (T^{(1,0)}_{J_{+}}M),
\end{align}
where, we recall,  $V_{-} =\frac{1}{1- c_{+}^{2}}( X_{-} - i c_{+} X_{+}).$\ 
 
\item for any $X, Y\in \Gamma  (T_{J_{+}}^{(1,0)}M)$ and $Z\in {\mathfrak X}(M)$, 
\begin{align}
\nonumber& g(\nabla_{Z} X, Y) =\frac{1}{2} H(X, Y, Z)\\
\label{conn-even}& g (\nabla_{Z}  V_{-}, X) =\frac{1}{2} ( i_{V_{-}} H) (X, Z) - i  F(X, Z),  
\end{align} 

\item \begin{equation}\label{lasteq:eq}i_{X_{+}}F = dc_+.\end{equation}

\end{enumerate}
\end{cor}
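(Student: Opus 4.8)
The plan is to follow the template of the proof of Corollary~\ref{pre-final-1}, replacing the odd-dimensional bundles by their even-dimensional counterparts from Lemma~\ref{computation-l-pm} ii). The starting observation is that $L_1^- = L^{-g,\, T^{(1,0)}_{J_-}M}$ and $L_1^+ = L^{g,\, T^{(1,0)}_{J_+}M,\, V_-}$ are exactly of the two model types treated in Lemma~\ref{general-results}, with $V_-$ the complex, norm-one vector field orthogonal to $T^{(1,0)}_{J_+}M$. Moreover, the normalized kernel generator is $u_+ = X_+ + g(X_+) + c_+$, which coincides with $u^{X_+,\,c_+}$ in the notation of Lemma~\ref{general-results} iii) once one takes $h = -g$ (so that $-h(X_+) = g(X_+)$). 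With these identifications in place, Proposition~\ref{criterion} reduces the pseudo-K\"ahler condition to the integrability of $L_1^\pm$ together with the $\mathbf{L}_{u_+}$-invariance of $\Gamma(L_1^-)$.

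For i) $\Rightarrow$ ii) I invoke Proposition~\ref{criterion} directly, using Lemma~\ref{crit-first} to replace $\mathbf{L}_{u_+}$-invariance of $\Gamma(L_1^+)$ by the equivalent invariance of $\Gamma(L_1^-)$. For ii) $\Leftrightarrow$ iii) I feed the three model bundles into Lemma~\ref{general-results}: part i) with $h=-g$, $\mathcal D = T^{(1,0)}_{J_-}M$ yields the involutivity of $T^{(1,0)}_{J_-}M$ (equivalently, integrability of $J_-$), the vanishing $F|_{\Lambda^2 T^{(1,0)}_{J_-}M}=0$, and the first relation of (\ref{even-B-u}); part iv) with $h=g$, $\mathcal D = T^{(1,0)}_{J_+}M$, $X_0 = V_-$ yields the Lie-derivative conditions (\ref{lie}) and the covariant-derivative conditions (\ref{conn-even}); and part iii) with $h=-g$, $\mathcal D = T^{(1,0)}_{J_-}M$, $Y_0 = X_+$, $f = c_+$ yields the second relation of (\ref{even-B-u}) together with the restricted identity $(i_{X_+}F)|_{T^{(1,0)}_{J_-}M} = (dc_+)|_{T^{(1,0)}_{J_-}M}$. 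Throughout I must track the sign flip coming from $h=-g$ and the antisymmetry of $H$ and $F$, which is precisely what makes the signs in (\ref{even-B-u}) and (\ref{conn-even}) come out correctly.

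Two bookkeeping points then finish the equivalence ii) $\Leftrightarrow$ iii). First, the restricted identity must be promoted to the full relation (\ref{lasteq:eq}): since $X_+$, $F$ and $c_+$ are real, the one-form $i_{X_+}F - dc_+$ is real, and a real one-form vanishing on $T^{(1,0)}_{J_-}M$ also vanishes on its conjugate $T^{(0,1)}_{J_-}M$, hence on all of $(TM)_\mathbb{C}$ (here one uses that $J_-$ is an honest almost complex structure in the even case, so the two bundles span). Second, the condition $\mathcal L_{X_+}\Gamma(T^{(1,0)}_{J_-}M)\subset \Gamma(T^{(1,0)}_{J_-}M)$ produced by Lemma~\ref{general-results} iii) does not appear in list iii) because it is redundant: writing $\mathcal L_{X_+}X = \nabla_{X_+}X - \nabla_X X_+$ and pairing with $Y\in T^{(1,0)}_{J_-}M$, the two relations in (\ref{even-B-u}) together with the cyclic symmetry of $H$ reduce $g(\mathcal L_{X_+}X,Y)$ to $-c_+ F(X,Y)$, which vanishes by $F|_{\Lambda^2 T^{(1,0)}_{J_-}M}=0$.

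Finally, for iii) $\Rightarrow$ i) I again appeal to Proposition~\ref{criterion}: it remains to show that $L_1 = L_1^+\oplus L_1^-$ is integrable, i.e.\ $[\Gamma(L_1^+),\Gamma(L_1^-)]\subset\Gamma(L_1)$. This is supplied by Lemma~\ref{general-results-1} applied with $\mathcal D_\pm = T^{(1,0)}_{J_\pm}M$ (note $T^{(1,0)}_{J_+}M$ is $g$-isotropic, as the lemma requires), $h=g$ and $X_0 = V_-$; the task is to verify that hypotheses (\ref{n1})--(\ref{f-d}) are consequences of the relations in iii), exactly as in the odd case. I expect the main obstacle to be this last verification together with the attendant sign and antisymmetry bookkeeping: because $V_-$ is genuinely complex (it carries the factor $ic_+$), one must translate the covariant-derivative statements about $V_-$ in (\ref{conn-even}) back into statements about the real fields $X_+$ and $X_-$, and then check that the orthogonality and isotropy properties of $T^{(1,0)}_{J_\pm}M$ and $V_-$ make each of (\ref{n1})--(\ref{f-d}) follow. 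This step is purely computational once the dictionary is fixed, but it is where all the delicate signs reside.
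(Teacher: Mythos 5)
Your proposal is correct and follows essentially the same route as the paper: Proposition~\ref{criterion} (with Lemma~\ref{crit-first}) for i)~$\Rightarrow$~ii), then Lemma~\ref{general-results} i), iii), iv) with exactly the parameter choices $h=-g$, $\mathcal D=T^{(1,0)}_{J_-}M$; $h=-g$, $Y_0=X_+$, $f=c_+$; and $h=g$, $\mathcal D=T^{(1,0)}_{J_+}M$, $X_0=V_-$ for ii)~$\Leftrightarrow$~iii), including the same two bookkeeping steps (promoting the restricted identity $i_{X_+}F=dc_+$ using that $J_-$ is an almost complex structure, and showing the $\mathcal L_{X_+}$-invariance of $\Gamma(T^{(1,0)}_{J_-}M)$ is redundant via (\ref{even-B-u}) and the cyclic symmetry of $H$), and finally Lemma~\ref{general-results-1} for iii)~$\Rightarrow$~i), which is precisely the "similar argument as in Corollary~\ref{pre-final-1}" the paper invokes.
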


\begin{proof} The claims follow with a similar argument, as in Corollary \ref{pre-final-1}.
We only explain the implication ii) $\Longrightarrow$ iii). 
We use
Lemma \ref{general-results} i) with $h:= -g$
and $\mathcal D:= T_{J_{-}}^{(1,0)}M$,  Lemma \ref{general-results} iv) with $h:= g$,
$\mathcal D :=T_{J_{+}}^{(1,0)}M$,  $X_{0} :=V_{-}$,  and Lemma  \ref{general-results} iii) 
with $h:= - g$, $\mathcal D := T_{J_{-}}^{(1,0)}M$, 
$Y_{0}:= X_{+}$, $f:= c_{+}$. We obtain  that $L_{1}^{\pm}$ are integrable and  
the Dorfman Lie derivative  ${\mathbf L}_{u_{+}}$
preserves $\Gamma ( L_{1}^{-})$ if and only if the relations  1., 2.\ and 3.\ from iii) hold, together with 
$i_{X_{+}} F = dc_{+}$ on $T_{J_{-}}^{(1,0)} M$ and ${\mathcal L}_{X_{+}} \Gamma ( T_{J_{-}}^{(1,0)}M) \subset
\Gamma ( T_{J_{-}}^{(1,0)} M)$.   However, the real one-form $i_{X_{+}} F -dc_{+}$ vanishes on 
$T_{J_{-}}^{(1,0)} M$ if and only if it is zero, since $J_-$ is an almost complex structure. Finally, ${\mathcal L}_{X_{+}} \Gamma ( T_{J_{-}}^{(1,0)}M) \subset
\Gamma ( T_{J_{-}}^{(1,0)} M)$
follows   
by combining the first relation (\ref{even-B-u})  with $Z:= X_{+}$
with   the second relation (\ref{even-B-u}) with $Z:= Y$ and using that $F(X, Y)=0$  
for any $X, Y\in \Gamma (T^{(1,0)}_{J_{-}} M)$,
since $F$ is of type 
$(1,1)$ with respect to $J_{-}$.
\end{proof}

\begin{rem} We remark that on the support of the function $c_+$ the relation 
(\ref{lasteq:eq}) 
is automatically satisfied if the 
other conditions listed under iii) in Corollary~\ref{pre-final-2}  hold. 
In fact, 
\[ (i_{X_{+}} F -dc_+)|_{T_{J_{-}}^{(1,0)}M}=0,\] 
and hence (\ref{lasteq:eq}),  
follows from  the second relation
(\ref{even-B-u}), by letting $Z:= X_{+}$ and using $g(X_{+}, X_{+}) = 1-c_{+}^{2}.$ 
\end{rem}

We conclude the proofs of Theorems \ref{integrability-odd} and  \ref{1-even} by noticing
that the conditions from
Corollary \ref{pre-final-1}  iii) and \ref{pre-final-2} iii) 
are equivalent to the conditions from these theorems.
This is done in the next two lemmas.

\begin{lem}
Assume that $(g,  J_{+}, J_{-}, X_{+}, X_{-})$ 
are the components of a $B_{n}$-generalized almost pseudo-Hermitian structure on
a  Courant algebroid $E = E_{H, F}$ of type $B_{n}$ over an odd dimensional manifold $M$.
The conditions from Corollary \ref{pre-final-1} iii) are equivalent to the conditions from Theorem \ref{integrability-odd}. 
\end{lem}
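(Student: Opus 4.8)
The statement is a repackaging of conditions, so the plan is to match each item of Corollary~\ref{pre-final-1} iii) against the corresponding item of Theorem~\ref{integrability-odd}, working fibrewise with the $g$-orthogonal decomposition $(TM)_\mathbb{C} = \mathbb{C}X_\pm \oplus T^{(1,0)}_{J_\pm}M \oplus T^{(0,1)}_{J_\pm}M$. Since the $J_\pm$ are $g$-skew with $J_\pm X_\pm = 0$ and $g(X_\pm,X_\pm)=1$, the bundles $T^{(1,0)}_{J_\pm}M$ are $g$-isotropic and dually paired with their conjugates, so $T^{(1,0)}_{J_\pm}M$ is exactly the $g$-orthogonal complement of $\mathbb{C}X_\pm \oplus T^{(1,0)}_{J_\pm}M$. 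Throughout I will use this \emph{pairing test}: a complex vector $V$ lies in $T^{(1,0)}_{J_\pm}M$ iff $g(V,X_\pm)=0$ and $g(V,W)=0$ for all $W\in T^{(1,0)}_{J_\pm}M$. I record at the outset the two structural facts that govern the split: $\nabla^-=\nabla+\tfrac12 H$ is a metric connection whose torsion is $H(X,Y)$, whereas $\nabla^+$ (cf.\ (\ref{nabla-1-def})) is \emph{neither} metric nor torsion-free because of the term $-J_+F(X)\otimes X_+$. I will also use the two algebraic identities $g(J_+F(X),W)=-iF(X,W)$ for $W\in T^{(1,0)}_{J_+}M$ and $g(J_+F(X),X_+)=0$, which make the $X_+$- and $T^{(1,0)}_{J_+}M$-bookkeeping close up.

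For the $\nabla^-$ half I would compute $g(\nabla^-_Z X, Y)=g(\nabla_Z X,Y)+\tfrac12 H(X,Y,Z)$ for $X,Y\in\Gamma(T^{(1,0)}_{J_-}M)$, so that the first relation~(\ref{nabla}) is precisely the vanishing of the $T^{(1,0)}_{J_-}M$-component of $\nabla^-_Z X$; likewise $\nabla^-_Z X_-=0$ unwinds to the second relation~(\ref{nabla}). Because $\nabla^-$ is metric and $g(X,X_-)=0$, the residual component $g(\nabla^-_Z X, X_-)$ vanishes automatically once $\nabla^-_Z X_-=0$, so the two relations~(\ref{nabla}) together are equivalent to ``$\nabla^-$ preserves $T^{(1,0)}_{J_-}M$'' plus the first equation of (\ref{covariant-deriv}). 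I then use $[X,Y]=\nabla^-_X Y-\nabla^-_Y X - H(X,Y)$: granted the preservation, involutivity of $T^{(1,0)}_{J_-}M$ amounts to $H(X,Y)\in\Gamma(T^{(1,0)}_{J_-}M)$, which by the pairing test is exactly $H|_{\Lambda^3 T^{(1,0)}_{J_-}M}=0$ together with $(i_{X_-}H)|_{\Lambda^2 T^{(1,0)}_{J_-}M}=0$ from (\ref{rel-F-H}). The conditions $F|_{\Lambda^2 T^{(1,0)}_{J_-}M}=0$ and $i_{X_-}F=0$ appear verbatim on both sides, and the preservation of $\Gamma(T^{(1,0)}_{J_-}M)$ under $\mathcal L_{X_-}$ is redundant: expanding $\mathcal L_{X_-}X=\nabla^-_{X_-}X-H(X_-,X)$ and invoking $(i_{X_-}H)|_{\Lambda^2 T^{(1,0)}_{J_-}M}=0$ shows it already follows, which is the content of Corollary~\ref{rem-comments}.

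The $\nabla^+$ half is more delicate. Pairing $\nabla^+_X Y$ with $W\in\Gamma(T^{(1,0)}_{J_+}M)$ reproduces the first relation~(\ref{nabla-plus}) (the $X_+$-term drops since $g(X_+,W)=0$). The full vector condition $\nabla^+_X X_+=-J_+F(X)$ unwinds to $\nabla_X X_+=\tfrac12 H(X,X_+)-J_+F(X)$; its $X_+$-component is automatic because $g(X_+,X_+)$ is constant and $\nabla$ is metric, its $T^{(1,0)}_{J_+}M$-pairing is the second relation~(\ref{nabla-plus}), and the conjugate pairing is that relation's complex conjugate. A short computation then shows $g(\nabla^+_X Y,X_+)=0$ follows from $\nabla^+_X X_+=-J_+F(X)$, completing ``$\nabla^+$ preserves $T^{(1,0)}_{J_+}M$''. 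For the algebraic constraints I would expand $\mathcal L_Y Z + iF(Y,Z)X_+ = \nabla^+_Y Z-\nabla^+_Z Y + H(Y,Z)-iF(Y,Z)X_+$, so the first relation~(\ref{lie-conditii}) becomes $H(Y,Z)-iF(Y,Z)X_+\in\Gamma(T^{(1,0)}_{J_+}M)$; the pairing test turns this into $H|_{\Lambda^3 T^{(1,0)}_{J_+}M}=0$ and $(i_{X_+}H)|_{\Lambda^2 T^{(1,0)}_{J_+}M}=iF|_{\Lambda^2 T^{(1,0)}_{J_+}M}$. As on the minus side, the second relation~(\ref{lie-conditii}) is redundant: expanding $\mathcal L_{X_+}Z+iF(X_+,Z)X_+$ and using the two constraints just derived shows it holds automatically.

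The main obstacle I anticipate is exactly this $\nabla^+$ side. Because $\nabla^+$ is neither metric nor torsion-free, I cannot read off preservation of $T^{(1,0)}_{J_+}M$ from a single pairing, and I must separately control the $X_+$-component using the normalization $\nabla^+_X X_+=-J_+F(X)$ together with $g(X_+,X_+)=1$; equally, distinguishing the full vector equation $\nabla^+_X X_+=-J_+F(X)$ from its mere $T^{(1,0)}_{J_+}M$-projection in (\ref{nabla-plus}) requires the metricity-and-conjugation argument above. The remaining delicate point is the bookkeeping of which Corollary conditions are genuinely equivalent to a Theorem constraint and which (the preservation of $T^{(1,0)}_{J_-}M$ under $\mathcal L_{X_-}$ and the second relations of~(\ref{lie-conditii})) are automatically implied; I would prove these redundancy claims as explicit separate verifications rather than fold them silently into the equivalence.
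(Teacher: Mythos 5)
Your proposal is correct and follows essentially the same route as the paper: both unwind the relations of Corollary \ref{pre-final-1} iii) through the connections $\nabla^{\pm}$ and the decomposition $(TM)_{\mathbb{C}}=\mathbb{C}X_{\pm}\oplus T^{(1,0)}_{J_{\pm}}M\oplus T^{(0,1)}_{J_{\pm}}M$, recovering the vector equations (\ref{covariant-deriv}) and the constraints (\ref{rel-F-H}) by pairing against $T^{(1,0)}_{J_{\pm}}M$ and $X_{\pm}$ (your identities $g(J_{+}F(X),W)=-iF(X,W)$ and $g(J_{+}F(X),X_{+})=0$ are exactly what the paper's computation uses). Your explicit redundancy checks --- that invariance of $\Gamma(T^{(1,0)}_{J_{-}}M)$ under $\mathcal{L}_{X_{-}}$ and the second relation (\ref{lie-conditii}) follow from the remaining conditions --- correctly supply the converse direction, which the paper only indicates as being proved ``in the same vein''.
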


\begin{proof}
Assume that the conditions from Corollary \ref{pre-final-1} iii) are satisfied. 
Relations (\ref{nabla}) imply that $\nabla^{-}$ preserves 
$T^{(1,0)}_{J_{-}} M$  and $\nabla^{-} X_{-}=0$.  
The second relation  (\ref{nabla-plus}) implies that
\begin{equation}
\nabla_{X} X_{+} +\frac{1}{2} H(X_{+}, X) - i F(X) \in \Gamma ( T^{(1,0)}_{J_{+}} M +\mathrm{span}_{\mathbb{C}}\{ X_{+} \} ) ,
\end{equation}
for any $X\in {\mathfrak X}(M)$.
Using that $g(X_{+}, X_{+}) =1$ we obtain 
\begin{equation}\label{x-plus}
\nabla_{X} X_{+} +\frac{1}{2} H(X_{+}, X) - i  ( F(X)  + F(X_{+}, X) X_{+}) \in \Gamma ( T^{(1,0)}_{J_{+}} M ).
\end{equation}
Since 
 both, the real and the imaginary parts of the left-hand side of (\ref{x-plus}) 
are orthogonal to  $X_{+}$,  $J_{+}\vert_{\mathrm{span}\{ X_{+} \}^{\perp}}$ is a complex structure and $J_{+} X_{+} =0$, 
we  deduce  from (\ref{x-plus}) that
\begin{equation}\label{X-plus}
\nabla_{X} X_{+} =-\frac{1}{2} H(X_{+}, X) - J_{+} F(X),\ \forall X\in {\mathfrak X}(M),
\end{equation}
which is equivalent to the second relation (\ref{covariant-deriv}). 
We now prove that $\nabla^{+}$ preserves $T^{(1,0)}_{J_{-}} M$. 
The first relation (\ref{nabla-plus}) implies that for any $X\in \mathfrak{X}(M)$ and 
$Y\in \Gamma  (T^{(1,0)}_{J_{+}}M)$, 
\begin{equation}\label{XX-plus}
\nabla_{X} Y -\frac{1}{2} H(X, Y) \in \Gamma (T^{(1,0)}_{J_{+}}M +\mathrm{span}_{\mathbb{C}} \{ X_{+} \} ).
\end{equation}
It is easy to see that (\ref{X-plus}) combined with (\ref{XX-plus}) 
imply that  $\nabla^{+}$ preserves  $T^{(1,0)}_{J_{+}}M$. 
The relations from Theorem  \ref{integrability-odd} i) follow.

The relations from Theorem \ref{integrability-odd} ii) can be obtained by similar computations. For example,  to prove that 
$H\vert_{\Lambda^{3} T_{J_{-}}^{(1,0)}M}=0$  let $ X, Y\in \Gamma ( T^{(1,0)}_{J_{-}} M)$ and write  
\begin{equation}
\mathcal L_{X} Y = \nabla^{-}_{X}Y - \nabla^{-}_{Y} X - H(X, Y).
\end{equation}
Since $T^{(1,0)}_{J_{-}}M$ is involutive and 
 $\nabla^{-}$ preserves
$T^{(1,0)}_{J_{-}} M$,  we obtain that $H(X, Y)$ is 
a section of $T^{(1,0)}_{J_{-}} M$. In particular, $H\vert_{\Lambda^{3} T_{J_{-}}^{(1,0)}M}=0$. 
To prove that $H\vert_{\Lambda^{3} T_{J_{+}}^{(1,0)}M}=0$  
and 
$(i_{X_{+}}H)= i F$ on ${\Lambda^{2}T_{J_{+}}^{(1,0)}M}$, 
we use  the first equation (\ref{lie-conditii}) and the fact that the distribution $T^{(1,0)}_{J_{+}} M$
is invariant under the connection $\nabla^+$. 

In the same vein one proves that conditions from Theorem \ref{integrability-odd} imply those from Corollary \ref{pre-final-1} iii).
\end{proof}

It remains to explain how   Theorem \ref{1-even} follows from Corollary \ref{pre-final-2}.

\begin{lem}
Assume that $(g,  J_{+}, J_{-}, X_{+}, X_{-}, c_{+})$ 
are the components of a $B_{n}$-generalized almost pseudo-Hermitian structure on 
a  Courant algebroid $E = E_{H, F}$ of type $B_{n}$ over an even dimensional manifold $M$, such that
$X_{+}$ and $X_{-}$ are non-null. 
Then the conditions from Corollary \ref{pre-final-2} iii) are equivalent to the conditions from Theorem \ref{1-even}. 
\end{lem}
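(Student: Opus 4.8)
The plan is to verify, relation by relation, that the conditions gathered in Corollary~\ref{pre-final-2}~iii) are a repackaging of those in Theorem~\ref{1-even}, exactly as in the odd-case lemma treated just above. Throughout I would use the non-null hypothesis $c_+^2\neq 1$, which guarantees that $g$ is nondegenerate on $\mathrm{span}_{\mathbb{C}}\{X_+,X_-\}$ and hence that one has a $g$-orthogonal decomposition
\[
(TM)_{\mathbb{C}} = T_{J_+}^{(1,0)}M \oplus T_{J_+}^{(0,1)}M \oplus \mathrm{span}_{\mathbb{C}}\{X_+,X_-\},
\]
in which $T_{J_+}^{(1,0)}M$ and $T_{J_+}^{(0,1)}M$ are isotropic and mutually dual, while $V_-$ lies in the last summand with $g(V_-,X_-)=1$ and $g(V_-,X_+)=-ic_+$. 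Membership of a vector in $T_{J_+}^{(1,0)}M$ is then detected by its $g$-pairings against $T_{J_+}^{(1,0)}M$ and against $X_\pm$, and it is precisely this criterion that I would apply repeatedly.

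For the $J_-$ side I would first note that $F|_{\Lambda^{2}T_{J_-}^{(1,0)}M}=0$ and the integrability of $J_-$ appear in both lists. Using that $T_{J_-}^{(1,0)}M$ is $g$-isotropic and the cyclic symmetry of $H$, the first equation of (\ref{even-B-u}) is exactly the statement that $D^-_X=\nabla_X+\tfrac12 H(X)$ preserves $T_{J_-}^{(1,0)}M$; and then $H|_{\Lambda^{3}T_{J_-}^{(1,0)}M}=0$ drops out of $\mathcal L_X Y = D^-_XY-D^-_YX-H(X,Y)$ together with involutivity, just as before. The second equation of (\ref{even-B-u}) is the restriction to $X\in T_{J_-}^{(1,0)}M$ of the first covariant-derivative formula in (\ref{d-p-m-civita}); since both sides are real, taking complex conjugates upgrades it to the full identity for all $X$.

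The heart of the proof is the $J_+$ side. Here I would first record the clean identity
\[
D^+_Z X = \nabla_Z X - \tfrac12 H(Z,X) + i\,F(Z,X)\,V_-,\qquad X\in\Gamma(T_{J_+}^{(1,0)}M),
\]
obtained by substituting $J_+X=iX$ into the two correction terms defining $D^+$ and recognising that $\tfrac{1}{1-c_+^2}(c_+X_+ + iX_-)=iV_-$. Pairing this against $Y\in T_{J_+}^{(1,0)}M$ (where $g(V_-,Y)=0$) reproduces the first equation of (\ref{conn-even}); pairing it against $X_\pm$ and using $g(\nabla_Z X,X_\pm)=-g(X,\nabla_Z X_\pm)$ together with the values $g(V_-,X_-)=1$, $g(V_-,X_+)=-ic_+$ turns the requirement that $D^+$ preserve $T_{J_+}^{(1,0)}M$ into the second line of (\ref{conn-even}). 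To pass from these projection statements to the \emph{full} formulas (\ref{d-p-m-civita}) for $\nabla X_\pm$ I would reconstruct the three $g$-orthogonal components of $\nabla_Z X_\pm$ separately: the $T_{J_+}^{(0,1)}M$-component from the second line of (\ref{conn-even}) split into real and imaginary parts, the conjugate $T_{J_+}^{(1,0)}M$-component then fixed by reality of $\nabla_Z X_\pm$, and the $\mathrm{span}\{X_+,X_-\}$-component by differentiating the normalisations $g(X_\pm,X_\pm)=1-c_+^2$, $g(X_+,X_-)=0$ and inserting $dc_+=i_{X_+}F$ from (\ref{c-F}), which matches (\ref{lasteq:eq}). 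The algebraic constraints (\ref{H-F-alg-even}) and (\ref{H-F-even}) I would extract from the Lie-bracket conditions of Corollary~\ref{pre-final-2}~iii): writing $D^+_XY-D^+_YX=\mathcal L_XY - H(X,Y)+2iF(X,Y)V_-$ and projecting onto $\mathrm{span}\{X_+,X_-\}$ yields $(i_{X_-}H)(X,Y)=iF(X,Y)$ and $(i_{X_+}H)(X,Y)=c_+F(X,Y)$ on $\Lambda^{2}T_{J_+}^{(1,0)}M$, which are precisely (\ref{H-F-even}) and, after combining, the mixed condition in (\ref{H-F-alg-even}), while its $T_{J_+}^{(0,1)}M$-projection gives $H|_{\Lambda^{3}T_{J_+}^{(1,0)}M}=0$. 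Finally the commutativity $\mathcal L_{X_+}X_-=0$ I would obtain from Lemma~\ref{exchange}, checking that (\ref{H-F-even-prime}) is a consequence of (\ref{d-p-m-civita}) and the $H$--$F$ relations just established; in each step the reverse implication is the same computation read backwards.

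I expect the main obstacle to be the reconstruction of the full covariant derivatives $\nabla X_\pm$ in (\ref{d-p-m-civita}) from the merely partial (projection) data supplied by (\ref{conn-even}): unlike the odd case, where $\ker J_+$ is one-dimensional, here $J_+$ degenerates on the two-dimensional plane $\mathrm{span}\{X_+,X_-\}$ and the relevant $(1,0)$-vector $V_-$ genuinely mixes $X_+$ and $X_-$, so the separation of real and imaginary parts against the three summands of the decomposition above must be carried out with care; it is only the non-null assumption $c_+^2\neq 1$ that keeps this $g$-orthogonal splitting available in the first place.
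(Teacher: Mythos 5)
Your treatment of the $J_-$-side conditions, of the identity $D^+_Z X = \nabla_Z X - \tfrac12 H(Z,X) + iF(Z,X)V_-$ for $X\in\Gamma(T^{(1,0)}_{J_+}M)$, of the reconstruction of $\nabla X_\pm$, and of the extraction of (\ref{H-F-alg-even}) and (\ref{H-F-even}) from the \emph{first} relation (\ref{lie}) all run parallel to what the paper does and are fine. The genuine gap is your last step. You claim that the commutativity $\mathcal L_{X_+}X_-=0$ follows via Lemma~\ref{exchange} because ``(\ref{H-F-even-prime}) is a consequence of (\ref{d-p-m-civita}) and the $H$--$F$ relations just established.'' It is not. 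Relation (\ref{H-F-even-prime}) constrains $H(X_+,X_-,W)$ for $W\in\{X_+,X_-\}^{\perp}$, i.e.\ the component of $H$ with \emph{both} $X_+$ and $X_-$ inserted, and relates it to $F(X_-,W)$ and $(J_+W)(c_+)$; the relations you have established only control $H$ on $\Lambda^3 T^{(1,0)}_{J_\pm}M$ and $i_{X_\pm}H$ on $\Lambda^2 T^{(1,0)}_{J_+}M$ (plus conjugates), which say nothing about terms of the form $H(X_+,X_-,Y)$. If your claim were true, the commutativity of $X_+$ and $X_-$ would be a redundant hypothesis in Theorem~\ref{1-even} and condition (\ref{c-plus-ct}) redundant in Corollary~\ref{ex-4-dim}, which is not the case -- indeed the classification in Section~\ref{examples} uses it as an independent constraint to rule out cases.

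Concretely, what is missing is any use of the \emph{second} relation (\ref{lie}) of Corollary~\ref{pre-final-2}~iii), the condition $\mathcal L_{V_-}X + iF(V_-,X)V_-\in\Gamma(T^{(1,0)}_{J_+}M)$. This is precisely the condition the paper's proof is devoted to: assuming all the other (already matched) conditions, one expands $\mathcal L_{V_-}X$ using $D^+$ and the formulas for $\nabla X_\pm$, and projects the resulting expression onto $T^{(1,0)}_{J_+}M$, $X_+$ and $X_-$; the projections onto $T^{(1,0)}_{J_+}M$ are automatic from (\ref{H-F-alg-even}) and (\ref{H-F-even}), while the projections onto $X_\pm$ are equivalent to $H(X_+,X_-,X)=c_+F(X_-,X)-(J_+X)(c_+)$ on $\{X_+,X_-\}^{\perp}$, hence (using $dc_+=i_{X_+}F$) to (\ref{H-F-even-prime}), hence (by Lemma~\ref{exchange}) to $[X_+,X_-]=0$. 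Without this computation your forward implication does not yield the commutativity required by Theorem~\ref{1-even}, and your backward implication never verifies the second relation (\ref{lie}), so the asserted ``reverse implication read backwards'' is also incomplete. You need to add this step (or an equivalent argument isolating where $[X_+,X_-]=0$ enters the corollary's conditions) to close the proof.
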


\begin{proof} 
Straightforward computations as in the previous argument  show that all conditions from Corollary \ref{pre-final-2} iii),
except the second relation (\ref{lie}), are equivalent to all conditions from 
Theorem \ref{1-even}, except 
$\mathcal L_{X_{+}} X_{-} =0$ (or 
relation (\ref{H-F-even-prime}), see Lemma \ref{exchange}). 
Assuming that  these equivalent conditions hold,
we show that  the second relation (\ref{lie}) 
is equivalent to relation (\ref{H-F-even-prime}).
Let $X\in \Gamma ( T^{(1,0)}_{J_{+}} M)$
and recall that $V_{-} =\frac{1}{ 1- c_{+}^{2}} (X_{-} - i c_{+} X_{+})$.  
Writing
\begin{align}
\nonumber& {\mathcal L}_{V_{-}} X= D^{+}_{V_{-}} X +\frac{1}{2} H(V_{-}, X) 
-\frac{ i F(V_{-}, X)}{ 1- c_{+}^{2}} ( X_{-} - i c_{+} X_{+})\\
\nonumber& - X ( \frac{1}{ 1 - c_{+}^{2}} ) ( X_{-} - i c_{+} X_{+}) -\frac{1}{ 1- c_{+}^{2}} 
( \nabla_{X} X_{-} - i X(c_{+}) X_{+} - i c_{+} \nabla_{X} X_{+})
\end{align} 
and using that $D^{+}$ preserves  $T^{(1,0)}_{J_{+}} M$, we obtain that the second relation (\ref{lie}) is equivalent to
the statement that the expression
\begin{align}
\nonumber&  \mathrm{Expr} (X):= \\
\nonumber& i F(V_{-}, X) V_{-}  +\frac{1}{2} H(V_{-}, X) -\frac{ i F(V_{-}, X)}{ 1- c_{+}^{2}} ( X_{-} - i c_{+} X_{+})\\
\nonumber& - X ( \frac{1}{ 1 - c_{+}^{2}} ) ( X_{-} - i c_{+} X_{+}) -\frac{1}{ 1- c_{+}^{2}} 
( \nabla_{X} X_{-} - i X(c_{+}) X_{+} - i c_{+} \nabla_{X} X_{+})
\end{align}
is a section of $T^{(1,0)}_{J_{+}} M$, for any
$X\in \Gamma (T^{(1,0}_{J_{+}} M).$ 
Equivalently, 
$\mathrm{Expr}(X)$  is orthogonal to 
$T^{(1,0)}_{J_{+}} M$,  $X_{+}$ and $X_{-}$. The condition  
$$
g ( \mathrm{Expr} (X),  Y) =0,\  \forall X, Y\in \Gamma  (T^{(1,0)}_{J_{+}} M)
$$ 
is equivalent to 
$(i_{V_{-}} H)\vert_{\Lambda^{2} T^{(1,0)}_{J_{+}} M}= i F\vert_{\Lambda^{2} T^{(1,0)}_{J_{+}}M}$, which follows from the third  relation (\ref{H-F-alg-even}) and the first relation
(\ref{H-F-even}). 
Similary,  one can show that  the relations
$$
g (\mathrm{Expr} (X), X_{\pm})=0,\ \forall X\in \Gamma (T^{(1,0)}_{J_{+}} M)
$$  are  equivalent to 
\begin{equation}\label{rel-h-f-c}
H(X_{+}, X_{-}, X) = c_{+}  F(X_{-}, X) - (J_{+} X) (c_{+}),
\end{equation}
for all  $X\in \mathrm{span}\{ X_{+}, X_{-}\}^{\perp} .$  
Using $dc_{+} = i_{X_{+}} F$ we obtain that  relation  (\ref{rel-h-f-c}) is equivalent to 
relation (\ref{H-F-even-prime}),  as needed.
\end{proof}

 We end this section with a comment which shows that  the notion of $B_{n}$-generalized K\"{a}hler structure includes the notion of generalized K\"{a}hler structure on exact Courant algebroids, as defined in \cite{gualtieri-thesis}.

\begin{rem}\label{classical-even}{\rm  Let $(\mathcal{G}, \mathcal F )$ be a $B_{n}$-generalized almost pseudo-Hermitian 
structure with components $(g,  J_{+}, J_{-}, X_{+}, X_{-}, c_{+})$  on a Courant algebroid $E_{H, F}$ of type $B_{n}$
over a manifold $M$ of even dimension. Assume 
that $X_{+} = X_{-}= 0$ (in particular $c_{+}^{2} = 1$). 
Then  $J_{\pm}$ are $g$-skew-symmetric 
almost complex structures   on $M$ and (in the notation introduced in Section \ref{components-hermitian})  $u_{-} = 1$. The bundles 
$L_{1}^{\pm}$ as defined in Proposition \ref{criterion} are given by
\begin{equation}\label{int-2}
L_{1}^{\pm } = \{   X  \pm g(X) \mid X\in  T^{(1,0)}_{J_{\pm}} M\}.
\end{equation}
Adapting the above  arguments one can show that 
$L_{1}^{\pm}$ are integrable 
and 
${\mathbf L}_{ u_{-}}$ preserves $ \Gamma (L_{1}^{\pm})$ 
if and only if
$F=0$, $J_{\pm}$ are (integrable) complex structures  and
$\nabla_{X} \pm \frac{1}{2} H(X)$ preserves $J_{\mp}.$ These conditions hold if and only if 
$(\mathcal{G}, \mathcal F )$ is a $B_{n}$-generalized
pseudo-K\"{a}hler  structure. In fact, if $\mathcal{G}$ is positive
definite, 
$(g, J_{+}, J_{-}, b:=0)$ is   the bi-Hermitian structure  of a generalized   K\"{a}hler
structure on the even exact Courant algebroid  $TM \oplus T^{*}M$ with Dorfman bracket twisted  by the closed $3$-form $H$ (see 
Theorem 6.28 of \cite{gualtieri-thesis}). }
\end{rem}

\section{$B_{2}$-, $B_{3}$-,  $B_{4}$-generalized  pseudo-K\"{a}hler structures}\label{3-4}

In this section we show that 
Theorems \ref{integrability-odd} and \ref{1-even} simplify considerably 
when $M$ has dimension two, three or four. Below we denote by $X^{\flat}$ the $1$-form 
$g$-dual to a vector field $X\in {\mathfrak X}(M).$

\begin{cor} \label{ex-2-dim}  i) Let $(g,  J_{+} J_{-}, X_{+}, X_{-}, c_{+})$ be the components of a $B_{2}$-generalized pseudo-K\"{a}hler 
structure  $(\mathcal{G}, \mathcal F)$ on a Courant algebroid $E_{F}:= E_{0,F}$ over a $2$-dimensional manifold $M$, such that $X_{\pm}$ are non-null.
There is $\epsilon_{0} \in \{ \pm 1\} $ such that 
\begin{equation}\label{ecuatii-2-dim}
J_{-} X_{+} = - \epsilon_{0} X_{-},\ J_{-} X_{-}  =\epsilon_{0} X_{+},\ J_{+} = \epsilon_{0} c_{+} J_{-}.
\end{equation} 
If $c_{+}(p) \neq 0$ for any $p\in M$ then $X_{+}$ is a Killing field  with $g(X_{+}, X_{+}) <1$ and $F=\frac{1}{2c_{+}} d X_{+}^{\flat}$. 
If $c_{+}( p) =0$ for any $p\in M$ then $X_{+}$ is a parallel unit field  and $F=0.$\

ii) Conversely, any pair $(g, X_{+})$  formed by a pseudo-Riemannian metric $g$ and a vector field $X_{+}$  
such that either $X_{+}$ is a Killing field with $g(X_{+}, X_{+}) <1$ or $X_{+}$ is a 
parallel unit field  defines a $B_{2}$-generalized pseudo-K\"{a}hler structure 
on $E_{F} = E_{0, F}$   with $F$ defined as in i). The vector field $X_{-}$ is arbitrarily chosen, orthogonal to $X_{+}$ and 
of the same norm as $X_{+}$, the endomorphisms $J_{\pm}$ are defined by (\ref{ecuatii-2-dim}) and
$c_{+} := \epsilon_{+} (1- g(X_{+} , X_{+}))^{1/2}$, where $\epsilon_{+} \in \{ \pm 1\} $.
\end{cor}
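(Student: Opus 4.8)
The plan is to specialize Theorem~\ref{1-even} to $n=2$ and $H=0$, where the low dimension forces drastic simplifications. First I would record the pointwise algebra. Since $X_\pm$ are non-null and orthogonal, $\{X_+,X_-\}$ is a (global) frame of $TM$. As $J_-$ is $g$-skew, $J_-X_+$ is $g$-orthogonal to $X_+$, hence proportional to $X_-$; writing $J_-X_+=aX_-$ and combining $J_-^2=-\mathrm{Id}$ with skew-symmetry gives $a^2=1$, so $a=-\epsilon_0$ with $\epsilon_0\in\{\pm1\}$ locally constant, and $J_-X_-=\epsilon_0X_+$ follows. Comparing with $J_+X_+=-c_+X_-$, $J_+X_-=c_+X_+$ from Proposition~\ref{data-on-M-even} on this frame yields $J_+=\epsilon_0c_+J_-$, establishing (\ref{ecuatii-2-dim}). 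I would then note two rank facts that trivialize most of Theorem~\ref{1-even}: the endomorphism $J_+=\epsilon_0c_+J_-$ has eigenvalues $\pm ic_+$, so since $c_+^2\neq1$ (the non-null hypothesis) its $i$-eigenbundle vanishes, $T^{(1,0)}_{J_+}M=0$; and $T^{(1,0)}_{J_-}M$ has complex rank one, so $\Lambda^2T^{(1,0)}_{J_-}M=\Lambda^3T^{(1,0)}_{J_\pm}M=0$.

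Consequently every algebraic condition in part ii) of Theorem~\ref{1-even} is vacuous, the requirement that $D^+$ preserve $T^{(1,0)}_{J_+}M=0$ is empty, and the requirement that $D^-=\nabla$ preserve $T^{(1,0)}_{J_-}M$ is automatic: on a surface any $g$-orthogonal almost complex structure is $\nabla$-parallel, because $\nabla_ZJ_-$ is $g$-skew and anticommutes with $J_-$, while the space of $g$-skew endomorphisms of a rank-two space is one-dimensional and spanned by $J_-$ itself, forcing $\nabla J_-=0$. With $H=0$ the only surviving conditions from Theorem~\ref{1-even} are
\begin{equation*}
\nabla_XX_+=c_+F(X),\quad \nabla_XX_-=-J_+F(X),\quad [X_+,X_-]=0,\quad dc_+=i_{X_+}F .
\end{equation*}

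For part i) I would read off the consequences. Skew-symmetry of $(X,Y)\mapsto g(\nabla_XX_+,Y)=c_+F(X,Y)$ shows $X_+$ is Killing, whence $dX_+^\flat=2c_+F$. If $c_+$ is nowhere zero then $g(X_+,X_+)=1-c_+^2<1$ and $F=\tfrac{1}{2c_+}dX_+^\flat$; if $c_+\equiv0$ then $g(X_+,X_+)=1$, so $X_+$ is a unit field, $\nabla X_+=c_+F(X)=0$ makes it parallel, and $i_{X_+}F=dc_+=0$ forces $F=0$ since $X_+$ is a nowhere-vanishing vector field on a surface. For the converse ii) I would define $X_-,J_\pm,c_+,F$ as prescribed, verify that they satisfy Proposition~\ref{data-on-M-even} (the same frame computation, run backwards), and check the four surviving conditions. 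The parallelism of $J_-$ together with $X_-=-\epsilon_0J_-X_+$ make $\nabla_XX_-=-J_+F(X)$ and $[X_+,X_-]=0$ automatic once $\nabla_XX_+=c_+F(X)$ holds; the latter is exactly the identification of $\nabla X_+^\flat$ with its skew part $\tfrac12dX_+^\flat=c_+F$, valid since $X_+$ is Killing (resp.\ parallel with $F=0$); and $dc_+=i_{X_+}F$ follows by differentiating $c_+^2=1-g(X_+,X_+)$ and using $g(\nabla_YX_+,X_+)=-c_+(i_{X_+}F)(Y)$. Note that $F=\tfrac{1}{2c_+}dX_+^\flat$ is automatically closed (a $3$-form on a surface vanishes) and smooth where $c_+\neq0$.

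I expect the main obstacle to be organizing the verification that $\nabla_XX_-=-J_+F(X)$ and $[X_+,X_-]=0$ are consequences rather than independent constraints. The cleanest route is to write $F=\phi\,\omega_-$ with $\omega_-=g(J_-\cdot,\cdot)$ the fundamental two-form, so that $F(X)=\phi J_-X$; then, via Lemma~\ref{exchange}, the commutativity $[X_+,X_-]=0$ is equivalent (with $H=0$) to $c_+F(X_-)+J_+F(X_+)=0$, which reduces to the elementary identity $\phi\epsilon_0X_+=\epsilon_0\phi X_+$ and is nothing but the automatic $(1,1)$-ness of a two-form on a surface. I would also handle the case distinction on $c_+$ in part i) by treating the nowhere-zero and identically-zero situations separately, exactly as the statement is phrased.
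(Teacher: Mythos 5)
Your proposal is correct and follows essentially the same route as the paper: specialize Theorem~\ref{1-even} (with Lemma~\ref{exchange}) to a surface, derive (\ref{ecuatii-2-dim}) from the orthonormal-frame algebra, observe that all conditions collapse to $\nabla_X X_+=c_+F(X)$, $\nabla_X X_-=-J_+F(X)$, $\nabla J_-=0$, $dc_+=i_{X_+}F$ with the second and third consequences of the others, and then read off the two cases $c_+\neq 0$ and $c_+\equiv 0$, reversing the argument for the converse. The only difference is bookkeeping (you discard $\nabla J_-=0$ as automatic and verify the commutativity identity explicitly via $F=\phi\,\omega_-$, where the paper keeps the former in its reduced list), which does not change the substance.
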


\begin{proof}
i) The first two relations (\ref{ecuatii-2-dim}) follow from the fact that 
$J_{-}$ is a $g$-skew-symmetric complex structure, 
$X_{\pm}$ are orthogonal of  the same norm and 
$M$ is $2$ dimensional. The last relation (\ref{ecuatii-2-dim}) follows from $J_{+} X_{+} =- c_{+} X_{-}$ and
$J_{+} X_{-} = c_{+} X_{+}.$ 
Since $M$ is $2$-dimensional,  $H=0$, $F$ 
is of type $(1,1)$ with respect to $J_{-}$ 
 and the conditions  from  Theorem \ref{1-even}  (see also Lemma \ref{exchange}) 
reduce to 
\begin{equation}\label{simpl-2-dim}
\nabla_{X} X_{+} = c_{+} F(X),\ \nabla_{X} X_{-} = - J_{+} F(X),\ \nabla_{X} J_{-} =0,\ i_{X_{+}} F = d c_{+}
\end{equation}
for any $X\in {\mathfrak X}(M)$. 
However,  using 
(\ref{ecuatii-2-dim}) one can show that the second and third relation (\ref{simpl-2-dim}) are implied by the first and fourth relation.
When $c_{+}(p) \neq 0$, for any $p\in M$,  the first relation (\ref{simpl-2-dim}) implies that $F = \frac{1}{ 2c_{+}} d X_{+}^{\flat}$
and we obtain that the last relation (\ref{simpl-2-dim}) is satisfied. When $c_{+} (p) =0$,  for any $p\in M$,  the last relation implies that
$F=0.$\

ii) Reversing the argument  from i) we obtain claim ii).  
\end{proof}

\begin{cor}\label{ex-3-dim} In the setting of Theorem \ref{integrability-odd} assume that  $M$ is $3$-dimensional. Then $(\mathcal{G}, \mathcal F )$ is  a  $B_{3}$-generalized pseudo-K\"{a}hler  structure if and only if
$i_{X_{-}} F =0$ and 
\begin{align}
\nonumber& \nabla_{X} X_{-} = -\frac{1}{2} H(X, X_{-})\\
 \label{gr-3}& \nabla_{X} X_{+} = -\frac{1}{2} H(X_{+}, X) - J_{+} F(X),
\end{align}
for any $X\in {\mathfrak X}(M).$ 
\end{cor}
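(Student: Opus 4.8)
The plan is to read the statement off Theorem~\ref{integrability-odd}, using that in dimension three the distributions $T^{(1,0)}_{J_{\pm}}M$ have complex rank one. Since $J_{\pm}$ annihilates the unit field $X_{\pm}$ and restricts to a complex structure on the plane $X_{\pm}^{\perp}$, its $i$-eigenbundle $T^{(1,0)}_{J_{\pm}}M$ is a complex line bundle, so $\Lambda^{2}T^{(1,0)}_{J_{\pm}}M=0$ and $\Lambda^{3}T^{(1,0)}_{J_{\pm}}M=0$. Every constraint in Theorem~\ref{integrability-odd}~ii) that is a restriction to $\Lambda^{2}$ or $\Lambda^{3}$ of these distributions is therefore vacuous, and the only surviving algebraic condition is $i_{X_{-}}F=0$, which is exactly one half of the corollary.

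I would then rewrite the covariant-derivative conditions of Theorem~\ref{integrability-odd}~i) via the Levi-Civita connection. From the definition (\ref{nabla-1-def}) and $H(X)(X_{-})=H(X,X_{-})$, the equation $\nabla^{-}_{X}X_{-}=0$ is literally the first line of (\ref{gr-3}). For $\nabla^{+}$, note that $g(J_{+}F(X),X_{+})=-g(F(X),J_{+}X_{+})=0$, so the rank-one term in (\ref{nabla-1-def}) vanishes on $X_{+}$ and $\nabla^{+}_{X}X_{+}=\nabla_{X}X_{+}-\frac{1}{2}H(X,X_{+})$; hence $\nabla^{+}_{X}X_{+}=-J_{+}F(X)$ is equivalent to $\nabla_{X}X_{+}=-\frac{1}{2}H(X_{+},X)-J_{+}F(X)$, the second line of (\ref{gr-3}). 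Thus the covariant-derivative part of i) is equivalent to (\ref{gr-3}).

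It remains to show that, granting (\ref{gr-3}), the requirement that $\nabla^{\pm}$ preserve $T^{(1,0)}_{J_{\pm}}M$ holds automatically in dimension three; this is the crux. For $\nabla^{-}$ the argument is short: $\nabla^{-}_{X}=\nabla_{X}+\frac{1}{2}H(X)$ is a metric connection, and $\nabla^{-}_{X}X_{-}=0$ makes $X_{-}$ parallel, so $\nabla^{-}$ preserves $\mathbb{R}X_{-}$ and its orthogonal complement $X_{-}^{\perp}$. A $g$-skew complex structure cannot exist on a plane of signature $(1,1)$, so $X_{-}^{\perp}$ is definite; on a definite two-plane any metric connection is $\mathfrak{so}(2)=\mathbb{R}J_{-}$-valued, hence commutes with $J_{-}$ and preserves $T^{(1,0)}_{J_{-}}M$.

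For $\nabla^{+}$ I would write $\nabla^{+}_{X}Y=\tilde{\nabla}^{+}_{X}Y-g(J_{+}F(X),Y)X_{+}$, where $\tilde{\nabla}^{+}_{X}:=\nabla_{X}-\frac{1}{2}H(X)$ is metric and satisfies $\tilde{\nabla}^{+}_{X}X_{+}=-J_{+}F(X)$ by (\ref{gr-3}). For $Y\in\Gamma(T^{(1,0)}_{J_{+}}M)\subset(X_{+}^{\perp})_{\mathbb{C}}$, metricity and $g(Y,X_{+})=0$ give that the $\mathbb{C}X_{+}$-component of $\tilde{\nabla}^{+}_{X}Y$ equals $-g(Y,\tilde{\nabla}^{+}_{X}X_{+})X_{+}=g(Y,J_{+}F(X))X_{+}$, which cancels the correction term $-g(J_{+}F(X),Y)X_{+}$. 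In a $g$-orthonormal frame $\{X_{+},e_{1},e_{2}\}$ with $J_{+}e_{1}=e_{2}$ the $X_{+}^{\perp}\to X_{+}^{\perp}$ block of the metric connection $\tilde{\nabla}^{+}$ is skew, hence a multiple of $J_{+}$ and thus $\mathbb{C}$-linear, so the horizontal part of $\tilde{\nabla}^{+}_{X}Y$ carries no $T^{(0,1)}_{J_{+}}M$-component; therefore $\nabla^{+}_{X}Y\in T^{(1,0)}_{J_{+}}M$. Reversing all the equivalences gives the converse. The one genuinely nontrivial point is this automatic preservation of the $(1,0)$-distributions, which rests entirely on the two-dimensionality of $X_{\pm}^{\perp}$ and the resulting $\mathfrak{so}(2)$-valued holonomy.
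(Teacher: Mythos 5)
Your proof is correct and follows the same overall route as the paper: the algebraic constraints (\ref{rel-F-H}) collapse to $i_{X_{-}}F=0$ because $T^{(1,0)}_{J_{\pm}}M$ has rank one, the conditions (\ref{covariant-deriv}) are rewritten as (\ref{gr-3}) using $J_{+}X_{+}=0$ to kill the rank-one term in (\ref{nabla-1-def}), and the only genuine work is to show that (\ref{gr-3}) forces $\nabla^{\pm}$ to preserve $T^{(1,0)}_{J_{\pm}}M$. Where you diverge from the paper is in that last step: the paper takes a local generator $v_{\pm}$ of $T^{(1,0)}_{J_{\pm}}M$, which is isotropic and orthogonal to $X_{\pm}$, shows that $\nabla^{\pm}_{X}v_{\pm}$ is $g$-orthogonal to both $v_{\pm}$ and $X_{\pm}$ (for $\nabla^{+}$ the non-metric rank-one correction and the relation $\nabla_{X}X_{+}-\frac{1}{2}H(X)(X_{+})=-J_{+}F(X)$ produce exactly the cancellation you exhibited), and concludes by a dimension count in $(TM)_{\mathbb{C}}$ that it is a multiple of $v_{\pm}$; you instead pass through the observation that $X_{\pm}^{\perp}$ is $g$-definite (no $g$-skew complex structure exists on a $(1,1)$-plane) and that a metric connection on a definite two-plane is $\mathfrak{so}(2)=\mathbb{R}J_{\pm}$-valued, hence $\mathbb{C}$-linear. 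Both arguments are valid and rest on the two-dimensionality of $X_{\pm}^{\perp}$; the paper's isotropy/dimension-count version is marginally slicker in that it never needs the signature of $X_{\pm}^{\perp}$, while yours makes the underlying holonomy reduction explicit.
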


\begin{proof} 
Relations (\ref{gr-3}) are just relations (\ref{covariant-deriv}),
written in terms of the Levi-Civita connection $\nabla$ of $g$.
Since  the bundles  $T_{J_{\pm}}^{(1,0)} M$ have  rank one, 
relations (\ref{rel-F-H}) reduce to $i_{X_{-}} F=0.$ 
It remains to prove that
$\nabla^{\pm}$ preserve $T^{(1,0)}_{J_{\pm}}M.$ Let $v_{-}$ be a non-zero local section of
$T^{(1,0)}_{J_{-}} M.$ Since $\nabla^{-}g=0$ and $v_{-}$ is isotropic, $g(\nabla^{-}_{X} v_{-}, v_{-})=0$.
Moreover,
$$
g(\nabla_{X}^{-} v_{-}, X_{-})  = - g(v_{-}, \nabla^{-}_{X} X_{-}) =0, 
$$
and we obtain that $\nabla_{X}^{-} v_{-} $ is a section of $T^{(1,0)}_{J_{-}} M$, i.e. $\nabla^{-}$ preserves $T^{(1,0)}_{J_{-}} M.$   
A similar argument 
which uses  the second relation (\ref{gr-3}) and the definition  (\ref{nabla-1-def})  of $\nabla^{+}$ 
shows that $\nabla^{+}$ preserves $T^{(1,0)}_{J_{+}} M.$  
\end{proof}

\begin{cor}\label{ex-4-dim} In the setting of Theorem \ref{1-even}, assume that $M$ is four dimensional. 
Then $( \mathcal{G},  \mathcal F )$ is  $B_{4}$-generalized  pseudo-K\"{a}hler  structure if and only if
the following conditions hold:

i) the covariant derivatives of $X_{\pm}$ with respect to the Levi-Civita connection $\nabla$ of $g$ are given by
\begin{align}
\nonumber& \nabla_{X} X_{+} = -\frac{1}{2}  (i_{X_{+}} H)(X) + c_{+} F(X),\\
\label{nabla-cov-even}&\nabla_{X} X_{-} = -\frac{1}{2} ( i_{X_{-}} H)(X) - J_{+} F(X).
\end{align}
for any $X\in {\mathfrak X}(M)$.\

ii) the connection
$D^{-} _{X}:=\nabla_{X} +\frac{1}{2} H(X)$
preserves $J_{-}$;\

iii) $F$ is of type $(1,1)$ with respect to $J_{-}$, $i_{X_{+}} F = d c_{+}$ and 
\begin{equation}\label{c-plus-ct}
H(X_{+} , X_{-}) = c_{+} F(X_{-}) + J_{+}  F(X_{+}).
\end{equation}
\end{cor}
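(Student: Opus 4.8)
The plan is to read the corollary off from Theorem~\ref{1-even} by showing that, in dimension four with $X_\pm$ non-null, most of the conditions listed there are either automatic or collapse to the three clauses of the corollary. Everything hinges on a dimension count for the two $(1,0)$-distributions. Since $J_-$ is an honest almost complex structure on the four-manifold $M$, the bundle $T^{(1,0)}_{J_-}M$ has complex rank two. For $J_+$, the hypothesis $c_+^2\neq 1$ makes $\{X_+,X_-\}$ a $g$-nondegenerate rank-two subbundle on which $J_+$ has eigenvalues $\pm i c_+\neq\pm i$, while $J_+|_{\{X_+,X_-\}^\perp}$ is a genuine complex structure on the complementary rank-two bundle; hence the $i$-eigenbundle $T^{(1,0)}_{J_+}M$ has complex rank one. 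This low rank is what trivializes the algebraic constraints.

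The form conditions of Theorem~\ref{1-even}~ii) are treated first. Because $T^{(1,0)}_{J_-}M$ and $T^{(1,0)}_{J_+}M$ have ranks two and one, the bundles $\Lambda^3 T^{(1,0)}_{J_\pm}M$ and $\Lambda^2 T^{(1,0)}_{J_+}M$ all vanish, so $H|_{\Lambda^3 T^{(1,0)}_{J_\pm}M}=0$, the constraint on $i_{X_+ + i c_+ X_-}H$ in \eqref{H-F-alg-even}, and the relation \eqref{H-F-even} hold vacuously. The only survivor is $F|_{\Lambda^2 T^{(1,0)}_{J_-}M}=0$, which for the real form $F$ is precisely the $(1,1)$-type condition in clause iii). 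The Hamiltonian identity \eqref{c-F} is retained as $i_{X_+}F = dc_+$, and Lemma~\ref{exchange} converts the commutativity $\mathcal L_{X_+}X_- = 0$ from Theorem~\ref{1-even}~i) into \eqref{H-F-even-prime}, i.e.\ \eqref{c-plus-ct}; these three statements make up clause iii). The covariant-derivative formulas \eqref{d-p-m-civita} become clause i) verbatim, and since $D^-$ is a real connection it preserves $T^{(1,0)}_{J_-}M$ if and only if it preserves $J_-$, which is clause ii).

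The one substantive point, and the heart of the proof, is that ``$D^+$ preserves $T^{(1,0)}_{J_+}M$'' — the remaining part of Theorem~\ref{1-even}~i) — is automatic in dimension four once \eqref{nabla-cov-even} holds, so it need not appear in the corollary. Here I would exploit rank one: fixing a local spanning section $v_+$ of $T^{(1,0)}_{J_+}M$ and using that $J_+$ is $g$-skew with eigenvalues $i$ and $\pm i c_+$ satisfying $i\pm i c_+\neq 0$, the vector $v_+$ is $g$-isotropic and $g$-orthogonal to $X_+$ and $X_-$; since $g(X_\pm,X_\pm)=1-c_+^2\neq 0$, a vector lies in $\mathbb{C}v_+=T^{(1,0)}_{J_+}M$ exactly when it is $g$-orthogonal to $v_+$, $X_+$ and $X_-$. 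It then suffices to check $g(D^+_X v_+,v_+)=g(D^+_X v_+,X_+)=g(D^+_X v_+,X_-)=0$ for all $X\in\mathfrak X(M)$. Although $D^+$ is not $g$-metric, its non-metric terms point along $X_+$ and $X_-$, which is precisely what makes the pairings collapse: the first vanishes by isotropy of $v_+$ and skewness of $H(X)$, and for the other two I would substitute the explicit $\nabla_X X_\pm$ from \eqref{nabla-cov-even}, write $g(\nabla_X v_+,X_\pm)=-g(v_+,\nabla_X X_\pm)$, and use $J_+ v_+ = i v_+$, whereupon the $H$-terms cancel against the $-\tfrac12 H(X)$ part of $D^+$ through the cyclic identity for the three-form and the $F$-terms cancel against the $F(X)\otimes X_+$ and $J_+F(X)\otimes X_-$ parts. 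The main obstacle is exactly this bookkeeping — tracking the cyclic signs of $H(X_\pm,X,v_+)$ and the factors of $i$ from $J_+ v_+=iv_+$ consistently — but once it is carried out the three inner products vanish, $D^+$ preserves $T^{(1,0)}_{J_+}M$, and the reduction of Theorem~\ref{1-even} to the corollary is complete.
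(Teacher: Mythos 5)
Your proposal is correct and takes essentially the same route as the paper: there, too, the reduction of Theorem \ref{1-even} rests on $\mathrm{rank}\, T^{(1,0)}_{J_+}M=1$ and $\mathrm{rank}\, T^{(1,0)}_{J_-}M=2$, which makes the higher-degree restrictions of $H$ and $F$ vacuous, and on the same isotropic-spanning-section orthogonality argument (borrowed from the proof of Corollary \ref{ex-3-dim}) showing that $D^+$ automatically preserves $T^{(1,0)}_{J_+}M$ once (\ref{nabla-cov-even}) holds. You merely carry out in detail the cancellation bookkeeping that the paper leaves to the reader.
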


\begin{proof} 
Since 
$M$ has dimension four, 
$\mathrm{rank}\, T_{J_{+}}^{(1,0)}M =1$ and  $\mathrm{rank}\, T_{J_{-}}^{(1,0)}M =2$.
Like in the proof of Corollary \ref{ex-3-dim}, 
one can show that $\mathrm{rank}\, T_{J_{+}}^{(1,0)}M =1$ implies that 
$D^{+}$ preserves $T_{J_{+}}^{(1,0)} M$.  We deduce that the conditions from Theorem \ref{1-even} 
(see also Lemma \ref{exchange}) 
reduce to the conditions
from Corollary \ref{ex-4-dim}.
\end{proof}

\section{Examples over Lie groups }\label{examples}

In order to illustrate our theory,  in this section we construct examples 
of  $B_{n}$-generalized pseudo-K\"{a}hler structures on Courant algebroids   of type $B_{n}$ over Lie groups of dimension two, three and four.

\begin{defn} 
A Courant algebroid   $E_{H, F}$  of type $B_{n}$ over a Lie group  $G$ is called {\cmssl  left-invariant} if the forms 
$H\in \Omega^{3}(G)$ and $F\in \Omega^{2}(G)$ are left invariant (recall that $dH + F\wedge F =0)$. 
A $B_{n}$-generalized pseudo-K\"{a}hler structure is  {\cmssl left-invariant}  if it is defined on a 
left-invariant Courant algebroid  and 
its components are left invariant tensor fields. 
\end{defn}

We identify as usual  left-invariant tensor fields on  a Lie group $G$ with tensors on the Lie algebra $\mathfrak{g}$ of $G$. In particular,
the forms   $H$ and $F$
 which define a left-invariant Courant algebroid  $E_{H, F}$ over $G$, 
 as well as the components of a left invariant $B_{n}$-generalized K\"{a}hler structure on $E_{H, F}$,  will be considered as 
tensors on the Lie algebra  $\mathfrak{g}$.

 \subsection{The case $\mathrm{dim}\, G=2$}\label{sect-dim-2}

 Let $G$ be a $2$-dimensional  Lie group 
 with Lie algebra $\mathfrak{g}$ and  $(\mathcal{G}, \mathcal F)$ a  left-invariant $B_{2}$-generalized 
 pseudo-K\"{a}hler structure on a Courant algebroid $E_{F}:= E_{0 , F}$ of type $B_{2}$ over $G$, with components
 $(g,  J_{+}, J_{-}, X_{+}, X_{-}, c_{+}) $ such that $X_{\pm}$ are non-null. 
 Recall that a $2$-dimensional Lie group with a left-invariant metric admits a (non-zero) left-invariant Killing field if and only if it is abelian.
 Since $X_{+}$ is a Killing field, we deduce that $G$ is abelian.
 From  Corollary \ref{ex-2-dim},  
 there is a 
 $g$-orthonormal basis $\{ v_{1}, v_{2} \}$ of $\mathfrak{g}$ 
  such that 
 \begin{equation}
 X_{+} = y v_{2}, \ X_{-} = y v_{1},\ 
J_{-} v_{1} = \epsilon_{0} v_{2},\ J_{-} v_{2} = - \epsilon_{0}  v_{1},\ J_{+} = \epsilon_{0} \epsilon_{+} (1- \epsilon y^{2})^{1/2} J_{-}, 
 \end{equation} 
where  
 $\epsilon_{0}, \epsilon_{+} \in \{ \pm 1 \} $ and $y\in \mathbb{R}\setminus \{ 0\} $
is such that  $\epsilon y^{2} \leq 1$, where 
$\epsilon := g(v_{i}, v_{i})\in \{ \pm 1\}$. The condition $i_{X_{+}} F = d c_{+}$ together with $c_{+}$ constant imply $F=0.$

\subsection{The case $\mathrm{dim}\, G = 3$}

Let $G$ be a  $3$-dimensional  Lie group with Lie algebra $\mathfrak{g}.$ 
 A  (non-degenerate) metric $g$ on   
$\mathfrak{g}$ defines  a canonical  operator  $L\in \mathrm{End}\,  (\mathfrak{g})$, unique up to multiplication by $\pm 1.$
By choosing an orientation on $\mathfrak{g}$, the operator $L $ relates the Lie bracket with the cross product
of $\mathfrak{g}$ determined by $g$ and the orientation, by
\begin{equation}\label{def-L}
[ u, v] = L (u\times v),\  \forall u, v\in \mathfrak{g}.
\end{equation} 
Reversing the orientation, the operator $L$ multiplies by $-1.$ 
It is well-known (see  \cite{milnor} 
for $g$ positive definite and  \cite{cortes-k} 
for $g$ of arbitrary signature) that  
 $G$  is unimodular 
(i.e.\ $\mathrm{trace}\, ( \mathrm{ad}_{x} ) =0$ for any $x\in \mathfrak{g}$)  
if and only if $L$ is self-adjoint.  In the next proposition we assume for simplicity that $L$ is diagonalizable. This is always the case
when $g$ is positive definite and $\mathfrak{g}$ is unimodular.

\begin{prop}\label{unimod-3} 
Let $G$ be a $3$-dimensional  unimodular Lie group, with Lie algebra $\mathfrak{g} .$ 
There is a left invariant $B_{3}$-generalized pseudo-K\"{a}hler structure $(\mathcal{G}, \mathcal F )$ 
on a  Courant algebroid  $E= E_{H, F}$ over $G$,  with 
components $(g,  J_{+}, J_{-}, X_{+}, X_{-})$, such that the operator $L\in \mathrm{End}\,  (\mathfrak{g})$ associated to $g$ is diagonalizable,
if and only if one of the following two cases holds:

i)  there is a  $g$-orthonormal  basis $\{ v_{1}, v_{2}, v_{3} \}$ of $\mathfrak{g}$, 
such that $g(v_{2}, v_{2}) =1$, 
 in which the Lie bracket of $\mathfrak{g}$ is given by
\begin{equation}
[v_{1}, v_{2} ] = \epsilon_{3} \lambda v_{3},\ [ v_{3}, v_{1} ] =0,\ [v_{2}, v_{3} ] = \epsilon_{1}\lambda v_{1},
\end{equation}
where  $\epsilon_{i}:= g( v_{i}, v_{i})\in \{ \pm 1\}$ 
($i\in \{ 1,3\}$), $\lambda \in \mathbb{R}\setminus \{ 0\}$, $X_{-} = v_{2}$ and $X_{+} = \pm X_{-}.$ 
In particular, $\mathfrak g$ is isomorphic to the Lie algebra of Killing fields of 
Euclidean or Minkowskian $2$-space, $\mathfrak{g}\cong \mathfrak{iso}(2)= \mathfrak{so}(2) \ltimes \mathbb{R}^2$ or $\mathfrak{g} \cong \mathfrak{iso}(1,1)=\mathfrak{so}(1,1)\ltimes \mathbb{R}^2$, depending on whether $\epsilon_1\epsilon_3=1$ or $\epsilon_1\epsilon_3=-1$, cf.\ Remark~\ref{change-basis-2}.

ii) $\mathfrak{g}$ is abelian, $g$ is an arbitrary  (non-degenerate)  metric on $\mathfrak{g}$ and $X_{\pm}\in \mathfrak{g}$ are arbitrary space-like unit vectors, i.e.\ $g(X_\pm ,X_\pm )=1$.

In both cases $ H =0$,  $F =0$ and $J_{\pm}\in \mathrm{End} (\mathfrak{g})$ are  arbitrary $g$-skew-symmetric endomorphisms, 
which satisfy $J_{\pm}X_{\pm} =0$
and   are complex structures on $X_{\pm}^{\perp}.$    
\end{prop}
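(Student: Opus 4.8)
The plan is to reduce integrability to the conditions of Corollary~\ref{ex-3-dim} — namely $i_{X_{-}}F=0$ together with the two covariant-derivative identities (\ref{gr-3}) — supplemented by the general facts from Corollary~\ref{rem-comments} that $X_{-}$ is a Killing field commuting with $X_{+}$. Since every tensor is left-invariant, the Levi-Civita connection of $g$ becomes the algebraic Koszul expression $2g(\nabla_{u}v,w)=g([u,v],w)-g([v,w],u)+g([w,u],v)$ on $\mathfrak{g}$, so all the conditions turn into algebraic identities among the structure constants of $\mathfrak{g}$, the metric $g$, the fields $X_{\pm}$, the endomorphism $J_{+}$ and the invariant forms $H\in\Lambda^{3}\mathfrak{g}^{*}$, $F\in\Lambda^{2}\mathfrak{g}^{*}$. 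Because $G$ is unimodular the canonical operator $L$ of (\ref{def-L}) is self-adjoint, and under the diagonalizability hypothesis I would fix a $g$-orthonormal eigenbasis $\{e_{1},e_{2},e_{3}\}$, $g(e_{i},e_{j})=\epsilon_{i}\delta_{ij}$, $Le_{i}=\lambda_{i}e_{i}$, in which $[e_{1},e_{2}]=\epsilon_{3}\lambda_{3}e_{3}$, $[e_{2},e_{3}]=\epsilon_{1}\lambda_{1}e_{1}$, $[e_{3},e_{1}]=\epsilon_{2}\lambda_{2}e_{2}$.

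First I would exploit that $X_{-}$ is a unit Killing field, i.e.\ that $\mathrm{ad}_{X_{-}}$ is $g$-skew. Writing $X_{-}=\sum a_{i}e_{i}$, this is equivalent to the three scalar equations $a_{3}(\lambda_{1}-\lambda_{2})=a_{2}(\lambda_{1}-\lambda_{3})=a_{1}(\lambda_{2}-\lambda_{3})=0$. Since $X_{-}\neq 0$, at least two eigenvalues must coincide, and unless all three coincide $X_{-}$ is forced to lie along the eigenline of the remaining, distinct eigenvalue. Next, combining the Killing property with the first identity in (\ref{gr-3}) yields $g(\nabla_{X}X_{-},Y)=-\tfrac{1}{2}g(X_{-},[X,Y])$, which rewrites as $i_{X_{-}}H=dX_{-}^{\flat}$; evaluating this in the eigenbasis pins down the single nonzero component of $H$ in terms of the eigenvalue transverse to $X_{-}$. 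Dually, $i_{X_{-}}F=0$ forces $F$ to be a multiple of the area form of $X_{-}^{\perp}$.

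The decisive step is the second identity in (\ref{gr-3}) for $X_{+}$. Using $[X_{+},X_{-}]=0$ to locate $X_{+}$ inside the centralizer of $X_{-}$, which in the non-abelian case is the line $\mathbb{R}X_{-}$ (so $X_{+}=\pm X_{-}$), the equation $\nabla_{X}X_{+}=-\tfrac{1}{2}H(X_{+},X)-J_{+}F(X)$ collapses, after inserting the already-determined $H$ and $F$, to a single identity of the form $J_{+}F(X)=H(X,X_{-})$. Splitting this in the $J_{+}$-adapted orthonormal frame of $X_{-}^{\perp}$, the two sides land in complementary one-dimensional subspaces, so both must vanish; this simultaneously forces $F=0$, the transverse eigenvalue to be zero, and hence $H=0$. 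Thus the eigenvalue pattern is either $(0,0,0)$, the abelian case ii), or two equal nonzero eigenvalues and one zero, case i), with $X_{+}=\pm X_{-}$ along the zero eigendirection; the residual sign $\epsilon_{1}\epsilon_{3}$ of the two transverse directions then distinguishes $\mathfrak{iso}(2)$ from $\mathfrak{iso}(1,1)$, according to whether $\mathrm{ad}_{X_{-}}$ acts on $X_{-}^{\perp}$ as a rotation or a boost. The remaining degenerate patterns — all eigenvalues equal and nonzero ($\mathfrak{su}(2)$, $\mathfrak{sl}(2,\mathbb{R})$) and exactly one nonzero eigenvalue (Heisenberg) — would be ruled out by showing that this same identity $J_{+}F(X)=H(X,X_{-})$ is incompatible with a nonzero $i_{X_{-}}H=dX_{-}^{\flat}$.

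For the converse I would verify directly that the listed data satisfy Corollary~\ref{ex-3-dim}: with $H=F=0$ the conditions reduce to $\nabla_{X}X_{\pm}=0$, and since $X_{\pm}$ lie along the bracket-transverse zero eigendirection the Koszul formula gives $g(X_{\pm},[X,Y])=0$ for all $X,Y$, so $X_{\pm}$ are parallel; any $g$-skew $J_{\pm}$ with $J_{\pm}X_{\pm}=0$ restricting to a complex structure on $X_{\pm}^{\perp}$ is then automatically admissible, because in dimension three the invariance of $T^{(1,0)}_{J_{\pm}}M$ under $\nabla^{\pm}$ is automatic, exactly as in the proof of Corollary~\ref{ex-3-dim}. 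I expect the main obstacle to be the third step: correctly organizing the eigenvalue-degeneracy subcases and carrying out the component bookkeeping that forces $F$, $H$ and the transverse eigenvalue to vanish at once.
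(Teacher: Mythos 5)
Your strategy is essentially the paper's own: reduce integrability to Corollary~\ref{ex-3-dim} plus Corollary~\ref{rem-comments}, pass to a $g$-orthonormal eigenbasis of $L$, use the Killing condition ($\mathrm{ad}_{X_-}$ $g$-skew) to force $X_-$ onto the eigenline of the non-repeated eigenvalue, determine $H$ from $i_{X_-}H=dX_-^\flat$ and $F$ from $i_{X_-}F=0$, and then exploit the second identity in (\ref{gr-3}). In the branch where the two coinciding eigenvalues are nonzero (and likewise when all three are equal and nonzero), your argument is correct and coincides with the paper's: there the centralizer of $X_-$ really is $\mathbb{R}X_-$, so $X_+=\pm X_-$, the identity $\pm H(X_-,X)=-J_+F(X)$ follows, and your complementary-subspace argument forces $F=0$, the eigenvalue of the $X_-$-direction to vanish, and $H=0$, which is exactly how the paper reaches case i) and excludes $\mathfrak{su}(2)$ and $\mathfrak{sl}(2,\mathbb{R})$.

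The genuine gap is the eigenvalue pattern with exactly one nonzero eigenvalue (the Heisenberg algebra: in the paper's notation $\lambda_1=\lambda_3=0\neq\lambda_2$). There $X_-=\pm v_2$ is central, so $[X_+,X_-]=0$ imposes no restriction on $X_+$; your claim that the centralizer of $X_-$ is the line $\mathbb{R}X_-$ ``in the non-abelian case'' fails precisely here, and consequently the collapsed identity $J_+F(X)=H(X,X_-)$ — which you propose to play off against $i_{X_-}H=dX_-^\flat\neq 0$ — is not available, since it was derived under $X_+=\pm X_-$. To exclude Heisenberg you must feed a general unit vector $X_+=a_1v_1+a_2v_2+a_3v_3$ into the full second relation (\ref{gr-3}) (the paper's relation (\ref{above-3})): the $X=v_1$ and $X=v_2$ components force $a_2=a_3=0$, hence $X_+=a_1v_1$ and $J_+v_1=0$ because $J_+X_+=0$; the $X=v_3$ component then gives $F_{13}J_+v_1=\epsilon_1a_1\lambda_2v_2$, so $a_1=0$, contradicting $g(X_+,X_+)=1$. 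Note that in this case the conclusion $X_+=a_1v_1$ is not proportional to $X_-$, so no variant of your centralizer argument can substitute for this computation; without it the ``only if'' direction of the classification is incomplete. A minor additional point: in your converse verification of case i), $g(X_\pm,[X,Y])=0$ alone only makes $\nabla X_\pm$ symmetric; you also need the $g$-skewness of $\mathrm{ad}_{X_\pm}$ (which holds, since $X_\pm=\pm v_2$ is Killing) to conclude $\nabla X_\pm=0$.
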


\begin{proof}  
Assume that $\mathfrak{g} $ is non-abelian. Since $L$ is diagonalizable, 
there is a $g$-orthonormal basis 
$\{ v_{1}, v_{2}, v_{3}\}$ 
in which the Lie bracket of $\mathfrak{g}$ is given by 
\begin{equation}
[ v_{1}, v_{2} ] = \epsilon_{3}\lambda_{3} v_{3},\ 
[v_{3}, v_{1} ] = \epsilon_{2} \lambda_{2} v_{2},\ 
[v_{2}, v_{3} ]=\epsilon_{1}  \lambda_{1} v_{1},
\end{equation}
where $\lambda_{i} \in \mathbb{R}$ (not all of them zero) and 
$\epsilon_{i}:= g(v_{i}, v_{i})\in \{ \pm 1\}$. 
The Levi-Civita connection of $g$ is given by 
\begin{align}
\nonumber& \nabla_{v_{1}} v_{2} = \frac{\epsilon_{3}}{2} ( - \lambda_{1} +\lambda_{2} +\lambda_{3})v_{3},\
\nabla_{v_{2}} v_{1} = \frac{\epsilon_{3}}{2} ( -\lambda_{1} +\lambda_{2} -\lambda_{3})v_{3},\\
\nonumber&  \nabla_{v_{1}} v_{3} = \frac{\epsilon_{2} }{2} (  \lambda_{1} -\lambda_{2} -\lambda_{3})v_{2},\
\nabla_{v_{3}} v_{1} = \frac{\epsilon_{2} }{2} ( \lambda_{1} +\lambda_{2} -\lambda_{3})v_{2},\\
\label{levi-civita-3-non}&  \nabla_{v_{2}} v_{3} = \frac{\epsilon_{1}}{2} (  \lambda_{1} -\lambda_{2} +\lambda_{3})v_{1},\
\nabla_{v_{3}} v_{2} = \frac{\epsilon_{1}}{2} ( -\lambda_{1} -\lambda_{2} +\lambda_{3})v_{1},
\end{align}
and
\begin{equation}\label{levi-civita-3-non-0}
\nabla_{v_{i}} v_{i}=0,\ \forall 1\leq i\leq 3.
\end{equation}

Recall, from Corollary 
\ref{rem-comments}, that $X_{-}$ is a Killing field of norm one.  It turns out that $\mathfrak{g}$ admits  such a left-invariant
Killing field  in the following cases:

1) $\lambda_{1} \neq \lambda_{2}$,  $\lambda_{3} = \lambda_{1}$ and 
$\epsilon_{2} =1$  (up to permutation). Then any  space-like left-invariant unit Killing  field (in particular, $X_{-}$) 
 is of the form $X_{-} =\pm v_{2}$.    
Replacing $v_{2}$ by $- v_{2}$ (and leaving $v_{1}$ and $v_{3}$ unchanged), we may (and will) assume
that $X_{-} = v_{2}.$\ 

2) $\lambda_{1} = \lambda_{2} = \lambda_{3} \neq 0$. Any  left invariant vector field is 
Killing.\

Consider case 1) and let $\{ v_{1}^{*}, v_{2}^{*}, v_{3}^{*}\}$ be the dual basis of 
$\{ v_{1}, v_{2}, v_{3}\}$, i.e.\ $ v_{i}^{*} (v_{j}) = \delta_{ij}.$ The covectors $v_{i}^{*}$ correspond to $\epsilon_{i} v_{i}$ 
in the duality defined by $g$.
From  $i_{X_{-}} F =0$,  $X_{-} = v_{2}$ we deduce  that 
\begin{equation}\label{F-1-3}
F= F_{13} v_{1}^{*} \wedge v_{3}^{*}, 
\end{equation}
 where $F_{13}\in \mathbb{R}.$ 
Using  relations (\ref{levi-civita-3-non}) (with $\lambda_{3} =\lambda_{1}$) and relations (\ref{levi-civita-3-non-0}) 
we obtain from  
the first relation (\ref{gr-3}) with $X_{-} = v_{2}$ that  
\begin{equation}\label{H-1-3}
H= -  \lambda_{2} v_{1}^{*} \wedge v^{*}_{2} \wedge v^{*}_{3}.
\end{equation}
Recall now, from Corollary \ref{rem-comments}, that $X_{+}$ and $X_{-}$ commute.

When  $\lambda_{3} = \lambda_{1} \neq 0$ the conditions that $X_{\pm}$ have the same norm and  $\mathcal L_{X_{+}} X_{-} =0$
imply 
that  $X_{+} = \epsilon X_{-}$ with $\epsilon \in \{ \pm 1\}$. The  second relation (\ref{gr-3}) reduces to 
\begin{equation}\label{epsilon-h}
\epsilon H(X_{-}, X) = - J_{+} (F(X)),\ \forall X\in \mathfrak{g}. 
\end{equation}
Using that $J_{+}$ is $g$-skew-symmetric, $J_{+} X_{+} =0$ and $\mathrm{rank} ( \mathrm{Ker} \, J_{+}) =1$, 
together with (\ref{F-1-3}) and (\ref{H-1-3}) 
we 
obtain  from (\ref{epsilon-h}) that $H =0$, $F =0$ and $\lambda_{2} =0.$ This leads to the generalized pseudo-K\"{a}hler  structure from claim i).

When $\lambda_{3} = \lambda_{1} =0$, $X_{-}$ belongs to the center of $\mathfrak{g}$ and $\mathcal L_{X_{-}} X_{+} =0$ 
does not impose any restrictions on $X_{+}$ (as it happened when $\lambda_{3} =\lambda_{1} \neq 0$).  Since $g(X_{+}, X_{+}) =1$, the vector field  $X_{+}$ is of the form 
$X_{+} = \sum a_i v_{i}$, where
$a_1, a_2 ,a_3\in \mathbb{R}$, such that  $\epsilon_{1} a_{1}^{2} + a_{2}^{2} + \epsilon_{3} a_{3}^{2}=1$.
The second
relation (\ref{gr-3}) becomes
\begin{align}
\nonumber&  \sum a_i \nabla_Xv_i   -\frac{  \lambda_{2}}{2} 
(  a_1 v^{*}_{2} \wedge v^{*}_{3} -  a_2 v^{*}_{1} \wedge v^{*}_{3} +   a_3
v^{*}_{1} \wedge v^{*}_{2} ) (X)\\
\label{above-3}  &  = -  F_{13} J_{+} (v_{1}^{*}\wedge v^{*}_{3} )(X)
\end{align}
for any  $X\in\mathfrak{g}.$
Since $\mathfrak{g}$ is not abelian, $\lambda_{2} \neq 0.$ Relation (\ref{above-3}) with $X:= v_{1} $ and $X:= v_{2}$  implies $a_2=a_3=0$, i.e.\ 
$X_{+} = a_1 v_{1}.$ In particular, 
as $J_{+} X_{+} =0$, we obtain that 
$J_{_+} v_{1} =0.$   
Relation (\ref{above-3}) with $X := v_{3}$
implies that $F_{13} J_{+} v_{1} =   \epsilon_{1}a_1 \lambda_{2} v_{2}.$ 
Combined with $J_{+} v_{1} =0$, this implies that $a_1 =0$, which is a contradiction. Similar computations show that case 2) leads to
a contradiction as well.\

In the remaining case, i.e.\ when $\mathfrak{g}$ is abelian, Corollary~\ref{ex-3-dim} 
implies immediately that $H$ and $F$ are zero and the remaining components of the generalized almost pseudo-Hermitian structure are unconstrained.
\end{proof}

\begin{rem}\label{change-basis-2}{\rm Consider the $B_{3}$-generalized pseudo-K\"{a}hler structure
$(\mathcal{G}, \mathcal F  )$ 
 from Proposition  \ref{unimod-3} i) and the new basis 
$\{ w_{1}:= \frac{1}{\lambda} v_{1} , w_{2} := \frac{1}{\lambda} v_{2} ,  w_{3} := \frac{1}{\lambda} v_{3} 
\}$   of $\mathfrak{g} .$ Rescaling  $(\mathcal{G}, \mathcal F )$ by $\lambda^{2}$ 
(according to Corollary \ref{rescaling-1}) 
we obtain a 
$B_{3}$-generalized pseudo-K\"{a}hler structure  on the untwisted Courant algebroid  of type $B_{3}$ over $G$, with 
components 
$(\tilde{g},\tilde{J}_{+}, \tilde{J}_{-}, \tilde{X}_{+}, \tilde{X}_{-})$, such that $\{  w_{1} , w_{2}, w_{3} \}$ is  
$\tilde{g}$-orthogonal, and  
\begin{equation}
\tilde{g}( w_{i}, w_{i}) =  \epsilon_{i},\  \tilde{g}( w_{2}, w_{2} ) =1,\ 
\tilde{X}_{-} =  w_{2},\  \tilde{X}_{+} = \pm  \tilde{X}_{-},
\end{equation}
where $\epsilon_{i} \in \{ \pm 1\}$ for $i\in \{ 1, 3\} .$
 As above, $\tilde{J}_{\pm}\in \mathrm{End}\, ( \mathfrak{g})$
are  arbitrary $g$-skew-symmetric endomorphisms, such that 
$\tilde{J}_{\pm } \tilde{X}_{\pm }=0$ and $\tilde{J}_{\pm}$ are complex structures on
$\tilde{X}_{\pm}^{\perp}.$  In the new basis the structure constants  of $\mathfrak{g}$  take the standard form (as in \cite{milnor}):
\begin{equation}
[ w_{1} , w_{2} ] =\epsilon_{3}  w_{3},\ 
[ w_{2} , w_{3} ] = \epsilon_{1} w_{1},\ [ w_{3}, w_{1} ]=0.
\end{equation}}
\end{rem}

Let  $G$ be a $3$-dimensional  non-unimodular Lie group, with Lie algebra $\mathfrak{g}.$ 
Since $\mathfrak{g}$ is $3$-dimensional, its unimodular kernel
\begin{equation}
\mathfrak{g}_{0}:= \{ x\in \mathfrak{g}:\ \mathrm{trace} (\mathrm{ad}_{x}) =0 \}
\end{equation}
is $2$-dimensional (and unimodular), hence abelian.  Choose  a basis $\{ v_{2}, v_{3 }\}$ of
$\mathfrak{g}_0$ and a vector $v_{1}\notin\mathfrak{g}_{0}$. 
In the basis $\{ v_{1}, v_{2}, v_{3} \}$ the Lie bracket of 
$\mathfrak{g}$ is given by 
\begin{equation}\label{lie-form}
[v_{1}, v_{2} ]= \alpha v_{2} + \beta v_{3},\ [v_{1}, v_{3} ] = \gamma v_{2} +\delta v_{3} ,\ [ v_{2}, v_{3} ] =0,
\end{equation}
where 
$\alpha , \beta , \gamma , \delta \in \mathbb{R}$ and 
$\alpha +\delta \neq 0$. Up to a multiplicative factor the constants
 $\alpha , \beta , \gamma , \delta $ are independent of the choice of $v_{1}.$

\begin{prop}\label{non-mod-3}  
Let $G$ be a $3$-dimensional  non-unimodular Lie group, with Lie algebra $\mathfrak{g} $
and unimodular kernel 
$\mathfrak{g}_{0}.$ 
There is a  left invariant  $B_{3}$-generalized pseudo-K\"{a}hler structure $(\mathcal{G},  \mathcal F )$ 
on a  Courant algebroid  $E= E_{H, F}$ of type $B_{3}$ over $G$,  with 
components $(g,  J_{+}, J_{-}, X_{+}, X_{-})$, such that 
$\mathfrak{g}_{0}^{\perp}\cap \mathfrak{g}_{0} =0$, 
if and only if   $\mathfrak g$ is isomorphic to $\mathbb{R}\oplus \mathfrak{sol}_2$, where 
$\mathfrak{sol}_2$ is the unique non-abelian Lie algebra of dimension $2$. 
If $\mathfrak{g}$ is isomorphic to   $\mathbb{R}\oplus \mathfrak{sol}_2$, then there is a basis 
$\{ w_{1}, w_{2}, w_{3} \}$ of $\mathfrak{g}$ 
in which the Lie brackets take the form
\begin{equation}\label{brackets-w}
[w_{1}, w_{2} ] = w_{2},\ [ w_{1}, w_{3} ] =  [w_{2}, w_{3} ] = 0,
\end{equation}
the metric $g$ is given by
\begin{equation}
g (w_{1}, w_{1}) = \frac{\epsilon}{\delta^{2}},\ g(w_{2}, w_{2})= \epsilon' ,\ g( w_{3}, w_{3}) = 1,\ g(w_{i},w_{j}) =0\, \forall i\neq j, 
\end{equation}
where $\delta \in \mathbb{R}\setminus \{ 0\}$,  $\epsilon, \epsilon' \in \{ \pm 1\}$ are arbitrary, 
$X_{-} = w_{3}$,   $X_{+} = \pm X_{-}$ 
and $J_{\pm}\in \mathrm{End} (\mathfrak{g})$ are arbitrary $g$-skew-symmetric endomorphisms, which satisfy $J_{\pm}X_{\pm} =0$
and are complex structures on $X_{\pm}^{\perp}.$    Moreover, $H=0$ and $F=0$.
\end{prop}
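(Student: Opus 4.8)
The plan is to translate the integrability constraints of Corollary~\ref{ex-3-dim} and Corollary~\ref{rem-comments} into conditions on the non-unimodular bracket (\ref{lie-form}), using throughout that $g$ is non-degenerate on $\mathfrak{g}_{0}$. First I would locate $X_{-}$. By Corollary~\ref{rem-comments} the vector field $X_{-}$ is a left-invariant unit Killing field, which for a left-invariant metric is equivalent to $\mathrm{ad}_{X_{-}}$ being $g$-skew-symmetric; in particular $\mathrm{trace}(\mathrm{ad}_{X_{-}}) = 0$, so $X_{-} \in \mathfrak{g}_{0}$. As $\mathfrak{g}_{0}$ is abelian, $\mathrm{ad}_{X_{-}}$ vanishes on $\mathfrak{g}_{0}$, and since $\mathfrak{g}_{0}$ is an ideal it maps $\mathfrak{g}_{0}^{\perp}$ into $\mathfrak{g}_{0}$; skew-symmetry then gives $g(\mathrm{ad}_{X_{-}}Y, Z) = -g(Y, \mathrm{ad}_{X_{-}}Z) = 0$ for all $Z \in \mathfrak{g}_{0}$, so $\mathrm{ad}_{X_{-}}Y \in \mathfrak{g}_{0}$ is $g$-orthogonal to $\mathfrak{g}_{0}$ and hence vanishes by the hypothesis $\mathfrak{g}_{0}^{\perp}\cap\mathfrak{g}_{0} = 0$. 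Thus $\mathrm{ad}_{X_{-}} = 0$, i.e.\ $X_{-}$ is central.

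Next I would read off the isomorphism type. Fixing $v_{1} \notin \mathfrak{g}_{0}$, the endomorphism $\mathrm{ad}_{v_{1}}|_{\mathfrak{g}_{0}}$ of the $2$-dimensional space $\mathfrak{g}_{0}$ has trace $\mu := \mathrm{trace}(\mathrm{ad}_{v_{1}}) \neq 0$ by non-unimodularity and kills $X_{-}$, so it has rank one with eigenvalues $0$ and $\mu$. Letting $w$ span its image, one gets $[v_{1}, w] = \mu w$, so $\mathrm{span}\{v_{1}, w\} \cong \mathfrak{sol}_{2}$, while $\mathfrak{g}_{0} = \mathbb{R}X_{-} \oplus \mathbb{R}w$ and $X_{-}$ is central; hence $\mathfrak{g} = \mathbb{R}X_{-} \oplus \mathrm{span}\{v_{1}, w\} \cong \mathbb{R}\oplus\mathfrak{sol}_{2}$, independently of the remaining data.

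Then I would determine $H$ and $F$. Computing $\nabla X_{-}$ from the Koszul formula (using that $X_{-}$ is central) and comparing with the first relation (\ref{gr-3}) gives $H(Y, X_{-}, Z) = g(X_{-}, [Y,Z])$ for all $Y, Z$; since $\dim M = 3$ this single family of components determines $H$ entirely and shows $H$ is a multiple of $g(X_{-}, [v_{1}, v_{2}])$, so that, as $[\mathfrak{g},\mathfrak{g}] = \mathbb{R}w$, the vanishing $H = 0$ is equivalent to $X_{-} \perp [\mathfrak{g},\mathfrak{g}]$. The condition $i_{X_{-}}F = 0$ confines $F$ to the plane transverse to $X_{-}$. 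The \emph{main obstacle} is now to substitute these expressions together with $X_{+}$ into the second relation (\ref{gr-3}) and exploit the algebraic properties of $J_{+}$ (namely $J_{+} X_{+} = 0$, $J_{+}$ is $g$-skew-symmetric, and $J_{+}|_{X_{+}^{\perp}}$ is a complex structure): the resulting linear system must be shown to force $F = 0$ and $g(X_{-}, w) = 0$, hence $H = 0$. Here the interplay between $i_{X_{-}}F = 0$ and the one-dimensional kernel of $J_{+}$ is what rules out any surviving freedom.

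Once $H = F = 0$, both relations (\ref{gr-3}) assert that $X_{\pm}$ are $\nabla$-parallel. A direct computation of the Levi-Civita connection of the now $g$-orthogonal splitting $\mathbb{R}X_{-} \oplus \mathrm{span}\{v_{1}, w\}$ shows that the parallel left-invariant fields are exactly $\mathbb{R}X_{-}$, forcing $X_{+} = \pm X_{-}$; rescaling $v_{1}$ by $1/\mu$ to normalise the nonzero structure constant then produces the stated basis $\{w_{1}, w_{2}, w_{3}\}$ and the diagonal metric, the factor $1/\delta^{2}$ in $g(w_{1}, w_{1})$ reflecting this rescaling. The converse is a short verification: on $\mathbb{R}\oplus\mathfrak{sol}_{2}$ with the normal-form metric and $H = F = 0$, the central unit vector $w_{3}$ is $\nabla$-parallel, so taking $X_{\pm} := \pm w_{3}$, $i_{X_{-}}F = 0$ and any admissible $J_{\pm}$ makes both relations (\ref{gr-3}) read $\nabla_{X} X_{\pm} = 0$; by Corollary~\ref{ex-3-dim} this is a $B_{3}$-generalized pseudo-K\"{a}hler structure.
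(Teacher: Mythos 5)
You get the first assertion of the proposition by a genuinely different and arguably cleaner route than the paper: from Corollary~\ref{rem-comments}, $X_{-}$ is a left-invariant unit Killing field, so $\mathrm{ad}_{X_{-}}$ is $g$-skew, hence traceless, hence $X_{-}\in\mathfrak{g}_{0}$; combined with $\mathfrak{g}_{0}$ abelian, $\mathfrak{g}_{0}$ an ideal and $\mathfrak{g}_{0}^{\perp}\cap\mathfrak{g}_{0}=0$ this makes $X_{-}$ central, and the rank-one trace argument then gives $\mathfrak{g}\cong\mathbb{R}\oplus\mathfrak{sol}_{2}$ without the paper's explicit two-case classification of Killing fields in the parametrization (\ref{lie-form}). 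Your converse paragraph and the identification of the parallel left-invariant fields with $\mathbb{R}X_{-}$ (once $H=F=0$ and $g(X_{-},w)=0$ are known) are also correct, using non-degeneracy of $g$ on $\mathfrak{g}_{0}$.

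However, the proposition is a classification statement, not only an isomorphism statement, and precisely the step you label ``the main obstacle'' is left unproved: you assert that substituting $H$, $F$ and $X_{+}$ into the second relation (\ref{gr-3}) and using that $J_{+}$ is $g$-skew with one-dimensional kernel ``must be shown to force $F=0$ and $g(X_{-},w)=0$'', but you never derive this. That forcing is the computational core of the paper's argument (the system (\ref{F-3})--(\ref{f-1-3}) in case 1 and its analogue in case 2), where the equation is evaluated on basis vectors and $J_{+}X_{+}=0$, skew-symmetry and the complex-structure property on $X_{+}^{\perp}$ are played off against each other; there the vanishing of $F$ and $H$ emerges from the same linear system that simultaneously constrains the components of $X_{+}$, so your plan of first obtaining $H=F=0$ and only afterwards $X_{+}=\pm X_{-}$ still requires solving essentially that system. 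Without it, the normal form ($H=F=0$, $X_{+}=\pm X_{-}$, the stated diagonal metric), which is also what Corollary~\ref{classification} relies on, is not established. Two smaller points: knowing only $g(X_{-},w)=0$, the splitting $\mathbb{R}X_{-}\oplus\mathrm{span}\{v_{1},w\}$ is not yet $g$-orthogonal --- you must still replace $v_{1}$ by a suitable element of $v_{1}+\mathfrak{g}_{0}$ (harmless, since $X_{-}$ is central and $\mathfrak{g}_{0}$ abelian) and rescale $w$ to achieve $g(w_{1},w_{2})=g(w_{1},w_{3})=0$ and $g(w_{2},w_{2})=\pm1$; and in the existence direction one should note that the constructed metric is indeed non-degenerate on $\mathfrak{g}_{0}=\mathrm{span}\{w_{2},w_{3}\}$, as required by the hypothesis.
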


\begin{proof}
Let  $g$ be a left-invariant metric on $G$ such that 
$\mathfrak{g}_{0}^{\perp}\cap \mathfrak{g}_{0} =0$. Choose a $g$-orthonormal basis $\{ v_{1}, v_{2}, v_{3}\}$ of $\mathfrak{g}$ such that
$v_{2}, v_{3} \in \mathfrak{g}_{0}$. 
The Lie bracket of $\mathfrak{g}$ takes the form (\ref{lie-form}) 
in the basis $\{ v_{1}, v_{2}, v_{3} \}$ 
and
 the Levi-Civita connection $\nabla$ of $g$ is given by 
 \begin{align}
 \nonumber& \nabla_{v_{1}} v_{1} =0,\ \nabla_{v_{2}} v_{2} = \epsilon_{1} \epsilon_{2} \alpha v_{1},\ \nabla_{v_{3}} v_{3} = \epsilon_{1} \epsilon_{3} \delta v_{1}\\
 \nonumber & \nabla_{v_{1}} v_{2} = \frac{1}{2} (\beta -\epsilon_{2} \epsilon_{3}\gamma ) v_{3},\  \nabla_{v_{2}} v_{1} = -\frac{1}{2} ( \epsilon_{2} \epsilon_{3} \gamma + \beta ) v_{3}- \alpha v_{2}\\
 \nonumber& \nabla_{v_{3}} v_{1} = -\frac{1}{2} (\epsilon_{2} \epsilon_{3}  \beta +\gamma  ) v_{2} -\delta v_{3},\ \nabla_{v_{1}} v_{3} = \frac{1}{2} (\gamma - \epsilon_{2} \epsilon_{3}  \beta )
 v_{2}\\
 \label{levi-civita-3} &  \nabla_{v_{3}} v_{2} =  \nabla_{v_{2}} v_{3} = \frac{\epsilon_{1} }{2} ( \epsilon_{2} \gamma + \epsilon_{3} \beta ) 
 v_{1}, 
 \end{align}
 where $\epsilon_{i} := g (v_{i}, v_{i}) \in \{ \pm 1\} .$ 

From Corollary 
\ref{rem-comments},  $X_{-}$ is a Killing  field. It turns out that $g$ admits a non-zero left-invariant Killing  field only in the following cases:

1)   $\alpha =0$, $\delta \neq 0$  and $\beta \gamma =0$. Any non-zero left-invariant Killing field (in particular $X_{-}$)  is of the form   $X_{-}= b ( v_{2} -\frac{\beta}{\delta } v_{3})$, where $b\in \mathbb{R}\setminus \{ 0\}$; or

2)   $\alpha \neq 0$, 
$\alpha + \delta \neq 0$ and 
$\gamma \beta = \delta \alpha$. Any  non-zero left-invariant Killing  field (in particular $X_{-}$) 
is of the form 
 $X_{-} =
c( -\frac{\gamma}{\alpha} v_{2} + v_{3})$, where $c\in \mathbb{R}\setminus \{ 0\}$.\

Consider case 1)  
and let  $\{ v_{1}^{*}, v_{2}^{*}, v_{3}^{*}\}$  be  the dual basis.
Relation $i_{X_{-}} F =0$ implies that
\begin{equation}\label{F-3}
F = F_{13} ( \frac{\beta}{\delta} v^{*}_{1} \wedge v^{*}_{2} + v_{1}^{*}\wedge v^{*}_{3} ),
\end{equation}
where $F_{13}\in \mathbb{R}$.  We write $H = H_{123} v_{1}^{*} \wedge v_{2}^{*} \wedge v_{3}^{*}$, where $H_{123}\in \mathbb{R}.$ 
Using  relations (\ref{levi-civita-3}) (with $\alpha =0$) we obtain that the first relation (\ref{gr-3}) is equivalent to 
$H_{123} =  \epsilon_{2} \gamma - \epsilon_{3} \beta $, i.e.
\begin{equation}
H =  ( \epsilon_{2} \gamma - \epsilon_{3} \beta )v^{*}_{1} \wedge v^{*}_{2} \wedge v^{*}_{3}.
\end{equation}
For the second relation
(\ref{gr-3}), let 
$X_{+} :=  \sum_{i} a_{i} v_{i}$, where $a_{1}, a_{2}, a_{3} \in \mathbb{R}.$ With 
this notation, the second relation (\ref{gr-3}) is equivalent to
\begin{align}
\nonumber& J_{_+} F (v_{1} )= (\beta - \epsilon_{2} \epsilon_{3} \gamma ) (\epsilon_{2} \epsilon_{3}  a_{3} v_{2} - a_{2} v_{3})\\
\nonumber& J_{+} F(v_{2}) = \beta ( -  \epsilon_{1} \epsilon_{3} a_{3}  v_{1} + a_{1} v_{3})\\
\nonumber& J_{+} F (v_{3}) = -\epsilon_{1}  (  \epsilon_{2}  a_{2} \gamma + \epsilon_{3}   a_{3} \delta ) v_{1} +  a_{1} \gamma v_{2} + a_{1} \delta v_{3},
\end{align}
or, using (\ref{F-3}), to 
\begin{align}
\nonumber& F_{13} J_{+} (  \epsilon_{2} \frac{\beta}{\delta} v_{2}+  \epsilon_{3}v_{3} ) = (\beta - \epsilon_{2} \epsilon_{3} \gamma ) (  \epsilon_{2} \epsilon_{3}  a_{3} v_{2} - a_{2}  v_{3} )\\
\nonumber& \frac{\beta}{\delta} F_{13} J_{+} ( v_{1}) = \beta (  \epsilon_{3} a_{3}  v_{1} - \epsilon_{1}  a_{1} v_{3} )\\
\label{f-1-3}& F_{13} J_{+} ( v_{1})  =  ( \epsilon_{2}  a_{2} \gamma +  \epsilon_{3}  a_{3} \delta ) v_{1} -  \epsilon_{1}  a_{1}\gamma v_{2} -  \epsilon_{1} a_{1} \delta v_{3}.
\end{align}

Recall now that $J_{+} \in \mathrm{End} (\mathfrak{g})$ is $g$-skew-symmetric, $J_{+} X_{+} =0$ and
$J_{+} $ is a complex structure on the orthogonal $X_{+}^{\perp}$.  
These conditions combined with (\ref{f-1-3}) 
 imply that $F_{13} = H_{123}=\beta = \gamma =  a_{1} = a_{3} =0$. 
 To summarize: case 1) provides  a  basis $\{ v_{1} , v_{2}, v_{3}\}$ of
 $\mathfrak{g} $ and 
 a  one parameter family  (indexed by $\delta\in \mathbb{R}\setminus \{ 0\}$, see below) of $B_{3}$-generalized pseudo-K\"{a}hler structures 
 on the untwisted Courant algebroid ($H=0$, $F=0$),  
 with components
 $(g, J_{+}, J_{-}, X_{+}, X_{-})$, 
 such that the basis $\{ v_{1}, v_{2}, v_{3}\}$ is $g$-orthonormal, 
 $ g (v_{2}, v_{2}) =1$,  the Lie bracket of $\mathfrak{g}$ is given by  
\begin{equation}\label{bracket-1}
[v_{1}, v_{2} ] = 0,\ [ v_{1}, v_{3} ] = \delta  v_{3},\ [v_{2}, v_{3} ] = 0,
\end{equation}
where $\delta \in \mathbb{R}\setminus \{ 0\}$, 
$X_{-} = v_{2}$ and  $X_{+} = \pm X_{-}$. 
In the new basis 
$\{ w_{1} := \frac{1}{\delta } v_{1},  w_{2} := v_{3}, w_{3} := v_{2} \}$ 
this family takes the form described in the statement of the proposition, where $\epsilon=\epsilon_1$ and $\epsilon'=\epsilon_3$. \

Similar arguments show that case 2)  with $\gamma\neq 0$ leads to  
a basis $\{ v_{1}, v_{2}, v_{3}\}$ of $\mathfrak{g}$ and 
a  family of $B_{3}$-generalized pseudo-K\"{a}hler structures  on the untwisted Courant algebroid $(H=0$, $F=0$), with components $(g, J_{+}, J_{-}, X_{+}, X_{-})$,   such that  $\{ v_{1}, v_{2}, v_{3} \}$
is $g$-orthonormal,  the Lie bracket takes the form
\begin{equation}\label{bracket-2}
[v_{1}, v_{2} ] = \alpha v_{2} + \gamma  \epsilon_{2} \epsilon_{3} v_{3},  \ [ v_{1}, v_{3} ] = \gamma v_{2} +\epsilon_{2} \epsilon_{3}  \frac{\gamma^{2}}{\alpha}
v_{3},\ [v_{2}, v_{3} ] = 0,
\end{equation}
where 
$\alpha  , \gamma   \in \mathbb{R}\setminus \{ 0\}$,  $\epsilon_{i}:= g(v_{i}, v_{i}) \in \{ \pm 1\}$, 
$X_{-} = c( -\frac{\gamma}{\alpha} v_{2} +  v_{3})$ and $X_{+} = \pm X_{-}$, where  $c\in \mathbb{R} $ satisfies 
$c^{2} ( \alpha^{2} \epsilon_{3} + \gamma^{2} \epsilon_{2})=\alpha^{2}$.
In the new basis  
$$\left\{ w_{1} := \frac{\epsilon_3c^2}{\alpha} v_{1}, w_{2} : =\frac{\epsilon_{3}c}{\alpha} (\alpha v_{2} +  \epsilon_{2} \epsilon_{3} \gamma v_{3} ), w_{3} := 
-\frac{c}{\alpha}(\gamma v_2-\alpha v_3)\right\} $$  the Lie brackets take the form
(\ref{brackets-w}) and the metric $g$ and vector fields $X_{\pm}$  are given by
\begin{align}
\nonumber&  g(w_{1}, w_{1}) =\frac{c^4}{\alpha^2}\epsilon_{1},\ g( w_{2}, w_{2}) =  \epsilon_{2} \epsilon_{3} ,\ g( w_{3}, w_{3}) = 1 ,\\
\nonumber& 
g(w_{1}, w_{2}) = g(w_{1}, w_{3}) =g( w_{2}, 
w_{3}) =0,\\
\label{resc-ii}& X_{-} =  w_{3},\ X_{+} = \pm X_{-}.
\end{align}
Letting   $\delta$ such that  $\delta^2=\alpha^2/c^4$ we arrive again at the $B_{3}$-generalized pseudo-K\"{a}hler structures described in the statement of the proposition,
 where now $\epsilon=\epsilon_1$ and 
$\epsilon'=\epsilon_{2}\epsilon_{3}$.\

Case 2) with $\gamma =0$ leads 
to  the  family of $B_{3}$-generalized pseudo-K\"{a}hler structures obtained  in case 1), but with $v_{2}$ and $v_{3}$ interchanged. 
Therefore, they provide  no further  $B_{3}$-generalized pseudo-K\"{a}hler structures besides  those described above.
\end{proof}

The next corollary summarizes our results 
from this section in the positive definite case.

\begin{cor} \label{classification}
Let $(\mathcal{G}, \mathcal F )$ be a left-invariant $B_{3}$-generalized K\"{a}hler structure on a Courant algebroid
$E = E_{H, F}$ of type $B_{3}$  over a $3$-dimensional Lie group $G$ with Lie algebra $\mathfrak{g}.$ 
Let 
$(g,  J_{+}, J_{-}, X_{+}, X_{-})$ be the components of $(\mathcal{G}, \mathcal F).$ 
Up to rescaling of $(\mathcal{G}, \mathcal F )$
one  of the following situations holds:\

i)  there is a $g$-orthonormal basis $\{ w_{1}, w_{2}, w_{3} \}$ of $\mathfrak{g}$ 
in which the Lie brackets   take the form
\begin{equation}
[ w_{1} , w_{2} ] =  w_{3},\ 
[  w_{2} , w_{3} ] =  w_{1},\ [ w_{3}, w_{1} ]=0
\end{equation}
and  $X_{-} = w_{2}$, $ X_{+} = \pm  X_{-}.$\

 ii) there is a $g$-orthogonal basis 
$\{ w_{1}, w_{2}, w_{3} \}$ of $\mathfrak{g}$ 
in which the Lie brackets  take the form 
\begin{equation}
[w_{1}, w_{2} ] = w_{2},\ [w_{1},w_{3} ] =  [w_{2}, w_{3} ] = 0,
\end{equation}
and 
\begin{equation}
g (w_{1}, w_{1}) = \frac{1}{\delta^{2}},\ g(w_{2}, w_{2})= 1 ,\ g(w_{3}, w_{3}) = 1,
\end{equation}
where $\delta \in \mathbb{R}\setminus \{ 0\}$,  
$X_{-} = w_{3}$,   $X_{+} = \pm X_{-}$.\

iii) the Lie algebra  $\mathfrak{g}$ is abelian,  $g$ is any Riemannian  metric on $\mathfrak{g}$ and $X_{\pm}$ are arbitrary vectors from
$\mathfrak{g}$, of norm one. \

 In all cases above   $H = 0$, $F =0$ and $J_{\pm}\in \mathrm{End}\, ( \mathfrak{g})$
are   $g$-skew-symmetric endomorphisms, with the properties that 
$J_{\pm } X_{\pm }=0$ and $J_{\pm}\vert_{X_{\pm}^{\perp}}$ are complex structures.  
\end{cor}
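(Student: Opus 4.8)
The plan is to deduce the corollary as the positive-definite (Riemannian) specialization of Propositions~\ref{unimod-3} and~\ref{non-mod-3}, which already classify the pseudo-K\"ahler case in arbitrary signature. The starting observation is that a $B_{3}$-generalized K\"ahler structure (as opposed to merely pseudo-K\"ahler) is by definition one whose generalized metric $\mathcal{G}$ is Riemannian; by the discussion following~(\ref{def-g}) this forces the induced metric $g$ to be positive definite. Two simplifications follow immediately. First, every orthonormal sign $\epsilon_{i}=g(v_{i},v_{i})$ equals $+1$. Second, the technical hypotheses of the two propositions become automatic: the canonical operator $L$ is self-adjoint and diagonalizable whenever $g$ is positive definite, so Proposition~\ref{unimod-3} applies without restriction, and since a positive-definite $g$ has no isotropic vectors one has $\mathfrak{g}_{0}^{\perp}\cap\mathfrak{g}_{0}=0$ automatically, so Proposition~\ref{non-mod-3} applies as well. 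I would therefore state these two automatic facts explicitly at the outset before specializing.

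First I would split according to whether $\mathfrak{g}$ is unimodular. In the unimodular case Proposition~\ref{unimod-3} leaves exactly two possibilities. If $\mathfrak{g}$ is non-abelian, part~i) of that proposition supplies a $g$-orthonormal basis with $[v_{1},v_{2}]=\epsilon_{3}\lambda v_{3}$, $[v_{3},v_{1}]=0$, $[v_{2},v_{3}]=\epsilon_{1}\lambda v_{1}$; setting $\epsilon_{1}=\epsilon_{3}=1$ and rescaling by $\lambda^{2}$ via Corollary~\ref{rescaling-1} (as carried out in Remark~\ref{change-basis-2}) normalizes $\lambda$ to $1$ and yields the bracket $[w_{1},w_{2}]=w_{3}$, $[w_{2},w_{3}]=w_{1}$, $[w_{3},w_{1}]=0$ of case~i), with $X_{-}=w_{2}$ and $X_{+}=\pm X_{-}$. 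If $\mathfrak{g}$ is abelian, part~ii) of the proposition gives case~iii) directly, with $g$ an arbitrary Riemannian metric and $X_{\pm}$ arbitrary unit vectors. In the non-unimodular case Proposition~\ref{non-mod-3} forces $\mathfrak{g}\cong\mathbb{R}\oplus\mathfrak{sol}_{2}$ and supplies the normalized basis with brackets $[w_{1},w_{2}]=w_{2}$, $[w_{1},w_{3}]=[w_{2},w_{3}]=0$; putting $\epsilon=\epsilon'=1$ in the metric recorded there gives $g(w_{1},w_{1})=\delta^{-2}$, $g(w_{2},w_{2})=g(w_{3},w_{3})=1$, together with $X_{-}=w_{3}$ and $X_{+}=\pm X_{-}$, which is exactly case~ii). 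In all three cases the cited propositions already record that $H=0$, $F=0$ and that $J_{\pm}$ are arbitrary $g$-skew-symmetric endomorphisms annihilating $X_{\pm}$ and restricting to complex structures on $X_{\pm}^{\perp}$, so these final assertions require no further argument.

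Since the real content is entirely contained in Propositions~\ref{unimod-3} and~\ref{non-mod-3}, there is no genuine obstacle: the proof is a bookkeeping exercise in setting all signature signs $\epsilon_{i}$ to $+1$ and invoking the rescaling of Corollary~\ref{rescaling-1}. The only point requiring a word of care is the verification that the hypotheses of the two propositions—diagonalizability of $L$ and the non-degeneracy condition $\mathfrak{g}_{0}^{\perp}\cap\mathfrak{g}_{0}=0$—hold automatically in the Riemannian setting, which is precisely why I would isolate those two observations first; once they are in place, the three cases of the corollary correspond verbatim to the unimodular non-abelian, non-unimodular, and abelian branches of the two propositions.
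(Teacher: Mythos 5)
Your proposal is correct and follows essentially the same route as the paper: the paper's own proof likewise derives the corollary from Propositions~\ref{unimod-3} and~\ref{non-mod-3} (via the rescaling of Remark~\ref{change-basis-2}), together with exactly the two observations you isolate, namely that positive definiteness of $g$ makes $L$ diagonalizable and forces $\mathfrak{g}_{0}^{\perp}\cap\mathfrak{g}_{0}=0$. Your case split and the specialization $\epsilon_{i}=+1$ match the intended argument, so nothing further is needed.
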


\begin{proof} The claim follows from Propositions \ref{unimod-3} and \ref{non-mod-3}
(see  also Remark \ref{change-basis-2}),  together with 
the observation that 
if $g$ is positive definite then the operator $L$ from Proposition \ref{unimod-3} is diagonalizable and 
the assumption $\mathfrak{g}_{0}^{\perp}\cap \mathfrak{g}_{0} =0$ from Proposition \ref{non-mod-3}  is satisfied.
\end{proof}

\subsection{The case $\mathrm{dim}\, G =4$}

Let $G$ be a $4$-dimensional Lie group with Lie algebra $\mathfrak{g}.$ 
We assume that    
$\mathfrak{g}$  is of the form
\begin{equation}\label{form-g}
\mathfrak{g} = \mathfrak{u} + \mathfrak{g}_{0}
\end{equation}
where $\mathfrak{g}_{0}$ is a $3$-dimensional  unimodular non-abelian Lie algebra and
$\mathfrak{u}$ is  $1$-dimensional and acts on $\mathfrak{g}_{0}$ as a  ($1$-dimensional) family of derivations. 

Note that such a Lie algebra $\mathfrak{g}$ is unimodular if and only if $\mathrm{tr}\, ad_X=0$
for a non-zero element $X\in \mathfrak u$.
\begin{exa}
Every non-unimodular Lie algebra $\mathfrak{g}$ admits a (unique) codimension one 
unimodular ideal $\mathfrak{g}_0= \mathrm{Ker}\, (\mathrm{tr}\circ ad)$, called the \emph{unimodular kernel} of 
$\mathfrak g$. Choosing a complementary line 
$\mathfrak{u}$ we arrive at a decomposition $\mathfrak{g}=\mathfrak{u} + \mathfrak{g}_{0}$. 
So the assumptions of this section are satisfied if the unimodular kernel of $\mathfrak g$ is not abelian. 
\end{exa}

As an application of Corollary \ref{ex-4-dim}, we now describe 
a  class (called {\cmssl adapted})  
of left invariant  $B_{4}$-generalized pseudo-K\"{a}hler structures 
over $G$. 

\begin{defn}\label{adapted} A  left-invariant $B_{4}$-generalized pseudo-Hermitian structure $(\mathcal{G}, \mathcal F )$ on a Courant algebroid
over $G$, 
with components $(g,   J_{+}, J_{-}, X_{+}, X_{-}, c_{+})$, 
is called  {\cmssl adapted}  if the decomposition $\mathfrak{g} = \mathfrak{u} + \mathfrak{g}_{0}$ is orthogonal with respect to 
$g$, the operator $L$ associated to   $(\mathfrak{g}_{0}, g\vert_{\mathfrak{g}_{0}\times \mathfrak{g}_{0} })$ 
is diagonalizable,  $J_{-} (\mathfrak{u})$ is included in  an eigenspace of $L$ and  $\mathfrak{u}$ and $X_{\pm}$ are non-null
(i.e.\ $c_{+} \in \mathbb{R}\setminus \{ \pm 1\}$). 
 \end{defn} 

If   $(g,  J_{+}, J_{-},  X_{+}, X_{-}, c_{+})$ are the components of  an adapted $B_{4}$-generalized pseudo-Hermitian  structure, then
there is a 
$g$-orthonormal basis   of $\mathfrak{g}$  (called {\cmssl adapted})  of the form
$\{ u, e_{1}, e_{2}, e_{3} \}$, where $u\in \mathfrak{u}$
and $e_{i} \in \mathfrak{g}_{0}$,
\begin{equation}\label{j-m}
J_{-} u = e_{1} ,\ J_{-} e_{2}=  e_{3},
\end{equation}
in which the Lie bracket takes the form
\begin{equation}\label{form-1}
[e_{1}, e_{2} ] = \epsilon_{3} \lambda_{3} e_{3},\ [e_{2}, e_{3} ] = \epsilon_{1}  \lambda_{1} e_{1},\ [e_{3}, e_{1} ] =\epsilon_{2} \lambda_{2} e_{2},\ 
[ u, e_{i}  ]= \sum_{j=1}^{3} a_{ij} e_{j},
\end{equation} 
where  
$a_{ij} \in \mathbb{R}$ and $\lambda_{i} \in \mathbb{R}$ 
(at least one non-zero).
Since $\mathrm{ad}_{u}$ is a derivation of
$\mathfrak{g}_{0}$,
\begin{align}
\nonumber& \lambda_{1} (-  a_{11} + a_{22} + a_{33}) =0\\
\nonumber& \lambda_{2}  (a_{11} - a_{22} + a_{33}) =0\\
\nonumber& \lambda_{3} ( a_{11} + a_{22} - a_{33}) =0\\
\label{cond-deriv-a}& \epsilon_{i}  \lambda_{i} a_{ij} = -\epsilon_{j} \lambda_{j} a_{ji},\ \forall i\neq j,
\end{align}
where  $\epsilon_{i} := g( v_{i}, v_{i})\in \{ \pm 1\}$ for any $i\in \{ 1, 2, 3\} .$  Remark that $\epsilon_{0} := g (u, u)= \epsilon_{1}$
and $\epsilon_{2} = \epsilon_{3}$ 
since $J_{-}$ is $g$-skew-symmetric.\\

\begin{prop}\label{4c-n0}
There is an adapted  $B_{4}$-generalized  pseudo-K\"{a}hler structure $(\mathcal{G}, \mathcal F )$ 
on a Courant algebroid  $E = E_{H, F}$  over $G$,  with 
components $(g,  J_{+}, J_{-}, X_{+}, X_{-}, c_{+})$,  such that $c_{+}\neq 0$, 
if and only if 
there is a $g$-orthonormal basis of $\{ u, e_{1}, e_{2}, e_{3} \}$ of $\mathfrak{g}$ such that
\begin{align}
\nonumber& [e_{1}, e_{2} ] = \epsilon_{3} \lambda   e_{3},\ [e_{2}, e_{3} ] =  0, \ [e_{3}, e_{1} ] =\epsilon_{2}  \lambda e_{2},\\
\label{form-1-rez}& [u, e_{1} ] =0,\ [ u, e_{2}  ]=  \beta  e_{3},\ [ u, e_{3} ] = - \beta  e_{2},
\end{align} 
where  
$\lambda \in \mathbb{R}\setminus \{ 0\}$, $\beta  \in \mathbb{R}$,   $\epsilon_{i} := g(e_{i}, e_{i}) \in \{ \pm 1\}$.  
The remainig data are given by: $X_{+} = a u + b e_{1}$, $X_{-} = \tilde{a} u + \tilde{b} e_{1}$, $J_{+} X_{+} = - c_{+} X_{-}$, $J_{+} X_{-} = c_{+} X_{+}$, where 
$a\in \mathbb{R}\setminus \{ 0\}$, 
 $b, \tilde{a}, \tilde{b} \in \mathbb{R}$ are such that
\begin{equation}\label{constraint:eq}
\epsilon_{1}  ( a^{2} + b^{2}) = \epsilon_{1} ( \tilde{a}^{2} + \tilde{b}^{2}) = 1- c_{+}^{2},\ a \tilde{a} + b \tilde{b} =0,
\end{equation}
$c_{+} \in \mathbb{R}\setminus \{ -1, 0 , 1\}$, the complex structure $J_{-}$ 
is given by (\ref{j-m}) and $J_{+} \vert_{\mathrm{span}\{ e_{2}, e_{3} \}}$ is any   $g$-skew-symmetric complex structure.
Finally, we have $H =0$ and $F =0$. 
 \end{prop}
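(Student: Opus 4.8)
The plan is to feed the adapted data into the four-dimensional criterion of Corollary~\ref{ex-4-dim} and read off algebraic equations for the structure constants $a_{ij},\lambda_i$ of~(\ref{form-1}), for the twisting forms $H,F$, and for the vectors $X_\pm$. Because the structure is left-invariant, $c_+$ is a constant which is moreover $\neq 0,\pm 1$ by hypothesis and adaptedness; in particular the condition $i_{X_+}F=dc_+$ of Corollary~\ref{ex-4-dim} iii) becomes $i_{X_+}F=0$, and relation~(\ref{c-plus-ct}) loses its $J_+F(X_+)$ term as soon as one knows $F(X_+)=0$. As a preliminary I would compute the Levi-Civita connection $\nabla$ of the left-invariant metric $g$ in the basis $\{u,e_1,e_2,e_3\}$ from the Koszul formula, using the brackets~(\ref{form-1}) and the derivation constraints~(\ref{cond-deriv-a}).

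For the sufficiency direction I would start from the explicit brackets~(\ref{form-1-rez}), the data $X_\pm$ with~(\ref{constraint:eq}), and $H=F=0$. With $H=F=0$ the two identities~(\ref{nabla-cov-even}) reduce to $\nabla X_+=\nabla X_-=0$, relation~(\ref{c-plus-ct}) becomes $0=0$, the type-$(1,1)$ and $i_{X_+}F$ conditions are vacuous, and $D^-=\nabla$. Hence the only genuine checks are that $X_+=au+be_1$ and $X_-=\tilde a u+\tilde b e_1$ are $\nabla$-parallel and that $\nabla$ preserves $J_-$; both follow by a direct computation with the simplified brackets, while~(\ref{constraint:eq}) is exactly the normalization $g(X_\pm,X_\pm)=1-c_+^2$, $g(X_+,X_-)=0$ of Proposition~\ref{data-on-M-even}.

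The necessity direction carries the weight of the argument. Using $i_{X_+}F=0$ together with the fact that $F$ is of type $(1,1)$ with respect to the prescribed $J_-$ (recall $J_-u=e_1$, $J_-e_2=e_3$ from~(\ref{j-m})), and using that $X_+$ is Killing and commutes with $X_-$, I would first show that $X_\pm$ have no $e_2,e_3$ components, i.e.\ $X_\pm\in\mathrm{span}\{u,e_1\}$, and that the coefficient $a$ of $u$ in $X_+$ is non-zero. Once this is known, the $(1,1)$-type and $i_{X_+}F=0$ conditions force $F$ to collapse to a single component $F=F_{23}\,e_2^\ast\wedge e_3^\ast$, whence $F(X_-)=0$ and~(\ref{c-plus-ct}) gives $H(X_+,X_-)=0$. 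Substituting $X_\pm$ into the covariant-derivative formulas~(\ref{nabla-cov-even}) and imposing that $D^-=\nabla+\tfrac12 H$ preserves $J_-$ then produces a linear system in the remaining $a_{ij}$, in the eigenvalues $\lambda_i$, in the components of $H$ and in $F_{23}$; solving it eliminates all off-diagonal $a_{ij}$, yields $\lambda_1=0$ and $\lambda_2=\lambda_3=:\lambda$ (so that $[e_2,e_3]=0$), reduces $\mathrm{ad}_u$ to the block $[u,e_1]=0$, $[u,e_2]=\beta e_3$, $[u,e_3]=-\beta e_2$, and forces $H=0$ and $F_{23}=0$. This is precisely the conclusion~(\ref{form-1-rez}) together with $H=F=0$.

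The hard part will be the localization $X_\pm\in\mathrm{span}\{u,e_1\}$: a priori $X_+$ and $X_-$ may have non-trivial $e_2,e_3$ parts, and excluding them requires combining the Killing equation for $X_+$, the commutation $[X_+,X_-]=0$, the derivation identities~(\ref{cond-deriv-a}) and the integrability conditions of Corollary~\ref{ex-4-dim} simultaneously, rather than any one of them in isolation. After this localization the surviving equations are linear, so the remaining difficulty is only the organized bookkeeping of the resulting system (best arranged according to which $\lambda_i$ and $a_{ij}$ vanish), not a conceptual obstruction.
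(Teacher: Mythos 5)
Your sufficiency direction is fine and coincides with what the paper does (with $H=F=0$ the criterion of Corollary~\ref{ex-4-dim} collapses to $\nabla X_{\pm}=0$, $\nabla J_{-}=0$ and the algebraic normalizations, which hold for the brackets (\ref{form-1-rez})). The gap is in the necessity direction, at exactly the step you yourself flag as the hard part: the ``localization'' $X_{\pm}\in\mathrm{span}\{u,e_{1}\}$ cannot be obtained from the toolkit you propose for it ($i_{X_{+}}F=0$, the $(1,1)$-type of $F$ with respect to $J_{-}$, the Killing equation for $X_{+}$ and $[X_{+},X_{-}]=0$), nor even from all the conditions of Corollary~\ref{ex-4-dim} that do not involve $X_{-}$ and $J_{+}$. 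The paper's intermediate classification (Lemma~\ref{classes-initial}) exhibits genuine left-invariant solutions of that larger system --- Killing equation, $dF=0$, $dH+F\wedge F=0$, $D^{-}J_{-}=0$, $F$ of type $(1,1)$, $i_{X_{+}}F=dc_{+}=0$, and $F$ determined by the first equation of (\ref{nabla-cov-even}) --- in which $X_{+}=c\,e_{2}$, $X_{+}=d\,e_{3}$, or $X_{+}$ has nonzero $e_{2},e_{3}$-components (its cases 3, 6, 7). Adding the commutation with $X_{-}$ does not help: in case 6, for instance, $e_{3}$ is central in $\mathfrak{g}$, so an orthogonal commuting $X_{-}$ of equal norm exists. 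These configurations are eliminated only by the conditions you postpone, namely the second equation of (\ref{nabla-cov-even}) (whose right-hand side contains $J_{+}F(X)$) together with the algebraic structure of $J_{+}$ ($g$-skewness, $J_{+}X_{+}=-c_{+}X_{-}$, $J_{+}X_{-}=c_{+}X_{+}$, complex structure on $\{X_{+},X_{-}\}^{\perp}$) and relation (\ref{c-plus-ct}). The same objection applies to your subsequent claim that, after localization, the system ``forces $\lambda_{1}=0$, $H=0$, $F_{23}=0$'': cases 1 and 2 of Lemma~\ref{classes-initial} have $X_{+}=au+be_{1}$ with $a\neq 0$, satisfy all conditions not involving $X_{-}$ and $J_{+}$, and yet have $H=-\lambda_{1}\,e_{1}^{*}\wedge e_{2}^{*}\wedge e_{3}^{*}\neq 0$; their exclusion again requires the $X_{-}$/$J_{+}$ equations.

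For comparison, the paper's route avoids any localization lemma: since $c_{+}$ is a nonzero constant, the first equation of (\ref{nabla-cov-even}) is solved for $F$, giving $F=\frac{1}{2c_{+}}\bigl(dX_{+}^{\flat}+i_{X_{+}}H\bigr)$ (equation (\ref{F-concrete})), so that $F$ becomes a derived quantity and $i_{X_{+}}F=dc_{+}$ is automatic for a Killing field of constant norm; one then enumerates all solutions of the ``$X_{+}$-side'' system into the eight families of Lemma~\ref{classes-initial} (organized by whether the $u$-component of $X_{+}$ vanishes and by coincidences among the $\lambda_{i}$), and finally prunes these families by attempting to construct $X_{-}$ and $J_{+}$ subject to the second equation of (\ref{nabla-cov-even}) and (\ref{c-plus-ct}). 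To repair your argument you would either have to carry out such a case analysis, or incorporate the $X_{-}$/$J_{+}$ constraints into the localization step from the start; as written, the necessity direction does not go through.
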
 

We divide the proof of Proposition~\ref{4c-n0} 
into several lemmas.  Let $(\mathcal{G}, \mathcal F)$ be an adapted $B_{4}$-generalized pseudo-Hermitian structure on $G$, 
with components
$(g,  J_{+}, J_{-}, X_{+}, X_{-} , c_{+})$,   where  $c_{+} \in \mathbb{R}\setminus \{ \pm1\}$ is arbitrary.  As above,  let $\{ u, e_{1}, e_{2}, e_{3} \}$ 
be an adapted basis, $\epsilon_{i} := g(e_{i}, e_{i})$ and $\epsilon_{0}:= g ( u, u).$

\begin{lem}\label{levi-4-mod}
The Levi-Civita connection of $g$ is given by
\begin{align}
\nonumber&\nabla_{e_{1}} e_{2} = \frac{\epsilon_{3} }{2} ( -\lambda_{1} +\lambda_{2} +\lambda_{3} ) e_{3} +\frac{\epsilon_{0} }{2} 
( \epsilon_{2} a_{12} +  \epsilon_{1} a_{21} ) u\\
\nonumber&\nabla_{e_{2}} e_{1} =  \frac{\epsilon_{3} }{2} (-  \lambda_{1} + \lambda_{2} -\lambda_{3} )e_{3} 
+\frac{\epsilon_{0}}{2} (  \epsilon_{2} a_{12} + \epsilon_{1} a_{21}) u\\
\nonumber& \nabla_{e_{1}} e_{3} = \frac{\epsilon_{2} }{2} ( \lambda_{1} -\lambda_{2} -\lambda_{3}) e_{2} +\frac{\epsilon_{0} }{2} 
( \epsilon_{3} a_{13} + \epsilon_{1} a_{31}) u\\
\nonumber& \nabla_{e_{3}} e_{1} = \frac{\epsilon_{2} }{2} ( \lambda_{1} + \lambda_{2} -\lambda_{3}) e_{2} +\frac{\epsilon_{0} }{2} 
( \epsilon_{3} a_{13} + \epsilon_{1} a_{31}) u\\
\nonumber& \nabla_{e_{2}} e_{3} = \frac{\epsilon_{1} }{2} ( \lambda_{1} -\lambda_{2} + \lambda_{3}) e_{1} +\frac{\epsilon_{0} }{2} (
\epsilon_{3}  a_{23} +  \epsilon_{2} a_{32}) u\\
\nonumber& \nabla_{e_{3}} e_{2} = \frac{\epsilon_{1} }{2} ( - \lambda_{1} - \lambda_{2} + \lambda_{3}) e_{1} +\frac{\epsilon_{0} }{2} (\epsilon_{3}  a_{23} +  \epsilon_{2} a_{32}) u\\
\nonumber& \nabla_{u} e_{i}  =\frac{1}{2} \sum_{j} ( a_{ij} -  \epsilon_{i} \epsilon_{j} a_{ji}) e_{j},\ \nabla_{e_{i}} e_{i} = \epsilon_{0} \epsilon_{i} a_{ii} u\\
\nonumber& \nabla_{e_{i}} u  = -\frac{1}{2} \sum_{j} ( a_{ij}+  \epsilon_{i} \epsilon_{j} a_{ji}) e_{j},\ \nabla_{u} u =0.
\end{align}
\end{lem}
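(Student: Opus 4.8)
The proof is a routine but careful application of the Koszul formula, so the plan is to set up the algebraic bookkeeping in a way that keeps the signs $\epsilon_i$ under control. The key observation is that both the metric $g$ and the frame $\{u,e_1,e_2,e_3\}$ are left-invariant, so every inner product $g(Y,Z)$ taken between two frame fields is a \emph{constant} function on $G$. Hence all three differentiation terms in the general Koszul formula vanish and one is reduced to the purely algebraic identity
\begin{equation}\label{koszul-plan}
2g(\nabla_X Y, Z) = g([X,Y],Z) - g([X,Z],Y) - g([Y,Z],X),
\end{equation}
valid for any $X,Y,Z$ chosen among $u,e_1,e_2,e_3$. This is the only analytic input; everything else is linear algebra in $\mathfrak g$.

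Since the adapted basis is $g$-orthogonal with $g(e_i,e_j)=\epsilon_i\delta_{ij}$, $g(u,u)=\epsilon_0$ and $g(u,e_i)=0$, the covariant derivative is recovered from its components by
\[
\nabla_X Y = \epsilon_0\, g(\nabla_X Y, u)\, u + \sum_{i=1}^{3}\epsilon_i\, g(\nabla_X Y, e_i)\, e_i .
\]
Thus it suffices to substitute the brackets (\ref{form-1}) into the right-hand side of (\ref{koszul-plan}) for each of the relevant triples and read off the components. I would organize the computation according to the type of the pair $(X,Y)$: first the pairs $(e_i,e_j)$ with $i\neq j$, which produce both an $e_k$-term (coming from $g([e_i,e_j],e_k)$ and the two structure-constant terms $\lambda_i$) and a $u$-term (coming from $g([u,e_i],e_j)$); then the mixed pairs $(u,e_i)$ and $(e_i,u)$; and finally the diagonal pairs $(e_i,e_i)$ and $(u,u)$.

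As a representative case, for $\nabla_{e_1}e_2$ one evaluates (\ref{koszul-plan}) with $Z=e_3$ to get $2g(\nabla_{e_1}e_2,e_3)=\lambda_3+\lambda_2-\lambda_1$, and with $Z=u$ to get $2g(\nabla_{e_1}e_2,u)=\epsilon_2 a_{12}+\epsilon_1 a_{21}$, while all other inner products vanish; multiplying by $\epsilon_3$ and $\epsilon_0$ respectively reproduces the first line of the asserted formula. The mixed and diagonal pairs are handled the same way, the only subtlety being that $g([e_i,e_j],u)=0$ whereas $g([u,e_i],e_j)$ is generally \emph{not} symmetric in $i,j$; this asymmetry is exactly what generates the symmetrized combination $a_{ij}+\epsilon_i\epsilon_j a_{ji}$ in $\nabla_{e_i}u$ and the antisymmetrized combination $a_{ij}-\epsilon_i\epsilon_j a_{ji}$ in $\nabla_u e_i$.

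The only real obstacle is sign discipline: every term carries one or two factors $\epsilon_i\in\{\pm1\}$, and one must remember that raising the index of $g(\nabla_XY,e_i)$ contributes a further $\epsilon_i$. There is no conceptual difficulty, and in particular the derivation constraints (\ref{cond-deriv-a}) are \emph{not} needed here — they are automatic consequences of $\mathrm{ad}_u$ being a derivation and play no role in deriving the Levi-Civita connection itself. I would therefore present (\ref{koszul-plan}), do the $\nabla_{e_1}e_2$ case in full as above, and state that the remaining entries follow by the identical computation, leaving the reader to check the signs.
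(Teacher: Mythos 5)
Your proposal is correct: the paper states this lemma without proof, and the intended justification is exactly the computation you describe, namely the Koszul formula $2g(\nabla_X Y,Z)=g([X,Y],Z)-g([X,Z],Y)-g([Y,Z],X)$ for left-invariant data, applied to the bracket relations (\ref{form-1}) in the orthonormal adapted basis. Your sample case $\nabla_{e_1}e_2$ and your sign bookkeeping (including the remark that the derivation constraints (\ref{cond-deriv-a}) are not needed) check out against the stated formulas.
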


Let   $\{ u^{*}, e_{1}^{*} , e_{2}^{*}, e_{3}^{*} \}$ be  the dual basis
of $ \{ u, e_{1}, e_{2}, e_{3} \}$, i.e.
$$
e_{i}^{*} (e_{j}) = \delta_{ij},\ e_{i}^{*} (u) =0,\ 
u^{*} (u) =1,\ u^{*} (e_{i}) =0,\ \forall i, j.
$$
We write the left-invariant forms  $H$ and $F$ as
\begin{align}
\nonumber& H = H_{123} e_{1}^{*} \wedge e_{2}^{*} \wedge e_{3}^{*} + \frac{1}{2} H_{ij} u^{*}\wedge e_{i}^{*}\wedge e_{j}^{*} \\
\label{H-F-4-uni}& F =\frac{1}{2} F_{ij} e_{i}^{*}\wedge e_{j}^{*}  +  F_{i} u^{*}\wedge e_{i}^{*},
\end{align}
where 
$H_{123}, H_{ij}, F_{ij}, F_{i} \in \mathbb{R}$,  $H_{ij} = - H_{ji}$, $F_{ij} = - F_{ji}$  for any $i, j$,  and to simplify notation  
we omitted the summation signs.  As the pair $(H, F)$ defines a Courant algebroid of type $B_{4}$, the coefficients of 
$H$ and $F$ are subject to various constraints
which come from  $ dF =0$ and $ dH + F\wedge F =0.$

 \begin{lem}\label{F-cl} i) The equality $ dH + F\wedge F =0$ holds if and only if
 \begin{equation}\label{H-cl}
 H_{123}  (a_{11} + a_{22} + a_{33}) = 2( F_{1} F_{23} + F_{3} F_{12} + F_{2} F_{31}).
 \end{equation}
ii) The $2$-form $F$ is closed if and only if
 \begin{align}
\nonumber& \epsilon_{1}  \lambda_{1} F_{1} = F_{23} ( a_{22} + a_{33})  +  F_{21} a_{31} + F_{13} a_{21}\\
\nonumber& \epsilon_{2}  \lambda_{2} F_{2} = F_{31} ( a_{11} + a_{33})  +  F_{21} a_{32} +  F_{32} a_{12}\\
\label{F-cl}&\epsilon_{3}  \lambda_{3} F_{3} = F_{12} ( a_{11} + a_{22}) +  F_{32} a_{13} + F_{13} a_{23}.
\end{align}
\end{lem}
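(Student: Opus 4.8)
The plan is to prove both statements by a direct Chevalley--Eilenberg computation of $dF$, $dH$ and $F\wedge F$ in the adapted basis. The first step is to record the differentials of the dual basis. Since every bracket in (\ref{form-1}) lands in $\mathfrak{g}_0$, the covector $u^*$ annihilates all brackets and hence $du^*=0$; for the spatial covectors the formula $d\xi(X,Y)=-\xi([X,Y])$ for left-invariant one-forms gives, with $(i,j,k)$ a cyclic permutation of $(1,2,3)$,
\[
de_i^* = -\epsilon_i\lambda_i\, e_j^*\wedge e_k^* - \sum_{l} a_{li}\, u^*\wedge e_l^*.
\]
Everything then reduces to wedging these forms together and keeping track of signs.

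For part ii) I would expand $dF$ from the expression (\ref{H-F-4-uni}) of $F$, using $d(e_i^*\wedge e_j^*)=de_i^*\wedge e_j^*-e_i^*\wedge de_j^*$ and $d(u^*\wedge e_i^*)=-u^*\wedge de_i^*$. The key observation is that the purely spatial component of $dF$, i.e.\ the coefficient of $e_1^*\wedge e_2^*\wedge e_3^*$, vanishes identically: this reflects the unimodularity of $\mathfrak{g}_0$ and follows term by term from the cyclic shape of the brackets (\ref{form-1}). Thus $dF$ has components only along the $u^*\wedge e_i^*\wedge e_j^*$. Collecting the coefficient of $u^*\wedge e_2^*\wedge e_3^*$, and cyclically, separates the term $\epsilon_1\lambda_1 F_1$ coming from differentiating the mixed summand $F_1 u^*\wedge e_1^*$ from the products of the $a_{ij}$ with the $F_{ij}$ coming from the spatial summand, and setting these coefficients to zero yields precisely the three relations (\ref{F-cl}).

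For part i) I would compute the four-form $dH+F\wedge F$, whose only possible component is along $u^*\wedge e_1^*\wedge e_2^*\wedge e_3^*$. In $dH$ the mixed summand $\frac{1}{2} H_{ij}\, u^*\wedge e_i^*\wedge e_j^*$ contributes nothing, since its differential always produces a repeated $u^*$ or a repeated $e_l^*$; only the top summand survives, giving $d\bigl(H_{123}\, e_1^*\wedge e_2^*\wedge e_3^*\bigr)=-H_{123}(a_{11}+a_{22}+a_{33})\, u^*\wedge e_1^*\wedge e_2^*\wedge e_3^*$. A short computation of the cross term of $F\wedge F$ (the squares of the two homogeneous pieces vanish, one by $u^*\wedge u^*=0$ and the other because there are only three $e_i^*$) gives $F\wedge F=2(F_1 F_{23}+F_2 F_{31}+F_3 F_{12})\, u^*\wedge e_1^*\wedge e_2^*\wedge e_3^*$. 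Setting the sum to zero reproduces (\ref{H-cl}).

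I expect the only genuine difficulty to be the sign bookkeeping when reordering the wedge products---bringing $u^*$ to the front and sorting the $e_i^*$ into increasing order---together with the consistent use of the antisymmetries $F_{ij}=-F_{ji}$ and $H_{ij}=-H_{ji}$. No input beyond the structure equations (\ref{form-1}) and these antisymmetries is required; in particular the derivation constraints (\ref{cond-deriv-a}) are already encoded in the Jacobi identity and need not be invoked separately.
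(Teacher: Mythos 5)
Your proposal is correct and follows essentially the same route as the paper: both compute $du^*=0$ and the Maurer--Cartan equations for $e_i^*$ from the structure constants (\ref{form-1}), then read off the coefficients of $dF$, $dH$ and $F\wedge F$ in the adapted coframe, with $dH=-\mathrm{tr}(a_{ij})H_{123}\,u^*\wedge e_1^*\wedge e_2^*\wedge e_3^*$ and $F\wedge F$ reduced to the cross term exactly as you describe. The sign bookkeeping you flag is indeed the only work left, and your stated intermediate formulas agree with the paper's.
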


\begin{proof} 
The claims follow from a straightforward computation, which uses that the $1$-form 
 $u^{*}$ is closed and the exterior derivatives of the $1$-forms $e_{i}^{*}$ are given by
\begin{align}
\nonumber& d(e^{*} _{1} )= - \epsilon_{1} \lambda_{1} e_{2}^{*} \wedge e^{*}_{3} - a_{j1} u^{*}\wedge e^{*}_{j}\\
\nonumber& d (e_{2}^{*} )= - \epsilon_{2} \lambda_{2}  e^{*}_{3}\wedge e^{*}_{1} - a_{j2} u^{*}\wedge e^{*}_{j}\\
\nonumber& d(e^{*}_{3} )= - \epsilon_{3} \lambda_{3} e_{1}^{*}\wedge e^{*}_{2} - a_{j3} u^{*}\wedge e^{*}_{j}.
\end{align}
For instance, these equations imply
\[ dH = -\mathrm{tr} (A) H_{123}u^*\wedge  e_{1}^{*} \wedge e_{2}^{*} \wedge e_{3}^{*},\quad A:=(a_{ij}),\]
and comparing with 
\[ F \wedge F = 2 \left( \sum_{\mathfrak{S}} F_{ij}F_k \right) u^* \wedge  e_{1}^{*} \wedge e_{2}^{*} \wedge e_{3}^{*}\]
yields (\ref{H-cl}), where $\mathfrak{S}$ indicates the sum over cyclic permutations.
\end{proof}

We now apply Corollary \ref{ex-4-dim} with $c_{+}$  a non-zero constant. We consider each condition from this corollary separately. 

\begin{lem}\label{F-11}  The connection $D^{-} = \nabla +\frac{1}{2} H$ preserves $J_{-}$ if and only if
\begin{align}
\nonumber& a_{21} = a_{31}=0,\  a_{23} + a_{32} =0,\\
\nonumber& a_{22} - a_{33} =  \epsilon_2(\lambda_{3} -\lambda_{2}),\\
\nonumber& H_{23} =0,\ H_{12} = -  \epsilon_{2} a_{12},\ H_{13} = - \epsilon_{3} a_{13},\\
\label{cond-coeff} & H_{123} = 2 a_{22}\epsilon_{2}  - \lambda_{1} +\lambda_{2} -\lambda_{3}.
\end{align}
\end{lem}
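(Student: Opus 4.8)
The plan is to rewrite the condition that $D^-$ preserves $J_-$ as a pointwise identity between endomorphisms of $\mathfrak{g}$ and then read off the constraints by comparing components in the adapted basis $\{u,e_1,e_2,e_3\}$. Since $D^-_X = \nabla_X + \tfrac12 H(X)$, for every $X,Y\in\mathfrak{g}$ we have
\[
(D^-_X J_-)(Y) = (\nabla_X J_-)(Y) + \tfrac12\bigl(H(X)J_- - J_- H(X)\bigr)(Y),
\]
so $D^-$ preserves $J_-$ if and only if $\nabla_X J_- + \tfrac12[H(X),J_-] = 0$ for all $X\in\mathfrak{g}$. Here $J_-$ is determined by (\ref{j-m}) together with $J_-^2=-\mathrm{Id}$, i.e.\ $J_- u = e_1$, $J_- e_1 = -u$, $J_- e_2 = e_3$, $J_- e_3 = -e_2$. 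Because $\nabla$ is metric and $J_-$ is $g$-skew with $J_-^2=-\mathrm{Id}$, both $\nabla_X J_-$ and $[H(X),J_-]$ are $g$-skew-symmetric and anti-commute with $J_-$; hence for each basis vector $X$ the left-hand side lies in the two-dimensional space of such endomorphisms, producing (at most) two scalar equations. Running $X$ over $\{u,e_1,e_2,e_3\}$ thus yields eight scalar conditions, matching the eight equations in (\ref{cond-coeff}).

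First I would compute $\nabla_X J_-$ on the basis directly from Lemma~\ref{levi-4-mod} and the values of $J_-$ above, via $(\nabla_X J_-)(Y) = \nabla_X(J_-Y) - J_-(\nabla_X Y)$. Next I would make each endomorphism $H(X)$ explicit. Writing
\[
H = H_{123}\, e_1^*\wedge e_2^*\wedge e_3^* + H_{12}\, u^*\wedge e_1^*\wedge e_2^* + H_{13}\, u^*\wedge e_1^*\wedge e_3^* + H_{23}\, u^*\wedge e_2^*\wedge e_3^*
\]
and contracting (cf.\ Notation~\ref{H-F-notation}), one obtains the two-forms $i_X H$, which are then turned into endomorphisms by raising an index with $g$. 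This last step is where the signs $\epsilon_i$ enter, through $(e_i^*)^\sharp = \epsilon_i e_i$ and $(u^*)^\sharp=\epsilon_1 u$ (recall $\epsilon_0=\epsilon_1$); keeping track of these factors is the most error-prone part of the computation. Forming the commutators $[H(X),J_-]$ then gives, for each $X$, a $g$-skew endomorphism anti-commuting with $J_-$.

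Finally I would equate the two expressions component by component. For instance, taking $X=e_1$ one finds $(\nabla_{e_1}J_-)(u) = \tfrac12(a_{12}+\epsilon_1\epsilon_2 a_{21})e_3 - \tfrac12(a_{13}+\epsilon_1\epsilon_3 a_{31})e_2$ and $\tfrac12[H(e_1),J_-](u) = \tfrac12\epsilon_2 H_{12}\, e_3 - \tfrac12\epsilon_3 H_{13}\, e_2$, so that the vanishing of their sum gives $a_{12}+\epsilon_1\epsilon_2 a_{21}+\epsilon_2 H_{12}=0$ and $a_{13}+\epsilon_1\epsilon_3 a_{31}+\epsilon_3 H_{13}=0$. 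The analogous identities coming from $X\in\{u,e_2,e_3\}$ supply the remaining relations, and solving the resulting coupled linear system simultaneously for the unknowns $a_{ij}$, $H_{ij}$, $H_{123}$ isolates the individual values and produces exactly (\ref{cond-coeff}); for example, the two displayed equations combine with their counterparts to force $a_{21}=0$, $a_{31}=0$, $H_{12}=-\epsilon_2 a_{12}$ and $H_{13}=-\epsilon_3 a_{13}$ separately. The converse direction is immediate, as all steps are equivalences. The only genuine obstacle is organizational: correctly implementing the indefinite musical isomorphism and disentangling the eight coupled equations; there is no conceptual subtlety.
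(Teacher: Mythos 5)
Your proposal is correct and follows essentially the same route as the paper: a direct computation of $D^-_XJ_-=\nabla_XJ_-+\tfrac12[H(X),J_-]$ in the adapted basis using Lemma~\ref{levi-4-mod} and the component expression of $H$, followed by comparing coefficients (your sample identities for $X=e_1$ check out, and together with the $X=u,e_2,e_3$ equations they do decouple into exactly the eight relations (\ref{cond-coeff})). The only difference is cosmetic: your observation that $D^-_XJ_-$ is $g$-skew and anticommutes with $J_-$ explains a priori why evaluating on $u$ suffices (eight scalar conditions in all), which is the shortcut the paper uses implicitly when it checks $(D^-_XJ_-)(u)=0$ and then notes the remaining components vanish automatically.
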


\begin{proof} 
Recall the definition (\ref{j-m}) of $J_{-}.$ 
Using  Lemma \ref{levi-4-mod}, one can check that $(D^{-}_{X} J_{-})(u) =0$  for any $X\in \mathfrak{g}$ is equivalent to relations
(\ref{cond-coeff}). Moreover, these relations imply $ (D^{-}_{X} J_{-})(e_{i}) =0$ for any $i$.
\end{proof}

Since $c_{+}\neq 0$,  from the first relation (\ref{nabla-cov-even}) 
we obtain that 
\begin{equation}\label{F-concrete}
F = \frac{1}{2c_{+}} ( d X_{+} ^{\flat} + i_{X_{+}} H),
\end{equation}
where $X_{+}^{\flat}$ is the $1$-form $g$-dual to $X_{+}.$ In particular, $X_{+}$ is a Killing field.
On the other hand, remark that if $X$ is a Killing field of constant norm for a  pseudo-Riemannian metric $h$,  then $i_{X} d  X^{\flat} =0$
where $X^{\flat}$ is the $h$-dual to $X$.
We deduce that  the condition $i_{X_{+}} F =0$ 
from Corollary \ref{ex-4-dim} is satisfied, once we know that $X_{+}$ is  a Killing field.
The next lemma determines the conditions satisfied by the coefficients of Killing fields for  $g$.

\begin{lem}
Assume that $a_{21} = a_{31} =0$ and $a_{23} + a_{32} =0.$ A vector field  $X_{+}= au + be_{1} + ce_{2} + de_{3}$  is Killing for  $g$ if and only if
\begin{align}
\nonumber& b a_{11} = a a_{ii} = b a_{12} + c a_{22} + d a_{32}=0,\  \forall i\\
\nonumber& b a_{13} + c a_{23} + d a_{33} =0\\
\nonumber&  \epsilon_{2} a a_{12} + d(\lambda_{2} - \lambda_{1}) =0\\
\nonumber&  \epsilon_{3} a a_{13} + c (\lambda_{1} - \lambda_{3}) =0\\
\label{Killing-4-n}& b(\lambda_{3} - \lambda_{2}) =0.
\end{align}
\end{lem}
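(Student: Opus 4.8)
The plan is to use the standard characterization that a vector field $X_{+}$ is Killing for $g$ precisely when the endomorphism $Y\mapsto \nabla_{Y}X_{+}$ is $g$-skew-symmetric. Equivalently, setting $\mathcal{K}(Y,Z):= g(\nabla_{Y}X_{+},Z) + g(\nabla_{Z}X_{+},Y)$, the field $X_{+}$ is Killing if and only if $\mathcal{K}(Y,Z)=0$ for all $Y,Z$. Since $\{u,e_{1},e_{2},e_{3}\}$ is $g$-orthonormal, it suffices to check that $\mathcal{K}$ vanishes on all unordered pairs of basis vectors, which amounts to one diagonal equation $g(\nabla_{v}X_{+},v)=0$ for each basis vector $v$ and one symmetrization equation for each of the six mixed pairs, at most ten scalar conditions in all.

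First I would record $\nabla_{v}X_{+}$ for each basis vector $v$ by inserting $X_{+}=au+be_{1}+ce_{2}+de_{3}$ into the connection formulas of Lemma~\ref{levi-4-mod} and simplifying with the standing hypotheses $a_{21}=a_{31}=0$, $a_{23}+a_{32}=0$, together with $\epsilon_{0}=\epsilon_{1}$ and $\epsilon_{2}=\epsilon_{3}$ (recorded after~(\ref{cond-deriv-a})). These hypotheses kill several connection terms: for instance the $u$-components of $\nabla_{e_{2}}e_{3}$ and $\nabla_{e_{3}}e_{2}$ drop out, since $\epsilon_{3}a_{23}+\epsilon_{2}a_{32}=(\epsilon_{3}-\epsilon_{2})a_{23}=0$, so the resulting expressions for $\nabla_{v}X_{+}$ are short.

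Then I would read off the ten conditions. The diagonal pair $(u,u)$ is automatic, while each $(e_{i},e_{i})$ yields $a\,a_{ii}=0$. The pair $(u,e_{1})$ collapses (after cancellation of the $a_{12},a_{13}$ cross terms) to $b\,a_{11}=0$, and the pairs $(u,e_{2})$, $(u,e_{3})$ give $b\,a_{12}+c\,a_{22}+d\,a_{32}=0$ and $b\,a_{13}+c\,a_{23}+d\,a_{33}=0$ respectively (using $a_{32}=-a_{23}$ where convenient). Finally the three pairs among $e_{1},e_{2},e_{3}$ produce $\epsilon_{2}a\,a_{12}+d(\lambda_{2}-\lambda_{1})=0$, $\epsilon_{3}a\,a_{13}+c(\lambda_{1}-\lambda_{3})=0$, and $b(\lambda_{3}-\lambda_{2})=0$: in each case the two relevant Christoffel symbols differ only by a sign in one $\lambda$-term, so symmetrizing combines them into a single eigenvalue difference. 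Collecting these is exactly the system~(\ref{Killing-4-n}).

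The only genuine obstacle is bookkeeping: one must track the various $\epsilon_{i}\epsilon_{j}$ factors and the cancellations that occur upon symmetrizing (many cross terms involving $a_{ij}$ cancel, and the surviving $\lambda$-contributions combine into differences $\lambda_{i}-\lambda_{j}$). No conceptual difficulty arises; the computation is purely mechanical once the connection has been written out and simplified under the standing hypotheses.
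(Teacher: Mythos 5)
Your proposal is correct and is precisely the argument the paper intends (the lemma is stated there without proof, as a routine consequence of Lemma~\ref{levi-4-mod}): one characterizes the Killing condition by the $g$-skew-symmetry of $Y\mapsto\nabla_{Y}X_{+}$, evaluates it on the orthonormal basis $\{u,e_{1},e_{2},e_{3}\}$ under the standing hypotheses $a_{21}=a_{31}=0$, $a_{32}=-a_{23}$, $\epsilon_{0}=\epsilon_{1}$, $\epsilon_{2}=\epsilon_{3}$, and the cancellations you describe (the $(u,e_{1})$ pair collapsing to $ba_{11}=0$, the $\lambda$-terms combining into the differences $\lambda_{2}-\lambda_{1}$, $\lambda_{1}-\lambda_{3}$, $\lambda_{3}-\lambda_{2}$) do occur exactly as claimed, yielding the system~(\ref{Killing-4-n}).
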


\begin{lem} Assume that $a_{21} = a_{31} =0$, $a_{23} + a_{32} =0$, $H_{23}=0$, $H_{12} = -  \epsilon_{2} a_{12}$ and $H_{13} = - \epsilon_{3} a_{13}$.
The form $F$ defined by (\ref{F-concrete})  is of type $(1,1)$ with respect to $J_{-}$ if and only if the following relations hold:
\begin{align}
\nonumber&  \epsilon_{2} a a_{12} -  d ( H_{123} -\lambda_{3}) =   \epsilon_{2}  ( ca_{23} - d a_{33}  +  b a_{13})\\
\label{F-11-concrete}&  \epsilon_{3} a a_{13} + c(H_{123} -\lambda_{2} ) = \epsilon_{2}  ( ca_{22}   + da_{23} - ba_{12}).
\end{align}
\end{lem}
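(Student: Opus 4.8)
The plan is to reduce the type-$(1,1)$ condition on $F$ to a pair of scalar equations on the components of $F$, and then substitute the formula (\ref{F-concrete}). First I would record the action of $J_{-}$: from (\ref{j-m}) and $J_{-}^{2}=-\mathrm{Id}$ one gets $J_{-}u=e_{1}$, $J_{-}e_{1}=-u$, $J_{-}e_{2}=e_{3}$, $J_{-}e_{3}=-e_{2}$, so the $i$-eigenspace $T^{(1,0)}_{J_{-}}M\subset\mathfrak{g}_{\mathbb{C}}$ is spanned over $\mathbb{C}$ by $u-ie_{1}$ and $e_{2}-ie_{3}$. Since $\mathfrak{g}$ is $4$-dimensional, $\Lambda^{2}T^{(1,0)}_{J_{-}}M$ is one-dimensional, spanned by $(u-ie_{1})\wedge(e_{2}-ie_{3})$, and because $F$ is real it is of type $(1,1)$ with respect to $J_{-}$ if and only if $F|_{\Lambda^{2}T^{(1,0)}_{J_{-}}M}=0$, i.e.\ $F(u-ie_{1},e_{2}-ie_{3})=0$. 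Expanding by bilinearity and separating real and imaginary parts, this single complex equation is equivalent to the two real conditions
\[
F(u,e_{2})=F(e_{1},e_{3}),\qquad F(u,e_{3})=-F(e_{1},e_{2}).
\]

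Next I would rewrite these using the components of $F$ in the basis (\ref{H-F-4-uni}), where $F(u,e_{i})=F_{i}$ and $F(e_{i},e_{j})=F_{ij}$, so that the two conditions read $F_{2}=F_{13}$ and $F_{3}=-F_{12}$. To bring in (\ref{F-11-concrete}) I would compute these four coefficients from $2c_{+}F=dX_{+}^{\flat}+i_{X_{+}}H$, where $X_{+}=au+be_{1}+ce_{2}+de_{3}$. For the first summand I use $X_{+}^{\flat}=a\epsilon_{0}u^{*}+b\epsilon_{1}e_{1}^{*}+c\epsilon_{2}e_{2}^{*}+d\epsilon_{3}e_{3}^{*}$ together with the formulas for $du^{*}$ and $de_{i}^{*}$ recorded in the proof of Lemma~\ref{F-cl}, simplified by the hypotheses $a_{21}=a_{31}=0$ (note $du^{*}=0$, so $\epsilon_{0}$ drops out); for the second summand I use the expression for $H$ in (\ref{H-F-4-uni}) with $H_{23}=0$, $H_{12}=-\epsilon_{2}a_{12}$, $H_{13}=-\epsilon_{3}a_{13}$, and a direct interior-product computation. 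Reading off the coefficients of $u^{*}\wedge e_{2}^{*}$, $e_{1}^{*}\wedge e_{3}^{*}$, $u^{*}\wedge e_{3}^{*}$ and $e_{1}^{*}\wedge e_{2}^{*}$ yields explicit formulas for $2c_{+}F_{2}$, $2c_{+}F_{13}$, $2c_{+}F_{3}$ and $2c_{+}F_{12}$.

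Finally I would substitute these into $F_{2}=F_{13}$ and $F_{3}=-F_{12}$. Using $H_{12}=-\epsilon_{2}a_{12}$ and $H_{13}=-\epsilon_{3}a_{13}$, the condition $F_{2}=F_{13}$ rearranges to the second equation of (\ref{F-11-concrete}), while $F_{3}=-F_{12}$, after additionally using $a_{32}=-a_{23}$, rearranges to the first. The only non-formal point is that in both cases one must invoke the identity $\epsilon_{2}=\epsilon_{3}$ (valid since $J_{-}$ is $g$-skew-symmetric, cf.\ the discussion preceding Proposition~\ref{4c-n0}) to collapse the mixed factors $\epsilon_{2},\epsilon_{3}$ into the single $\epsilon_{2}$ appearing on the right-hand sides of (\ref{F-11-concrete}). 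Since every step is an equivalence, the converse direction is automatic. The main obstacle is purely the bookkeeping of the numerous coefficients of $dX_{+}^{\flat}$ and $i_{X_{+}}H$; the simplifying hypotheses of the lemma are exactly what is needed to keep this computation tractable and to match the stated normal form.
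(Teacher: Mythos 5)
Your proposal is correct and follows essentially the same route as the paper: reduce the type-$(1,1)$ condition to the two component identities $F_{13}=F_{2}$, $F_{12}=-F_{3}$ (the paper's relations (\ref{f-12})), compute the coefficients of $F$ from (\ref{F-concrete}) using the structure constants and the simplified form of $H$, and substitute to obtain (\ref{F-11-concrete}). Your more explicit justification of the $(1,1)$-criterion via the one-dimensionality of $\Lambda^{2}T^{(1,0)}_{J_{-}}M$ and the reality of $F$ is just a spelled-out version of the paper's first step, and your use of $\epsilon_{2}=\epsilon_{3}$ is legitimate since this identity holds in the adapted setting, as noted before Proposition~\ref{4c-n0}.
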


\begin{proof} 
From the definition of $J_{-}$, a  $2$-form as in (\ref{H-F-4-uni}) is of type $(1,1)$ with respect to $J_{-}$ if and 
only of 
\begin{equation}\label{f-12}
F_{12} = -  F_{3},\ F_{13} =  F_{2}.
\end{equation}
Using relation  (\ref{F-concrete}),  a simple computation shows that
\begin{align}
\nonumber& F_{12} = \frac{1}{2c_{+}} ( - \epsilon_{2} a a_{12} + d (H_{123} - \lambda_{3} ) )\\
\nonumber& F_{13} = - \frac{1}{2c_{+}} ( \epsilon_{3}  a a_{13} + c  (H_{123} - \lambda_{2} ) )\\
\nonumber& F_{1} =-\frac{1}{2c_{+}} (  \epsilon_{1} b a_{11} + 2 \epsilon_{2} c a_{12} + 2 \epsilon_{2} d a_{13})\\
\nonumber& F_{2} =-\frac{\epsilon_{2} }{2c_{+}} (  c  a_{22} +  d a_{23}  - b a_{12})\\
\label{Expr-F}& F_{3} =\frac{\epsilon_{2} }{2c_{+}} ( c a_{23} -  d  a_{33} +   b a_{13}).
\end{align}
The claim follows from (\ref{Expr-F}) and (\ref{f-12}).
\end{proof}

\begin{rem}{\rm For later use, remark that
\begin{equation}\label{F-23}
F_{23} = \frac{1}{2 c_{+}} b ( H_{123} - \lambda_{1}).
\end{equation}}
\end{rem}

The next lemma  describes the Lie bracket of the Lie algebra $\mathfrak{g}$, 
the Killing field
$X_{+} $,   the forms $H$ and $F$,  
such that all conditions from 
Corollary \ref{ex-4-dim} are satisfied, except the normalization 
$g(X_{+}, X_{+}) =1- c_{+}^{2}$ and the conditions  which involve $X_{-}$.

\begin{lem}\label{classes-initial}  Let $\epsilon_1, \epsilon_2=\epsilon_3\in \{\pm 1\}$ and $c_+ \in \mathbb{R}\setminus \{ 0\}$. There are eight  classes 
$(a_{ij}, \lambda_{i}, H_{123}$, $H_{ij}, F_{ij}, F_{i}, a, b, c, d)$  of solutions
of the systems (\ref{cond-deriv-a}), (\ref{H-cl}), 
(\ref{F-cl}), 
(\ref{cond-coeff}),  (\ref{Killing-4-n}),  (\ref{F-11-concrete}), (\ref{Expr-F}), (\ref{F-23})
with  at least one of the constants $a$, $b$, $c$,  $d$ non-zero and excluding the case 
$\lambda_1=\lambda_2=\lambda_3=0$, i.e.\ 
the case when the ideal $\mathfrak{g}_0\subset\mathfrak{g}$ is abelian.
They are given as follows
(below the constants  $(H_{123}, H_{ij}, F_{ij}, F_{i}, a$, $b, c, d)$ 
are incorporated in the corresponding Killing field $X_{+}$,  and forms $H$ and $F$):\

1) $a_{12} = a_{13} =a_{ii }=0$ for any $i$,   $a_{23}\in \mathbb{R}$, 
$\lambda_{1} \in \mathbb{R}\setminus \{ 0\}$, $ \lambda_{2} =\lambda_{3}\in \mathbb{R}\setminus \{  \lambda_{1}\}$, 
$X_{+} = au + be_{1}$  (with  $a\in \mathbb{R}\setminus \{ 0\}$ and $b\in \mathbb{R}$), 
and
\begin{equation}\label{case-1}
H = -\lambda_{1} e_{1}^{*}\wedge e^{*}_{2}\wedge e^{*}_{3},\ F = - \frac{b\lambda_{1}}{ c_{+} } e^{*}_{2}\wedge e^{*}_{3}.
\end{equation}

2) $a_{12} = a_{13} =a_{ii }=0$ for any $i$,   $a_{23}\in \mathbb{R}$, 
$\lambda_{1}= \lambda_{2} =\lambda_{3}  \in \mathbb{R}\setminus \{ 0\}$, 
$X_{+} = au + be_{1}$ 
(with $a \in \mathbb{R}\setminus \{ 0\}$ 
and $b\in \mathbb{R}$). The
forms $H$ and $F$ are given by (\ref{case-1}).\

3) $a_{12} = -\frac{d\lambda_{2} \epsilon_{2}}{a}$, $a_{13} = \frac{ c\lambda_{2}\epsilon_{3}}{a}$,  $a_{ii}=0$ for any $i$, 
$a_{23} \in \mathbb{R}$, 
$\lambda_{1} =0$, $\lambda_{2} =\lambda_{3}\in  \mathbb{R}\setminus \{ 0\}$, $X_{+} = au -\frac{ \epsilon_{3} a a_{23}}{ \lambda_{2}} e_{1} + ce_{2} + de_{3}$ (with $a\in \mathbb{R}\setminus \{ 0\}$ and  $c, d\in \mathbb{R}$) 
and
\begin{equation}
H = \frac{\lambda_{2}}{a} u^{*}\wedge ( d e_{1}^{*} \wedge e_{2}^{*} - c e_{1}^{*} \wedge e_{3}^{*}),\ F =0.
\end{equation}

4) $a_{12} = a_{13} = a_{ii }=0$ for any $i$, $a_{23}\in \mathbb{R}$, 
$\lambda_{1} =0$, $\lambda_{2} = \lambda_{3} \in \mathbb{R}\setminus \{ 0\}$, 
$X_{+ } = a u + b e_{1}$ (with  $a\in \mathbb{R}\setminus \{ 0\} $ 
and $b\in \mathbb{R}\setminus \{ -\frac{\epsilon_{2} a a_{23}}{\lambda_{2}} \}$). The form $H$ and $F$ are trivial.\

5) $a_{12} = a_{13} = a_{ii} =0$ for any $i$,  $a_{23}\in \mathbb{R}$,  $\lambda_{1} =\lambda_{2} =\lambda_{3} \in \mathbb{R}\setminus \{  0\}$, $X_{+} = b e_{1}$ (with $b\in \mathbb{R}\setminus \{ 0\}$) 
and
\begin{equation}
H = -\lambda_{1} e_{1}^{*}\wedge e^{*}_{2} \wedge e^{*}_{3},\ F = -\frac{b\lambda_{1}}{ c_{+}} e^{*}_{2} \wedge e^{*}_{3}.
\end{equation}

6) $a_{23} =a_{33} =0$, $a_{11} = -\epsilon_{2}  \lambda_{3}$, $a_{22} = \epsilon_{2} \lambda_{3}$, 
$a_{12}, a_{13}\in \mathbb{R}$, 
$\lambda_{1} = \lambda_{2} =0$, $\lambda_{3} \in \mathbb{R}\setminus \{ 0\}$, $X_{+} = d e_{3}$  
(with $d\in \mathbb{R}\setminus \{ 0\}$) 
and 
\begin{align}
\nonumber& H = \lambda_{3} e^{*}_{1}\wedge e^{*}_{2} \wedge e^{*} _{3}-  \epsilon_{2} u^{*} \wedge ( a_{12} e^{*}_{1}\wedge e^{*}_{2} + a_{13} e^{*}_{1}\wedge e^{*}_{3})\\
\nonumber&  F= -\frac{ \epsilon_{3} d a_{13}}{c_{+}} u^{*}\wedge e^{*}_{1}.
\end{align}

 7)  $a_{22} = a_{23} =0$, $a_{11} = -\epsilon_{3}  \lambda_{2}$, 
 $a_{33} = \epsilon_{3}  \lambda_{2}$, 
 $a_{12}, a_{13}\in \mathbb{R}$, $\lambda_{1} = \lambda_{3} =0$, $\lambda_{2}\in \mathbb{R}\setminus \{ 0\}$, 
 $X_{+} =c e_{2}$  (with $c\in \mathbb{R}\setminus \{ 0\}$) 
 and
 \begin{align}
 \nonumber& H= \lambda_{2} e_{1}^{*}\wedge e^{*}_{2} \wedge e^{*}_{3} -  \epsilon_{2} u^{*}\wedge ( a_{12} e^{*}_{1} \wedge e_{2}^{*} + a_{13} e^{*}_{1} \wedge e^{*}_{3} )\\
 \nonumber&  F = -\frac{ \epsilon_{2} ca_{12}}{c_{+}} u^{*}\wedge e^{*}_{1}.
 \end{align}

 8) $a_{12} = a_{13} = a_{ii} =0$ for any $i$, 
 $a_{23}\in \mathbb{R}$,  
 $\lambda_{1} \in \mathbb{R}$, 
 $\lambda_{2} =\lambda_{3} \in \mathbb{R} \setminus \{  \lambda_{1}\}$, 
  $X_{+} = b e_{1}$  (with $b\in \mathbb{R}\setminus \{ 0\}$) 
  and
 \begin{equation}\label{H-F-8} 
 H = -\lambda_{1} e^{*}_{1} \wedge e^{*}_{2} \wedge e^{*}_{3},\ F = - \frac{b\lambda_{1}}{c_{+}} e_{2}^{*} \wedge e^{*}_{3}.
 \end{equation}

In all of the above cases, $a_{21} = a_{31} =0$ and $a_{32} = - a_{23}$. 
\end{lem}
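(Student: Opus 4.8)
The plan is to regard the claim as the complete solution of the algebraic system formed by all the constraints accumulated in the preceding lemmas, and to resolve it by a case analysis organized according to the vanishing pattern of $(\lambda_1,\lambda_2,\lambda_3)$ and the support of the candidate Killing field $X_+=au+be_1+ce_2+de_3$. First I would feed the rigid conditions (\ref{cond-coeff}) of Lemma \ref{F-11} into everything: they force $a_{21}=a_{31}=0$ and $a_{32}=-a_{23}$ (which already establishes the final sentence of the statement), they fix the mixed coefficients $H_{23}=0$, $H_{12}=-\epsilon_2 a_{12}$, $H_{13}=-\epsilon_3 a_{13}$ and $H_{123}=2\epsilon_2 a_{22}-\lambda_1+\lambda_2-\lambda_3$, and they impose $a_{22}-a_{33}=\epsilon_2(\lambda_3-\lambda_2)$. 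Substituting $a_{21}=a_{31}=0$ and $a_{32}=-a_{23}$ into the derivation relations (\ref{cond-deriv-a}) and using $\epsilon_2=\epsilon_3$ collapses them to the scalar identities $\lambda_1 a_{12}=0$, $\lambda_1 a_{13}=0$, $(\lambda_2-\lambda_3)a_{23}=0$ together with the three diagonal trace relations. These already produce the two governing dichotomies: $\lambda_2=\lambda_3$ (leaving $a_{23}$ free) versus $\lambda_2\neq\lambda_3$ (forcing $a_{23}=0$), and $\lambda_1\neq0$ (forcing $a_{12}=a_{13}=0$) versus $\lambda_1=0$.

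Next I would bring in the remaining conditions of Corollary \ref{ex-4-dim}. The first relation (\ref{nabla-cov-even}), rewritten as (\ref{F-concrete}), shows that $F$ is completely determined by the (Killing) field $X_+$ and by $H$, so I would compute all of its components through (\ref{Expr-F}) and (\ref{F-23}); the requirement that $F$ be of type $(1,1)$ with respect to $J_-$ then enters as (\ref{f-12}), repackaged as (\ref{F-11-concrete}). In parallel I would expand the Killing system (\ref{Killing-4-n}) using the Levi-Civita data of Lemma \ref{levi-4-mod}: here the relations $aa_{ii}=0$ and $ba_{11}=0$ decide whether $X_+$ may carry a $u$-component, while $\epsilon_2 aa_{12}+d(\lambda_2-\lambda_1)=0$, $\epsilon_3 aa_{13}+c(\lambda_1-\lambda_3)=0$ and $b(\lambda_3-\lambda_2)=0$ couple the transverse components $c,d$ to $\mathrm{ad}_u$. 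Finally I would impose the Courant-algebroid constraints $dF=0$ of (\ref{F-cl}) and $dH+F\wedge F=0$ of (\ref{H-cl}), which in each branch either fix the last free coefficients or turn out to hold automatically.

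The bulk of the proof is the resulting case analysis, which I would run through branches indexed by the $\lambda$-pattern and then subdivided by the support of $X_+$. When $\lambda_2=\lambda_3$ the surviving possibilities are: $\lambda_1=\lambda_2=\lambda_3\neq0$, yielding classes 2 and 5 according to whether $a\neq0$ or $a=0$; and $\lambda_2=\lambda_3\neq\lambda_1$, yielding classes 1, 3, 4 and 8 according to whether $\lambda_1\neq0$ or $\lambda_1=0$ and whether the Killing relations force the off-diagonal $a_{12},a_{13}$ to vanish or to be proportional to $c,d$. When $\lambda_2\neq\lambda_3$, so that $a_{23}=0$, I expect the diagonal trace relations together with the Killing and closedness conditions to force $\lambda_1=0$ and exactly one of $\lambda_2,\lambda_3$ to vanish, producing the two mirror classes 6 and 7, in which the diagonal coefficients are pinned down as $a_{11}=-\epsilon_2\lambda_3$, $a_{22}=\epsilon_2\lambda_3$ (respectively $a_{11}=-\epsilon_3\lambda_2$, $a_{33}=\epsilon_3\lambda_2$). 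In each branch I would read off $H$ and $F$ in the normalized form displayed in the statement, carrying throughout the sign constraints $\epsilon_0=\epsilon_1$ and $\epsilon_2=\epsilon_3$ imposed by the $g$-skew-symmetry of $J_-$; the reverse inclusion, that each listed family indeed satisfies all the equations, is a direct substitution that I would verify branch by branch.

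The main obstacle is not any individual computation but the bookkeeping required to guarantee that the eight branches are exhaustive and pairwise disjoint and that the genericity hypotheses are correctly propagated. Concretely, I would need to verify that no admissible vanishing pattern of $(\lambda_1,\lambda_2,\lambda_3)$ is overlooked (in particular that $\lambda_1\neq0$ together with $\lambda_2\neq\lambda_3$ admits no solution with $X_+\neq0$), that within each surviving branch the Killing constraints never force $X_+=0$, and that the splitting between $a=0$ and $a\neq0$ distinguishing, for instance, classes 1 and 8 or classes 2 and 5 is genuinely dictated by the relations $aa_{ii}=0$ rather than imposed by hand. The excluded abelian case $\lambda_1=\lambda_2=\lambda_3=0$ must also be tracked, since it is exactly the degenerate locus where these dichotomies collapse.
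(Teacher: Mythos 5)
Your proposal is correct and is essentially the paper's own argument: both amount to feeding the constraints of Lemma \ref{F-11} and (\ref{cond-deriv-a}) into the Killing system (\ref{Killing-4-n}), the type-$(1,1)$ conditions (\ref{F-11-concrete})--(\ref{Expr-F})--(\ref{F-23}) and the closedness relations (\ref{H-cl}), (\ref{F-cl}), and then exhausting the algebraic possibilities by elementary case analysis. The only difference is bookkeeping order: the paper splits first on $a\neq 0$ versus $a=0$ (obtaining classes 1--4, respectively 5--8 from the coincidence pattern of the $\lambda_i$), while you split first on the $\lambda$-pattern; the branches match up and your "expected" exclusions (e.g.\ that $\lambda_2\neq\lambda_3$ forces $\lambda_1=0$ and the configurations of classes 6 and 7) do follow from the listed equations.
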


\begin{proof} 
The claim follows from elementary algebraic computations. 
When  $a\neq 0$, the second
relation (\ref{Killing-4-n}) implies that 
$a_{ii} =0$ for any $i$ and  the third  and last relation (\ref{cond-coeff}) imply that
$\lambda_{2} = \lambda_{3}$ and $H_{123} =- \lambda_{1}.$   The case $a\neq 0$ leads to the first four  classes of the statement.
When $a =0$,  relations (\ref{Killing-4-n}) imply
\begin{equation}
d( \lambda_{2} - \lambda_{1}) =c(\lambda_{1} -\lambda_{3}) =b( \lambda_{2} - \lambda_{3} ) =0.
\end{equation}
In particular,  at least two from the 
$\lambda_{i}$'s coincide (otherwise $X_{+} =0$).  
Considering all  possibilities ($\lambda_{1}=\lambda_{2} = \lambda_{3}$,  $\lambda_{1} = \lambda_{2} \neq \lambda_{3}$, 
$\lambda_{1} = \lambda_{3} \neq\lambda_{2}$ and $\lambda_{2} = \lambda_{3} \neq \lambda_{1}$)
we obtain  the remaining four cases from the statement. 
The expressions of $H$ and $F$ follow from
(\ref{H-F-4-uni}), (\ref{cond-coeff}),  (\ref{Expr-F}) and  (\ref{F-23}).
\end{proof}

The next lemma concludes the proof of Proposition~\ref{4c-n0}.

\begin{lem} Consider the classes of solutions  from Lemma \ref{classes-initial} with  $X_{+}$ such that $0 \neq  g(X_{+}, X_{+}) < 1$
and  $c_{+} \in \mathbb{R}\setminus \{ -1, 0, 1\}$  such that 
$c^{2}_{+} = 1 - g(X_{+}, X_{+})$.
There is a  left invariant vector field $X_{-}$  on $G$ and a left invariant endomorphism $J_{+}\in \mathrm{End}\, (TG)$, such that $( g, 
J_{+}, J_{-}, X_{+}, X_{-}, c_{+})$
are the components of an adapted  $B_{4}$-generalized 
pseudo-K\"{a}hler structure, only in cases 3) (if $c=d=0$), 4) and 8) of Lemma \ref{classes-initial}.  The resulting $B_{4}$-generalized pseudo-K\"{a}hler structures from cases  3), 4) and 8)  (with $a_{23}$ replaced by $\beta$)
are described in Proposition~\ref{4c-n0}.
\end{lem}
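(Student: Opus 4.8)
The plan is to feed each of the eight classes of Lemma \ref{classes-initial} into the remaining conditions and decide, class by class, whether the data $(g,J_-,X_+,H,F)$ can be completed to a full set of components of an adapted $B_4$-generalized pseudo-K\"ahler structure. By the lemma immediately preceding Lemma \ref{classes-initial}, every condition of Corollary \ref{ex-4-dim} already holds except the normalization $g(X_+,X_+)=1-c_+^2$ and the conditions involving $X_-$, so only these remain to be imposed. First I would fix $c_+$ by $c_+^2:=1-g(X_+,X_+)$; this is admissible precisely because the hypothesis $0\neq g(X_+,X_+)<1$ forces $c_+^2\notin\{0,1\}$, i.e.\ $c_+\in\mathbb{R}\setminus\{-1,0,1\}$. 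It then remains to produce a left-invariant $X_-$ and a $g$-skew endomorphism $J_+$ realizing the algebraic relations of Proposition \ref{data-on-M} ii) (namely $X_-\perp X_+$, $g(X_-,X_-)=g(X_+,X_+)$, $J_+X_+=-c_+X_-$, $J_+X_-=c_+X_+$, and $J_+^2=-\mathrm{Id}$ on $\{X_+,X_-\}^\perp$), together with the two surviving integrability conditions
\begin{equation*}
\nabla_X X_- = -\tfrac12 (i_{X_-} H)(X) - J_+ F(X), \qquad H(X_+, X_-) = c_+ F(X_-) + J_+ F(X_+),
\end{equation*}
the second being the commutator condition $[X_+,X_-]=0$ by Lemma \ref{exchange}.

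The organizing observation is that $J_+$ is pinned down up to a binary choice: on $\mathrm{span}\{X_+,X_-\}$ it is the rotation determined by $c_+$, while on the $g$-orthogonal $2$-plane $\{X_+,X_-\}^\perp$ it must be a $g$-skew complex structure, which exists if and only if $g$ is definite on that plane. I would therefore first exploit the covariant-derivative condition on $X_-$: since everything is left-invariant, after substituting $\nabla$ from Lemma \ref{levi-4-mod} and the explicit $H,F$ of the class, it becomes a purely linear system in the coefficients of $X_-$. Solving it under $X_-\perp X_+$ and $g(X_-,X_-)=g(X_+,X_+)$ determines the admissible $X_-$ in each class, and the compatibility relation $H(X_+,X_-)=c_+F(X_-)+J_+F(X_+)$ together with the definiteness of $\{X_+,X_-\}^\perp$ then supplies the final obstructions.

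Carrying this out, I expect cases 1, 2, 5, 6 and 7 to be eliminated: the nonzero parts of $H$ and $F$ (governed by $\lambda_1$, or by $X_+\in\mathrm{span}\{e_2,e_3\}$) make the covariant-derivative equation incompatible either with the existence of an $X_-$ of the prescribed norm orthogonal to and commuting with $X_+$, or with the definiteness of $\{X_+,X_-\}^\perp$ required for $J_+$. In case 3 the analysis survives only when the $e_2$- and $e_3$-components of $X_+$ vanish, i.e.\ $c=d=0$, which simultaneously forces $H=0$; in case 8 it forces $\lambda_1=0$, hence again $H=F=0$. In the surviving situations one reads off $H=F=0$, $X_\pm\in\mathrm{span}\{u,e_1\}$, and a collapse of the structure constants (with $a_{23}=:\beta$ and $\lambda_2=\lambda_3=:\lambda$) to the bracket (\ref{form-1-rez}); writing $X_+=au+be_1$, $X_-=\tilde a u+\tilde b e_1$ and translating the norm and orthogonality relations yields (\ref{constraint:eq}), reproducing the data of Proposition \ref{4c-n0}. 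For case 8 (where $X_+=be_1$) one first applies the rotation in the $\{u,e_1\}$-plane, which preserves $J_-$ since $J_-e_1=-u$ and preserves the form (\ref{form-1-rez}) because $\epsilon_2=\epsilon_3$ and $\lambda_1=0$, thereby producing $a\neq0$. The main obstacle is exactly this case-by-case elimination: the covariant-derivative equation, the commutator condition and the definiteness requirement must be played against the skew-symmetry and squaring identities of $J_+$, and the chief subtlety is that $J_+$ enters both explicitly and through $X_-=-c_+^{-1}J_+X_+$, so the conditions are genuinely coupled and nonlinear.
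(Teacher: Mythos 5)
Your plan is correct and follows essentially the same route as the paper: for each of the eight classes one imposes the two residual conditions of Corollary \ref{ex-4-dim} (the equation for $\nabla X_{-}$ and the relation $H(X_{+},X_{-})=c_{+}F(X_{-})+J_{+}F(X_{+})$, the last term vanishing since $i_{X_{+}}F=dc_{+}=0$) together with the algebraic constraints linking $X_{-}$, $J_{+}$ and $c_{+}$, solves the resulting linear system for $X_{-}$, and plays the outcome against the skew-symmetry, the rank of $J_{+}$ and the need for a $g$-skew complex structure on $\{X_{+},X_{-}\}^{\perp}$, which eliminates cases 1), 2), 5), 6), 7) and restricts 3) to $c=d=0$ and 8) to $\lambda_{1}=0$, exactly as in the paper (which writes out only case 1) and treats the others as analogous). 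Your extra remark about rotating in the $\{u,e_{1}\}$-plane to bring case 8) into the normal form of Proposition \ref{4c-n0} is a harmless refinement of the same argument.
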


\begin{proof}  
We need to determine  $X_{-}\in \mathfrak{g}$ and  $J_{+}\in \mathrm{End}\, ( \mathfrak{g}) $ 
such that $X_{-}$ is  orthogonal 
to $X_{+}$ and has the same norm as $X_{+}$, 
$J_{+}$ is $g$-skew-symmetric, $J_{+} X_{+} = - c_{+} X_{-}$, $J_{+} X_{-} = c_{+} X_{+}$, 
$J_{+}\vert_{ \{ X_{+}, X_{-} \}^{\perp} }$ is a complex structure 
and
\begin{align}
\nonumber& \nabla_{X} X_{-} +\frac{1}{2} H(X_{-}, X) = - J_{+} F(X),\ \forall X\in \mathfrak{g},\\
\label{left} &  H(X_{+}, X_{-}) = c_{+} F(X_{-})
\end{align}
(see Corollary  \ref{ex-4-dim}). 
Consider
case 1) from Lemma \ref{classes-initial}. Then $X_{+} = au + b e_{1}$ with $a\neq 0$ and 
\begin{equation}\label{F-case-1}
F(u) = F(e_{1}) =0,\ F(e_{2}) = -\frac{ \epsilon_{3} \lambda_{1} b}{ c_{+}} e_{3},\ F(e_{3} )=\frac{ \epsilon_{2} \lambda_{1} b }{c_{+}} e_{2}.
\end{equation}
Let  $X_{-}= \tilde{a} u + \tilde{b} e_{1} +\tilde{c}e_{2} +\tilde{d}e_{3}$, where $\tilde{a}, \tilde{b}, \tilde{c}, \tilde{d}\in \mathbb{R}$.
We compute  
\begin{align}
\nonumber& \nabla_{e_{1}} X_{-} +\frac{1}{2} H(X_{-}, e_{1}) = \epsilon_{3} \lambda_{2} ( - \tilde{d} e_{2} + \tilde{c} e_{3})\\
\nonumber& \nabla_{e_{2}} X_{-} +\frac{1}{2} H(X_{-}, e_{2}) =  \lambda_{1} ( \epsilon_{1} \tilde{d} e_{1} -  \epsilon_{3} \tilde{b} e_{3})\\
\nonumber& \nabla_{e_{3}} X_{-} +\frac{1}{2} H(X_{-}, e_{3}) =  \lambda_{1} ( - \epsilon_{1} \tilde{c} e_{1} +\epsilon_{2}  \tilde{b} e_{2})\\
\label{D-minus} & \nabla_{u} X_{-} +\frac{1}{2} H(X_{-}, u) = a_{23}  ( - \tilde{d} e_{2} + \tilde{c} e_{3}).
\end{align}
Combining the first relation (\ref{left}) with (\ref{F-case-1}) and (\ref{D-minus}), we obtain 
\begin{align}
\nonumber& a_{23} ( -\tilde{d} e_{2} + \tilde{c} e_{3} ) =0,\ \lambda_{2} (-\tilde{d} e_{2} +\tilde{c} e_{3} ) =0\\
\nonumber& \frac{\epsilon_{3} b}{ c_{+}} J_{+} e_{3} = \epsilon_{1} \tilde{d} e_{1}- \epsilon_{3} \tilde{b} e_{3},\ \frac{\epsilon_{2} b}{ c_{+}} J_{+} e_{2} =  \epsilon_{1}  \tilde{c} e_{1} - \epsilon_{2} \tilde{b} e_{2} .
\end{align}
Since $J_{+}$ is $g$-skew and the basis $\{ u, e_{1}, e_{2}, e_{3} \}$ is orthonormal, $\tilde{b}=0$ and the second line above becomes 
\begin{equation}\label{C-ref}
b J_{+} e_{3} = \epsilon_{1} \epsilon_{3} c_{+} \tilde{d} e_{1},\ b J_{+} e_{2} = \epsilon_{1} \epsilon_{2} c_{+} \tilde{c} e_{1}.
\end{equation}
On the other hand $\tilde{b}=0$ and  $g (X_{+}, X_{-}) =0$ imply  that $\tilde{a}=0$.  Thus   $X_{-}= \tilde{c} e_{2} + \tilde{d} e_{3}.$ 
On the other hand, relation (\ref{C-ref}) also implies that $b\neq 0$ (otherwise $X_{-} =0$). 
It is now straightforward to check that relations (\ref{C-ref}) combined  with $J_{+} (X_{-}) = c_{+} X_{+}$  and $c_{+} a\neq 0$ 
lead to a contradiction. 
The remaining cases from Lemma \ref{classes-initial} can be treated similarly. 
\end{proof}

While this section was concerned with $B_{4}$-generalized K\"{a}hler  structures with $c_{+} \neq 0$,  it remains  to find examples of such structures
with $c_{+}=0$. 
Several hints in this direction are given below.

\begin{exa}{\rm i) By   rescaling  (see Corollary \ref{rescaling-2}),   we obtain from  Proposition~\ref{4c-n0} adapted $B_{4}$-generalized pseudo-K\"{a}hler structures with corresponding vector fields $X_{\pm}$ of norm one 
(i.e.\ $c_{+}=0$).\

ii) Using similar computations as above, one can  construct  further left-invariant $B_{4}$-generalized pseudo-K\"{a}hler structures
with $X_{\pm}$ of norm one.
The $2$-form $F$ satisfies  relations (\ref{H-cl}), (\ref{F-cl}) and (\ref{f-12}) also in this case,  but it is no longer  related
to $X_{+}$  by relation (\ref{F-concrete}).  
}
\end{exa}

V. Cort\'es: vicente.cortes@math.uni-hamburg.de\

Department of Mathematics and Center for Mathematical Physics, University of Hamburg,  Bundesstrasse 55, D-20146, Hamburg, Germany.\\

L. David: liana.david@imar.ro\

Institute of Mathematics  'Simion Stoilow' of the Romanian Academy,   Calea Grivitei no. 21,  Sector 1, 010702, Bucharest, Romania.

\end{document}